\theoremstyle{definition}
\def\fnum{equation} 
\newtheorem{Thm}[\fnum]{Theorem}
\newtheorem{Cor}[\fnum]{Corollary}
\newtheorem{Lem}[\fnum]{Lemma}
\newtheorem{Def}[\fnum]{Definition}
\newtheorem{Rem}[\fnum]{Remark}
\newtheorem{Pro}[\fnum]{Proposition}
\newtheorem{Cla}[\fnum]{Claim}
\numberwithin{equation}{section}
\renewcommand{\text}[1]{\textrm{\upshape #1}}
\newcommand{\HH}{{\mathcal{H}}}
\newcommand{\LL}{{\mathcal{L}}}
\newcommand{\Lip}{{\text {Lip}}}
\newcommand{\diam}{{\text {diam}}}
\newcommand{\supp}{{\text{supp}}}
 \newcommand{\N}{\ensuremath{\mathbb{N}}}
 \newcommand{\Q}{\ensuremath{\mathbb{Q}}}
  \newcommand{\R}{\ensuremath{\mathbb{R}}}
 \newcommand{\ba}{\begin{align*}}
 \newcommand{\ea}{\end{align*}}
\newcommand{\pr}{\text{Pr}}
\newcommand\Tau{\scalerel*{\tau}{T}}
 \newcommand{\norm}[2]{{ \ensuremath{\left\|} #1 \ensuremath{\right\|}}_{#2}}
\def\RR{{\mathbb R}}
\newcommand{\dd}{{\mathsf {d}}}
\newcommand{\e}{{\text {e}}}
\newcommand{\veps}{\varepsilon}
\newcommand{\rcd}{\text{RCD}}
\newcommand{\cde}{\text{CD}^e(K,N)}
\newcommand{\cden}{\text{CD}^e(K,n)}
\newcommand{\Ent}{\text{Ent}}
\newcommand{\Adm}{\text{Adm}}
\newcommand{\Opt}{\text{Opt}}
\DeclareMathOperator{\RCD}{RCD}
\DeclareMathOperator{\CD}{CD}
\DeclareMathOperator{\MCP}{MCP}
\DeclareMathOperator{\OptGeo}{OptGeo}
\DeclareMathOperator{\Geo}{Geo}
\DeclareMathOperator{\curv}{curv}
\newcommand{\mm}{\mathfrak{m}}
\newcommand{\mrestr}{\mathbin{\vrule height 1.6ex depth 0pt width
0.13ex\vrule height 0.13ex depth 0pt width 1.3ex}}
\title{Topology of non-collapsed three-dimensional RCD spaces}
\author{Qin Deng \and Alessandro Pigati}
\address{\parbox{\linewidth}{Bocconi University, Department of Decision Sciences.\\
	Via Sarfatti 25,
	20136 Milano -- Italy\\[-4pt]\phantom{a}}}
\email{qin.deng@unibocconi.it}
\address{\parbox{\linewidth}{Bocconi University, Department of Decision Sciences.\\
	Via Sarfatti 25,
	20136 Milano -- Italy\\[-4pt]\phantom{a}}}
\email{alessandro.pigati@unibocconi.it}
\begin{document}

\raggedbottom

\begin{abstract}
    We show that non-collapsed $\text{RCD}(K,3)$ spaces without boundary are orbifolds whose topological singularities are locally finite and locally homeomorphic to cones over $\mathbb{RP}^2$, and that the topology of such spaces is stable under non-collapsed Gromov--Hausdorff convergence. We study the notion of non-orientability on these spaces as a key part of our analysis and show that the property of non-orientability (on uniformly sized balls) is stable under non-collapsed Gromov--Hausdorff convergence. Finally, we show that any non-orientable non-collapsed $\text{RCD}(K,3)$ space without boundary admits a ramified double cover which is itself an orientable non-collapsed $\text{RCD}(K,3)$ space without boundary, and that such ramified double cover is stable under non-collapsed Gromov--Hausdorff convergence.
\end{abstract}

\maketitle
\tableofcontents

\section{Introduction}

In this paper, we study the topology of non-collapsed $\RCD(K,3)$ spaces without boundary. Our first main result concerns the topological regularity of these spaces.
\begin{Thm}\label{Thm: orbifold structure thm intro} (Orbifold structure theorem).
    Let $(X, \dd, \HH^3)$ be a non-collapsed $\RCD(K,3)$ space without boundary. Then $X$ is an orbifold and, denoting by $\mathcal{P}$ the set of all points that admit a tangent cone with cross-section homeomorphic to $\mathbb{RP}^2$, the following holds:
    \begin{enumerate}
        \item $\mathcal{P}$ is locally finite;
        \item $X\setminus\mathcal{P}$ is a topological manifold;
        \item for each $x\in\mathcal{P}$, there exists a neighborhood $U$ of $x$ homeomorphic to $C(\mathbb{RP}^2)$.
    \end{enumerate}
\end{Thm}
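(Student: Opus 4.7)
The plan is to combine a classification of possible tangent cones with a topological stability result under non-collapsed Gromov--Hausdorff convergence, and then use the local topology of each model cone to deduce the remaining structural assertions.

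\emph{Step 1: Classification of tangent cone cross-sections.} At any $x \in X$, by the non-collapsed structure theory for $\RCD$ spaces, every tangent cone at $x$ is a metric cone $C(\Sigma)$ with $(\Sigma, \HH^2)$ a non-collapsed $\RCD(1,2)$ space of diameter at most $\pi$. The hypothesis that $X$ has no boundary propagates to $C(\Sigma)$ and forces $\Sigma$ to be a closed surface. Since closed two-dimensional non-collapsed $\RCD(1,2)$ spaces are Alexandrov surfaces of curvature $\geq 1$ without boundary, the classification of such surfaces (positive area combined with curvature $\geq 1$ forces positive Euler characteristic) yields that $\Sigma$ is homeomorphic to either $S^2$ or $\mathbb{RP}^2$.

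\emph{Step 2: Topological stability of the model cones.} The crucial ingredient is the following: for each $\Sigma$ as above, there exists $\epsilon = \epsilon(\Sigma) > 0$ such that whenever $B_1(y)$ inside a non-collapsed $\RCD(-1,3)$ space without boundary is $\epsilon$-close in Gromov--Hausdorff distance to the unit ball in $C(\Sigma)$ with $y$ matched to the vertex, then $B_{1/2}(y)$ is homeomorphic to a ball in $C(\Sigma)$ via a homeomorphism sending $y$ to the vertex. Granting this stability and applying it at a sufficiently small scale around each $x \in X$, combined with Step 1: if $x \notin \mathcal{P}$, then every tangent cone has cross-section $S^2$ (hence is topologically $\R^3$), so a small ball around $x$ is homeomorphic to a $3$-ball, proving (2); if $x \in \mathcal{P}$, then a small ball around $x$ is homeomorphic to a ball in $C(\mathbb{RP}^2)$ with $x$ corresponding to the vertex, proving (3).

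\emph{Step 3: Local finiteness of $\mathcal{P}$.} Fix a neighborhood $U \ni x$ as in (3), so $U \cong C(\mathbb{RP}^2)$ via a homeomorphism sending $x$ to the vertex. Since $C(\mathbb{RP}^2) \setminus \{\text{vertex}\} \cong \mathbb{RP}^2 \times (0,\infty)$ is a topological manifold, every $y \in U \setminus \{x\}$ has a Euclidean neighborhood in $X$. If some $y \in U \setminus \{x\}$ were in $\mathcal{P}$, applying (3) at $y$ would give a neighborhood of $y$ homeomorphic to $C(\mathbb{RP}^2)$, which fails to be locally Euclidean at the vertex, a contradiction. Hence $\mathcal{P} \cap U = \{x\}$, proving (1).

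The main obstacle is clearly Step 2: in the Alexandrov setting this is Perelman's stability theorem, but no such general stability is presently available for $\RCD$ spaces, so it must be developed here using specific features of the three-dimensional non-collapsed situation. A natural route, consistent with the paper's abstract, is to first construct for each $x \in \mathcal{P}$ a local orientable ramified double cover which is itself a non-collapsed $\RCD(K,3)$ space and on which the preimage of $x$ has a tangent cone with cross-section $S^2$; this reduces the $C(\mathbb{RP}^2)$-case to the $C(S^2)$-case modulo a free $\Z_2$ action, after which one constructs the required homeomorphisms via a Reifenberg-type scale-by-scale patching exploiting the uniqueness of the topological type of the tangent cone along the orbit.
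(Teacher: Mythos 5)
The high-level skeleton is sound: Step 1 matches the paper's Remark \ref{Rem: one tangent cone all tangent cone}, the derivation of (1) from (3) via invariance of domain is correct, and (2) in the $\mathbb{S}^2$ case is the (localized) manifold recognition theorem of \cite{BPS24}. But Step 2, which you rightly flag as the entire difficulty, is circular as sketched. Your route to the conical neighborhood is to build a local orientable ramified double cover ``which is itself a non-collapsed $\RCD(K,3)$ space.'' Establishing even a local $\CD^e$ property of the double cover near a singular point is precisely where one must already know that the locally non-orientable points do not accumulate: the globalization argument on $\hat X$ requires (a) that away from the lift $\hat p$ the projection is a local metric-measure isomorphism onto an $\RCD$ space, which needs all nearby points to be locally orientable, and (b) that no optimal dynamical plan between absolutely continuous measures charges the geodesics through $\hat p$ --- the paper's Main Lemma, proved by a blow-up argument combined with 1D-localization along $\dd_p$ to get the $k^3$ density bound. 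In your scheme the non-accumulation of $\mathcal{P}$ is only derived \emph{afterwards}, from (3). The paper breaks this circle with a separate induction on the density $\theta$ (Lemmas \ref{cone.ok} and \ref{proj.dens}), showing level by level in density that $\mathcal{P}$ cannot accumulate; nothing in your proposal replaces that step, and it is the logical hinge of the whole proof.

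Two further gaps. First, even once $\hat X$ is known to be an $\RCD$ topological manifold near $\hat x$ with an isometric involution, identifying the quotient neighborhood with $C(\mathbb{RP}^2)$ is not a ``Reifenberg-type scale-by-scale patching'': the paper shows the annuli between consecutive Green spheres in $\hat X$ are simply connected (Van Kampen plus collaring of Green spheres), hence homeomorphic to $[0,1]\times\mathbb{S}^2$ by the Poincar\'e conjecture, and then invokes Livesay's classification of free involutions on such annuli to conclude the quotient annuli are $[0,1]\times\mathbb{RP}^2$ before pasting. Second, the uniform one-scale stability statement you posit in Step 2 (with $\veps$ depending only on $\Sigma$) is stronger than anything proved or needed: closeness of a single ball to $C(\Sigma)$ at one scale does not by itself yield a homeomorphism in the $\RCD$ setting, and the paper never argues this way --- it obtains (3) from the global structure of the double cover, not from a pointwise stability of model cones.
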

This addresses a conjecture of Mondino on non-collapsed $\RCD(K,3)$ spaces when the space is without boundary \cite{BPS24} and partially generalizes to the $\RCD$ setting the topological regularity of $3$-dimensional Alexandrov spaces from Perelman's conical neighborhood theorem \cite{P93}. Our result is dimensionally sharp due to \cite{M00}, which gives a $4$-dimensional non-collapsed Ricci limit space with a point whose tangent cone is not homeomorphic to any neighborhood of the point. We point out that, by the manifold recognition theorem of \cite{BPS24} (cf.\ Theorem \ref{loc.reg}), any non-collapsed $\RCD(K,3)$ space without boundary that does not admit a tangent cone (at any point) homeomorphic to $C(\mathbb{RP}^2)$ is a topological manifold. As such, our work deals with the presence of singularities of the form $C(\mathbb{RP}^2)$. In the case of $3$-dimensional non-collapsed Ricci limit spaces, topological regularity is completely understood. Indeed, it was shown that all such spaces are homology manifolds in \cite{Z93} and, more recently, bi-Hölder homeomorphic to smooth Riemannian manifolds in \cite{S12, ST21, ST22}. 

A corollary of Theorem \ref{Thm: orbifold structure thm intro} is the following topological classification theorem for positively curved $\RCD$ spaces. 
\begin{Cor}\label{Cor: topological classification intro} (Topological classification theorem).
 Let $(X, \dd, \HH^3)$ be a non-collapsed $\rcd(K,3)$ space with no boundary and with $K > 0$. Then one of the following holds: 
 \begin{enumerate}
     \item $X$ is a spherical $3$-manifold, i.e., $X$ is an orientable topological manifold homeomorphic to $\mathbb{S}^3/\Gamma$, where $\Gamma < SO(4)$ is a finite subgroup acting freely by rotations;
     \item $X$ is homeomorphic to the spherical suspension over $\mathbb{RP}^2$.
 \end{enumerate}
\end{Cor}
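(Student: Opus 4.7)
The plan is to combine Theorem~\ref{Thm: orbifold structure thm intro} and the paper's main theorem on ramified double covers with the Bonnet--Myers theorem for $\RCD(K,N)$ spaces and Perelman's resolution of the Poincar\'e and elliptization conjectures. By Bonnet--Myers for $\RCD(K,3)$ with $K>0$, $\diam(X)\le\pi\sqrt{2/K}$, so $X$ is compact and the locally finite set $\mathcal{P}$ from Theorem~\ref{Thm: orbifold structure thm intro} is actually finite.

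\emph{Case 1: $\mathcal{P} = \emptyset$.} By Theorem~\ref{Thm: orbifold structure thm intro}(2), $X$ is a closed topological $3$-manifold. Its topological universal cover $\tilde X$ inherits an $\RCD(K,3)$ structure (the covering is locally isometric), so by Bonnet--Myers $\tilde X$ is also compact and $\pi_1(X)$ is finite. Perelman's elliptization theorem then gives that $X$ is a spherical space form as soon as $X$ is orientable; non-orientability is ruled out because any orientation-reversing involution of $\mathbb{S}^3$ has Lefschetz number $2$ and hence cannot act freely on the universal cover (which is $\mathbb{S}^3$ by the Poincar\'e conjecture). This establishes case~(1).

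\emph{Case 2: $\mathcal{P} \ne \emptyset$.} Since $C(\mathbb{RP}^2)$ is non-orientable, $X$ is non-orientable. By the paper's main result on ramified double covers, there exists an orientable non-collapsed $\RCD(K,3)$ space $Y$ without boundary and a $\mathbb{Z}_2$-deck action whose quotient is $X$, ramified over $\mathcal{P}$. Locally, the branched cover of $C(\mathbb{RP}^2)$ is $C(\mathbb{S}^2)\cong\mathbb{R}^3$ (smooth), so $Y$ has no topological singularities and is a closed topological $3$-manifold. Applying Case~1 to $Y$ shows $Y$ is a spherical $3$-manifold. Since $b_1(Y) = b_2(Y) = 0$ over $\mathbb{Q}$ for any spherical $3$-manifold, the Lefschetz formula applied to the orientation-reversing $\mathbb{Z}_2$-action on $Y$ computes its fixed-point count as $2$, so $|\mathcal{P}|=2$.

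The main technical obstacle is the final identification $X \cong S(\mathbb{RP}^2)$. The plan is to realize $X$ as a global orbifold quotient of $\mathbb{S}^3$: the orbifold universal cover of $X$ unfolds the two $C(\mathbb{RP}^2)$ cone points into smooth points, producing a simply-connected closed topological $3$-manifold, which is $\mathbb{S}^3$ by Perelman's Poincar\'e conjecture. Hence $X \cong \mathbb{S}^3/G$ topologically for some finite group $G$ which, by equivariant geometrization, is conjugate to a linear subgroup of $O(4)$. The ``no boundary'' hypothesis on $X$ excludes rotation-type stabilizers in $G$ (whose tangent cones $\mathbb{R}\times C(\mathbb{S}^1(\theta))$ with $\theta<2\pi$ are metric cones over spherical lunes with non-empty boundary), so $G\cap SO(4) = \{1\}$; a case analysis of orientation-reversing involutions of $O(4)$ whose fixed set on $\mathbb{S}^3$ consists of $|\mathcal{P}|=2$ isolated points of local type $-\Id$ on $\mathbb{R}^3$ forces $G = \mathbb{Z}_2$ to be the ``antipodal slice'' involution, so $X \cong \mathbb{S}^3/\mathbb{Z}_2 \cong S(\mathbb{RP}^2)$.
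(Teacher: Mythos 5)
Your Case 1 and your Lefschetz count are sound. The reduction to finite fundamental group via the compact $\RCD$ universal cover (this needs \cite{MW19}, \cite{W24} rather than the parenthetical ``the covering is locally isometric,'' since lifting the $\RCD$ condition along covers is precisely the kind of globalization issue that is nontrivial) plus elliptization is exactly what the paper does in the $\mathcal{P}=\emptyset$ case. Your computation $|\mathcal{P}|=2$ via the Lefschetz--Hopf theorem applied to the orientation-reversing involution $\Gamma$ on the spherical manifold $\hat X$ (each fixed point being topologically modeled on $-\Id$ on $\R^3$, hence of index $+1$) is correct and is close in spirit to the paper's Lemma \ref{p.lb}, which runs a Lefschetz-number argument on the complement of Green balls.

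The genuine gap is in the step you yourself flag as the main obstacle. First, the passage from ``$X\cong \mathbb{S}^3/G$ as a topological orbifold quotient'' to ``$G$ is conjugate into $O(4)$'' is not available: the deck group $G$ acts on $\mathbb{S}^3$ only by homeomorphisms, and finite groups of homeomorphisms of $\mathbb{S}^3$ need not be linearizable (Bing's wild involutions); equivariant geometrization and the orbifold theorem require smooth (or at least locally smooth) actions, which you have not produced. Second, the assertion that the no-boundary hypothesis forces $G\cap SO(4)=\{1\}$ is false on two counts: $G\cap SO(4)$ contains the deck group $\Gamma'$ of $\mathbb{S}^3\to \hat X$, which acts freely and hence contributes no stabilizers at all, so nothing about singularities constrains it; and even a rotation with a fixed circle produces a quotient whose tangent cones along the edge are cones over metric spheres (``footballs'') with empty boundary, so such elements are not excluded by the no-boundary hypothesis either --- what excludes them is that the singular locus of the orbifold $X$ is already known to be the two points of $\mathcal{P}$. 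What your argument is missing, and what actually closes the case, is a concrete identification of $X$ minus neighborhoods of the two singular points: the paper removes the two Green balls $B_1,B_2$ (whose closures are cones over $\mathbb{RP}^2$ by the orbifold structure theorem), observes that $X\setminus(B_1\cup B_2)$ is a compact $3$-manifold with finite fundamental group and $\mathbb{RP}^2$ boundary components, and applies Livesay's theorem \cite[Theorem 2]{L63} to conclude that this complement is homeomorphic to $[0,1]\times\mathbb{RP}^2$; gluing the two cones back gives $X\cong S(\mathbb{RP}^2)$ (and, in particular, $N=2$ and $\hat X\cong\mathbb{S}^3$ come out of this rather than needing to be fed into a linearization argument). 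You should replace your orbifold universal cover paragraph with an argument of this type, or otherwise supply a linearization theorem valid for the topological action at hand.
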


Our second main result is the following topological stability theorem. 
\begin{Thm}\label{Thm: stability thm intro} (Topological stability theorem).
Given $0<v,D<\infty$, there exists $\veps(v,D)>0$ such that the following holds.
Assume that $(X,\dd_X,\HH^3)$ and $(Y,\dd_Y,\HH^3)$
are two non-collapsed $\RCD(-2,3)$ spaces without boundary, with diameter $\le D$
and $\HH^3(B_1(p))\ge v$, for all $p\in X$ and all $p\in Y$.
If $d_{GH}(X,Y)<\veps$ then $X$ and $Y$ are homeomorphic,
with a homeomorphism which can be taken $\delta$-close to the $\veps$-GH isometry and its inverse (with $\delta(\veps)\to0$ as $\veps\to0$).
\end{Thm}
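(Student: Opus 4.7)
The plan is a standard contradiction-and-compactness argument. Assume the statement fails: for some $v, D > 0$ there exist sequences $(X_n, \dd_{X_n}, \HH^3)$ and $(Y_n, \dd_{Y_n}, \HH^3)$ of non-collapsed $\RCD(-2,3)$ spaces without boundary satisfying the diameter and volume bounds, with $d_{GH}(X_n, Y_n) \to 0$, but admitting no $\delta_n$-close homeomorphism for any fixed sequence $\delta_n \to 0$. By non-collapsed Gromov--Hausdorff precompactness (De Philippis--Gigli), after extracting a subsequence both $X_n$ and $Y_n$ converge in non-collapsed GH to the same limit $(Z, \dd_Z, \HH^3)$, which itself satisfies the hypotheses. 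It therefore suffices to prove the one-sided statement that whenever $X_n \to X$ in non-collapsed GH (with the usual bounds), $X_n$ is homeomorphic to $X$ via a GH-close map for all large $n$; composing such homeomorphisms for $X_n$ and $Y_n$ with common limit $Z$ then yields the theorem.

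The first step is to match singular points. By Theorem \ref{Thm: orbifold structure thm intro}, the set $\mathcal{P}_X$ is locally finite; combined with the diameter bound and the fact that each conical neighborhood of type $C(\mathbb{RP}^2)$ occupies a definite amount of $\HH^3$-measure (the cross-section $\mathbb{RP}^2$ has area bounded below by the cone rigidity coming from the volume lower bound), the set $\mathcal{P}_X$ is globally finite, say $\mathcal{P}_X = \{x_1, \ldots, x_N\}$ with $N = N(v, D)$. For each $x_i$, non-collapsed stability of tangent cones produces, for $n$ large, a nearby point $x_i^n \in \mathcal{P}_{X_n}$ with $x_i^n \to x_i$. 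Conversely, any sequence $p_n \in \mathcal{P}_{X_n}$ subconverges to some $p \in X$; if $p$ were a manifold point of $X$, then by Theorem \ref{loc.reg} applied in $X_n$ the small balls around $p_n$ would be homeomorphic to Euclidean balls for $n$ large, contradicting the conical $C(\mathbb{RP}^2)$ structure at $p_n$. Hence $p \in \mathcal{P}_X$, and for $n$ large one obtains a bijection $\mathcal{P}_X \leftrightarrow \mathcal{P}_{X_n}$, $x_i \leftrightarrow x_i^n$.

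Next, I would fix a small $r > 0$ so that the balls $\overline{B_{2r}(x_i)} \subset X$ are pairwise disjoint and each $\overline{B_r(x_i)}$ is homeomorphic to the closed cone on $\mathbb{RP}^2$ (using Theorem \ref{Thm: orbifold structure thm intro}(3)). For $n$ large the same holds at $x_i^n \in X_n$. Set $M := X \setminus \bigcup_i B_r(x_i)$ and $M_n := X_n \setminus \bigcup_i B_r(x_i^n)$: these are compact topological $3$-manifolds with boundary a disjoint union of $N$ copies of $\mathbb{RP}^2$, and $M_n \to M$ in non-collapsed GH. Using a finite cover of $M$ by interior Euclidean charts (from Theorem \ref{loc.reg}) together with boundary collar charts (from the conical structure of $B_{2r}(x_i) \setminus \overline{B_r(x_i)}$), and the non-collapsed GH convergence, I would construct a homeomorphism $F : M \to M_n$ that is $o(1)$-close to the GH approximation and sends $\partial B_r(x_i)$ to $\partial B_r(x_i^n)$ componentwise. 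Finally, each restriction $F|_{\partial B_r(x_i)} \colon \mathbb{RP}^2 \to \mathbb{RP}^2$ extends to a homeomorphism $\overline{B_r(x_i)} \to \overline{B_r(x_i^n)}$ by coning along the cone parameter, and gluing produces the desired $\tilde F \colon X \to X_n$.

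The hard part is the construction of $F$ in the previous step: promoting the qualitative local manifold recognition of Theorem \ref{loc.reg} to a global controlled homeomorphism between close manifolds with non-trivial boundary. The natural template is Perelman's stability theorem for Alexandrov spaces (as formalized by Kapovitch), where local homeomorphisms are glued via a finite cover and a handle-by-handle induction. A difficulty specific to the RCD setting is that Theorem \ref{loc.reg} is qualitative, so one first has to upgrade it to a uniform-scale statement (some $r_0 = r_0(v, D) > 0$ below which all balls of radius $r_0$ in the relevant spaces are homeomorphic to Euclidean balls and $o(1)$-close in GH distance to a standard round ball), which can be done by contradiction against non-collapsed GH compactness. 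Once such a quantitative local model is available, a Chapman--Ferry or Siebenmann-type patching argument should produce $F$; the collar component of the cover is what guarantees that $F$ preserves the $\mathbb{RP}^2$ boundary components required by the final coning step.
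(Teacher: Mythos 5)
Your outline matches the paper's at the top level (match the singular sets, excise conical neighborhoods, build a homeomorphism between the manifold parts, glue back by coning), but the step you yourself flag as ``the hard part'' is exactly where the paper's proof lives, and your sketch of it would not go through as written. There is no Perelman/Kapovitch-type stability machinery (strainers, MGS framings, handle-by-handle gluing) available in the $\RCD$ setting; this is precisely why the paper instead \emph{doubles} $M$ and $M_n$ along their boundaries to obtain closed $3$-manifolds, proves a \emph{uniform local contractibility} estimate for the doubled spaces (Corollary \ref{cor.contr} away from $\partial M$, plus the retraction of Proposition \ref{green.balls}~(7) and a surgery argument, using Proposition \ref{green.balls}~(6), for spheres whose image crosses $\partial M$), feeds this into Petersen's theorem (Proposition \ref{pet.pro}) to get a controlled homotopy equivalence, applies Jakobsche's $3$-dimensional Chapman--Ferry theorem (Theorem \ref{hom.thm}) to upgrade it to a homeomorphism of the doublings, and finally uses Alexander's theorem, Livesay's theorem, and an isotopy to make the homeomorphism respect the two halves. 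Your proposal invokes ``a Chapman--Ferry or Siebenmann-type patching argument'' but does not supply the controlled homotopy equivalence that such theorems take as input, nor the uniform contractibility function near the boundary $\mathbb{RP}^2$ components that is needed to produce it; note also that these theorems are for closed manifolds, which is what forces the doubling trick. Relatedly, your sets $M = X\setminus\bigcup_i B_r(x_i)$ are built from metric balls, whose boundaries need not be surfaces; the paper must use Green balls $\mathbb{B}_r$ (Proposition \ref{green.balls}) precisely so that $\partial M$ is a tamely embedded union of copies of $\mathbb{RP}^2$ with a collared neighborhood and a Lipschitz-controlled retraction.

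There is a second, smaller but genuine, gap in the matching of singular sets. To show that a limit $p$ of singular points $p_n\in\mathcal{P}_{X_n}$ is singular, you argue that if $p$ were a manifold point then ``by Theorem \ref{loc.reg} applied in $X_n$'' small balls around $p_n$ would be Euclidean. Theorem \ref{loc.reg} requires knowing that all tangent cones at all points of an open subset \emph{of $X_n$} have $\mathbb{S}^2$ cross-sections; regularity of the limit point $p\in X$ gives no such information about $X_n$ (densities are only semicontinuous, and a manifold point of $X$ can have density well below $4\pi$). This direction is exactly the stability of local non-orientability, Theorem \ref{Thm: P stability}, which rests on the whole ramified-double-cover/$\CD^e$ machinery of the paper; within the paper you may of course cite it, but the argument you give in its place is not valid. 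The converse direction (every $x_i\in\mathcal{P}_X$ is a limit of points of $\mathcal{P}_{X_n}$), which you attribute to ``stability of tangent cones,'' also needs an argument: the paper derives it from the convergence of Green spheres, since otherwise an $\mathbb{RP}^2$ Green sphere in $X_n$ would bound a compact topological $3$-manifold. Finally, the uniform separation of $\mathcal{P}$ (Lemma \ref{p.lb}), which you need to choose the excision radius $r$ uniformly in $n$, is itself a nontrivial Lefschetz-number argument on the double cover rather than a direct consequence of local finiteness.
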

This partially generalizes the topological stability theorem of Perelman \cite{P91} in the context of Alexandrov spaces and the topological stability theorem of \cite{BPS24} for non-collapsed $\RCD(K,3)$-spaces which are topological manifolds. As with the topological regularity theorem, this result is dimensionally sharp due to the examples constructed in \cite{A90}. 

As a byproduct of our techniques, we also obtain uniform contractibility of non-collapsed $\rcd(-2,3)$ spaces without boundary (see Corollary \ref{cor.contr.bis}).

\begin{Pro} (Uniform contractibility)
    If $(X,\dd,\HH^3,p)$ is a non-collapsed $\rcd(-2,3)$ space without boundary
    with $\HH^3(B_1(p))\ge v>0$, then any ball $B_r(q)\subseteq B_1(p)$
    is contractible in $B_{C(v)r}(q)$, provided that $r\in(0,r_0(v))$ is small enough.
\end{Pro}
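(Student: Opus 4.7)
Plan. The key inputs are Theorem \ref{Thm: orbifold structure thm intro}, which constrains the topology of tangent cones at every point to be either $\R^3$ or $C(\RP^2)$ (both contractible via a radial homotopy), and Theorem \ref{Thm: stability thm intro}, which provides quantitative control over homeomorphisms between GH-close spaces. The strategy is to transfer the manifest contractibility of the tangent cones back to small balls in $X$ by blow-up and compactness.

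By Bishop--Gromov, the hypothesis $\HH^3(B_1(p)) \ge v$ yields a uniform lower bound $\HH^3(B_r(q)) \ge c(v) r^3$ for every $q \in B_1(p)$ and every $r \le 1$. Rescaling by $r^{-1}$ places $(X, r^{-1}\dd, q)$ in the pGH-compact class of non-collapsed $\RCD(-2r^2, 3)$ spaces with uniform non-collapsing on unit balls. As $r \to 0$, any pGH-sublimit is a tangent cone $(C_q, o_q)$ at $q$; being a non-collapsed $\RCD(0,3)$ metric cone without boundary (using that the no-boundary property passes to non-collapsed limits), it is topologically either $\R^3$ or $C(\RP^2)$ by Theorem \ref{Thm: orbifold structure thm intro}, with $o_q$ the origin/apex. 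In either model, the unit ball around $o_q$ is contractible within itself by radial retraction.

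For $r \le r_0(v, \eta)$ with $\eta$ small, the rescaled ball is $\eta$-pGH-close to the ball of radius $2$ in some model $(C_q, o_q)$. A local (ball) version of Theorem \ref{Thm: stability thm intro}, applied at scale $2$ in the rescaled metric, then yields a homeomorphism $\Phi$ from $B_{3/2}(q)$ (in the rescaled metric) onto an open subset of $B_{3/2}(o_q)$ that is $\delta(\eta)$-close to the GH-identification, with $\delta \to 0$ as $\eta \to 0$. Pulling back the radial contraction of $B_1(o_q)$ through $\Phi$ and undoing the rescaling gives a contraction of $B_r(q)$ within $B_{(1+2\delta)r}(q) \subseteq B_{2r}(q)$, so $C(v) = 2$ suffices. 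The uniformity of $r_0(v)$ across all $q$ follows from pGH-compactness together with the fact that there are only two topological models for $C_q$.

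The main obstacle I anticipate is establishing the local (ball) version of Theorem \ref{Thm: stability thm intro} used above: the stated theorem compares compact spaces of bounded diameter, whereas here one needs to compare balls in possibly non-compact spaces. This can be addressed either by truncating and doubling the balls into closed models to which the global theorem applies, or, preferably, by extracting a local statement from the proof of Theorem \ref{Thm: stability thm intro}, which should be feasible because that proof is expected to localize around the finite set $\mathcal{P}$ of topological singularities.
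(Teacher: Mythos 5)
There are two genuine gaps here, and they compound. First, the claim that for all $r\le r_0(v,\eta)$ and all $q\in B_1(p)$ the rescaled ball is $\eta$-GH close to a ball in a tangent cone is false: tangent cones arise only as subsequential blow-up limits, and there is no uniform scale below which every ball at every point is almost conical (this is precisely why the paper works with the set $\mathcal{G}_p$ of good radii, which has density $1$ at $0$ but is not an interval, and why Baire-category arguments appear in Theorem \ref{loc.reg} and in the local finiteness proof). A compactness/contradiction argument does not repair this: a limit of $(X_i,r_i^{-1}\dd_i,q_i)$ with $q_i$ and $r_i\to0$ chosen adversarially is merely some non-collapsed $\rcd(0,3)$ space, not a cone, so your reduction to the two contractible models $\R^3$ and $C(\mathbb{RP}^2)$ is unavailable.

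Second, and more structurally, invoking Theorem \ref{Thm: stability thm intro} (even a local version) is circular. In the paper, topological stability is proved \emph{after} and \emph{by means of} uniform contractibility: the proof of Theorem \ref{main.homeo} verifies the uniform local $n$-connectedness hypothesis of Petersen's Proposition \ref{pet.pro} exactly via Corollaries \ref{cor.contr} and \ref{cor.contr.bis}, and the Chapman--Ferry/Jakobsche step likewise needs controlled homotopy equivalences as input. One cannot use stability to produce the contractions that stability presupposes. The paper's actual argument never passes through a homeomorphism with a model: when $r\lesssim\dd(q,\mathcal{P})$ it lifts $B_r(q)$ isometrically to the ramified double cover (using the displacement lower bound of Lemma \ref{displ.lb}), which is an orientable $\RCD$ topological $3$-manifold where good Green balls of comparable radius are contractible by \cite[Proposition 9.24]{BPS24}; when $q$ is within distance $\lesssim r$ of $\mathcal{P}$, it uses the uniform separation of $\mathcal{P}$ (Lemma \ref{p.lb}) together with Corollary \ref{unif.cone.rp2}, which provides at every scale a Green ball around the nearby singular point homeomorphic to the compact cone over $\mathbb{RP}^2$, hence contractible. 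To salvage your outline you would need to replace ``GH-close to a tangent cone plus topological stability'' with these direct contractibility statements for Green balls.
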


To prove Theorems \ref{Thm: orbifold structure thm intro} and \ref{Thm: stability thm intro}, we have to understand the topology of $\RCD$ spaces with $C(\mathbb{RP}^2)$ singularities. As such, it is natural to study non-orientable $\RCD$ spaces as defined in \cite{BBP24} (see also \cite{H17} for an equivalent formulation in the setting of Ricci limit spaces). Our secondary result is the following stability theorem that answers a question of \cite{BBP24} in the $3$-dimensional case.

\begin{Thm}\label{Thm: non-orientability stability intro} (Stability of non-orientability).
Let $(X_i, \dd_i, \HH^3, p_i)_{i\in \N}$ be a sequence of non-orientable $\RCD(K,3)$ spaces without boundary converging in the pmGH sense to some $\RCD(K,3)$ space without boundary $(X, \dd, \HH^3, p)$. If, for some $R > 0$, $B_{R}(p_i)$ is non-orientable for all $i \in \N$, then $B_{R'}(p)$ is non-orientable for all $R' > R$.
\end{Thm}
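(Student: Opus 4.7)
The plan is to argue by contradiction: assume that, for some $R' > R$, the ball $B_{R'}(p)$ is orientable in the sense of \cite{BBP24}, and derive that $B_R(p_i)$ must be orientable for $i$ large, contradicting the hypothesis. By Theorem \ref{Thm: orbifold structure thm intro}, each $X_i$ (and $X$) is an orbifold whose singular set $\mathcal{P}_i$ (resp.\ $\mathcal{P}$) consists of locally finitely many $\RP^2$-cone points, with complement a topological $3$-manifold. Non-orientability of $B_R(p_i)$ then provides an orientation-reversing loop $\gamma_i$ in the regular part of $B_R(p_i)$, representing a non-trivial element for the monodromy of the orientation $\Z/2$-bundle; I would take $\gamma_i$ to minimize length within this non-trivial class.

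After passing to a subsequence, I would distinguish two cases depending on the behavior of $\mathrm{length}(\gamma_i)$. If the lengths stay bounded below by some $\delta>0$, then by non-collapsed compactness $\gamma_i$ subconverges to a continuous loop $\gamma \subset \overline{B_R(p)} \subset B_{R'}(p)$. Since $\mathcal{P}\cap B_{R'}(p)$ is a finite set ($0$-dimensional in the $3$-dimensional ambient space), a small generic perturbation of $\gamma$ avoids $\mathcal{P}$ without changing its class in mod-$2$ cohomology, and orientation monodromy is locally constant under such $C^0$-limits; hence the perturbed loop is orientation-reversing in $B_{R'}(p)\setminus\mathcal{P}$, contradicting the assumed orientability. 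If instead $\mathrm{length}(\gamma_i)\to0$, then $\gamma_i$ collapses onto a point $q_i$, with $q_i\to q\in\overline{B_R(p)}\subset B_{R'}(p)$. The non-triviality of $[\gamma_i]$, combined with local simple connectedness at regular points of $X_i$ (which admit Euclidean charts), forces $q_i\in\mathcal{P}_i$; since $\RP^2$-cone points have volume density $\tfrac12$ while manifold points have density $1$, continuity of the density function under non-collapsed pmGH convergence forces $q\in\mathcal{P}$. Theorem \ref{Thm: orbifold structure thm intro} (3) then produces an orientation-reversing loop arbitrarily close to $q$ inside $B_{R'}(p)$, again contradicting orientability.

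I expect the main obstacle to be making precise the convergence of the loops $\gamma_i$ and the continuity of their orientation monodromy in the $\RCD$ setting: controlling lengths, extracting a good limit, and ruling out intermediate behaviors in the degenerate case. As an alternative route, one could invoke a local form of the topological stability Theorem \ref{Thm: stability thm intro}: for $R<R''<R'$ and $i$ large, construct a homeomorphism $\Phi_i$ from $B_{R''}(p_i)$ onto an open subset of $B_{R'}(p)$, close to the pmGH approximation; since the singular set is topologically intrinsic by Theorem \ref{Thm: orbifold structure thm intro} (points of $\mathcal{P}$ are exactly the non-manifold points), $\Phi_i$ preserves the orbifold structure, and orientability — a topological property of the ball together with its singular set — transfers back from $B_{R'}(p)$ to $B_R(p_i)$, contradicting the hypothesis.
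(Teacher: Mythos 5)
Your overall strategy (contradiction via orientation-reversing loops in $X_i$, split according to whether their lengths degenerate) is natural, but in both cases the step you treat as routine is exactly where the difficulty of this theorem lives, and it is where the paper deploys its heavy machinery. In Case 1, the assertion that ``orientation monodromy is locally constant under such $C^0$-limits'' is precisely the hard direction and is not known a priori: the loops $\gamma_i$ live in different spaces, and comparing orientations chart-by-chart between $X_i$ and $X$ along the limit loop is only possible at uniformly $\veps$-regular points (via Reifenberg), whereas the limit loop may pass through metrically singular but topologically regular points of $X$ where no such comparison is available. (The opposite direction --- pulling an orientation-reversing loop \emph{back} from the limit to $X_i$ --- is what \cite{BBP24} proves in Theorem \ref{Thm: stability of orientability}; the forward direction is the open question this theorem resolves.) Moreover, a length-minimizer in the nontrivial class need not exist: in $C(\mathbb{RP}^2)$ the infimum of lengths of orientation-reversing loops is $0$ and is not attained, so the dichotomy itself is delicate. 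In Case 2, the claim that $q_i\in\mathcal{P}_i$ does not follow from local simple connectedness at manifold points, because the scale at which a manifold point of $X_i$ is locally orientable is not uniform in $i$ (this is exactly the scenario of Remark \ref{illustration}, where all orientation-reversing loops concentrate at a regular point). Even granting $q_i\in\mathcal{P}_i$, the implication $q_i\to q\Rightarrow q\in\mathcal{P}$ is Theorem \ref{Thm: P stability}, which is one of the paper's main technical results and requires showing the ramified double covers are locally $\CD^e(-2,3)$ (via the Main Lemma \ref{Lem: main lemma} and globalization) before invoking Theorem \ref{Thm: stability of non-orientiable local cde case}. Your shortcut via density fails: $\mathbb{RP}^2$-cone points have density $\le 1/2$ (not $=1/2$), manifold points can have density arbitrarily close to $0$, and density is only lower semicontinuous under pmGH convergence, so low density in the limit neither follows from nor implies membership in $\mathcal{P}$.

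The alternative route via Theorem \ref{Thm: stability thm intro} is circular within the paper's logic: the proof of topological stability (Theorem \ref{main.homeo}) begins by establishing $\mathcal{P}_i\to\mathcal{P}$ in the Hausdorff sense, which relies on Theorem \ref{Thm: P stability}, i.e., on the stability of non-orientability. The paper's actual proof takes a different route entirely: it first proves the orbifold structure theorem, deduces that the ramified double covers $\hat X_i$ and $\hat X$ are $\RCD(K,3)$, and then transfers non-orientability to the limit by passing to the limit of the \emph{double covers} together with their involutions, showing the limit involution is non-trivial on the regular set (Lemma \ref{lem: regular set double cover}) and connecting $\tilde x$ to $\Gamma(\tilde x)$ by geodesics avoiding the singular set (Lemma \ref{Lem: double cover limit connetedness}, which uses the $\CD^e$ density estimates); projecting this path yields the orientation-reversing loop in $B_{R'}(p)$. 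Your proposal contains no substitute for this mechanism.
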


The notion of orientable ramified double cover was introduced for non-orientable $\RCD$ spaces in \cite{BBP24} as a replacement for the orientable double cover of a smooth manifold in the singular setting. We prove the following regularity theorem for the ramified double cover. 

\begin{Thm}\label{Thm: ramified double cover is RCD intro} (Regularity of ramified double cover).
Let $(X, \dd, \HH^3)$ be a non-orientable $\RCD(K,3)$ space without boundary. Then the ramified double cover $(\hat{X}, \hat\dd, \HH^3)$ is an orientable $\RCD(K,3)$ space without boundary. 
\end{Thm}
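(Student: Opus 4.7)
The plan is to construct $(\hat X, \hat\dd, \HH^3)$ explicitly and then verify each of the claimed properties---non-collapsed $\RCD(K,3)$, absence of boundary, and orientability---in turn. By Theorem~\ref{Thm: orbifold structure thm intro}, $X$ is an orbifold whose topological singular set $\mathcal{P}$ is locally finite, $X^*:=X\setminus\mathcal{P}$ is a topological manifold, and each $p\in\mathcal{P}$ has a neighborhood homeomorphic to $C(\RP^2)$. Accordingly, I would build $\hat X$ in two stages: first take the standard topological orientation double cover $\pi^*:\tilde X^*\to X^*$ (which is connected because $X$ is non-orientable), equip it with the pulled-back length distance and with $\HH^3$; then compactify $\tilde X^*$ by adding, above each $p\in\mathcal{P}$, a single point $\hat p$. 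This one-point compactification of the corresponding end is unambiguous since a punctured neighborhood of $p$ in $X^*$ deformation retracts onto $\RP^2$, whose orientation double cover is $S^2$, so the associated end of $\tilde X^*$ is homeomorphic to $S^2\times(0,1)$. The resulting map $\pi:\hat X\to X$ is a continuous $\Z$-action branched double cover, $2$-to-$1$ off the added points and $1$-to-$1$ on them.

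The main technical step is the $\RCD(K,3)$ condition on $\hat X$. On $\hat X\setminus\hat{\mathcal{P}}$, where $\hat{\mathcal{P}}$ denotes the union of the added points with the preimages of $\mathcal{P}\cap X^*$ (empty by construction, so $\hat{\mathcal{P}}$ is just the added points), the map $\pi$ is a local metric isometry, and the $\RCD(K,3)$ property is inherited locally from $X$. At an added point $\hat p$, the orbifold theorem combined with the fact that the orientation double cover of $C(\RP^2)$ is $C(S^2)\cong\R^3$ shows that $\hat X$ is a topological $3$-manifold near $\hat p$, and that $\hat{\mathcal{P}}$ is locally finite in $\hat X$. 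In particular $\hat{\mathcal{P}}$ has vanishing $\HH^2$-measure, so it is $\HH^{N-1}$-negligible. The $\RCD$ condition extends across $\hat{\mathcal{P}}$ either by invoking an extension/removability result for $\RCD$ spaces across $\HH^{N-1}$-null sets, or more directly by lifting optimal transport plans: given Borel probabilities $\hat\mu_0,\hat\mu_1$ on $\hat X$, set $\mu_i:=\frac12\pi_*\hat\mu_i$, take an $\RCD$ Wasserstein geodesic $(\mu_t)$ in $X$, and lift it back to $\hat X$ using the local isometry property away from $\hat{\mathcal{P}}$ (a $\pi_*\HH^3$-null set); the $K$-convexity of the entropy then transfers since $\hat\mu_t$ and $\mu_t$ have the same entropy up to an additive $\log 2$.

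Non-collapsedness is then immediate since $\HH^3(B_r(\hat p))=2\HH^3(B_r(p))$ for small $r$, and the absence of boundary on $\hat X$ follows from the absence on $X$ because the boundary in the sense used in \cite{BPS24} is a local notion detected on tangent cones, which lift through $\pi$. For orientability in the sense of \cite{BBP24}, the regular part of $\hat X$ is by construction the classical orientation double cover of the regular part of $X$, hence orientable as a smooth manifold; since each added point has a neighborhood homeomorphic to $\R^3$, orientability extends across $\hat{\mathcal{P}}$, and one checks that this topological orientation induces the measure-theoretic orientability of \cite{BBP24}.

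The hard part is the rigorous verification of the $\RCD$ condition at the branch points. While the codimension-$3$ nature of $\hat{\mathcal{P}}$ strongly suggests removability, turning this into a formal proof requires showing that $\hat\dd$ is a genuine geodesic distance on all of $\hat X$ (including through the branch points) compatible with $\Z_2$-lifting of optimal transport from $X$, and that no defect in the $\CD$ inequality occurs in passing through $\hat{\mathcal{P}}$. The topological control from Theorem~\ref{Thm: orbifold structure thm intro}---in particular, that $\hat{\mathcal{P}}$ is discrete and that its points have Euclidean neighborhoods in $\hat X$---will be essential here, as it excludes pathological gluings and reduces the problem to standard removability of isolated singularities in an otherwise locally $\RCD(K,3)$ space.
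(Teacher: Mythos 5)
Your overall architecture (orientation double cover of the regular part, completed by one point over each singular point) reproduces the object of \cite{BBP24}, and your treatment of orientability, non-collapsedness and absence of boundary is essentially what the paper does. The genuine gap sits exactly where you flag ``the hard part'': establishing the $\RCD(K,3)$ condition across the branch points $\hat{\mathcal{P}}$, and neither of your two proposed routes works. There is no removability theorem for the $\CD$/$\RCD$ condition across $\HH^{N-1}$-null sets, nor even across a single isolated point: the model obstruction (cf.\ Remark \ref{illustration}) is two copies of a metric cone glued at their tips, which is locally $\RCD$ and a topological manifold away from one point, has Euclidean volume growth there, and yet fails every $\CD$ condition because optimal transport branches through that point. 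So codimension, local finiteness of $\hat{\mathcal{P}}$, and the existence of Euclidean topological neighborhoods of its points --- all of which your argument extracts from the orbifold theorem --- are \emph{not} sufficient; the paper explicitly notes that globalizing $\RCD$ away from a low-dimensional set is open in general and that \cite{HS25} does not apply here. Your alternative route of pushing $\hat\mu_0,\hat\mu_1$ down to $X$, taking the Wasserstein geodesic there, and lifting also fails: $\pi$ is only $1$-Lipschitz, a geodesic of $X$ through $p$ need not lift to a geodesic of $\hat X$, the lift can land on the wrong sheet, and $W_2$ upstairs and downstairs genuinely differ, so entropy convexity for $(\mu_t)$ in $X$ says nothing about Wasserstein geodesics of $\hat X$ between the prescribed endpoints.

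What the paper actually proves, and what your proposal is missing, is Lemma \ref{Lem: main lemma}: every optimal dynamical plan in $\hat X$ between absolutely continuous measures gives zero mass to $\Geo_{\hat p}(\hat X)$. This uses the specific geometry of the $C(\mathbb{RP}^2)$ tangent cone --- a blow-up argument combined with $1$D-localization along $\dd_p$ to obtain intermediate-time density bounds of order $k^3$ rather than $k^4$, and then the antipodal rigidity of Alexandrov metrics on $\mathbb{RP}^2$ with $\curv\ge1$ (diameter $\le\pi/2$, so lifted geodesics avoid the cone tip) to contradict cyclical monotonicity in the limit. Only with this lemma can one apply the globalization theorem (Theorem \ref{Thm: main globalization theorem}) to get the local strong $\CD^e(K,3)$ condition near each $\hat p\in\hat{\mathcal{P}}$ and then globalize, check infinitesimal Hilbertianity via the null $2$-capacity of $\hat{\mathcal{P}}$, and conclude. (A secondary caveat: in the paper the orbifold theorem and the present theorem are both downstream of this lemma and of the local finiteness of $\mathcal{P}$, so one should be careful that leaning on the orbifold theorem does not smuggle in the very statement being proved.) Without a substitute for Lemma \ref{Lem: main lemma}, your proof does not close.
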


The following stability theorem for the ramified double cover follows directly from Theorem \ref{Thm: ramified double cover is RCD intro} and \cite[Theorem 4.2]{BBP24}. 

\begin{Thm}\label{Thm: ramified double cover stability intro} (Stability of ramified double cover).
Under the assumptions of Theorem \ref{Thm: non-orientability stability intro}, denote by $(\hat{X}_i, \hat\dd_i, \HH^3, \hat{p}_i)$ the ramified double covers of $X_i$. Then $(\hat{X}_i, \hat\dd_i, \HH^3, \hat{p}_i)$ converges in the pmGH sense to $(\hat{X}, \hat\dd, \HH^3, \hat{p})$, where the latter is the ramified double cover of $X$ and $\pi(\hat{p}) = p$ (with $\pi: \hat{X} \to X$ being the associated projection map).
\end{Thm}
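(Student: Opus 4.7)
The plan is to combine Theorem \ref{Thm: ramified double cover is RCD intro} with the abstract stability principle \cite[Theorem 4.2]{BBP24} and the non-orientability stability statement in Theorem \ref{Thm: non-orientability stability intro}. The hypotheses of \cite[Theorem 4.2]{BBP24} require, beyond the pmGH convergence of the base spaces and the non-orientability of uniform-size balls, that the ramified double covers sit within a fixed class of $\RCD(K,N)$ spaces for which compactness and pmGH convergence are available; thus the key input is the a priori regularity of the $\hat X_i$.

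First, I would apply Theorem \ref{Thm: ramified double cover is RCD intro} to each non-orientable $X_i$ in order to promote every $(\hat X_i,\hat\dd_i,\HH^3)$ to an orientable non-collapsed $\RCD(K,3)$ space without boundary. Since the sequence $(X_i,\dd_i,\HH^3,p_i)$ is pmGH convergent, it has uniformly bounded diameter on bounded balls and satisfies a uniform lower bound on $\HH^3(B_1(p_i))$ (which is inherited by the ramified double covers through the projection $\pi_i:\hat X_i\to X_i$, since $\pi_i$ is at most two-to-one). Thus $\{(\hat X_i,\hat\dd_i,\HH^3,\hat p_i)\}$ is itself a sequence of non-collapsed $\RCD(K,3)$ spaces without boundary with uniform non-collapsing on balls.

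Second, I would invoke Theorem \ref{Thm: non-orientability stability intro} to conclude that $X$ is non-orientable on every ball $B_{R'}(p)$ with $R'>R$, so that the ramified double cover $(\hat X,\hat\dd,\HH^3,\hat p)$ of the limit is well-defined, and by another application of Theorem \ref{Thm: ramified double cover is RCD intro} it is again an orientable non-collapsed $\RCD(K,3)$ space without boundary. In particular $\hat X$ sits in the same regularity class as the $\hat X_i$, so no extraneous limit objects can appear.

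Finally, \cite[Theorem 4.2]{BBP24} asserts exactly that, given a pmGH convergent sequence of non-orientable $\RCD$ spaces whose ramified double covers lie in a fixed non-collapsed $\RCD(K,N)$ class and whose base balls are non-orientable for a uniform radius, the covers converge in pmGH sense (up to subsequence) to the ramified double cover of the limit, with base points matching under the projections. Plugging in the regularity established above, the conclusion follows; the a priori non-uniqueness of the pmGH subsequential limit is excluded because $\hat X$ is characterized abstractly as the ramified double cover of $X$. The only nontrivial ingredient is Theorem \ref{Thm: ramified double cover is RCD intro}, without which the covers would not obviously satisfy the $\RCD$ regularity needed to apply \cite[Theorem 4.2]{BBP24}; once that is in hand, the stability statement is a direct quotation.
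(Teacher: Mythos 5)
Your proposal is correct and follows essentially the same route as the paper: the authors state explicitly that this theorem "follows directly from Theorem \ref{Thm: ramified double cover is RCD intro} and \cite[Theorem 4.2]{BBP24}," i.e., one first upgrades each $\hat X_i$ (and, via the stability of non-orientability, the limit $\hat X$) to an orientable non-collapsed $\RCD(K,3)$ space and then quotes the abstract stability result of \cite{BBP24}. Your identification of Theorem \ref{Thm: ramified double cover is RCD intro} as the only nontrivial ingredient matches the paper's treatment.
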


\subsection{Outline of the proof}
Here we briefly outline our strategy for the proof of Theorems \ref{Thm: orbifold structure thm intro} and \ref{Thm: stability thm intro}, leaving out a discussion of the other results.

Let $(X, \dd, \HH^3)$ be a non-orientable (see Definition \ref{Def: Orientability}), non-collapsed $\RCD(-2,3)$ space without boundary, and let $\mathcal{P}$ be the set of points whose tangent cones all have cross-sections that are homeomorphic to $\mathbb{RP}^2$ (we note that the tangent cones at any particular point are either all homeomorphic to $\RR^3$ or all homeomorphic to $C(\mathbb{RP}^2)$ by Remark \ref{Rem: one tangent cone all tangent cone}). If $\mathcal{P} = \emptyset$, then $X$ is a topological manifold due to the manifold recognition theorem of \cite{BPS24}. Suppose now that $\mathcal{P} \neq \emptyset$. Let $(\hat{X}, \hat\dd, \HH^3)$ be the ramified double cover (see Theorem \ref{Thm: Ramified double cover existence} for the definition) with associated projection map $\pi: \hat{X} \to X$ and isometric involution $\Gamma: \hat{X} \to \hat{X}$. In the intuitive picture of what the ramified double cover should look like, one might expect that it has the following nice properties:
\begin{enumerate}
    \item $\hat{X}$ is a non-collapsed $\RCD(-2,3)$ space without boundary;
    \item $\pi: \hat{X} \to X$ is a double cover and local isometry away from $\pi^{-1}(\mathcal{P})$. In particular, given $\hat{p} \in \pi^{-1}(X\setminus\mathcal{P})$, the tangent cones over $\pi(\hat p)$ are exactly the tangent cones over $\hat{p}$ and hence they are homeomorphic to $\R^3$, as the latter are orientable;
    \item each $p \in \mathcal{P}$ has a unique lift $\hat{p} \in \pi^{-1}(p)$  and the tangent cones over $\hat{p}$ are the ramified double covers of the tangent cones over $p$, and hence homeomorphic to $\R^3$.
\end{enumerate}
As an instructive example, we can consider 
the ramified double cover of the spherical suspension $S(Y^2)$, where $Y^2 \cong \mathbb{RP}^2$ is a $2$-dimensional Alexandrov space with $\curv \geq 1$. In this case, $\mathcal{P}$ consists exactly of the two tips of the suspension and the ramified double cover is the suspension over the double cover of $Y^2$, so it is easy to see that all the properties listed above hold. Whenever these hold, then one can apply the manifold recognition theorem of \cite{BPS24} to conclude that $\hat{X}$ is a topological manifold. The orbifold structure and topological stability claimed in Theorems \ref{Thm: orbifold structure thm intro} and \ref{Thm: stability thm intro} can then be proved by keeping track of the isometric involution on $\hat{X}$. We remark that this last step requires several rather involved arguments using the regularity properties of good Green balls (defined in \cite{BPS24}) as well as tools from geometric topology, which we will not go over in this outline due to their technical nature. 

We now discuss how we will prove the nice properties of the ramified double cover listed above. The problem can be divided into two parts: 
\begin{enumerate}
    \item proving the properties assuming that $\mathcal{P}$ is locally finite;
    \item proving that $\mathcal{P}$ is locally finite.
\end{enumerate}

    To illustrate our ideas for the first problem, let us only consider the simpler case where $\mathcal{P} = \{p\}$ is a singleton (the general case is not so different). Denote by $\mathcal{P}' \subseteq X$ the set of all points that have a unique lift via $\pi$. It can be proved that $\pi$ is a local isometry and a double cover away from $\mathcal{P}'$, and so, in particular, $\pi^{-1}(X \setminus\mathcal{P}')$ looks $\RCD(-2,3)$ locally, in the sense that every point has a neighborhood that is metric measure isomorphic to some open subset of an $\RCD(-2,3)$ space. One would then hope to show that each point in $\pi^{-1}(\mathcal{P}')$ also has a neighborhood that is locally strongly $\CD^e(-2,3)$ (see Definition \ref{Def: local CDe definition}) and then globalize the $\CD^e(-2,3)$ condition. As such, it would be ideal if $\pi^{-1}(\mathcal{P}')$ had no accumulation points, since then we could deal with neighborhoods which have only a single element of $\pi^{-1}(\mathcal{P}')$.

    It can be checked that $\mathcal{P}'$ is exactly the set of all locally non-orientable points, i.e., points that are not contained in any orientable open set (see Definition \ref{Def: lno points}). Due to the stability of orientability proved in \cite{BBP24}, it is known that any sequence of orientable balls must converge to an orientable ball under pmGH convergence. Since $C(\mathbb{RP}^2)$ is non-orientable, it follows that $\mathcal{P} \subseteq \mathcal{P}'$. Conjecturally, the two sets should be equal. This would follow from the stability of non-orientability, which is not known in general. For example, it is not ruled out that one can construct a sequence of non-orientable $\RCD(-(n-1),n)$ spaces converging to a cone over a small $(n-1)$-sphere, so that the ``non-orientable topology" is concentrated closer and closer to the tip, i.e., all orientation-reversing loops (see Definition \ref{Def: orientation-reversing loop}) must pass through a smaller and smaller neighborhood of the cone tip. In this case, the ramified double covers of the sequence would converge to two copies of the cone over the sphere glued at the tip. We point out that this space clearly does not have $\CD$ structure (for instance due to branching around the tip), which is something we will leverage later. 
    
    Nevertheless, in the case where $\mathcal{P}$ is a singleton, we can show $\mathcal{P} = \mathcal{P}'$. Indeed, by localizing \cite{BPS24}, we can prove a local manifold recognition theorem (Theorem \ref{loc.reg}), which says that an open subset of a non-collapsed $\RCD(-2,3)$ space for which all tangent cones (at all points) are homeomorphic to $\RR^3$ must be a topological manifold. As such, $X \setminus \mathcal{P}$ is a topological manifold and every point in $X \setminus \mathcal{P}$ is locally orientable. This implies $\mathcal{P}' = \mathcal{P}$ and so, in particular, $\mathcal{P}'$ is also a singleton. 

    Having established that $\mathcal{P'} = \mathcal{P} = \{p\}$, let us denote by $\hat{p}$ the unique lift of $p$ via $\pi$. As mentioned earlier, we would like to prove that there is a neighborhood of $\hat{p}$ in $\hat{X}$ that satisfies the local strong $\CD^e(-2,3)$ condition. 
    
    As an aside, this is a special case of the problem of lifting the $\CD$ or $\RCD$ condition from some quotient space $X/G$ to the covering space $X$ for some discrete group $G$ acting on $X$ which, to the best of our knowledge, is not well-understood. Some positive results are known for $3$-dimensional Alexandrov spaces under the assumption that the ($1$-dimensional) fixed point set is extremal \cite{GW14}; their proof can likely be used to get more general results in the Alexandrov setting (see for instance \cite[Proposition 2.4]{DGGM18}).  We mention that even more generally this is a special case of the problem of globalizing the $\CD$ or $\RCD$ condition from local ones away from some low-dimensional set, which was investigated in \cite{HS25} (although their results do not seem to apply in our case). As suggested by the extremality assumption in \cite{GW14}, it is likely that one needs to assume some geometric condition on this low-dimensional set in order to achieve a positive result; we point out that the closure of the set of points whose tangent cone is homeomorphic to some given space (e.g., $\overline{\mathcal{P}}$) is automatically extremal in the Alexandrov setting \cite{P07}.
    
    Coming back to the work at hand, a key lemma is that no optimal dynamical plan between two measures of bounded density in $\hat{X}$ can be concentrated (or have positive measure) on the set of geodesics that pass through $\hat{p}$. The proof uses a blow-up argument. Assuming that such an optimal dynamical plan $\nu$ does exist, one can then construct a sequence of optimal dynamical plans $\nu_k$ so that:
    \begin{enumerate}
        \item $\nu_k$ is supported on the set of geodesics whose image is contained in $B_{R/k}(\hat{p})$ for some fixed $R$;
        \item $\nu_k$ is the optimal dynamical plan between two absolutely continuous measures whose densities are bounded by $ck^{-3}$, where $c$ is independent of $k$. 
    \end{enumerate}
    To construct $\nu_k$, we first set $\nu_k'$ to be the normalization of $\nu$ restricted to the set of geodesics which pass through $\hat{p}$ in a suitably chosen time interval $I_k := [t_k, t_k+1/k] \subseteq [0,1]$. Next we define $F_k: \Geo(\hat{X}) \to \Geo(\hat{X})$ (see \eqref{Eq: Geo(X)} for notation) to be the map which takes a geodesic $\gamma$ to $\gamma \lvert_{[t_k - 1/k, t_k+2/k]}$ (linearly reparameterized on $[0,1]$) and set $\nu_k := (F_k)_*(\nu_k')$. Clearly $\nu_k$ is an optimal dynamical plan between $(e_0)_*(\nu_k)$ and $(e_1)_*(\nu_k)$, where $e_t: \Geo(\hat{X}) \to \hat{X}$ denotes the evaluation map at $t$. It is not difficult to see that if the density of $(e_0)_*(\nu_k)$ is bounded by $C$, then the density of $(e_0)_*(\nu_k')$ can be bounded by $Ck^{-1}$ and, using the local $\CD^{e}(-2,3)$ condition away from $\hat{p}$, $(e_0)_*(\nu_k)$ can be bounded by $c_0Ck^{-4}$, where $c_0$ does not depend on $k$. This is not quite good enough for our purposes but it turns out that we can save a factor of $k$ by an argument using $1$D-localization with respect to the distance function to $p$ on $X$. Having constructed such a sequence of $\nu_k$, we can pushforward the plan onto $X$ itself (by mapping curves on $\hat{X}$ to their projections) and take a limit under blow-up to obtain a contradiction, using the geometry of the tangent cones at $p$. 

    Once the above lemma is proved for $\hat{p}$, it is reasonable to expect that the presence of $\hat{p}$ should not affect the concavity of entropy along Wasserstein geodesics between absolutely continuous measures. In particular, since $\hat{X}$ is locally strongly $\CD^{e}(-2,3)$ away from $\hat{p}$, the usual proof of globalization of the $\CD^e(-2,3)$ condition goes through and prove that $\hat{X}$ is globally strongly $\CD^e(-2,3)$. We note that this argument would also give the local strong $\CD^e(-2,3)$ condition for a neighborhood of $\hat{p}$, if instead we only assumed that there is a neighborhood $U$ of $p$ such that $\mathcal{P} \cap U$ is a singleton.

Now we discuss our strategy for proving that $\mathcal{P}$ is locally finite. This will be done by an induction argument on the density $\theta$. Fix any bounded open $B \subseteq X$ (so that the density is bounded away from $0$ for points in $B$) and some suitable small $\delta > 0$. We define the open sets $U_k := \{x \in B \,:\, \theta(x) > 4\pi - k\delta\}$ and show by induction on $k$ that $\mathcal{P}$ cannot accumulate in $U_k$. This clearly holds for $U_0$ since $U_0 = \emptyset$. Suppose that we have shown that there are no accumulation points of $\mathcal{P}$ in $U_k$; we would like to prove that there are also no accumulation points of $\mathcal{P}$ in $U_{k+1}$. Assume for the sake of contradiction that there is some $p \in U_{k+1}$ that is an accumulation point of $\mathcal{P}$. Trivially, we have either $p \in \mathcal{P}$ or $p \notin \mathcal{P}$. We focus on the first case, since the second one is more difficult but uses some similar ideas, and so we will not go over it in this outline.

Assuming $p \in \mathcal{P}$, there exists some $s > 0$ such that, for all $s' < s$, $B_{s'}(p)$ (rescaled to a ball of radius $1$) is close in Gromov--Hausdorff distance to the ball of radius $1$ in some $\RCD(0,3)$ cone with cross-section $\cong \mathbb{RP}^2$. Therefore, there must be a definitive increase in density for the points close to $p$ compared to the density of $p$ itself. This means that, by choosing $\delta$ sufficiently small, we can find some small $r > 0$ so that $B_{r}(p) \setminus \{p\} \subseteq U_k$. By the induction hypothesis, this implies that no point $q \in B_{r}(p) \setminus \{p\}$ is an accumulation point of $\mathcal{P}$. It follows from what we discussed in the first part that there must then be a small neighborhood around any lift of $q$ in $\hat{X}$ that is locally $\CD^e(-2,3)$. Blowing up around $p$, we see that there is a sequence of spaces $(X_i, p_i)$ (with $p_i$ being the point corresponding to $p$ in the blow-up) converging to an $\RCD(0,3)$ cone with cross-section $\cong \mathbb{RP}^2$ satisfying the following properties.
\begin{enumerate}
    \item For every $q \in B_{2}(p_i)\setminus \{p_i\} \subseteq X_i$, there is a neighborhood around $q$ that is locally strongly $\CD^e(-2,3)$. 
    \item For every $i$, there is a point $q_i \in \mathcal{P}$ that is distance $1$ away from $p_i$.
\end{enumerate}
Globalizing the first property, we have that each $\hat{X}_i$ is locally $\CD^e(-2,3)$ on a small, but uniformly sized, ball around $q_i$. This is enough to show that the local non-orientability of $q_i$ is stable, i.e., $q_i$ must converge, up to a subsequence, to some $q$ in the $\RCD(0,3)$ cone that is also locally non-orientable. However, the only locally non-orientable point of the cone is the cone tip itself and $q$ is distance $1$ away from the tip by the second property, which gives a contradiction.  

\section*{Acknowledgment}
We thank Elia Bruè for many insightful comments throughout the writing of this paper. We also thank Camillo Brena, Nicola Gigli, Vitali Kapovitch, and Daniele Semola for helpful discussions.
A.P.'s research is funded by the European Research Council (ERC) through StG 101165368 ``MAGNETIC.''
Views and opinions expressed are however those of the authors only and do not necessarily reflect those of the European Union or the European Research Council.

\section{Preliminaries}\label{sec: prelim}
In this section, we will collect some preliminary results on $\RCD(K,N)$ spaces. As the theory is at this stage quite well-developed, we will only present background relevant to our arguments and will not attempt to be comprehensive, referring to the survey papers \cite{A18, G23, S23} and the references therein for an overview of the subject. In the development of the theory, various results were proven by assuming the $\RCD^{*}(K,N)$ condition. This has since been proven to be equivalent to the $\RCD(K,N)$ condition in \cite{CM21} (see also \cite{L24}) and hence we will simply take our assumption to be the $\RCD(K,N)$ condition when citing various results, even if they were originally proved for $\RCD^*(K,N)$ spaces. 
In the present work, we will mostly deal with non-collapsed $\RCD(K,N)$ spaces 
without boundary (see \cite{DPG18} for the definition of non-collapsed $\RCD(K,N)$ spaces and of the boundary); sometimes we will omit the non-collapsed qualifier or the assumption of empty boundary for brevity, when they are obvious from the context. 

In this paper, a \textit{metric space} $(X, \dd)$ is always assumed to be proper (i.e.,  closed and bounded sets are compact) unless stated otherwise. Note that proper metric spaces are Polish (i.e., complete and separable). We say that a metric space $(X, \dd)$ is \textit{geodesic} if for any $x, y \in X$ there exists $\gamma \in \Geo(X)$ such that $\gamma(0) = x$ and $\gamma(1) = y$, with
\begin{equation}\label{Eq: Geo(X)}
    \Geo(X) := \Big\{\gamma \in C([0,1], X) \,:\, \dd(\gamma(s), \gamma(t)) = |s-t|\dd(\gamma(0), \gamma(1), \text{ for $s, t \in [0,1]$} \Big\}. 
\end{equation}
We say that a curve is \textit{geodesic} if and only if it is an element of $\Geo(X)$. In particular, geodesics in this paper are constant speed length-minimizing curves parameterized on $[0,1]$. It can be checked that, on complete geodesic spaces, local compactness is equivalent to properness. We will often also assume that $(X, \dd)$ is geodesic. 

A \textit{metric measure space} is always taken to mean a triple $(X,\dd,\mm)$ where $(X,\dd)$ is a metric space and $\mm$ is a nonnegative, nonzero Borel measure on $X$ that is finite on bounded sets with $\supp(\mm)=X$, unless stated otherwise. 

It is known that finite Borel measures on a Polish space are inner regular with respect to compact sets and outer regular with respect to open sets (see for instance \cite[Theorem 7.1.7]{B07}). It is not difficult to check from this that locally finite (i.e., finite on bounded sets) Borel measures on Polish spaces satisfy the same inner and outer regularity.

\subsection{Pointed measured Gromov--Hausdorff convergence}
In this subsection, we review the various notions of Gromov--Hausdorff convergence. We will consider only metric spaces which are proper, complete and separable, and Borel measures on such spaces which are nonnegative, nonzero, and Radon. We do not necessarily assume that the support of the measure is $X$ in this particular subsection, as the class of such pointed metric measure spaces is not closed in the topology induced by pointed measured Gromov--Hausdorff convergence (see \cite[Remark 3.25]{GMS15}).   

We take the following definition from \cite[Definition 2.1]{DPG18}. There are several other definitions which are known to be equivalent in the case of geodesic metrics and uniformly locally doubling measures; we refer to the paragraph prior to \cite[Definition 2.1]{DPG18} for a more detailed discussion. 
\begin{Def}\label{Def: pmGH convergence}(Gromov--Hausdorff convergence).
Let $(X_n, \dd_n)$, $n \in \N \cup \{\infty\}$ be metric spaces. We say that $(X_n, \dd_n)$ converges to $(X_\infty, \dd_{\infty})$ in the \textit{Gromov--Hausdorff} (GH for short) sense provided that there exist a metric space $(Z, \dd_Z)$ and isometric embeddings $\iota_n: X_n \to Y$ for each $n \in \N \cup \{\infty\}$ so that 
\begin{equation*}
    \dd_{H}(\iota_n(X_n), \iota_\infty(X_\infty)) \to 0\quad \text{as $n \to \infty$,}
\end{equation*}
where $\dd_H$ denotes the Hausdorff distance in $Z$. 

If the spaces are pointed, i.e., for each $n$ we have some reference point $x_n \in X_n$, then we say that $(X_n, \dd_n, x_n)$ converges to $(X_\infty, \dd_\infty, x_\infty)$ in the \textit{pointed Gromov--Hausdorff} (pGH for short) sense provided that there exist a metric space $(Z, \dd_Z)$ and isometric embeddings $\iota_n: X_n \to Y$ for each $n \in \N \cup \{\infty\}$ so that:
\begin{enumerate}
    \item $\iota_n(x_n) \to \iota_\infty(x_\infty)$ in $Z$;
    \item for every $R'>R > 0$ and $\veps>0$, eventually $\iota_n(B_R(x_n))$ is included in
    the $\veps$-neighborhood of $\iota_\infty(B_{R'}(x_\infty))$, and
    $\iota_\infty(B_R(x_\infty))$ is included in the $\veps$-neighborhood of $\iota_n(B_{R'}(x_n))$.
    For geodesic spaces, this is equivalent to require that, for each $R>0$,
    \begin{equation*}
          \dd_{H}\Big(\iota_n(B_R(x_n)), \iota_\infty(B_R(x_\infty))\Big) \to 0 \quad \text{as $n \to \infty$.}
    \end{equation*}
\end{enumerate}

If moreover the spaces $X_n$ are endowed with Radon measures $\mm_n$ which are finite on bounded sets, we say that $(X_n, \dd_n, \mm_n, x_n)$ converges to $(X_\infty, \dd_\infty, \mm_\infty, x_\infty)$ in the \textit{pointed measured Gromov--Hausdorff} sense (pmGH for short) provided that there exist $(Z, \dd_{Z})$ and $\{\iota_n\}_{n \in \N \cup \{\infty\}}$ satisfying the conditions (1), (2) above and moreover it holds that:
\begin{enumerate}
    \setcounter{enumi}{2}
    \item $(\iota_n)_*(\mm_n)$ converges weakly to $(\iota_\infty)_*(\mm_\infty)$, i.e., for every $\varphi \in C_b(Z)$ with bounded support, where $C_b(Z)$ denotes the set of continuous bounded functions on $Z$, we have
    \begin{equation*}
        \int \varphi \, d(\iota_n)_*(\mm_n) \to \int \varphi \, d(\iota_\infty)_*(\mm_\infty) \quad \text{as $n \to \infty$.}
    \end{equation*}
\end{enumerate}

In any of the above cases, $(Z, \dd_Z, \{\iota_n\})$ is called a \textit{realization of the convergence} and, given $y_n \in X_n$, $n \in \N \cup \{\infty\}$, we say that $y_n$ converges to $y_\infty$, and write $y_n \overset{\text{GH}}{\to} y_\infty$, provided that there exists a realization such that 
\begin{equation*}
    \lim_{n \to \infty} \dd_Z(\iota_n(y_n), \iota_\infty(y_\infty)) =0.
\end{equation*}
\end{Def}

\begin{Rem}\label{Rem: proper realization}
If, for each $n \in \N \cup \{\infty\}$, $(X_n, \dd_n)$ is a proper metric space, i.e., every closed ball is compact, then we can find a realization of the convergence where $(Z, \dd_Z)$ in the realization is proper as well. Indeed, given any realization $(Z, \dd_Z, \{\iota_n\})$ we may take $Z' := \bigcup_{n \in \N \cup \{\infty\}} \iota_n(X_n) \subseteq Z$ and take $\dd_{Z'}$ the restriction of $\dd_{Z}$. It is not difficult to check that $(Z', \dd_{Z'}, \{\iota_n\})$ is also a realization of the convergence and $(Z', \dd_{Z'})$ is a proper metric space. 
\end{Rem}

We will be interested in the convergence of probability measures in the context of a pmGH convergence. For a metric space $X$, we denote by $\mathcal{P}(X)$ the space of Borel probability measures on $X$. Fix a sequence $(X_n, \dd_n, \mm_n, x_n)$ converging to $(X_\infty, \dd_\infty, \mm_\infty, x_\infty)$ in the pmGH sense with $(Z, \dd_Z, \{\iota_n\})$ a realization of the convergence. In addition, we assume that each $(X_n, \dd_n)$ is proper, which implies that we may assume $(Z, \dd_Z)$ is proper thanks to Remark \ref{Rem: proper realization}.
\begin{Lem}\label{Lem: weak convergence of probability measures}
Let $R > 0$ and suppose that for each $n\in\N$ we have $\mu_n \in \mathcal{P}(X_n)$ such that $\mu_n(B_R(x_n)) = 1$. There exists $\mu_\infty \in \mathcal{P}(X_\infty)$ such that $(\iota_n)_*(\mu_n)$ subconverges weakly to $(\iota_\infty)_*(\mu_\infty)$, i.e., for any $\varphi \in C_b(Z)$, 
\begin{equation*}
    \int \varphi \, d(\iota_n)_*(\mu_n) \to \int \varphi \, d(\iota_\infty)_*(\mu_\infty) \quad \text{as $n \to \infty$.}
\end{equation*}
\end{Lem}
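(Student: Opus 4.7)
The plan is to apply Prokhorov's theorem in the proper ambient space $(Z,\dd_Z)$ provided by Remark \ref{Rem: proper realization}, and then argue that any weak subsequential limit is automatically concentrated on $\iota_\infty(X_\infty)$.

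First I would verify tightness of the sequence of pushforwards $\nu_n := (\iota_n)_*(\mu_n)$ as Borel probability measures on $Z$. Since $\mu_n$ is concentrated on $B_R(x_n)$, $\nu_n$ is concentrated on $\iota_n(B_R(x_n))$. Fixing any $R'>R$ and any $\veps>0$, condition (2) of Definition \ref{Def: pmGH convergence} yields, for all sufficiently large $n$, the inclusion $\iota_n(B_R(x_n))\subseteq \{z\in Z : \dd_Z(z,\iota_\infty(\overline{B_{R'}(x_\infty)}))\le \veps\}$. The latter set is bounded in $Z$, hence (since $Z$ is proper) is contained in a compact set $K_{R',\veps}\subseteq Z$, independent of $n$ for $n$ large. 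In particular, for every $\eta>0$ we can find a compact $K_\eta\subseteq Z$ such that $\nu_n(Z\setminus K_\eta)=0$ eventually and, by enlarging $K_\eta$ to also accommodate the finitely many initial measures, for all $n$. This gives uniform tightness.

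By Prokhorov's theorem, up to extracting a subsequence we obtain a Borel probability measure $\tilde\mu$ on $Z$ such that $\nu_n\rightharpoonup \tilde\mu$ weakly. Next I would show $\supp(\tilde\mu)\subseteq \iota_\infty(X_\infty)$. For each $\veps>0$ set $A_\veps := \{z\in Z : \dd_Z(z,\iota_\infty(\overline{B_{R+1}(x_\infty)}))\le \veps\}$, which is closed in $Z$. By the estimate above, $\nu_n(A_\veps)=1$ for $n$ large, so by the Portmanteau theorem $\tilde\mu(A_\veps)\ge \limsup_n \nu_n(A_\veps)=1$. Intersecting over $\veps\downarrow 0$ yields $\tilde\mu(\iota_\infty(\overline{B_{R+1}(x_\infty)}))=1$, and in particular $\tilde\mu$ is concentrated on the closed set $\iota_\infty(X_\infty)$ (using properness of $X_\infty$, whence $\iota_\infty(\overline{B_{R+1}(x_\infty)})$ is compact, hence closed in $Z$).

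Finally, since $\iota_\infty$ is an isometric embedding of a Polish space into a Polish space, it is a Borel isomorphism onto its (closed) image, so defining $\mu_\infty := (\iota_\infty^{-1})_*(\tilde\mu\mrestr \iota_\infty(X_\infty)) \in \mathcal{P}(X_\infty)$ yields a probability measure with $(\iota_\infty)_*(\mu_\infty) = \tilde\mu$. By construction $\nu_n=(\iota_n)_*(\mu_n)\rightharpoonup (\iota_\infty)_*(\mu_\infty)$ weakly in $Z$, which for test functions $\varphi\in C_b(Z)$ is exactly the desired convergence. The only mild subtlety is ensuring $\tilde\mu$ gives no mass outside $\iota_\infty(X_\infty)$, handled by the $A_\veps$ argument above; the rest is a routine application of Prokhorov and Portmanteau.
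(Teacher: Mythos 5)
Your proposal is correct and follows essentially the same route as the paper: pass to the proper realization $(Z,\dd_Z)$, use the concentration of $(\iota_n)_*(\mu_n)$ near $\iota_\infty(\overline{B_{R'}(x_\infty)})$ together with properness to get tightness and apply Prokhorov, then show the subsequential limit is concentrated on $\iota_\infty(X_\infty)$ and pull back. The only cosmetic difference is that you establish concentration via Portmanteau on the closed $\veps$-neighborhoods $A_\veps$, while the paper tests against continuous functions supported on balls disjoint from $\iota_\infty(X_\infty)$; these are interchangeable.
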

\begin{proof}
By definition of pmGH convergence, for large $n$ we have that $(\iota_n)_*(\mu_n)$ is concentrated in $B_{R+1}(\iota_{\infty}(x_\infty))$, i.e., $((\iota_n)_*(\mu_n))(B_{R+1}(\iota_{\infty}(x_\infty)))=1$. Since $(Z, \dd_Z)$ is proper, by Prokhorov's theorem, up to choosing a subsequence, $(\iota_n)_*(\mu_n)$ weakly converges to some $\mu_\infty' \in \mathcal{P}(Z)$. It is not difficult to check that $\mu_\infty'(\iota_\infty(X_\infty))=1$ by applying the definition of weak convergence to bounded continuous functions supported on open balls disjoint from $\iota_\infty(X_\infty)$. Therefore, $\mu_\infty' = (\iota_\infty)_*(\mu_\infty)$ for some $\mu_\infty \in \mathcal{P}(X_\infty)$ as required. 
\end{proof}

We have the following lemma. 
\begin{Lem}\label{Lem: weak convergence density bound}
Let $c > 0$ and, for each $n \in \N$, let $\mu_n \in \mathcal{P}(X_n)$ be so that $\mu_n \leq c\mm_n$, i.e., for every Borel set $A \subseteq X_n$, $\mu_n(A) \leq c\mm_n(A)$. In addition, assume  $\mu_\infty \in \mathcal{P}(X_\infty)$ is so that $(\iota_\infty)_*(\mu_\infty)$ is a weak limit of $(\iota_n)_*(\mu_n)$. Then we have $\mu_\infty \leq c\mm_\infty$.
\end{Lem}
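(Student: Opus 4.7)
The plan is to work in the realization space $Z$ (which we may take to be proper, by Remark~\ref{Rem: proper realization}) and exploit that the pointwise inequality $\mu_n \leq c\mm_n$ passes to pushforwards, giving $(\iota_n)_*\mu_n \leq c(\iota_n)_*\mm_n$ as Borel measures on $Z$. I would then transfer this inequality to the limit measures by testing against suitable continuous functions, and finally pull the result back to $X_\infty$ via $\iota_\infty$.

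First, for every nonnegative $\varphi \in C_b(Z)$ with bounded support, I would use the pmGH convergence of $(\iota_n)_*\mm_n$ (which tests precisely against such $\varphi$) together with the weak convergence of $(\iota_n)_*\mu_n$ to $(\iota_\infty)_*\mu_\infty$ to pass to the limit in the inequality $\int \varphi \, d(\iota_n)_*\mu_n \leq c \int \varphi \, d(\iota_n)_*\mm_n$, obtaining $\int \varphi \, d(\iota_\infty)_*\mu_\infty \leq c \int \varphi \, d(\iota_\infty)_*\mm_\infty$.

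Second, to turn this integral inequality into a set-theoretic one, for each bounded open $V \subseteq Z$ I would apply it to the Lipschitz cutoffs $\varphi_k(z) := \min\{1, k\,\dd_Z(z, Z \setminus V)\}$, which lie in $C_b(Z)$ with support contained in $\overline{V}$ and increase pointwise to $\mathbf{1}_V$; monotone convergence then yields $(\iota_\infty)_*\mu_\infty(V) \leq c(\iota_\infty)_*\mm_\infty(V)$. Given a Borel $A \subseteq X_\infty$ with $\iota_\infty(A)$ bounded in $Z$, I would invoke the outer regularity of the locally finite measure $(\iota_\infty)_*\mm_\infty$ (recalled in Section~\ref{sec: prelim}) to find bounded open sets $V_k \supseteq \iota_\infty(A)$ with $(\iota_\infty)_*\mm_\infty(V_k) \to (\iota_\infty)_*\mm_\infty(\iota_\infty(A))$; combining this with the monotonicity bound $\mu_\infty(A) \leq (\iota_\infty)_*\mu_\infty(V_k) \leq c(\iota_\infty)_*\mm_\infty(V_k)$ and sending $k \to \infty$ gives $\mu_\infty(A) \leq c\mm_\infty(A)$. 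For a general Borel $A \subseteq X_\infty$, decomposing it into countably many bounded pieces (e.g.\ intersections with the annuli $B_{k+1}(x_\infty) \setminus B_k(x_\infty)$) and using $\sigma$-additivity extends the bound.

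The argument is essentially standard measure theory and I do not anticipate a serious obstacle. The only mild care point is the need to restrict to test functions with bounded support when invoking pmGH convergence of $\mm_n$, which is why the intermediate step goes through bounded open sets of $Z$ rather than directly through all $\varphi \in C_b(Z)$.
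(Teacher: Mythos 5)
Your proposal is correct and follows essentially the same route as the paper: the paper's proof simply combines the definition of weak convergence with its Lemma \ref{Lem: dual density bound}, whose proof is exactly the outer-regularity-plus-monotone-cutoff argument you write out inline. The only difference is that you unfold that auxiliary lemma rather than citing it, and both the limit passage (testing against $C_b(Z)$ functions of bounded support) and the reduction to bounded Borel sets are handled as the paper intends.
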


\begin{proof}
The lemma follows easily by applying the definition of weak convergence along with Lemma \ref{Lem: dual density bound}. 
\end{proof}

\begin{Lem}\label{Lem: dual density bound}
Let $(X, \dd)$ be a Polish space and $\mu, \nu$ be nonnegative Borel measures on $X$ which are finite on bounded sets, and let $c > 0$. The following are equivalent:
\begin{enumerate}
    \item $\mu \leq c\nu$;
    \item for every nonnegative $f \in C_b(X)$ with compact support, \begin{equation*}
        \int f \, d\mu \leq c\int f \, d\nu.
    \end{equation*}
\end{enumerate}
\end{Lem}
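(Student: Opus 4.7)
The direction $(1) \Rightarrow (2)$ is immediate from the definition of integration against nonnegative functions, so I will focus on the converse.

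For $(2) \Rightarrow (1)$, the plan is to invoke the inner and outer regularity of locally finite Borel measures on Polish spaces, as recalled at the beginning of this section. First, by $\sigma$-additivity it is enough to show $\mu(A) \leq c\nu(A)$ for bounded Borel sets $A$, and by inner regularity of $\mu$ it in turn suffices to prove the inequality $\mu(K) \leq c\nu(K)$ for every compact $K \subseteq X$: indeed $\nu(K) \leq \nu(A)$ whenever $K \subseteq A$, while $\mu(A) = \sup_{K \subseteq A} \mu(K)$.

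To establish this compact-set inequality, I would use outer regularity of $\nu$ and reduce to showing $\mu(K) \leq c\nu(U)$ for every open $U \supseteq K$; taking the infimum over $U$ then yields the desired bound. Given such a pair $K \subseteq U$, the main step is to construct a bump function $f \in C_b(X)$ with $\mathbf{1}_K \leq f \leq \mathbf{1}_U$ and compact support, for example
\begin{equation*}
f(x) = \bigl(1 - \dd(x,K)/\delta\bigr)_+, \qquad \delta := \min\bigl\{1,\, \dd(K, X \setminus U)/2\bigr\},
\end{equation*}
which is well defined and strictly positive since $K$ is compact and disjoint from the closed set $X \setminus U$. Applying hypothesis (2) to this $f$ then gives
\begin{equation*}
\mu(K) \leq \int f \, d\mu \leq c \int f \, d\nu \leq c\nu(U),
\end{equation*}
as required.

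The only delicate point I anticipate is ensuring that $f$ actually has compact support. This step relies on the standing properness assumption on the ambient metric space stated at the start of the preliminaries, which guarantees that the closed $\delta$-neighborhood $\overline{B_\delta(K)}$ of the compact set $K$ is itself compact (being closed and bounded). Without properness, the conclusion of the lemma could in fact fail, since on a non-locally-compact Polish space continuous functions of compact support may be too scarce for hypothesis (2) to carry meaningful information.
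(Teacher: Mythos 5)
Your proof is correct and follows essentially the same route as the paper's: both handle the converse by reducing to nice sets via the regularity of locally finite Borel measures on Polish spaces and then testing against Urysohn-type continuous functions built from distance functions, the only difference being that the paper uses outer regularity alone together with a monotone increasing sequence of functions converging to $1_U$ (and monotone convergence), while you additionally invoke inner regularity of $\mu$ and use a single function sandwiched between $1_K$ and $1_U$. Your closing remark on properness is well taken: the statement's hypothesis (2) speaks of compact support while the paper's own proof only produces functions with bounded support, so the standing properness assumption (realized via Remark \ref{Rem: proper realization} in the application) is indeed what makes the argument go through.
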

\begin{proof}
The implication from (1) to (2) is obvious by the definition of integration. For the other direction it suffices to show that, for any bounded Borel $A \subseteq X$, $\mu(A) \leq c\nu(A)$.
Since $\mu,\nu$ are outer regular, we can assume that $A=U$ is open.
Using the distance function to $X \setminus U$, we can construct a monotone increasing sequence of nonnegative functions $f_i \in C_b(X)$ with bounded support converging to $1_U$, the indicator function of $U$. Clearly $\int f_i \, d\nu \to \nu(U)$ and $\int f_i \, d\mu \to \mu(U)$ as $i \to \infty$ and so $\mu(U) \leq c\nu(U)$ by our assumption. 
\end{proof}

\subsection{Optimal transport, non-branching, and curvature-dimension condition} In this subsection we review some basics of optimal transport theory, as well as the essentially non-branching condition and the entropic curvature-dimension condition $\cde$. 

Let $(X, \dd)$ be a metric space. We denote by $\mathcal{P}(X)$ the space of its (Borel) probability measures. We say that $\mu \in \mathcal{P}(X)$ has \textit{finite second moment} if $\int_{X} \dd^2(x, x_0)\,d\mu(x) < \infty$ for some (and hence all) $x_0 \in X$, and denote by $\mathcal{P}_2(X) \subseteq \mathcal{P}(X)$ the set of all such probability measures. Given $\mu_1, \mu_2 \in \mathcal{P}_2(X)$, the \textit{$L^2$-Wasserstein distance} $W_2$ between them is defined by
\begin{equation}\label{Eq: L2 Wass def}
W_2^2(\mu_1, \mu_2) := \inf_\gamma \int_{X \times X} \dd^2(x,y) \, d\gamma(x,y),
\end{equation}
where the infimum is taken over all $\gamma \in \mathcal{P}(X \times X)$ satisfying $(\pi_1)_*(\gamma)=\mu_1$ and $(\pi_2)_*(\gamma)=\mu_2$, where $\pi_j: X \times X \to X$ is the projection onto the $j$-th coordinate for $j = 1,2$. Such measures $\gamma$ are called \textit{admissible plans} for the pair $(\mu_1, \mu_2)$ and we denote by $\Adm(\mu_1, \mu_2)$ the set of all admissible plans. We say that an admissible plan $\gamma$ is an \textit{optimal plan} if the infimum in \eqref{Eq: L2 Wass def} is realized at $\gamma$, and denote by $\Opt(\mu_1, \mu_2)$ the set of all optimal plans for $(\mu_1, \mu_2)$. It turns out optimal plans always exist for the cost function $c = \dd^2$ if $(X,\dd)$ is Polish (see for instance \cite[Theorem 1.5]{AG13}).

In the case where $(X,\dd)$ is geodesic,  $(\mathcal{P}_2(X), W_2)$ is also geodesic. Moreover, any Wasserstein geodesic $(\mu_t)_{t \in [0,1]}$ in $(P_2(X), W_2)$ can be lifted to a measure $\nu \in \mathcal{P}(\Geo(X))$ so that $(e_t)_{*}(\nu)=\mu_t$ for all $t \in [0,1]$, where $e_t: \Geo(X) \to X$ is the evaluation map
\begin{equation}\label{Eq: evaluation map def}
    e_t(\gamma) := \gamma(t).
\end{equation}
We say that $\nu \in \mathcal{P}(\Geo(X))$ is a \textit{dynamical optimal plan} from $\mu_0$ to $\mu_1$ if $(e_0, e_1)_{*}(\nu) \in \Opt(\mu_0, \mu_1)$, and denote by $\OptGeo(\mu_0, \mu_1)$ the space of all such plans.

Let $(X, \dd)$ be a geodesic space.  
\begin{Def}\label{Def: non-branching} (Non-branching condition).
Given $\gamma_1, \gamma_2 \in \Geo(X)$, we say that $\gamma_1$ and $\gamma_2$ are \textit{branching} if $\gamma_1 \neq \gamma_2$ and there exists $t \in(0,1)$ so that, for any $s \in [0,t]$, $\gamma_1(s) = \gamma_2(s)$. If $\gamma_1, \gamma_2$ are branching, then there exists a unique $t_0 \in (0,1)$ such that
\begin{itemize}
    \item $\gamma_1(t_0) = \gamma_2(t_0)$;
    \item $\gamma_1(s) = \gamma_2(s)$ for all $s \in [0,t_0]$;
    \item For any $\veps > 0$, there exists $s \in (t_0, t_0+\veps)$ so that $\gamma_1(s) \neq \gamma_2(s)$.
\end{itemize}
We say that a set $S \subseteq \Geo(X)$ is \textit{non-branching} if there does not exist $\gamma_1, \gamma_2 \in \Geo(X)$ which are branching, and that $X$ is \textit{non-branching} if $\Geo(X)$ is non-branching.
\end{Def}

Many results were shown for various kinds of $\CD$ spaces under an additional non-branching assumption. In \cite{RS14}, a weaker condition called \textit{essentially non-branching} was introduced under which most of these results generalize.

\begin{Def}\label{Def: essentially non-branching} (Essentially non-branching condition).
A geodesic metric measure space $(X, \dd, \mm)$ is said to be \textit{essentially non-branching} if any dynamical optimal plan $\nu$ between two probability measures $\mu_0, \mu_1 \in \mathcal{P}(X)$, with $\mu_0, \mu_1 \ll \mm$, is concentrated on a non-branching set, i.e., there exists $A \subseteq \Geo(X)$ Borel so that $\nu(\Geo(X) \setminus A) = 0$ and $A$ is non-branching. 
\end{Def}

It was shown in \cite{RS14} that $\RCD(K,N)$ spaces are essentially non-branching. This was strengthened to non-branching in \cite{D25}. Later in our paper, we will want to obtain global geometric properties from local ones (see for instance Theorem \ref{Thm: main globalization theorem}), which motivates the following definition.

\begin{Def}\label{Def: locally essentially non-branching}(Local essentially non-branching condition).
Let $U \subseteq X$ be open and bounded. We say that $X$ is \textit{essentially non-branching on $U$} if any dynamical optimal plan $\nu$ between two probability measures $\mu_0, \mu_1$ supported in $U$, with $\mu_0, \mu_1 \ll \mm$, is concentrated on a non-branching set. For $p \in X$ we say that $X$ is \textit{locally essentially non-branching at $p$} if there exists an open and bounded $U$ so that $p \in U$ and $X$ is essentially non-branching on $U$. 
\end{Def}

We now review the entropic curvature-dimension condition $\CD^{e}(K,N)$ of \cite{EKS15}. Let $(X, \dd, \mm)$ be a metric measure space. Given a measure $\mu \in \mathcal{P}_2(X)$, we define its relative entropy (with respect to $\mm$) by
\begin{equation*}
    \Ent(\mu) := \int \rho \log(\rho) \, d\mm,
\end{equation*}
if $\mu = \rho\mm$ is absolutely continuous with respect to $\mm$ and $(\rho\log\rho)_+$ is integrable. Otherwise, we set $\Ent(\mu):=\infty$. We denote by $\mathcal{P}_2^*(X) \subseteq \mathcal{P}_2(X)$ the subset of probability measures whose entropy is finite. For $N > 0$, define the functional $U_N: \mathcal{P}_2(X) \to [0, \infty]$ by
\begin{equation*}
    U_N(\mu):=\exp\Big(-\frac{1}{N}\Ent(\mu)\Big).
\end{equation*}
For $K \in \R$ and $N > 0$, we define the \textit{distortion coefficients} $\sigma_{K, N}^{t}$ by: 
\begin{align}\label{Eq: distortion coefficients}
\begin{split}
 (t, \theta) \in [0,1] \times \mathbb{R}^{+} \mapsto \sigma^{(t)}_{K,N}(\theta):=\begin{cases}
	\infty &\text{if } K\theta^2 \geq N\pi^{2},\\
	\frac{\sin(t\theta\sqrt{K\slash N })}{\sin(\theta\sqrt{K \slash N})} &\text{if } 0 < K\theta^2 < N\pi^2,\\
	t &\text{ if } K\theta^2 = 0,\\
	\frac{\sinh(t\theta\sqrt{-K\slash N })}{\sinh(\theta\sqrt{-K \slash N})} &\text{if } K\theta^2 < 0.\\
	\end{cases}
\end{split}
\end{align}

The entropic curvature-dimension condition is defined as follows. 
\begin{Def}\label{Def: CDe definition}(Entropic curvature-dimension condition \cite[Definition 3.1]{EKS15}).
Let $K \in \R$ and $N \in (0, \infty)$. We say that a geodesic metric measure space $(X, \dd, \mm)$ satisfies the \textit{entropic curvature-dimension condition} $\cde$ if, for any absolutely continuous $\mu_0 = \rho_0\mm, \mu_1 = \rho_1\mm \in \mathcal{P}^*_2(X)$, there exists a Wasserstein geodesic $(\mu_t)_{t \in [0,1]}$ connecting $\mu_0$ and $\mu_1$ so that for all $t \in [0,1]$ we have
\begin{equation}\label{Eq: entropy convexity}
U_N(\mu_t)\geq \sigma_{K,N}^{(1-t)}(W_2(\mu_0, \mu_1))U_N(\mu_0)+\sigma_{K,N}^{(t)}(W_2(\mu_0, \mu_1))U_N(\mu_1).
\end{equation}
If \eqref{Eq: entropy convexity} holds for any Wasserstein geodesic $(\mu_t)$ between $\mu_0$ and $\mu_1$ we say that $(X, \dd, \mm)$ satisfies the \emph{strong $\cde$ condition}. 
\end{Def}

We also use the following local definitions.
\begin{Def}\label{Def: local CDe definition}
Let $U \subseteq X$ be open and bounded. We say that $X$ is \textit{locally $\cde$ on $U$} if, for any absolutely continuous probability measures $\mu_0 = \rho_0\mm, \mu_1 = \rho_1\mm \in \mathcal{P}_2^*(X)$ supported in $U$, there exists a Wasserstein geodesic $(\mu_t)_{t \in [0,1]}$ connecting $\mu_0$ and $\mu_1$ so that \eqref{Eq: entropy convexity} holds. Similarly, we say that $X$ is \textit{locally strongly $\cde$ on $U$} if \eqref{Eq: entropy convexity} holds for any Wasserstein geodesic $(\mu_t)$ between $\mu_0$ and $\mu_1$. Finally, for any $p \in X$, we say that $X$ is \textit{locally (resp.\ locally strongly) $\cde$ at $p$} if there exists an open and bounded $U$ so that $p \in U$ and $X$ is locally (resp.\ locally strongly) $\cde$ on $U$. 
\end{Def}

The $\cde$ condition has the advantage that it is easier to work with (for instance in terms of proving globalization properties) than the $\CD(K,N)$ condition from \cite{S06b} and the $\CD^*(K,N)$ condition from \cite{BS10}. Under an essentially non-branching condition, the three conditions are equivalent from \cite[Theorem 3.12]{EKS15} and \cite[Theorem 1.1]{CM21} (see also \cite[Theorem 1.1]{L24}). 

As noted in \cite{EKS15}, a local $\cde$ condition implies a local essentially non-branching condition.
\begin{Lem}\label{Lem: cde implies essential non-branching}
Let $R>0$. If $(X, \dd, \mm)$ is locally strongly $\cde$ on $B_{2R}(p)$ for some $p \in X$, then $X$ is essentially non-branching on $B_{R}(p)$.
\end{Lem}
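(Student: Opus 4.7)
The proof is a localization of the classical Rajala--Sturm argument \cite{RS14} (see also \cite[Theorem 3.12]{EKS15}) showing that strong $\cde$ implies essential non-branching. I proceed by contradiction: assume there exist absolutely continuous $\mu_0, \mu_1 \in \mathcal{P}_2(X)$ supported in $B_R(p)$ together with $\nu \in \OptGeo(\mu_0, \mu_1)$ that is not concentrated on any non-branching Borel subset of $\Geo(X)$.

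The key observation enabling the localization is that every $\gamma$ in the support of $\nu$ is contained in $B_{2R}(p)$. Indeed, by the triangle inequality, for $t \in [0, 1/2]$ one has
\[
\dd(\gamma(t), p) \le t\, \dd(\gamma(0), \gamma(1)) + \dd(\gamma(0), p) < 2tR + R \le 2R,
\]
and a symmetric estimate handles $t \in [1/2, 1]$. Hence every intermediate marginal $(e_t)_* \nu$ is supported in $B_{2R}(p)$, and the same holds for any dynamical optimal plan between AC measures supported in $B_R(p)$, as well as for all the auxiliary plans constructed below. Consequently, the hypothesis of local strong $\cde$ on $B_{2R}(p)$ applies throughout.

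With this in hand, the proof proceeds along the Rajala--Sturm strategy. After a standard truncation---restricting $\nu$ to the subset of geodesics whose endpoints lie where the densities $\rho_0, \rho_1$ are bounded and renormalizing---I may assume $\mu_0, \mu_1$ have bounded densities while $\nu$ remains concentrated on a branching Borel set. A Borel measurable selection then produces a branching time $t_0 \in (0,1)$ and a decomposition $\nu = \nu^A + \nu^B$ into mutually singular positive sub-plans such that $(e_s)_* \nu^A = (e_s)_* \nu^B$ for $s \in [0, t_0]$, while $(e_s)_* \nu^A$ and $(e_s)_* \nu^B$ are mutually singular for $s \in (t_0, 1]$. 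After normalization, $\bar{\nu}^A$ and $\bar{\nu}^B$ lift two distinct Wasserstein geodesics with a common initial segment on $[0, t_0]$ whose endpoints lie in $B_R(p)$. Applying the strong $\cde$ inequality \eqref{Eq: entropy convexity} on $B_{2R}(p)$ to the mixture $\frac{1}{2}(\bar\nu^A + \bar\nu^B)$ (which is itself an optimal dynamical plan between its endpoints) produces a Wasserstein geodesic whose $U_N$ value at intermediate times $t \in (t_0, 1)$ is strictly smaller than the value forced by strict concavity of $-\log U_N$ along the two constituent geodesics, yielding a contradiction.

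The main obstacle is the Borel selection producing $\nu^A, \nu^B$ with the prescribed singularity structure; this is the technical heart of \cite{RS14} and is unaffected by the localization, since the geometric observation in the second paragraph ensures every plan appearing in the argument is supported on geodesics contained in $B_{2R}(p)$, where the strong $\cde$ hypothesis holds.
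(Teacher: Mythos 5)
Your proof takes essentially the same route as the paper's: both localize the Rajala--Sturm argument of \cite{RS14}, using exactly the observation that any geodesic between two points of $B_R(p)$ has image contained in $B_{2R}(p)$, so that the local hypothesis applies to every plan arising in the argument. The only difference is that the paper first invokes \cite[Lemma 2.12]{EKS15} to pass from locally strongly $\cde$ to locally strongly $\CD(K,\infty)$, after which the proof of \cite[Theorem 1.1]{RS14} applies verbatim; this is slightly cleaner than rerunning the argument with $U_N$ and also sidesteps the direction slip in your final step ($\Ent$, hence $-\log U_N=\Ent/N$, is \emph{convex} along Wasserstein geodesics, and by mutual singularity the mixture's $U_N$ jumps \emph{up} by a factor $2^{1/N}$ at $t_0^+$, contradicting the distorted concavity of $U_N$).
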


\begin{proof}
By \cite[Lemma 2.12]{EKS15}, we see that if $X$ is locally strongly $\cde$ on $B_{2R}(p)$ then it is locally strongly $\CD(K, \infty)$ on $B_{2R}(p)$, where the definition of $\CD(K,\infty)$ (see \cite[Definition 4.5]{S06a}, \cite[Definition 0.7]{LV09}) is extended in an analogous way (as Definition \ref{Def: local CDe definition}) to locally strongly $\CD(K, \infty)$. The proof of \cite[Theorem 1.1]{RS14} can then be repeated verbatim to conclude. 
\end{proof}

Denote by $\Geo_p(X)$ the set of geodesics in $X$ that passes $p$, i.e., 
\begin{equation*}
\Geo_p(X) := \{\gamma \in \Geo(X)\,:\,\exists t \in [0,1] \text{ such that } \gamma(t) = p\}.
\end{equation*}
We will need the following globalization theorem (cf.\ \cite[Theorem 3.14]{EKS15}). 

\begin{Thm}\label{Thm: main globalization theorem} (Globalization theorem). 
Let $R > 0$ and $p \in X$. Suppose that the following conditions hold:
\begin{enumerate}
    \item there exists a countable open cover of $B_{2R}(p) \setminus \{p\}$ by open balls $\{B_{r_i}(p_i)\}$ so that $X$ is locally strongly $\cde$ on $B_{10r_i}(p_i)$ for each $i$;
    \item for any optimal dynamical plan $\nu$ between two absolutely continuous measures supported in $B_{R}(p)$, $\nu(\Geo_p(X))=0$.
\end{enumerate}
Then $X$ is locally strongly $\cde$ on $B_{R}(p)$. 
\end{Thm}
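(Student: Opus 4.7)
The plan is to verify the entropy convexity inequality \eqref{Eq: entropy convexity} for an arbitrary Wasserstein geodesic $(\mu_t)_{t\in[0,1]}$ between absolutely continuous probability measures $\mu_0,\mu_1\in\mathcal{P}_2^*(X)$ supported in $B_R(p)$. Fix such a geodesic together with a lift $\nu\in\OptGeo(\mu_0,\mu_1)$. Since $\gamma(0),\gamma(1)\in B_R(p)$ for $\nu$-a.e.\ $\gamma$, the triangle inequality yields $\dd(\gamma(t),p)<2R$ for all $t\in[0,1]$, so $\gamma([0,1])\subseteq\overline{B_{2R}(p)}$. By hypothesis (2), $\nu(\Geo_p(X))=0$, so for $\nu$-a.e.\ $\gamma$ the continuous function $t\mapsto\dd(\gamma(t),p)$ attains a strictly positive infimum $m(\gamma)>0$. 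For $\veps>0$ set $K_\veps:=\{\gamma\in\Geo(X):m(\gamma)>\veps\}$; by monotone convergence, $\nu(K_\veps)\nearrow 1$ as $\veps\to0$.

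Fix $\veps>0$. The compact annulus $A_\veps:=\overline{B_{2R}(p)}\setminus B_\veps(p)$ is covered by $\{B_{r_i}(p_i)\}$, hence admits a finite subcover with a Lebesgue number $\delta_\veps>0$: every subset of $A_\veps$ of diameter less than $\delta_\veps$ is contained in some $B_{r_i}(p_i)\subseteq B_{10 r_i}(p_i)$. Choose $h>0$ with $2Rh<\delta_\veps$ and partition $[0,1]$ into intervals $I_k=[t_k,t_{k+1}]$ of length at most $h$. For each $\gamma\in K_\veps$, $\diam(\gamma(I_k))\leq h\cdot\dd(\gamma(0),\gamma(1))<\delta_\veps$, so $\gamma(I_k)$ lies in some $B_{r_{i(k,\gamma)}}(p_{i(k,\gamma)})$. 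Define $\nu_\veps:=\nu(K_\veps)^{-1}\nu|_{K_\veps}$; by the standard restriction property of optimal dynamical plans, $\nu_\veps$ is an optimal dynamical plan between its marginals, which are absolutely continuous with densities bounded by $\nu(K_\veps)^{-1}$ times those of $\mu_0,\mu_1$.

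For each $k$, partition $K_\veps$ into finitely many disjoint Borel pieces $E_k^{(i)}$ according to the index $i(k,\gamma)$. Each $\nu_\veps|_{E_k^{(i)}}$, restricted to $I_k$ and reparametrized onto $[0,1]$, yields an optimal dynamical plan whose marginals are supported in $B_{r_i}(p_i)\subseteq B_{10 r_i}(p_i)$; absolute continuity of the intermediate marginals $(e_{t_k})_*\nu_\veps$ follows inductively using local strong $\cde$ on each $B_{10 r_i}(p_i)$ together with Lemma \ref{Lem: cde implies essential non-branching}. Applying local strong $\cde$ on $B_{10 r_i}(p_i)$ to each piece and summing in $i$ delivers the entropy convexity inequality for $(e_t)_*\nu_\veps$ over each subinterval $I_k$. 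Chaining these per-subinterval inequalities across $[0,1]$ via the dyadic iteration in the proof of \cite[Theorem 3.14]{EKS15}, which here stays inside $A_\veps$ by construction, yields \eqref{Eq: entropy convexity} for $\nu_\veps$. Finally, letting $\veps\to 0$, the convergence $\nu_\veps\to\nu$ (and hence $(e_t)_*\nu_\veps\to\mu_t$ in $W_2$) together with lower semicontinuity of $\Ent$ passes the inequality to $(\mu_t)$. The main obstacle is ensuring that the chained convexity inequalities remain meaningful across $[0,1]$, which hinges on keeping every intermediate measure inside $A_\veps$; this is precisely what the cut-off $K_\veps$ and the factor $10$ in the enlarged cover balls $B_{10 r_i}(p_i)$ are designed to guarantee.
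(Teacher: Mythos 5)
Your strategy is essentially the one the paper follows: decompose the plan into pieces whose restrictions to short time intervals are supported in balls where the local strong $\cde$ condition holds, apply the local condition on each piece, and stitch via the machinery of \cite[Theorem 3.14]{EKS15}. The cutoff $K_\veps$ plus Lebesgue-number decomposition plays the role of the paper's exhaustion $G_k$ and multi-index partition $H_\alpha$. However, there is a genuine gap at the step ``applying local strong $\cde$ \dots{} to each piece and summing in $i$.'' The pieces $E_k^{(i)}$ partition the set of \emph{geodesics}, not the set of initial points, so the marginals $(e_{t_k})_*(\nu_\veps\mrestr{E_k^{(i)}})$ are in general not mutually singular, and neither $\Ent$ nor $U_N$ combines correctly across a decomposition whose pieces overlap in measure: one only has convexity of $\Ent$, which bounds $\Ent(\mu_{t})$ from above by the weighted sum but does \emph{not} let you recover $\Ent(\mu_{t_k})$ and $\Ent(\mu_{t_{k+1}})$ from the entropies of the pieces at the endpoints. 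This is exactly why the paper first proves that the relevant plans are concentrated on non-branching sets (Lemma \ref{Lem: essentially non-branching globalization}, which uses hypothesis (2) a second time, beyond merely discarding $\Geo_p(X)$) and then invokes the mutual-singularity propagation of Lemma \ref{Lem: mutually disjoint} before summing. Your proposal never establishes non-branching for $\nu_\veps$ nor refines the partition so that the time-$t_k$ marginals are mutually singular; as written, the summation step fails.

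Two smaller points. First, in the final limit $\veps\to0$ you invoke lower semicontinuity of $\Ent$; this handles the left-hand side of \eqref{Eq: entropy convexity} (it gives $U_N(\mu_t)\ge\limsup_\veps U_N((e_t)_*\nu_\veps)$), but for the right-hand side it goes the wrong way: you need $\liminf_\veps U_N((e_j)_*\nu_\veps)\ge U_N(\mu_j)$ for $j=0,1$, i.e.\ $\limsup_\veps\Ent((e_j)_*\nu_\veps)\le\Ent(\mu_j)$. This does hold, because the unnormalized densities increase monotonically to $u_j$ and the supports are bounded (so $(s\log s)_+$ is handled by monotone convergence and $(s\log s)_-$ by domination), but it is not a consequence of lower semicontinuity and must be argued. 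Second, the compact set $\overline{B_{2R}(p)}\setminus B_\veps(p)$ need not be contained in $B_{2R}(p)\setminus\{p\}$, so it is not covered by the given balls; you should shrink to $\overline{B_{2R-\eta}(p)}\setminus B_\veps(p)$ and note that each geodesic image is a compact subset of the open ball $B_{2R}(p)$, so the corresponding sets of geodesics still exhaust $\nu$. Both of these are repairable; the missing non-branching/mutual-singularity step is the substantive omission.
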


\begin{proof}
We only sketch the proof, as it uses many of the same ingredients as the proof of \cite[Theorem 3.14]{EKS15}. For each $k \in \N$, define $G_k \subseteq \Geo(X)$ as the set of all geodesics whose image is contained in $\bigcup_{i=1}^k B_{r_i}(p_i)$. For each $k$ let $n_k \in \N$ be sufficiently large so that $10R/r_i < n_k$ for each $i = 1,\dots, k$. Let $I_k = \{1,\dots,k\}^{n_k}$ be the set of all $n_k$-tuples with entries in $\{1,\dots,k\}$. For any $\alpha = (\alpha_1,\dots, \alpha_{n_k}) \in I_k$, we define $H_\alpha \subseteq G_k$ by
\begin{equation*}
    H_{\alpha} := \{\gamma \in G_k \,:\, \gamma(i/n_k) \in B_{r_{\alpha_i}}(p_{\alpha_i}) \text{ for all $i=0,\dots,n_k-1$}\}.
\end{equation*}
Clearly, $\{H_{\alpha}\}_{\alpha \in I_k}$ forms a (not necessarily disjoint) partition of $G_k$. 

Fix any Wasserstein geodesic $(\mu_t)_{t \in [0,1]}$ between $\mu_0 = \rho_0\mm, \mu_1 = \rho_1\mm \in \mathcal{P}_2^*(X)$ concentrated in $B_{R}(p)$ and let $\nu \in \mathcal{P}(\Geo(X))$ be an associated optimal dynamical plan, i.e., $(e_t)_*(\nu) = \mu_t$ for each $t \in [0,1]$. By the triangle inequality, $\nu$ is concentrated on a set of geodesics of length at most $2R$ and whose image is contained in $B_{2R}(p)$. From this and our choice of $n_k$, for any $\alpha \in I_k$, $\nu \mrestr {H_\alpha}$ must be concentrated on the set of curves $H_\alpha' \subseteq H_\alpha$ defined by
\begin{equation*}
    H'_{\alpha} := \{\gamma \in H_{\alpha} \,:\, \gamma([i/n_k, (i+2)/n_k]) \subseteq B_{2r_{\alpha_i}}(p_{\alpha_i}) \text{ for any $i = 0,\dots, n_{k}-2$}\}.
\end{equation*}
For any $\alpha \in I_k$ where $\nu(H_\alpha) > 0$, we define $\nu_\alpha := \nu \mrestr {H_{\alpha}}/\nu(H_\alpha)$. For each $i = 0,\dots, n_k-2$, let $F_i: \Geo(X) \to \Geo(X)$ be the map which takes $\gamma$ to $F_i(\gamma)$ defined by $F_i(\gamma)(t) := \gamma(i/n_k + 2t/n_k)$, i.e., $F_i(\gamma)$ is a linear reparameterization of $\gamma$ on the interval $([i/n_k, (i+2)/n_k]$. Then $(F_i)_{*}(\nu_\alpha)$ is concentrated on the set of geodesics with image in $B_{2r_{\alpha_i}}(p_{\alpha_i})$.

As in the proof of \cite[Theorem 3.14]{EKS15}, we may now apply the locally strong $\cde$ condition of each $B_{2r_{\alpha_i}}(p_{\alpha_i})$ to each $(F_i)_{*}(\nu_\alpha)$ and then use \cite[Lemma 2.8 (i)]{EKS15} to conclude that the Wasserstein geodesic $((e_t)_*(\nu_{\alpha}))_t$ satisfies \eqref{Eq: entropy convexity}. To be slightly more precise than the proof of \cite[Theorem 3.14]{EKS15}, it is not difficult to check (using for instance a local version of \cite[Theorem 1.1]{CM17a}) that the local strong $\cde$ condition on each $B_{4r_{\alpha_i}}(p_{\alpha_i})$ implies that, for any Wasserstein geodesic $(\mu_t)$ between two probabilities $\mu_0, \mu_1$ concentrated in $B_{2r_{\alpha_i}}(p_{\alpha_i})$, if $\mu_0 \in \mathcal{P}_2^*(X)$ then $(\mu_t)$ is the unique Wasserstein geodesic between $\mu_0$ and $\mu_1$, and $\mu_t \in \mathcal{P}_2^*(X)$ for any $t \in [0,1)$. We can then conclude inductively that, since $(e_0)_*((F_0)_*(\nu_{\alpha})) = (e_0)_*(\nu_{\alpha}) \in \mathcal{P}_2^*(X)$, each $(e_{1/2})_*((F_i)_*(\nu_{\alpha})) \in \mathcal{P}_2^*(X)$ as well. As such, \eqref{Eq: entropy convexity} holds for each Wasserstein geodesic $((e_t)_*((F_i)_*(\nu_\alpha)))_t$ and then we can use the characterization of \cite[Lemma 2.8 (i)]{EKS15} to stitch together the estimate. 

For any $k \in \N$ so that $\nu \mrestr {G_k} \neq 0$, we define $\nu_k := \nu \mrestr {G_k}/\nu(G_k)$. Since $I_k$ is finite and for each $\alpha \in I_k$, $(e_t)_*(\nu_\alpha) \in \mathcal{P}_2^*(X)$ for all $t \in [0,1]$, we can conclude that $(e_t)_*(\nu_k) \in \mathcal{P}_2^*(X)$ and in particular $(e_t)_*(\nu_k) \ll \mm$. Applying Lemma \ref{Lem: essentially non-branching globalization}, we conclude that $\nu_k$ is concentrated on a non-branching set. It now follows that $\nu$ is also concentrated on a non-branching set. We note that at this point we have proven that any optimal dynamical plan $\nu$ between two probability measures in $\mathcal{P}_2^*(X)$ and concentrated in $B_{R}(p)$ is concentrated on a non-branching set. Keeping in mind Lemma \ref{Lem: mutually disjoint}, we may now argue as in \cite[Theorem 3.14]{EKS15} to conclude that \eqref{Eq: entropy convexity} holds for each $((e_t)_*(\nu_k))_t$, and then finally that it holds for $((e_t)_*(\nu))_t$. 
\end{proof}

\begin{Lem}\label{Lem: essentially non-branching globalization}
Let $R > 0$ and $p \in X$. Suppose that there exists a countable open cover of $B_{2R}(p) \setminus \{p\}$ by open balls $\{B_{10r_i}(p_i)\}$ where $X$ is essentially non-branching on each $B_{10r_i}(p_i)$. Let $\nu$ be an optimal dynamical plan between two absolute measures $\mu_0$, $\mu_1$ supported in $B_{R}(p)$ so that:
\begin{enumerate}
    \item $\mu_t := (e_t)_*(\nu) \ll \mm$ for every $t \in [0,1]$;
    \item $\nu(\Geo_p(X))=0$.
\end{enumerate}
Then $\nu$ is concentrated on a non-branching set. 
\end{Lem}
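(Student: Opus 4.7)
The plan is to mimic the structure of the proof of Theorem~\ref{Thm: main globalization theorem}, but replace each use of local strong $\CD^e$ by an application of the essentially non-branching hypothesis to a reparameterized restriction of $\nu$. The new ingredient is a \emph{disjoint} partition of the geodesics, chosen so that any would-be branching pair is automatically forced into the same local non-branching set. First I would reduce to geodesics contained in finitely many balls: by hypothesis~(2) and since $\mu_0,\mu_1$ are supported in $B_R(p)$, $\nu$-a.e.\ geodesic $\gamma$ has image in the compact set $B_{2R}(p)\setminus\{p\}$, and is therefore covered by finitely many balls of the given cover. After a Lebesgue-number refinement on compact sub-annuli of $B_{2R}(p)\setminus\{p\}$, I may assume a countable cover by balls $B_{r_i}(p_i)$ with each $B_{10r_i}(p_i)$ contained in one of the original cover balls (so essentially non-branching still holds on $B_{10r_i}(p_i)$), and set
\[
G_k := \{\gamma\in\Geo(X) : \gamma([0,1])\subseteq B_{r_1}(p_1)\cup\cdots\cup B_{r_k}(p_k)\},
\]
so that $G_k\subseteq G_{k+1}$ and $\nu\bigl(\bigcup_k G_k\bigr)=1$.

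For fixed $k$, I would then choose $n_k\in\N$ with $n_k>10R/r_i$ for all $i\le k$, let $F_i:\Geo(X)\to\Geo(X)$ denote the linear reparameterization of $\gamma|_{[i/n_k,(i+2)/n_k]}$ (as in the proof of Theorem~\ref{Thm: main globalization theorem}), and for each $i\in\{0,\ldots,n_k-2\}$ and $j\in\{1,\ldots,k\}$ set
\[
\tilde G_{k,i,j} := \{\gamma\in G_k : j=\min\{l\le k : \gamma(i/n_k)\in B_{r_l}(p_l)\}\},
\]
which defines, for each $i$, a disjoint Borel partition of $G_k$. When $\nu(\tilde G_{k,i,j})>0$, set $\eta_{k,i,j}:=(F_i)_*(\nu\mrestr\tilde G_{k,i,j})/\nu(\tilde G_{k,i,j})$. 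Using that restrictions to Borel sets and linear time-reparameterizations preserve the optimal dynamical plan property, $\eta_{k,i,j}$ is an optimal dynamical plan whose endpoint marginals are supported in $B_{2r_j}(p_j)\subseteq B_{10r_j}(p_j)$ (by the choice of $n_k$) and absolutely continuous (bounded above by constant multiples of $\mu_{i/n_k}$ and $\mu_{(i+2)/n_k}$). Applying the essentially non-branching assumption on $B_{10r_j}(p_j)$ then yields a non-branching Borel set $A_{k,i,j}\subseteq\Geo(X)$ on which $\eta_{k,i,j}$ is concentrated.

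The stitching step is the heart of the argument. Define
\[
A^k := \bigcap_{i=0}^{n_k-2}\bigcup_{j=1}^{k}\{\gamma\in\tilde G_{k,i,j} : F_i(\gamma)\in A_{k,i,j}\},\qquad A := \bigcup_k G_k \setminus \bigcup_k(G_k\setminus A^k).
\]
Then $\nu(G_k\setminus A^k)=0$ by construction and hence $\nu(\Geo(X)\setminus A)=0$. To see that $A^k$ (and hence $A$) is non-branching: if $\gamma_1,\gamma_2\in A^k$ branched at some $t_0\in(0,1)$, pick $i$ with $t_0\in(i/n_k,(i+2)/n_k)$; since $\gamma_1(i/n_k)=\gamma_2(i/n_k)$, the smallest-index rule forces both geodesics into the same $\tilde G_{k,i,j}$, so that $F_i(\gamma_1),F_i(\gamma_2)\in A_{k,i,j}$ branch at $s_0=(n_kt_0-i)/2\in(0,1)$, contradicting non-branching of $A_{k,i,j}$. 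A branching pair in $A$ would, using $G_k\subseteq G_{k+1}$, both eventually lie in a common $G_k$ and hence in $A^k$, yielding the same contradiction.

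I expect the stitching to be the main obstacle. A priori, the non-branching sets $A_{k,i,j}$ coming from different applications of the essentially non-branching hypothesis are unrelated, and arbitrary unions of non-branching sets need not be non-branching; the disjoint smallest-index partition is precisely the device that guarantees a branching pair shares a single bucket $\tilde G_{k,i,j}$, reducing the problem to one local non-branching set. A secondary (but routine) technicality is leaving enough geometric room between the balls used for the partition and those where essentially non-branching is assumed, which is what the Lebesgue-number refinement at the start is for.
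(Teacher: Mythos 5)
Your proof is correct and follows essentially the same route as the paper's: the same exhaustion by the sets $G_k$ of geodesics contained in finitely many of the small balls, the same linear reparameterization maps $F_i$, and the same key observation that a branching pair of geodesics agree at the sample time just before the branch point, which forces them into a common ball where the local essentially non-branching hypothesis applies. The differences are presentational rather than substantive: you argue directly with a smallest-index disjointification at every sample time and intersect over $i$, whereas the paper argues by contradiction and disjointifies only at the initial time of each reparameterized piece; your opening Lebesgue-number refinement also (correctly) reconciles the statement's cover by the balls $B_{10r_i}(p_i)$ with the proof's use of the smaller balls $B_{r_i}(p_i)$, a point the paper glosses over.
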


\begin{proof}
By the triangle inequality, $\nu$ is concentrated on a set of geodesics of length at most $2R$ and whose image is contained in $B_{2R}(p)$. 

For each $k \in \N$, define $G_k \subseteq \Geo(X)$ to be the set of geodesics whose image is contained in $\bigcup_{i=1}^k B_{r_i}(p_i)$. It suffices to prove that each $\nu_k := \nu \mrestr {G_k}$ is concentrated on a non-branching set, i.e., there exists non-branching $A_k'$ so that $\nu_k(\Geo(X) \setminus A_k')=0$ (notice that any null measure is concentrated on the non-branching set $\emptyset$). Indeed, in this case by adjusting $A_k'$ one can construct a monotone sequence of non-branching sets $A_k$ so that $\nu_k$ is concentrated on $A_k$. It follows that $A := \bigcup_{k=1}^{\infty} A_k$ is a non-branching set. Clearly, $\nu$ is concentrated on $A$ by the second assumption of the lemma. 

Suppose for the sake of contradiction that there exists some $k$ so that $\nu_k$ is not concentrated on any non-branching set. Let $n$ be sufficiently large so that $10R/n < r_j$ for each $j = 1,\dots,k$. For each $j = 0, \dots , n-2$, define the map $F_j: \Geo(X) \to \Geo(X)$ which takes $\gamma$ to $F_j(\gamma)$ given by $F_j(\gamma)(t) := \gamma(j/n+2t/n)$, and define $\nu_{k,j} := (F_j)_*(\nu_k)$. There must exist some $j$ such that $\nu_{k,j}$ is not concentrated on any non-branching set. 

Finally, for each $m = 1,\dots,k$, we define $T_m \subseteq \Geo(X)$ to be the set of geodesics $\gamma$ so that $\gamma(0) \in B_{r_m}(p_m)$. Defining $\nu_{k,j,m} = \nu_{k, j} \mrestr {T_m}$, again there must be some $m$ so that $\nu_{k, j, m}$ is not concentrated on any non-branching set. Due to our choice of $n$ and the observation that $\nu$ is supported on the set of geodesics with length less than $2R$, we see that $(e_{0})_*(\nu_{k, j, m}), (e_{1})_*(\nu_{k, j, m})$ must be supported in $B_{10r_i}(p_i)$. Moreover, by our initial assumption that $(e_t)_*(\nu) \ll \mm$, we have $(e_{0})_*(\nu_{k, j, m}), (e_{1})_*(\nu_{k, j, m}) \ll \mm$ as well. Therefore, $\nu_{k,j,m}/\nu_{k, j, m}(\Geo(X))$ is an optimal dynamical plan between two absolutely continuous probability measures supported in $B_{10r_m}(p_m)$ that is not concentrated on any non-branching set. This is a contradiction. 
\end{proof}

We have the following minor variant of \cite[Lemma 3.11]{EKS15}.
\begin{Lem}\label{Lem: mutually disjoint}
Let $(X, \dd, \mm)$ be a geodesic metric measure space. Let $\mu_0, \mu_1 \in \mathcal{P}(X)$ be so that $\mu_0, \mu_1 \ll \mm$ and assume that any optimal dynamical plan between $\mu_0, \mu_1$ is concentrated on a non-branching set. Assume that $\nu = \sum_{k = 1}^{n} \alpha_k\nu^k$ for $\alpha_k > 0$ and optimal dynamical plans $\nu_k$. Define $\mu^k_t := (e_t)_*(\nu^k)$ for each $t \in [0,1]$. If $\{\mu^k_0\}_{k}$ is mutually singular, then $\{\mu_t^k\}_k$ is also mutually singular for each $t \in (0,1)$.
\end{Lem}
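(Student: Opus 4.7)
The plan is to adapt the proof of \cite[Lemma 3.11]{EKS15}, of which this is a minor variant. The only real difference is that we have replaced the global essentially non-branching hypothesis on $(X,\dd,\mm)$ by the targeted assumption that any optimal dynamical plan between the specific pair $\mu_0,\mu_1$ is concentrated on a non-branching set, but this is exactly the quantitative form of non-branching that the EKS proof actually exploits.

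The proof is by contradiction. Suppose that for some $k\neq j$ and some $t_0\in(0,1)$ the measures $\mu^k_{t_0}$ and $\mu^j_{t_0}$ are not mutually singular; then the lower envelope $\eta:=\alpha_k\mu^k_{t_0}\wedge\alpha_j\mu^j_{t_0}$ is a nontrivial positive Borel measure. Using the hypothesis $\mu^k_0\perp\mu^j_0$, fix disjoint Borel carriers $S_k,S_j$ of $\mu^k_0$ and $\mu^j_0$ respectively. Disintegrate $\alpha_k\nu^k$ and $\alpha_j\nu^j$ with respect to the evaluation map $e_{t_0}$, obtaining kernels $x\mapsto\sigma^k_x$ and $x\mapsto\sigma^j_x$ supported on geodesics passing through $x$ at time $t_0$. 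Restricting these kernels to fibers above $\supp(\eta)$ and integrating against $\eta$ yields sub-dynamical-plans $\tilde\nu^k\le\alpha_k\nu^k$ and $\tilde\nu^j\le\alpha_j\nu^j$, both with common $e_{t_0}$-marginal equal to $\eta$.

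At each $x\in\supp(\eta)$, perform the \emph{swap}: pair initial $[0,t_0]$-halves of $\sigma^k_x$-geodesics with terminal $[t_0,1]$-halves of $\sigma^j_x$-geodesics (and vice versa). Substituting the two resulting sub-plans for $\tilde\nu^k+\tilde\nu^j$ inside $\nu$ produces a nonnegative measure $\nu'$ on curves, with $(e_0)_*\nu'=\mu_0$ and $(e_1)_*\nu'=\mu_1$. The heart of the argument, which is the step we borrow from EKS, is that optimality of $\nu$ together with cyclic monotonicity of $(e_0,e_1)_*\nu$ forces the concatenated curves in the swap to be genuine geodesics and the total transport cost to be preserved; hence $\nu'$ is itself an optimal dynamical plan between $\mu_0$ and $\mu_1$, so by hypothesis it is concentrated on a non-branching set. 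However, by construction a swapped curve starting at $a\in S_k$ coincides with the original $\sigma^k_x$-head on $[0,t_0]$ and diverges past time $t_0$ (its tail is a $\sigma^j_x$-tail, ending in the $\mu^j_1$-part rather than the $\mu^k_1$-part); comparing such a swapped curve with a geodesic in $\supp(\nu')$ that shares the initial segment exhibits branching at time $t_0$, yielding the desired contradiction.

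The main obstacle, which is the technical core shared with the EKS argument, is verifying that the swap preserves the total cost (so that the hypothesis on optimal plans between $\mu_0,\mu_1$ really does apply to $\nu'$) and that the resulting configuration genuinely contains branching geodesics; this requires a careful Borel-measurable implementation of the disintegration and the matching at each $x\in\supp(\eta)$, and crucially uses the mutual singularity of the initial marginals $\mu^k_0,\mu^j_0$ to guarantee that swapping produces curves distinct from the originals.
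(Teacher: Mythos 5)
Your proposal follows essentially the same route as the paper, which simply defers to the proof of \cite[Lemma 3.11]{EKS15} after observing that the non-branching hypothesis is only ever applied to optimal dynamical plans between the fixed pair $\mu_0,\mu_1$ (here, to the cost-preserving swapped plan $\nu'$), exactly as you do. The one imprecision is in the last step: since only the time-$0$ marginals are assumed mutually singular, the swapped tail $\tilde\gamma|_{[t_0,1]}$ may in fact coincide with the original tail $\gamma|_{[t_0,1]}$ (the time-$1$ marginals $\mu^k_1,\mu^j_1$ need not be singular), in which case branching is instead exhibited by the time-reversed configuration, whose heads differ precisely because $\mu^k_0\perp\mu^j_0$ --- this dichotomy is the one carried out in EKS and is needed to close the argument.
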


The proof is exactly the same as that of \cite[Lemma 3.11]{EKS15}. We note that in \cite{EKS15} it was assumed that $X$ is essentially non-branching. However, as is evident in the proof, for any particular $\mu_0, \mu_1$, it is only necessary to assume that any optimal dynamical plan between $\mu_0$ and $\mu_1$ is concentrated on a non-branching set. 

We now give some properties of the local $\cde$ condition. By localizing the proof of \cite[Proposition 3.6]{EKS15}, we have the following version of the Bishop--Gromov inequality.
\begin{Pro}\label{Pro: Bishop--Gromov}(Local Bishop--Gromov inequality). Let $R > 0$. If $(X, \dd, \mm)$ satisfies the local $\cde$ condition on $B_{2R}(p)$ for some $p \in X$, then for any $q \in B_R(p)$ and $0 < r_1 < r_2 \leq \min\{R - \dd(p,q), \pi\sqrt{N/K^+}\}$ we have
\begin{equation*}
\frac{\mm(B_{r_1}(q))}{\mm(B_{r_2}(q))} \geq \frac{\int_{0}^{r_1} \sin_{K/N}(t)^N \,dt}{\int_{0}^{r_2} \sin_{K/N}(t)^N \,dt},
\end{equation*}
where for each $\kappa \in \R$ the function $\sin_{\kappa}: [0, \infty] \to \R$ is defined by
\begin{align*}
\sin_{\kappa}(t):=\begin{cases}
    \frac{1}{\sqrt{\kappa}}\sin(\sqrt{\kappa}t) &\text{if } \kappa > 0,\\
	\; t &\text{if $\kappa =0$,}\\
    \frac{1}{\sqrt{-\kappa}}\sinh(\sqrt{-\kappa}t) &\text{if  $\kappa < 0$.}
	\end{cases}
\end{align*}
\end{Pro}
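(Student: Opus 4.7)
The plan is to localize the proof of the Bishop-Gromov inequality for global $\cde$ spaces in \cite[Proposition 3.6]{EKS15}. Since $r_2 \leq R - \dd(p,q)$, the ball $B_{r_2}(q)$ lies inside $B_R(p) \subseteq B_{2R}(p)$, so all Wasserstein transports between measures supported in $B_{r_2}(q)$ take place inside $B_{2R}(p)$ and the local $\cde$ condition applies.

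I would fix $q \in B_R(p)$ and $0 < r_1 < r_2 \leq \min\{R - \dd(p,q), \pi\sqrt{N/K^+}\}$. For each small $\veps > 0$, I consider the probability measures
\begin{equation*}
\mu_0^\veps := \frac{\mm \mrestr {B_\veps(q)}}{\mm(B_\veps(q))}, \qquad \mu_1 := \frac{\mm \mrestr {B_{r_2}(q)}}{\mm(B_{r_2}(q))}.
\end{equation*}
Both are absolutely continuous with finite entropy and supported in $B_{r_2}(q) \subseteq B_{2R}(p)$, so the local $\cde$ hypothesis produces a Wasserstein geodesic $(\mu_t^\veps)_{t \in [0,1]}$ between them satisfying \eqref{Eq: entropy convexity}.

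Next, I would combine two bounds on $U_N(\mu_t^\veps)$: from below via \eqref{Eq: entropy convexity}, and from above via Jensen's inequality, which gives $U_N(\mu_t^\veps) \leq \mm(\supp \mu_t^\veps)^{1/N}$ (note $\mu_t^\veps \ll \mm$, since \eqref{Eq: entropy convexity} forces $U_N(\mu_t^\veps) > 0$). The triangle inequality yields $\supp \mu_t^\veps \subseteq B_{\veps + t(r_2+\veps)}(q)$. Passing to the limit $\veps \to 0$ (using $U_N(\mu_0^\veps) = \mm(B_\veps(q))^{1/N} \to 0$ and the continuity of $W_2(\cdot, \mu_1)$) produces a pointwise volume comparison of the form
\begin{equation*}
    \mm(\bar B_{tr_2}(q))^{1/N} \geq \sigma_{K,N}^{(t)}(W_2(\delta_q,\mu_1))\, \mm(B_{r_2}(q))^{1/N}, \qquad t\in (0,1),
\end{equation*}
where the upper bound $r_2 \leq \pi\sqrt{N/K^+}$ enters precisely to keep the distortion coefficient bounded. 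The integrated form stated in the proposition follows from this pointwise comparison by a standard calculus manipulation of the model function $\sin_{K/N}$ (using in particular that $\sin_{K/N}/\sin_{K/N}'$ is monotone on the relevant range of radii), possibly after also running the same argument with $\mu_1$ replaced by measures concentrated on suitable shells around $q$ to optimize the Wasserstein distance in the $K > 0$ regime.

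I expect the main obstacle to be the careful handling of the $\veps \to 0$ limit (in particular, passing \eqref{Eq: entropy convexity} to the limit in an appropriate weak sense) and the analysis of the monotonicity properties of $\sigma^{(t)}_{K,N}$ as a function of the Wasserstein distance in the $K > 0$ case. Both of these issues are treated in \cite{EKS15} for the global setting and the arguments transfer to our local setting without essential modification, since the only role of the $\cde$ condition is to supply \eqref{Eq: entropy convexity} for the specific transports above, which all happen inside $B_{2R}(p)$.
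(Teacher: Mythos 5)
Your proposal is correct and is essentially the same argument the paper has in mind: the paper gives no details beyond "localize the proof of [EKS15, Proposition 3.6]," and the only point where locality enters is exactly the observation you make, namely that $r_2\le R-\dd(p,q)$ forces $\mu_0^\veps$, $\mu_1$ (and all the intermediate measures) to be supported in $B_{2R}(p)$, so the local $\cde$ hypothesis supplies \eqref{Eq: entropy convexity} for every transport used. One small remark: the shell variant you flag as "possibly" needed is in fact the step \cite{EKS15} actually runs and it is genuinely required when $K>0$, since the ball-to-ball comparison only bounds $W_2(\delta_q,\mu_1)$ from above while $\sigma^{(t)}_{K,N}(\theta)$ is increasing in $\theta$ there; for $K\le0$ your pointwise ball comparison together with the log-concavity of $\sin_{K/N}^N$ already yields the integrated form.
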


By localizing the arguments of \cite[Theorem 3.1]{CM17a}, we have the following variant of the theorem.
\begin{Pro}\label{Pro: local cde density control}
Let $R > 0$, $K \in \R$, and $N \in [1,\infty)$. There exists a locally bounded function $C_{K, N, R}: [0,1) \to \R_+$ (i.e., $C_{K, N, R}$ is bounded on $[0,s)$ for any $s < 1$) so that the following holds. Let $(X, \dd, \mm)$ satisfy the local $\cde$ condition on $B_{2R}(p)$ for some $p \in X$. Let $\mu_0 = \rho_0\mm \ll \mm$ be supported in $B_{R}(p)$ with $\rho_0 \in L^\infty(\mm)$, and let $q \in B_{R}(p)$. Then there exists an optimal dynamical plan $\nu$ between $\mu_0$ and $\mu_1 := \delta_q$ so that $\mu_t := (e_t)_*(\nu) \ll \mm$ for any $t = [0,1)$. Moreover, writing $\mu_t = \rho_t\mm$ we have that $\rho_t \in L^\infty(\mm)$ and
\begin{equation*}
    \norm{\rho_t}{L^{\infty}(\mm)} \leq C_{K,N,R}(t) \norm{\rho_0}{L^\infty(\mm)}.
\end{equation*}
\end{Pro}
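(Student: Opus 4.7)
The plan is to follow the argument of \cite[Theorem 3.1]{CM17a} localized to the ball $B_{2R}(p)$, handling the Dirac endpoint by approximation and weak limits. A key geometric observation is that any constant-speed geodesic $\gamma$ with both endpoints in $B_R(p)$ stays inside $B_{2R}(p)$: by the triangle inequality,
\[
\dd(\gamma(t),p)\le\min(t,1-t)\,\dd(\gamma(0),\gamma(1))+R\le 2R
\]
for every $t\in[0,1]$. Consequently, every $W_2$-optimal dynamical plan between probabilities supported in $B_R(p)$ is concentrated on curves contained in $B_{2R}(p)$, so the local $\cde$ hypothesis governs all transports that we shall consider.

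Next, I would approximate $\mu_1=\delta_q$ by absolutely continuous probabilities $\mu_1^n:=\mm\mrestr{B_{1/n}(q)}/\mm(B_{1/n}(q))$, supported in $B_R(p)$ for $n$ large. For each such $n$, the local $\cde$ condition on $B_{2R}(p)$ provides a Wasserstein geodesic $(\mu_t^n)_{t\in[0,1]}$ from $\mu_0$ to $\mu_1^n$ satisfying the entropic convexity inequality \eqref{Eq: entropy convexity}; pick a representing $\nu^n\in\OptGeo(\mu_0,\mu_1^n)$. The central claim, obtained by localizing the Cavalletti-Mondino disintegration argument (disintegrate $\nu^n$ along the level sets of a Kantorovich potential, apply the entropic $\cde$ inequality to each conditioned subprobability, and derive concavity of $(\rho_t^n)^{-1/N}$ along each disintegrated geodesic), is that $\mu_t^n=\rho_t^n\mm\ll\mm$ with
\[
\|\rho_t^n\|_{L^\infty(\mm)}\le C_{K,N,R}(t)\,\|\rho_0\|_{L^\infty(\mm)}
\]
for every $t\in[0,1)$, where $C_{K,N,R}$ is locally bounded on $[0,1)$ and independent of $n$. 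The essentially non-branching property required for the disintegration on the relevant subsets is furnished by Lemma \ref{Lem: cde implies essential non-branching}. Finally, since every $\nu^n$ is concentrated on curves contained in the compact set $\overline{B_{2R}(p)}$, the family $\{\nu^n\}$ is tight in $\mathcal{P}(\Geo(X))$ and subconverges weakly to some $\nu$; stability of optimality together with $\mu_1^n\to\delta_q$ identifies $\nu$ as an optimal dynamical plan from $\mu_0$ to $\delta_q$, and Lemma \ref{Lem: weak convergence density bound} transfers the density bound to each $\mu_t=(e_t)_*\nu$ for $t\in[0,1)$.

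The main obstacle is the middle step: verifying that Cavalletti-Mondino's disintegration argument localizes cleanly. One must check that all auxiliary subprobabilities appearing in the disintegration remain supported in $B_R(p)$, that the transports between them stay inside $B_{2R}(p)$ where the local $\cde$ condition is available, and that essentially non-branching can be invoked at each step so that the Jacobian estimate propagates to the pointwise $L^\infty$ bound on $\rho_t^n$ with a constant independent of $n$. Once this is in place, the compactness and weak limit step go through routinely.
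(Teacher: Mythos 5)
Your proposal is correct and matches the paper's approach: the paper gives no proof at all beyond the one-line remark that the statement follows ``by localizing the arguments of [CM17a, Theorem 3.1],'' which is precisely what you carry out, and your observation that geodesics between points of $B_R(p)$ remain in $B_{2R}(p)$ is exactly the reason the localization is harmless. The middle step you flag as the main obstacle (that the Cavalletti--Mondino density estimate survives localization, using Lemma \ref{Lem: cde implies essential non-branching} to supply essential non-branching) is exactly what the paper asserts without further detail, so your sketch is, if anything, more explicit than the paper's own justification.
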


Given a metric measure space $(X, \dd, \mm)$, we say that $X$ is \textit{$c$-doubling} for some $c > 0$ provided that 
\begin{equation*}
    \mm(B_{2r}(x)) \leq c\mm(B_{r}(x))
\end{equation*}
for any $x \in X$ and any $r > 0$; recall that we assume $\mm\neq0$. For a fixed $c > 0$, a family of bounded and $c$-doubling  spaces is uniformly totally bounded (i.e., for any $\veps$ there exists $n_\veps$ such that each of the spaces in the family can be covered by at most $n_{\veps}$ balls of radius $\veps$). From this, we have the following version of Gromov's precompactness theorem, whose statement we take from \cite[Lemma 3.32]{GMS15} (see also \cite[Theorem 8.8.10]{BBI01}).
\begin{Lem}\label{Lem: pmGH precompactness}
Let $(X_n ,\dd_n, \mm_n, x_n)$, $n \in \N$, be a sequence of pointed metric measure spaces. Assume that, for some $c > 0$, all of them are $c$-doubling and there exists $r > 0$ such that $0<\liminf_{n\to\infty} \mm_n(B_r(x_n)) < \infty$. Then the sequence is precompact in the pmGH topology and any limit space $(X_\infty, \dd_\infty, \mm_\infty, x_\infty)$ is $c$-doubling as well. 
\end{Lem}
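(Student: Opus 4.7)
The plan is to split the argument into three steps: (i) pointed Gromov--Hausdorff precompactness of the underlying metric spaces, (ii) upgrading to pointed measured Gromov--Hausdorff convergence by a diagonal Prokhorov argument, and (iii) verifying the $c$-doubling condition in the limit. Everything beyond the $c$-doubling assumption and the hypothesis on $\mm_n(B_r(x_n))$ is classical.

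For step (i), I would first extract a subsequence (not relabeled) on which $M := \sup_n \mm_n(B_r(x_n)) < \infty$ and $m := \inf_n \mm_n(B_r(x_n)) > 0$, which is possible by the hypothesis. Iterating the $c$-doubling condition bounds $\mm_n(B_R(x_n))$ from above by $c^{\lceil \log_2(R/r)\rceil} M$, and bounds the $\mm_n$-measure of any $\veps$-ball centered inside $B_R(x_n)$ from below by $c^{-\lceil \log_2((R+r)/\veps)\rceil} m$ (since such a ball is contained in $B_R(x_n)$ and contains $B_r(x_n)$ after at most a controlled number of doublings). Comparing these two bounds on the disjoint balls of a maximal $\veps/2$-separated net in $B_R(x_n)$ yields a uniform bound $N(R,\veps)$ on the covering number, depending only on $c, r, R, \veps, M/m$. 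Classical Gromov precompactness for pointed proper metric spaces then gives, along a further subsequence, pGH convergence to some proper pointed space $(X_\infty, \dd_\infty, x_\infty)$, realized inside a common proper ambient space $(Z, \dd_Z)$ via isometric embeddings $\iota_n$ (cf.\ Remark \ref{Rem: proper realization}).

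For step (ii), the uniform bound $\mm_n(B_R(x_n)) \leq C(R)$ implies that, for each fixed $R \in \N$, the restricted pushforwards $(\iota_n)_*(\mm_n \mrestr B_R(x_n))$ are finite Borel measures on a precompact subset of $Z$ with uniformly bounded total mass. Prokhorov's theorem combined with a diagonal extraction in $R$ then produces a subsequence such that $(\iota_n)_*(\mm_n)$ converges to a locally finite Radon measure on $Z$ in the local weak sense demanded by Definition \ref{Def: pmGH convergence}. Exactly as in the proof of Lemma \ref{Lem: weak convergence of probability measures} (applied to bounded continuous test functions supported in open balls disjoint from $\iota_\infty(X_\infty)$), the limit is concentrated on $\iota_\infty(X_\infty)$ and thus has the form $(\iota_\infty)_*(\mm_\infty)$ for a locally finite Radon measure $\mm_\infty$ on $X_\infty$; this produces the claimed pmGH convergence.

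For step (iii), fix $y \in X_\infty$ and choose $y_n \in X_n$ with $\iota_n(y_n) \to \iota_\infty(y)$ in $Z$. The increasing function $\rho \mapsto \mm_\infty(B_\rho(y))$ is continuous outside a countable set $S \subseteq (0,\infty)$, so for every $\rho \notin S \cup \tfrac{1}{2} S$ the portmanteau theorem gives $\mm_\infty(B_\rho(y)) = \lim_n \mm_n(B_\rho(y_n))$ and $\mm_\infty(B_{2\rho}(y)) = \lim_n \mm_n(B_{2\rho}(y_n))$. Passing $c$-doubling to the limit on this dense set of radii, and then extending to every $\rho > 0$ by left continuity of $\rho \mapsto \mm_\infty(B_\rho(y))$ together with the inclusion $B_{2\rho_k}(y) \uparrow B_{2\rho}(y)$ along $\rho_k \uparrow \rho$, yields $\mm_\infty(B_{2\rho}(y)) \leq c\,\mm_\infty(B_\rho(y))$ for all $\rho > 0$. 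The only mild subtlety in the whole argument is precisely the handling of the boundary of balls under weak convergence; passing through the continuity radii is what makes this routine.
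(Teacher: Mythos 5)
Your proof is correct and is the standard packing/Prokhorov/portmanteau argument; the paper does not prove this lemma itself but simply notes that bounded $c$-doubling spaces are uniformly totally bounded (your step (i)) and cites \cite[Lemma 3.32]{GMS15} and \cite[Theorem 8.8.10]{BBI01} for the rest, so your write-up supplies exactly what the citation covers. One remark on the statement rather than on your proof: with only $\liminf_n\mm_n(B_r(x_n))\in(0,\infty)$, "precompact'' can only mean that \emph{some} subsequence converges (a different subsequence could have $\mm_n(B_r(x_n))\to\infty$ and admit no pmGH limit), and your initial extraction of a subsequence with $0<m\le\mm_n(B_r(x_n))\le M<\infty$ is precisely the right reading and the intended use in the paper.
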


Let $(X, \dd, \mm)$ be locally $\cde$ on $B_{4R}(p)$ for some $R > 0$ and $p \in X$. Consider $(\overline{B_{R}(p)}, \dd', \mm')$, where $\dd'$ is the restriction of $\dd$ to $\overline{B_{R}(p)}$ and $\mm'$ is the normalization of the restriction of $\mm$ to $\overline{B_{R}(p)}$, i.e.,
\begin{equation*}
    \mm' = \frac{\mm \mrestr {\overline{B_{R}(p)}}}{\mm(\overline{B_{R}(p)})}. 
\end{equation*}
Since $(X, \dd, \mm)$ is assumed to satisfy the usual conditions we require for metric measure spaces, $(B_{R}(p), \dd', \mm')$ does as well. In particular, $(\overline{B_R(p)}, \dd')$ is a compact metric space (and therefore Polish) and, since $\supp(\mm)=X$, we have $\supp(\mm') = \overline{B_R(p)}$. Note also that, although $(\overline{B_{R}(p)}, \dd', \mm')$ is no longer a geodesic space, any $x, y \in B_{R/2}(p)$ can be connected by a geodesic in $\overline{B_{R}(p)}$. The local Bishop--Gromov inequality (Proposition \ref{Pro: Bishop--Gromov}) implies that $(X, \dd, \mm)$ is $c_{R, K, N}$-doubling for balls in $B_{2R}(p)$ and so, in view of (the proof of) Lemma \ref{Lem: pmGH precompactness}, we have the following lemma.
\begin{Lem}\label{Lem: local cde ball precompactness}
Let $R>0$, $K \in \R$, and $N \in [1, \infty)$. Let $(X_i, \dd_i, \mm_i, p_i)$ be a sequence of pointed metric measure spaces. Assume that $(X_i, \dd_i, \mm_i)$ is locally $\cde$ on $B_{4R}(p_i)$ for each $i \in \N$. Then  the sequence $(\overline{B_R(p_i)}, \dd_i', \mm_i', p_i)$ is precompact in the pmGH topology. 
\end{Lem}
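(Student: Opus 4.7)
The plan is to mirror the proof scheme of Lemma \ref{Lem: pmGH precompactness}, adapted to the restricted setting. The two ingredients needed for pmGH precompactness are uniform total boundedness of the metric spaces $(\overline{B_R(p_i)}, \dd_i')$ and a uniform tightness/mass bound on the probability measures $\mm_i'$. Both will be provided by Proposition \ref{Pro: Bishop--Gromov} together with the normalization. The one delicate point will be that the restricted triples $(\overline{B_R(p_i)}, \dd_i', \mm_i')$ need not themselves be doubling, because of boundary effects; the argument must therefore exploit doubling of the ambient $\mm_i$ inside $B_{2R}(p_i)$, which the local $\cde$ condition on $B_{4R}(p_i)$ furnishes.

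First I would apply Proposition \ref{Pro: Bishop--Gromov}: the local $\cde$ condition on $B_{4R}(p_i)$ is exactly its hypothesis with $2R$ in place of the parameter ``$R$'' there, so for every $q \in \overline{B_R(p_i)}$ and every $0 < r_1 < r_2 \leq r_* := \min\{R, \pi\sqrt{N/K^+}\}$ one obtains an explicit volume ratio lower bound depending only on $K$, $N$, $R$. Specializing to $r_2 = 2 r_1$ extracts a uniform doubling constant $c = c(K, N, R)$ for $\mm_i$ on balls centered in $\overline{B_R(p_i)}$ with radius up to $r_*/2$, uniformly in $i$.

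A standard covering argument (taking disjoint $\epsilon/2$-balls around a maximal $\epsilon$-separated subset of $\overline{B_R(p_i)}$, and comparing their masses to $\mm_i(B_{2R}(p_i))$ via Bishop--Gromov applied both at $p_i$ and at each center, together with a short chain argument to propagate mass lower bounds from $p_i$ to nearby points) then yields that, for every $\epsilon > 0$, $\overline{B_R(p_i)}$ admits a cover by $N_\epsilon$ balls of radius $\epsilon$, with $N_\epsilon = N_\epsilon(K, N, R)$ independent of $i$. Gromov's pGH-precompactness theorem then produces a subsequence along which $(\overline{B_R(p_i)}, \dd_i', p_i)$ converges in the pGH sense to some compact pointed metric space $(Y, \dd_Y, y_\infty)$; using Remark \ref{Rem: proper realization} I would realize the convergence via isometric embeddings $\iota_i$ into a common proper ambient space $(Z, \dd_Z)$.

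For the measure part, each $\mm_i'$ is by construction a Borel probability measure, and the pushforwards $(\iota_i)_* \mm_i'$ are supported in a fixed bounded subset of $Z$ (by pGH convergence of the underlying metric spaces). Tightness together with Prokhorov's theorem then gives a further subsequence along which $(\iota_i)_*\mm_i'$ converges weakly to some Borel probability $\mu_\infty$ on $Z$, and the argument from Lemma \ref{Lem: weak convergence of probability measures} shows that $\mu_\infty = (\iota_\infty)_*\mm_\infty$ for a Borel probability $\mm_\infty$ on $Y$. Restricting $Y$ to $\supp(\mm_\infty)$ to respect the full-support convention then yields pmGH convergence of the chosen subsequence, proving the claimed precompactness.
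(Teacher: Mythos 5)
Your proposal is correct and follows essentially the same route as the paper: the paper likewise extracts a uniform doubling constant for $\mm_i$ on balls centered in $B_{2R}(p_i)$ from the local Bishop--Gromov inequality (Proposition \ref{Pro: Bishop--Gromov}) and then invokes (the proof of) Lemma \ref{Lem: pmGH precompactness}, i.e., uniform total boundedness followed by Prokhorov's theorem for the normalized measures. Your observation that the restricted triples $(\overline{B_R(p_i)},\dd_i',\mm_i')$ need not themselves be doubling, so that one must run the covering argument with the ambient measure, is exactly why the paper cites the \emph{proof} of that lemma rather than the lemma itself.
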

Arguing now as in \cite[Theorem 4.9]{GMS15}, we have the following stability theorem for the local $\cde$ condition.
\begin{Lem}\label{Lem: local cde stability}
Any limit space $(X_\infty, \dd_\infty, \mm_\infty, p_\infty)$ of a subsequence of $(\overline{B_R(p_i)}, \dd_i', \mm_i', p_i)_i$ appearing in Lemma \ref{Lem: local cde ball precompactness} is locally $\cde$ on $B_{R/2}(p_\infty)$.
\end{Lem}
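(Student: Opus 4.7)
The strategy is to adapt the standard pmGH stability argument for the curvature--dimension condition (cf.\ \cite[Theorem 4.9]{GMS15}) to the local setting. Fix two absolutely continuous measures $\mu_0^\infty = \rho_0^\infty \mm_\infty, \mu_1^\infty = \rho_1^\infty \mm_\infty \in \mathcal{P}_2^*(X_\infty)$ supported in $B_{R/2}(p_\infty)$, and fix a proper realization $(Z, \dd_Z, \{\iota_i\})$ of the pmGH convergence as in Remark \ref{Rem: proper realization}. The goal is to produce a Wasserstein geodesic $(\mu_t^\infty)_{t \in [0,1]}$ in $X_\infty$ from $\mu_0^\infty$ to $\mu_1^\infty$ satisfying \eqref{Eq: entropy convexity}.

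First, I would construct approximating measures $\mu_j^i \in \mathcal{P}_2^*(X_i)$ for $j=0,1$, supported in $B_{R/2 + o(1)}(p_i)$, with densities $\rho_j^i \in L^\infty(\mm_i')$ uniformly bounded in terms of $\|\rho_j^\infty\|_{L^\infty(\mm_\infty)}$, and such that $(\iota_i)_*\mu_j^i \to (\iota_\infty)_*\mu_j^\infty$ weakly in $Z$. This is a standard construction (convolution/discretization against a bump in $Z$, exploiting the weak convergence of $\mm_i'$). The uniform $L^\infty$ bound together with weak convergence upgrades to entropy convergence $\Ent(\mu_j^i) \to \Ent(\mu_j^\infty)$, while stability of $W_2$ under pmGH convergence gives $W_2(\mu_0^i, \mu_1^i) \to W_2(\mu_0^\infty, \mu_1^\infty) < R$. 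By a one-sided triangle inequality applied from both endpoints, any geodesic in $X_i$ connecting two points in $B_{R/2 + o(1)}(p_i)$ stays in $B_{R + o(1)}(p_i) \subset B_{4R}(p_i)$, and the same is then true of the support of any $W_2$-geodesic between $\mu_0^i$ and $\mu_1^i$; hence the local $\cde$ hypothesis applies.

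Thus, for each $i$ I obtain a dynamical optimal plan $\nu_i \in \OptGeo(\mu_0^i, \mu_1^i)$ whose induced Wasserstein geodesic $(\mu_t^i)$ satisfies \eqref{Eq: entropy convexity}, with $\nu_i$ concentrated on curves with image in $\overline{B_{R + o(1)}(p_i)}$. Pushing the $\nu_i$ forward to $Z$ and applying Prokhorov (using that $Z$ is proper and the curves have uniformly bounded image), I extract a subsequential weak limit $\nu_\infty \in \mathcal{P}(\Geo(Z))$. By the standard closure property of optimal transport under pmGH convergence (see, e.g., the proof of \cite[Theorem 4.9]{GMS15}), $\nu_\infty$ is concentrated on $\Geo(X_\infty)$ and is a dynamical optimal plan from $\mu_0^\infty$ to $\mu_1^\infty$. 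Setting $\mu_t^\infty := (e_t)_*\nu_\infty$ gives the required Wasserstein geodesic in $X_\infty$. Finally, I pass to the limit in \eqref{Eq: entropy convexity}: the right-hand side converges by continuity of $\sigma_{K,N}^{(t)}(\cdot)$ and endpoint entropy convergence, while the left-hand side satisfies $U_N(\mu_t^\infty) \ge \limsup_i U_N(\mu_t^i)$ by lower semicontinuity of relative entropy under weak convergence (equivalently, upper semicontinuity of $U_N$).

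The main technical obstacle is the bookkeeping between the renormalized measure $\mm_i'$ (used for pmGH convergence) and the ambient measure $\mm_i$ (used in the local $\cde$ hypothesis). This is resolved by noting that the normalization constants $c_i := \mm_i(\overline{B_R(p_i)})$ converge to $c_\infty := \mm_\infty(\overline{B_R(p_\infty)}) \in (0, \infty)$ by the Bishop--Gromov inequality (Proposition \ref{Pro: Bishop--Gromov}) together with the weak convergence of $\mm_i'$; since \eqref{Eq: entropy convexity} is invariant under multiplying the reference measure by a positive constant (the additive shift in $\Ent$ becomes a common multiplicative factor in $U_N$ on both sides), this rescaling can be ignored. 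The other subtle point, and the reason why the conclusion only yields $\cde$ on the strictly smaller ball $B_{R/2}(p_\infty)$, is that supports of intermediate measures $\mu_t^i$ can reach all the way up to $B_R(p_i)$, so the pre-limit hypothesis on the strictly larger ball $B_{4R}(p_i)$ is genuinely needed to carry out the argument.
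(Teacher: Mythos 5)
Your proposal is correct and follows essentially the same route as the paper, which gives no separate proof but simply invokes the argument of \cite[Theorem 4.9]{GMS15} adapted to the local setting exactly as you describe (recovery sequences for the endpoints, the local $\cde$ inequality upstream on $B_{4R}(p_i)$, tightness of the dynamical plans, and lower semicontinuity of the entropy, together with the observation that geodesics between points of $B_{R/2}(p_i)$ stay in $\overline{B_R(p_i)}$). The only point to tidy is that $\rho_j^\infty$ need not lie in $L^\infty$, so the uniform density bound on the recovery sequence should be preceded by a truncation/diagonal step as in \cite{GMS15}; this does not affect the argument.
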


\begin{Rem}
We note that there is a minor technical issue with Lemma \ref{Lem: local cde stability} as stated since the local $\cde$ condition was defined only for geodesic spaces and $(X_\infty, \dd_\infty, \mm_\infty, p_\infty)$ is not necessarily a geodesic space. Nevertheless, as noted earlier, any two points in $B_{R/2}(p_i)$ are connected by a geodesic and this property carries over to $B_{R/2}(p_\infty)$. This is enough to formulate the local $\cde$ condition for $B_{R/2}(p_\infty)$, even if $X_\infty$ is not geodesic; this formulation is what we mean when we say $(X_\infty, \dd_\infty, \mm_\infty)$ is locally $\cde$ on $B_{R/2}(p_\infty)$. Moreover, since the proofs for Propositions \ref{Pro: Bishop--Gromov} and \ref{Pro: local cde density control} are completely local, they both hold with $B_{R/2}(p_\infty)$ in place of the larger ball where the local $\cde$ condition is assumed. 
\end{Rem}

\subsection{Ramified double cover}\label{subsec: Ramified double cover}
Next, we discuss the notion of orientable ramified double cover of an $\RCD$ space introduced in \cite{BBP24}.
We begin by recalling the following definition of \textit{orientability} for $\RCD$ spaces.
\begin{Def}\label{Def: Orientability}(Orientable $\RCD$ spaces \cite[Definition 1.6]{BBP24}).
Let $(X, \dd, \mathcal{H}^n)$ be a non-collapsed $\RCD(-(n-1),n)$ space with no boundary. We say that $X$ is \textit{orientable} if every open $A \subseteq X$ that is a topological manifold is orientable.
\end{Def}
As shown in \cite{BBP24}, the above definition turns out to be equivalent to the one given in \cite{H17} for non-collapsed Ricci limits. In the setting of non-collapsed $\RCD(-(n-1),n)$ spaces without boundary, the $\veps$-regularity theorem of \cite{CC97, DPG18} ensures that, for $0 < \veps < \veps(n)$, the open subset
\begin{equation}\label{Eq: epsilon regular set}
    A_\veps(X) := \Bigg\{x \in X \,:\, \frac{\HH^n(B_r(x))}{\omega_nr^n} > 1- \veps \text{ for some $r \in (0, \veps)$}\Bigg\},
\end{equation}
where $\omega_n := \HH^n(B_1(0^n))$, is a connected topological manifold without boundary whose complement has Hausdorff dimension smaller than or equal to $n-2$ (see \cite{KM21}). It turns out that to check the orientability of $X$ it suffices to check orientability with respect to only one such $A_\veps(X)$. More precisely, we have the following proposition.
\begin{Pro}\label{Pro: Equivalent definition of orientability}(Equivalent definition of orientability). 
Let $(X, \dd, \mathcal{H}^n)$ be a non-collapsed $\RCD(-(n-1),n)$ space with no boundary. Then $X$ is orientable in the sense of Definition \ref{Def: Orientability} if and only if there exists some $\veps < \veps(n)$ so that $A_\veps(X)$ is orientable.
\end{Pro}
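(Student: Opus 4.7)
The forward direction is immediate from the definition: if $X$ is orientable as in Definition \ref{Def: Orientability}, then for any $\veps<\veps(n)$ the set $A_\veps(X)$ is an open subset of $X$ which is a topological manifold (by the $\veps$-regularity theorem of \cite{CC97, DPG18} combined with \cite{KM21}), and so it is orientable by definition.

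For the reverse direction, the plan is as follows. Assume $A_\veps(X)$ is orientable for some $\veps<\veps(n)$, and let $A\subseteq X$ be any open subset which is a topological manifold. Setting $U:=A\cap A_\veps(X)$ and $S:=A\setminus U$, I would first observe that $U$ is open both in $A$ and in $A_\veps(X)$; in particular $U$ is an open submanifold of the orientable manifold $A_\veps(X)$, so $U$ is itself orientable. On the other hand, $S$ is closed in $A$ and contained in $X\setminus A_\veps(X)$, which has Hausdorff dimension at most $n-2$ by \cite{KM21}. Since topological dimension is bounded above by Hausdorff dimension on metric spaces, $S$ has topological dimension at most $n-2$ in $A$.

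The conclusion then reduces to the following standard topological fact, which I would apply to each connected component of $A$: a connected topological $n$-manifold $M$ without boundary is orientable if and only if $M\setminus T$ is orientable, for any closed subset $T\subseteq M$ of topological dimension at most $n-2$. I expect this to be the only delicate step, although it is classical. The natural approach is to consider the orientation double cover $\pi\colon\widetilde{M}\to M$: the restricted cover $\widetilde{M}\setminus\pi^{-1}(T)\to M\setminus T$ is precisely the orientation double cover of $M\setminus T$, and $\pi^{-1}(T)\subseteq\widetilde{M}$ is again closed of topological dimension at most $n-2$ because $\pi$ is a local homeomorphism. If $M$ were non-orientable, then $\widetilde{M}$ would be connected; applying the classical fact that a closed subset of topological codimension at least two cannot disconnect a connected topological $n$-manifold (a consequence of local Alexander duality), the space $\widetilde{M}\setminus\pi^{-1}(T)$ would remain connected. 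This contradicts the orientability of $M\setminus T$, which forces its orientation double cover to split as two copies of $M\setminus T$. Hence $M$ is orientable, completing the argument.
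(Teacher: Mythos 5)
Your argument is correct. Note first that the paper does not prove this proposition itself: it simply defers to \cite[Theorem 2.1]{BBP24}, so there is no in-paper proof to compare against; your proposal is a self-contained substitute. The forward direction is indeed immediate. For the reverse direction, your reduction to the topological fact ``a connected $n$-manifold $M$ is orientable iff $M\setminus T$ is orientable for $T\subseteq M$ closed with $\dim_{\mathrm{top}}T\le n-2$'' is the right move, and your proof of that fact via the orientation double cover is sound: the restriction of the orientation cover of $M$ to the open set $M\setminus T$ is the orientation cover of $M\setminus T$ (local orientations are intrinsic to neighborhoods), $\pi^{-1}(T)$ again has topological dimension $\le n-2$ since $\pi$ is a local homeomorphism, and the Hurewicz--Wallman non-separation theorem (a region of an $n$-manifold is not disconnected by a subset of dimension $\le n-2$) gives connectedness of $\widetilde M\setminus\pi^{-1}(T)$, contradicting the splitting of the orientation cover of the orientable set $M\setminus T$. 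The inputs you invoke are all legitimate here: $X\setminus A_\veps(X)$ has Hausdorff dimension $\le n-2$ by \cite{KM21}, the Szpilrajn inequality $\dim_{\mathrm{top}}\le\dim_H$ applies since $X$ is separable, and $U=A\cap A_\veps(X)$ is dense in $A$ (so nonempty on each component of $A$, which is needed for the final contradiction). This is essentially the same mechanism underlying \cite[Theorem 2.1]{BBP24} --- the whole content is that a set of codimension at least two cannot carry the obstruction to orientability --- so I would regard your write-up as a correct, slightly more hands-on version of the cited argument rather than a genuinely different one.
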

We refer to \cite[Theorem 2.1]{BBP24} for the proof of this fact and other equivalent definitions of orientability. Both definitions can be applied locally following \cite[Remark 1.8]{BBP24}.
\begin{Def}\label{Def: local orientability}(Local orientability).
Let $(X, \dd, \mathcal{H}^n)$ be a non-collapsed $\RCD(-(n-1),n)$ space with no boundary. We say that an open subset $U \subseteq X$ is \textit{orientable} if $U \cap A$ is orientable for every $A$ as in Definition \ref{Def: Orientability}. Moreover, this is equivalent to $U\cap A_\veps(X)$ being orientable as in Proposition \ref{Pro: Equivalent definition of orientability}.
\end{Def}

We will make use of the concept of orientation-reversing and preserving loops. We refer to the beginning of \cite[Subsection 2.1]{BBP24} for a discussion on the definitions and properties of such loops in the context of manifolds. 
\begin{Def}\label{Def: orientation-reversing loop} (Orientation-reversing and preserving loops) Let $(X, \dd, \HH^n)$ be a non-collapsed $\RCD(-(n-1),n)$ with no boundary. We say that a continuous loop $\gamma:[0,1] \to X$ (i.e., $\gamma(0) = \gamma(1)$) is orientation-reversing (resp.\ orientation-preserving) if the following holds: 
\begin{enumerate}
    \item there exists an open $A \subseteq X$ that is a topological manifold so that $\gamma([0,1]) \subseteq A$;
    \item $\gamma$ is orientation-reversing (resp.\ orientation-preserving) in the standard sense as a loop in $A$.
\end{enumerate}
\end{Def}
From standard algebraic topology, it is clear that any continuous loop whose image lies in the topological manifold part of $X$ is either orientation-reversing or preserving, and the definition is independent of the choice of $A$. The following is immediate from the definitions. 
\begin{Pro}\label{Pro: non-orientable loops equivalence}
Let $U \subseteq X$ be open. Then $U$ is orientable if and only if there are no orientation-reversing loops whose image lies in $U$.  
\end{Pro}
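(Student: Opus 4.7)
The plan is to unpack both implications directly from the definitions, invoking the standard fact that whether a loop in a topological manifold is orientation-reversing depends only on the manifold structure in a neighborhood of its image.

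For the forward implication, I would assume $U$ is orientable and argue by contradiction. Suppose $\gamma\colon[0,1]\to U$ is an orientation-reversing loop. By Definition \ref{Def: orientation-reversing loop} there exists an open topological manifold $A\subseteq X$ with $\gamma([0,1])\subseteq A$ in which $\gamma$ is orientation-reversing in the usual sense. Set $B:=U\cap A$; this is open in $A$, hence itself an open topological manifold of $X$, and it still contains the image of $\gamma$. Since the orientation-reversing property of a loop in a topological manifold is intrinsic to a neighborhood of the image, $\gamma$ remains orientation-reversing in $B$, so $B$ is non-orientable. This contradicts Definition \ref{Def: local orientability}, which forces $U\cap A=B$ to be orientable.

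For the backward implication, I would pick an arbitrary open topological manifold $A\subseteq X$ and check that $U\cap A$ is orientable by verifying that every continuous loop in $U\cap A$ is orientation-preserving; this characterization of orientability is classical for topological manifolds (triviality of the orientation double cover). If a loop $\gamma\colon[0,1]\to U\cap A$ were orientation-reversing in $U\cap A$, it would itself witness Definition \ref{Def: orientation-reversing loop} with the ambient manifold taken to be $U\cap A$, producing an orientation-reversing loop with image in $U$, contrary to the hypothesis. Letting $A$ range over all open topological manifolds of $X$ yields that $U$ is orientable, via Definition \ref{Def: local orientability}.

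There is no substantive obstacle, since the statement is essentially formal; the one point deserving explicit invocation is the locality of the orientation-reversing property for a loop. This follows from the fact that the orientation double cover of a topological manifold pulls back under inclusion of open submanifolds, so a loop lifts closedly in the cover of the ambient manifold if and only if it lifts closedly in the cover of any open submanifold containing its image.
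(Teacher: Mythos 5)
Your proof is correct and matches the paper's intent: the paper states the proposition is ``immediate from the definitions'' and omits the argument, and your write-up simply makes explicit the same unpacking of Definitions \ref{Def: Orientability}, \ref{Def: local orientability}, and \ref{Def: orientation-reversing loop}, using the locality/well-definedness of the orientation-reversing property that the paper itself records just before the proposition. No issues.
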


Orientability in stable under non-collapsing Gromov--Hausdorff convergence by \cite[Theorem 1.16]{BBP24}. We note that, by \cite[Theorem 1.6]{BNS22}, the non-collapsed limit of a sequence of a non-collapsed $\RCD(-(n-1),n)$ spaces without boundary is also without boundary. 
\begin{Thm}\label{Thm: stability of orientability}(Stability of orientability).
Let $(X_i, \dd_i, \HH^n, p_i)$ be a sequence of non-collapsed $\RCD(-(n-1),n)$ spaces without boundary which converges in the pointed measured Gromov--Hausdorff sense to $(X, \dd, \HH^n, p)$. If $(X_i, \dd_i, \HH^n)$ is orientable for every $i$ then $(X, \dd, \HH^n)$ is orientable. 
\end{Thm}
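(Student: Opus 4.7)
The plan is to argue by contradiction using the loop-based characterization of orientability from Proposition \ref{Pro: non-orientable loops equivalence}, and to transfer an orientation-reversing loop from $X$ back to the approximating $X_i$. Suppose $X$ is non-orientable. By Proposition \ref{Pro: Equivalent definition of orientability} there exists $\veps_0 < \veps(n)$ such that the open topological manifold $A_{\veps_0}(X)$ is non-orientable, and then Proposition \ref{Pro: non-orientable loops equivalence} furnishes a continuous orientation-reversing loop $\gamma : [0,1] \to A_{\veps_0}(X)$.

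First I would show that the compact image of $\gamma$ can be approximated by points lying in $A_{\veps_1}(X_i)$ for some fixed $\veps_1 \in (\veps_0, \veps(n))$ and all $i$ sufficiently large. This rests on the continuity of the volume-ratio condition $\HH^n(B_r(x))/(\omega_n r^n) > 1 - \veps$ defining $A_\veps$ under non-collapsed pmGH convergence, together with the weak convergence of $\HH^n$ on compact subsets of the limit space: for each $x \in A_{\veps_0}(X)$ and each approximating sequence $x_i \overset{\text{GH}}{\to} x$, one eventually has $x_i \in A_{\veps_1}(X_i)$. A standard compactness argument on $\gamma([0,1])$ yields a uniform choice of such approximations.

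Second, I would cover $\gamma([0,1])$ by finitely many small open balls $B_1, \ldots, B_k$ inside $A_{\veps_0}(X)$, each bi-H\"older homeomorphic to a Euclidean ball via the Reifenberg-type charts of \cite{CC97} (cf.\ also \cite{KM21, BPS24}), arranged so that the consecutive overlaps $B_j \cap B_{j+1}$ are connected and nonempty. Matching centers in $X_i$ produce balls $B_{j,i} \subseteq A_{\veps_1}(X_i)$ whose bi-H\"older parametrizations converge, under a realization of the pmGH convergence, to those of the $B_j$. This enables me to build loops $\gamma_i : [0,1] \to A_{\veps_1}(X_i)$ converging uniformly to $\gamma$. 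Since the charts can be chosen with consistent orientations on the Euclidean side, the combinatorial record of local orientation changes along $\gamma_i$ inherits that of $\gamma$, so $\gamma_i$ is itself orientation-reversing, contradicting the orientability of $X_i$ via Proposition \ref{Pro: non-orientable loops equivalence}.

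The main obstacle is making rigorous the orientation-compatibility of the Reifenberg charts along the sequence: the parametrizations in \cite{CC97, KM21} are produced via harmonic coordinates and iterative Reifenberg constructions that are canonical only up to a choice of frame, so some care is needed to pick them continuously in the pmGH topology so that the orientation classes are preserved. A possibly cleaner alternative is to work directly with orientation double covers: orientability of $X_i$ means the orientation double cover of $A_{\veps_1}(X_i)$ is a trivial (disconnected) double cover; one then passes this triviality to the limit by extracting a pmGH limit of locally defined sheet-labeling functions on the covers, obtaining a trivial double cover of $A_{\veps_0}(X)$ and hence orientability of $X$.
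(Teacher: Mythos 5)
The paper does not prove this statement itself: Theorem \ref{Thm: stability of orientability} is imported verbatim from \cite[Theorem 1.16]{BBP24}, so there is no internal proof to compare against. That said, your strategy is essentially the one used in the cited reference (and re-used in this paper, e.g.\ in the proofs of Theorem \ref{Thm: stability of non-orientiable local cde case} and Claim \ref{Cla: cone optimal plan support}): pass to the contrapositive, take an orientation-reversing loop $\gamma$ in $A_{\veps_0}(X)$, and transport it to loops $\gamma_i$ in $A_{\veps_1}(X_i)$ by approximating finitely many points $\gamma(k/m_i)$ and joining them by geodesics. Your first step is sound and you have the inclusion in the right direction ($\veps_1>\veps_0$, since the volume-ratio condition is open and transfers to nearby points in nearby spaces by volume convergence). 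The one place where your argument is not yet a proof is exactly the step you flag: showing that uniform closeness of $\gamma_i$ to $\gamma$ inside uniformly $\veps$-regular regions forces $\gamma_i$ to be orientation-reversing. This is the actual content of \cite[Theorem 4.1]{BBP24}, and it is handled there not by choosing the Reifenberg parametrizations ``continuously in $i$'' (which, as you note, is awkward since they are canonical only up to a frame), but by a chain argument: one covers the loop by a controlled chain of $\delta$-regular balls, observes that on each ball and on each connected overlap both the limit chart and the approximating charts are homeomorphic to Euclidean balls, and compares local orientations pairwise along the chain via the GH-approximation, so that the $\mathbb{Z}/2$ holonomy record of $\gamma_i$ agrees with that of $\gamma$ for large $i$; your ``cleaner alternative'' via triviality of the orientation double cover is in fact close to one of the two proofs given in \cite{BBP24}. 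So: right route, correct reductions, but the decisive orientation-transfer step is asserted rather than carried out.
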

\begin{Rem}\label{Rem: local stability of orientability}(Local stability of orientability).
We note that it follows from the proof of the above theorem that stability of orientability also holds locally. In other words, if in the previous theorem we instead assume that $B_{r}(p_i)$ is orientable for every $i$, then we can conclude that $B_{r}(p)$ is orientable. 
\end{Rem}
On the other hand, the stability of non-orientability was only proved for non-collapsed Ricci limit spaces in \cite[Theorem 1.18]{BBP24}, and remains an open question for non-collapsed $\RCD$ spaces. 

As a key tool,
for non-orientable $\RCD$ spaces one can construct a \textit{ramified double cover}.
\begin{Thm}\label{Thm: Ramified double cover existence}(Ramified double cover for $\RCD$ spaces \cite[Theorem 3.1]{BBP24}).
Let $(X, \dd, \HH^{n})$ be a non-orientable, non-collapsed $\RCD(-(n-1), n)$ space without boundary. Then there exists a geodesic metric measure space $(\hat{X}, \hat{\dd}, \HH^n)$ along with an involutive isometry $\Gamma: \hat{X} \to \hat{X}$ so that the following hold: 
\begin{enumerate}
    \item $X = \hat{X}/\langle \Gamma \rangle$, and we denote by $\pi: \hat{X} \to X$ the projection map;
    \item there exists an open and dense subset $\hat{A} \subseteq \hat{X}$ that is an orientable topological manifold and a length space so that, denoting $A := \pi(\hat{A})$, the map $\pi: \hat{A} \to A$ is a local isometry and a double cover;
    \item the set $\mathcal{R}$ of regular points (i.e., whose tangent cone is $\R^n$) is contained in $A$.
\end{enumerate}
Moreover, the set $A$ may be taken to be $A_\veps(X)$ for $0 < \veps < \veps(n)$. The pair $((\hat{X}, \hat{\dd}, \HH^n), \pi)$ is unique up to isomorphism. Specifically, if $((\hat{X'}, \hat{\dd'}, \HH^n), \pi')$ is another such pair, then there exists an isometry $\Phi:(\hat{X}, \hat{\dd}) \to (\hat{X'}, \hat{\dd'})$ satisfying $\pi' \circ \Phi = \pi$ and thus also $\Phi\circ\Gamma=\Gamma'\circ\Phi$.
\end{Thm}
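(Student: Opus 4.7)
The plan is to construct $\hat X$ as the metric completion of the topological orientation double cover of the regular-enough part of $X$, combining standard covering space theory with length-metric completion.

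First, I would set $A := A_\veps(X)$ (cf.\ \eqref{Eq: epsilon regular set}) for some fixed $0<\veps<\veps(n)$. By the $\veps$-regularity theorem and \cite{KM21}, $A$ is a connected, open, topological manifold without boundary whose complement $X\setminus A$ has Hausdorff codimension at least $2$; moreover $\mathcal{R}\subseteq A$ since tangent cones at regular points are $\R^n$ and sufficiently small balls there are nearly Euclidean, which also takes care of item (3) in the statement. Because $X$ is non-orientable, so is $A$ by Definition \ref{Def: Orientability}. I would then form the standard topological orientation double cover $\pi_0:\hat A\to A$, obtained as the set of pairs $(x,o_x)$ with $o_x$ a local orientation at $x$; this $\hat A$ is connected and orientable, with a free deck involution $\Gamma_0$.

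Next, I would equip $\hat A$ with the pullback length metric. Because $X\setminus A$ has Hausdorff codimension at least $2$, the restriction of $\dd$ to $A\times A$ coincides with the intrinsic length metric of $A$; pulling this metric back through the local homeomorphism $\pi_0$ gives a length metric $\hat\dd_0$ on $\hat A$ for which $\pi_0$ is a local isometry and $\Gamma_0$ is an isometric involution. I would then define $(\hat X,\hat\dd)$ to be the metric completion of $(\hat A,\hat\dd_0)$ and endow it with the $n$-dimensional Hausdorff measure $\HH^n$. The $1$-Lipschitz map $\pi_0$ extends continuously to $\pi:\hat X\to X$, and the isometric involution $\Gamma_0$ extends to an isometric involution $\Gamma:\hat X\to\hat X$.

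It then remains to verify: (a) $\pi$ is surjective and $X=\hat X/\langle\Gamma\rangle$, with the fiber over $x\in X\setminus A$ containing either one or two points depending on whether a small punctured neighborhood $B_r(x)\cap A$ is non-orientable or orientable, by standard covering-space theory applied to $B_r(x)\cap A$; (b) $(\pi|_{\hat A})_{*}\HH^n = 2\HH^n\mrestr A$ because $\pi|_{\hat A}$ is a $2$-sheeted local isometric cover, while $\hat X\setminus\hat A$ has $\HH^n$-measure zero since it projects into the codimension-$\ge 2$ set $X\setminus A$ with fibers of cardinality at most $2$; (c) $(\hat X,\hat\dd)$ is proper and geodesic, via a Hopf--Rinow-type argument, once local compactness of $\hat X$ is verified using the at-most-$2$-to-$1$ property of $\pi$ together with properness of $X$. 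For uniqueness, given another admissible pair $((\hat X',\hat\dd',\HH^n),\pi')$, the restriction $(\pi')^{-1}(A)\to A$ is a $2$-sheeted local isometric cover of $A$ by an orientable length space, so the classification of coverings forces it to coincide with $\pi_0:\hat A\to A$ up to a deck-equivariant homeomorphism, which is then an isometry on $\hat A$ and extends uniquely to the completion by continuity.

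The main obstacle will be the careful analysis of $\pi$ and of $\HH^n$ around the ramification set $\hat X\setminus\hat A$. One must rule out that a Cauchy sequence in $(\hat A,\hat\dd_0)$ projects to a non-Cauchy sequence in $(X,\dd)$, and conversely show that every $x\in X\setminus A$ is reached by $\pi$. Both issues are controlled by the codimension-$2$ bound on $X\setminus A$: this forces $\dd|_{A\times A}$ to agree with the intrinsic length metric of $A$, and ensures that small punctured balls in $X$ remain connected, so that the local structure of $\pi$ near a ramification point is dictated entirely by the local fundamental group of $A$ around that point.
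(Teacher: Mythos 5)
This theorem is quoted from \cite[Theorem 3.1]{BBP24} and not reproved in the present paper; your construction (orientation double cover of $A_\veps(X)$, pullback length metric, metric completion, extension of $\pi$ and $\Gamma$, and uniqueness via the classification of double covers of $A$) is essentially the same as the one in the cited reference. The only point I would tighten is the measure-zero claim for $\hat X\setminus\hat A$: being $1$-Lipschitz with fibers of cardinality $\le 2$ does not by itself control $\HH^n$ of preimages — you need the identity $\dd(\pi\hat x,\pi\hat y)=\min\{\hat\dd(\hat x,\hat y),\hat\dd(\Gamma\hat x,\hat y)\}$ so that covers of $X\setminus A$ lift to covers by at most twice as many balls of the same radii, exactly as in the covering argument of Lemma \ref{Lem: limit measure = hausdorff}.
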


\begin{Rem}\label{Rem: ramified double cover Polish and proper}
It is not difficult to check from either the construction of $\hat{X}$ or the properties of Remark \ref{Rem: Ramified double cover properties} below that, since $(X, \dd, \HH^n)$ is assumed to satisfy the usual conditions that we impose on metric measure spaces, $(\hat{X}, \hat{\dd}, \HH^n)$ does as well. In particular, $(\hat{X}, \hat{\dd}, \HH^n)$ is proper and $\HH^n$ is finite on bounded sets of $\hat{X}$ with $\supp(\HH^n) = \hat{X}$.  
\end{Rem}
It is an open question whether $\hat{X}$ is $\RCD$ in general. In the case of Ricci limits, it was shown in \cite[Theorem 1.21]{BBP24} that if $X$ is the non-collapsed Ricci limit of some sequence of non-orientable manifolds, then $\hat{X}$ is the Ricci limit of the double cover of the same sequence and hence $\RCD$. 
Nonetheless, in general
we have the following useful properties for the ramified double cover from \cite[Remark 3.2]{BBP24}.
\begin{Rem}\label{Rem: Ramified double cover properties}
In the notations of Theorem \ref{Thm: Ramified double cover existence}:
\begin{enumerate}
    \item $\pi$ is surjective and, if $x = \pi(\hat{x})$, then the fiber $\pi^{-1}(x) = \{\hat{x}, \Gamma(\hat{x})\}$;
    \item $\dd(\pi(\hat{x}), \pi(\hat{y})) = \min\{\hat{\dd}(\hat{x}, \hat{y}), \hat{\dd}(\Gamma \hat{x}, \hat{y})=\hat\dd(\hat x,\Gamma\hat y)\}$ for every $\hat{x}, \hat{y} \in \hat{X}$;
    \item the map $\pi$ is $1$-Lipschitz, and pushes forward the measure $\HH^n$ as $\pi_{*}(\HH^n_{\hat{X}})=2\HH^n_X$;
    \item we have that $\hat{A} \subseteq \{\hat{x}: \Gamma\hat{x} \neq \hat{x}\}$, where both sets are open and dense;
    \item $\dim_{\HH}(\hat{X} \setminus \hat{A}) \leq n-2$;
    \item $\diam(\hat{X}) \leq 2\diam(X)$.
\end{enumerate}
\end{Rem}

Properties $(1)$ and $(2)$ of the above remark easily give the following lemma. 
\begin{Lem}\label{Lem: Geodesic break}
If $\gamma:[0,1] \to \hat{X}$ is a constant speed geodesic and, for some $t_0 \in [0,1]$, $\gamma(t_0)$ is the unique lift of $\pi(\gamma(t_0))$, then $(\pi \circ \gamma )\rvert_{[0,t_0]}$ and $(\pi \circ \gamma) \rvert_{[t_0, 1]}$ are both geodesics. 
\end{Lem}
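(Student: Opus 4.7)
The key observation is that the hypothesis that $\gamma(t_0)$ is the unique lift of $\pi(\gamma(t_0))$ is, by property $(1)$ of Remark \ref{Rem: Ramified double cover properties}, equivalent to $\Gamma(\gamma(t_0))=\gamma(t_0)$. So $\gamma(t_0)$ is a fixed point of the isometric involution $\Gamma$, and this is the only structural input we really need.

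\textbf{Step 1: compute the distance of the endpoints of each half.} Write $L:=\hat\dd(\gamma(0),\gamma(1))$, so $\hat\dd(\gamma(s),\gamma(t))=|s-t|L$ for all $s,t\in[0,1]$. Applying property $(2)$ of Remark \ref{Rem: Ramified double cover properties} to the pair $(\gamma(0),\gamma(t_0))$ and using that $\Gamma$ is an isometry with $\Gamma\gamma(t_0)=\gamma(t_0)$, one gets
\[
\hat\dd(\Gamma\gamma(0),\gamma(t_0))=\hat\dd(\Gamma\gamma(0),\Gamma\gamma(t_0))=\hat\dd(\gamma(0),\gamma(t_0))=t_0 L,
\]
so both candidates in the minimum coincide, yielding $\dd(\pi(\gamma(0)),\pi(\gamma(t_0)))=t_0 L$. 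The same argument applied to $(\gamma(t_0),\gamma(1))$ gives $\dd(\pi(\gamma(t_0)),\pi(\gamma(1)))=(1-t_0)L$.

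\textbf{Step 2: upgrade to constant-speed geodesics.} For any $0\le s\le t\le t_0$, the $1$-Lipschitz property of $\pi$ (property $(3)$ of Remark \ref{Rem: Ramified double cover properties}) gives
\[
\dd(\pi(\gamma(0)),\pi(\gamma(s)))\le sL,\quad \dd(\pi(\gamma(s)),\pi(\gamma(t)))\le (t-s)L,\quad \dd(\pi(\gamma(t)),\pi(\gamma(t_0)))\le (t_0-t)L.
\]
Summing and using the triangle inequality together with Step 1 yields
\[
t_0 L=\dd(\pi(\gamma(0)),\pi(\gamma(t_0)))\le sL+(t-s)L+(t_0-t)L=t_0 L,
\]
so every inequality is an equality; in particular $\dd(\pi(\gamma(s)),\pi(\gamma(t)))=(t-s)L$. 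Hence $(\pi\circ\gamma)\rvert_{[0,t_0]}$ is a constant speed minimizing curve on $[0,t_0]$, which after the usual linear reparametrization on $[0,1]$ becomes an element of $\Geo(X)$. The argument for $[t_0,1]$ is identical.

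\textbf{Where the difficulty (if any) lies.} There is essentially no obstacle; the whole content of the lemma is that the point $\gamma(t_0)$ being its own $\Gamma$-orbit forces property $(2)$ of Remark \ref{Rem: Ramified double cover properties} to collapse to the upstairs distance at $\gamma(t_0)$, which is exactly what prevents $\pi\circ\gamma$ from shortcutting via the $\Gamma$-image on either half.
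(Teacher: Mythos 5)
Your proof is correct and follows essentially the same route as the paper: the paper's argument consists exactly of your Step 1 (the min in Remark \ref{Rem: Ramified double cover properties} (2) collapses because $\Gamma$ fixes $\gamma(t_0)$), with your Step 2 left implicit as "all that is needed to conclude." Your explicit completion of that final step via the $1$-Lipschitz property and the triangle inequality is a correct filling-in of the detail the paper omits.
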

Indeed, since $\gamma(t_0)$ is the unique lift of $\pi(\gamma(t_0))$, the action of $\Gamma$ on $\gamma(t_0)$ is trivial by property $(1)$. Property $(2)$ then gives 
\begin{align*}
\dd_{\hat{X}}(\gamma(0), \gamma(t_0)) &= \dd_{X}(\pi(\gamma(0)), \pi(\gamma(t_0))),\\
\dd_{\hat{X}}(\gamma(t_0), \gamma(1)) &= \dd_{X}(\pi(\gamma(t_0)), \pi(\gamma(1))),
\end{align*}
which is all that is needed to conclude Lemma \ref{Lem: Geodesic break}.
We will also make use of the following lemma.
\begin{Lem}\label{Lem: involution bound on non-orientable ball} (\cite[Lemma 3.3]{BBP24}).
Let $(X, \dd, \HH^n)$ be a non-orientable, non-collapsed $\RCD(-(n-1),n)$ space without boundary and let $p \in X$. Let $\pi: (\hat{X}, \hat{\dd}, \HH^n) \to (X, d, \HH^n)$ be the ramified double cover, and let $\hat{p} \in \hat{X}$ be such that $\pi(\hat{p})=p$. Then, for any $R > 0$, $B_R(p)$ is non-orientable if and only if $\hat{d}(\hat{p}, \Gamma \hat{p}) < 2R$.
\end{Lem}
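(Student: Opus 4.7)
My plan is to prove the two implications separately, using the fact that $\pi \colon \hat A \to A$ is the orientation double cover of the non-orientable manifold $A := A_\veps(X)$ for $\veps < \veps(n)$; this identification follows from Theorem \ref{Thm: Ramified double cover existence} and uniqueness of the orientation double cover, because Proposition \ref{Pro: Equivalent definition of orientability} forces $A$ to be non-orientable while $\hat A$ is orientable. Set $U_1 := B^{\hat X}_R(\hat p)$ and $U_2 := B^{\hat X}_R(\Gamma \hat p)$. Property $(2)$ of Remark \ref{Rem: Ramified double cover properties} gives $\pi^{-1}(B_R(p)) = U_1 \cup U_2$, and since $\Gamma$ is an isometric involution interchanging $\hat p$ and $\Gamma \hat p$, it swaps $U_1$ and $U_2$. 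In particular, $\hat\dd(\hat p, \Gamma \hat p) < 2R$ is equivalent to $U_1 \cap U_2 \neq \emptyset$.

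For the forward direction, assume $B_R(p)$ is non-orientable. Proposition \ref{Pro: non-orientable loops equivalence} produces an orientation-reversing loop $\alpha \colon [0,1] \to A \cap B_R(p)$ based at some point $q$; the unique lifting property for the double cover $\pi|_{\hat A}$ gives a continuous lift $\hat\alpha \colon [0,1] \to \hat A$ with $\hat\alpha(0) = \hat q \in \pi^{-1}(q)$, and the orientation-reversing character of $\alpha$ forces $\hat\alpha(1) = \Gamma \hat q$. The image of $\hat\alpha$ lies in $\pi^{-1}(B_R(p)) = U_1 \cup U_2$. If $U_1 \cap U_2$ were empty, $\Gamma$-swapping would force $\hat q$ and $\Gamma \hat q$ to sit in different components of the disjoint union $U_1 \sqcup U_2$, contradicting the connectedness of $\hat\alpha([0,1])$. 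Hence $U_1 \cap U_2 \neq \emptyset$, i.e., $\hat\dd(\hat p, \Gamma \hat p) < 2R$.

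For the backward direction, assume $\hat\dd(\hat p, \Gamma \hat p) < 2R$, so $U_1 \cap U_2 \neq \emptyset$. Density of $\hat A$ in $\hat X$ (property $(5)$ of Remark \ref{Rem: Ramified double cover properties}) provides $\hat z \in \hat A \cap U_1 \cap U_2$, and by $\Gamma$-equivariance also $\Gamma \hat z \in \hat A \cap U_1 \cap U_2$. The strategy is to connect $\hat z$ to $\Gamma \hat z$ by a continuous path $\hat\beta$ inside $\hat A \cap (U_1 \cup U_2)$; the projection $\beta := \pi \circ \hat\beta$ is then a continuous loop in $A \cap B_R(p)$ whose unique $\hat A$-lift starting at $\hat z$ ends at $\Gamma \hat z$, which identifies $\beta$ as orientation-reversing via the characterization of the orientation double cover, and Proposition \ref{Pro: non-orientable loops equivalence} then yields the non-orientability of $B_R(p)$.

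The main obstacle is this connectedness assertion in the backward direction. The ambient set $U_1 \cup U_2$ is path-connected in $\hat X$, since each $U_i$ is star-shaped at its center (any geodesic from a point of $U_i$ to the center stays in $U_i$) and $U_1 \cap U_2$ is non-empty. The point is that this path-connectedness should persist after removing the singular set $\hat X \setminus \hat A$, which by property $(5)$ of Remark \ref{Rem: Ramified double cover properties} has Hausdorff dimension at most $n-2$. Concretely, I would pick an intermediate $\hat w \in \hat A \cap U_1 \cap U_2$, form the broken geodesic $\hat z \to \hat w \to \Gamma \hat z$ (which stays in $U_1 \cup U_2$), and then perturb it slightly to avoid the codimension-two singular locus. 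In the three-dimensional setting of the paper this singular set is at most one-dimensional, so the perturbation step can be handled by a standard general-position argument, aided by the local manifold recognition for good Green balls of \cite{BPS24}.
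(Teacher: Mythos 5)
The paper does not reprove this lemma (it is quoted from \cite[Lemma 3.3]{BBP24}), so I am judging your argument on its own merits. Your forward implication is correct and clean: the lift of an orientation-reversing loop in $A_\veps(X)\cap B_R(p)$ is a connected subset of $\pi^{-1}(B_R(p))=U_1\cup U_2$ joining $\hat q$ to $\Gamma\hat q$, and the $\Gamma$-swapping argument rules out $U_1\cap U_2=\emptyset$. (One small point to make explicit: non-orientability of $B_R(p)$ gives an orientation-reversing loop in $B_R(p)\cap A_\veps(X)$ specifically, via Definition \ref{Def: local orientability}, which is what you need in order to lift through the double cover $\pi|_{\hat A}$.)

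The backward implication has a genuine gap, and it sits exactly where you flagged it. First, the broken geodesic $\hat z\to\hat w\to\Gamma\hat z$ with $\hat w\in U_1\cap U_2$ does \emph{not} stay in $U_1\cup U_2$: a geodesic between two points of $B_R(\hat p)$ is only guaranteed to remain in $B_{2R}(\hat p)$, so projecting would only show non-orientability of $B_{2R}(p)$. To stay inside $U_1\cup U_2$ you must route through the centers $\hat p,\Gamma\hat p$ (the only points for which star-shapedness helps), and these are precisely the points most likely to lie outside $\hat A$ (e.g.\ when $p$ is locally non-orientable, $\hat p$ is a fixed point of $\Gamma$ and hence not in $\hat A$). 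Second, and more seriously, the ``standard general-position perturbation'' to push the path off $\hat X\setminus\hat A$ is not available: at this stage $\hat X$ is not known to be a topological manifold, nor $\RCD$, nor even locally $\CD^e$ near $\hat X\setminus\hat A$, so neither transversality nor the a.e.-avoidance Lemma \ref{Lem: RCD almost everywhere connectedness} applies there. Indeed, the present paper devotes Lemma \ref{Lem: double cover limit connetedness} and the surrounding subsection to establishing exactly this kind of connectivity for ramified double covers \emph{under additional local $\CD^e$ hypotheses}, which signals that it is not free. The intended short proof of this direction uses a property you did not invoke: Theorem \ref{Thm: Ramified double cover existence} (2) states that $(\hat A,\hat\dd)$ is a \emph{length space}, so $\hat\dd(\hat p,\Gamma\hat p)<2R$ produces (after approximating $\hat p$ by a nearby $\hat z\in\hat A$ if necessary) a path \emph{in $\hat A$} from $\hat z$ to $\Gamma\hat z$ of length $<2R-\delta$; every point of its projection then lies within distance $<R-\delta/2$ of $z$ along the loop, so the projected loop is an orientation-reversing loop contained in $B_R(p)$, and Proposition \ref{Pro: non-orientable loops equivalence} concludes. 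Replacing your connectivity/perturbation step by this length-metric argument repairs the proof.
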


Unlike their smooth counterparts, non-collapsed $\RCD$ spaces \cite[Example 1.22]{BBP24}, and even non-collapsed Ricci limits of dimension $\geq 5$ \cite[Example 1.4]{BBP24}, can have points that have arbitrarily small non-orientable neighborhoods. This motivates the following definition.
\begin{Def}\label{Def: lno points} (Locally non-orientable point \cite[Subsection 1.5]{BBP24}).
Let $(X, \dd, \HH^n)$ be a non-collapsed $\RCD(-(n-1), n)$ space without boundary and let $p \in X$. We say that $p$ is \textit{locally non-orientable} (LNO for short) if any open set $U$ containing $p$ is non-orientable. Otherwise, we say that $p$ is \textit{locally orientable}.
\end{Def}
Clearly, the local non-orientability of $p$ is equivalent to $B_r(p)$ being non-orientable for any $r > 0$. Locally non-orientable points are exactly those that have a unique lift via $\pi$. 
\begin{Pro}\label{Pro: LNO classification}
Let $(X, \dd, \HH^n)$ be a non-orientable, non-collapsed $\RCD(-(n-1),n)$ without boundary and let $p \in X$. Let $\pi: (\hat{X}, \hat{\dd}, \HH^n) \to (X, d, \HH^n)$ be the ramified double cover, and let $\hat{p} \in \hat{X}$ be such that $\pi(\hat{p})=p$. Then $p$ is locally non-orientable if and only if $\pi^{-1}(p) = \{\hat{p}\}$. 
\end{Pro}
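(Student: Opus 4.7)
The plan is to directly combine the two key facts recalled just before the statement: Remark \ref{Rem: Ramified double cover properties}(1), which gives $\pi^{-1}(p) = \{\hat p, \Gamma \hat p\}$, and Lemma \ref{Lem: involution bound on non-orientable ball}, which characterizes non-orientability of $B_R(p)$ by $\hat\dd(\hat p, \Gamma \hat p) < 2R$. There is really no other work to do, so this should be a short proof rather than a substantive argument.

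First I would observe that, by Remark \ref{Rem: Ramified double cover properties}(1), the equality $\pi^{-1}(p) = \{\hat p\}$ is equivalent to $\Gamma \hat p = \hat p$, which in turn is equivalent to $\hat\dd(\hat p, \Gamma \hat p) = 0$.

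Next, recall that by the comment following Definition \ref{Def: lno points}, $p$ is locally non-orientable if and only if $B_r(p)$ is non-orientable for every $r > 0$. By Lemma \ref{Lem: involution bound on non-orientable ball}, the latter holds if and only if $\hat\dd(\hat p, \Gamma \hat p) < 2r$ for every $r > 0$, which is the same as saying $\hat\dd(\hat p, \Gamma \hat p) = 0$.

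Combining the two equivalences yields the proposition. There is no real obstacle here: the content of the claim is already packaged inside Lemma \ref{Lem: involution bound on non-orientable ball}, and the statement is simply translating that quantitative bound into the qualitative statement that $\hat p$ is a fixed point of $\Gamma$. The only point to keep in mind is that $\pi$ is well-defined on points (independently of which preimage we call $\hat p$), so the choice of lift $\hat p$ of $p$ made in the statement does not affect the argument.
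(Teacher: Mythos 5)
Your proof is correct and follows exactly the same route as the paper: combine Lemma \ref{Lem: involution bound on non-orientable ball} (letting $R\to0$ to get $\hat\dd(\hat p,\Gamma\hat p)=0$) with Remark \ref{Rem: Ramified double cover properties} (1). The paper's own proof is the same two-line argument, just stated more tersely.
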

\begin{proof}
 Lemma \ref{Lem: involution bound on non-orientable ball} implies that $\Gamma(\hat{p}) = \hat{p}$ if and only if $p$ is locally non-orientable. Remark \ref{Rem: Ramified double cover properties} (1) then gives the desired conclusion. 
\end{proof}

Combining the previous proposition with the local stability of orientability, we obtain the following.
\begin{Pro}\label{Lem: non-orientable tangent cone implies fixed point}
Let $(X, \dd, \HH^n)$ be a non-orientable, non-collapsed $\RCD(-(n-1),n)$ space without boundary and let $p \in X$. Let $\pi: (\hat{X}, \hat{\dd}, \HH^n) \to (X, d, \HH^n)$ be the ramified double cover, and let $\hat{p} \in \hat{X}$ be such that $\pi(\hat{p})=p$. If there is a tangent cone at $p$ that is non-orientable, then $\pi^{-1}(p) = \{\hat{p}\}$. In particular, $p$ is locally non-orientable. 
\end{Pro}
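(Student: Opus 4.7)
The plan is to reduce the statement to Proposition \ref{Pro: LNO classification}: it suffices to prove the ``in particular'' part, i.e., that $p$ is locally non-orientable, since the equivalence between local non-orientability and having a unique lift under $\pi$ is already established there. To show local non-orientability, I would argue by contradiction via a blow-up, deducing from an assumed orientable neighborhood $B_r(p)$ that every tangent cone at $p$ must be orientable, which contradicts the hypothesis.

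Concretely, I would assume that $p$ is locally orientable and fix $r > 0$ with $B_r(p)$ orientable in the sense of Definition \ref{Def: local orientability}. Then I would pick a non-orientable tangent cone $(C(Y), \dd_C, \HH^n, o)$ at $p$, realized as the pmGH limit of rescalings $(X_i, \dd_i, \HH^n, p_i) := (X, r_i^{-1}\dd, r_i^{-n}\HH^n, p)$ with $r_i \to 0$. These rescalings are non-collapsed $\RCD(-(n-1)r_i^2, n)$ spaces without boundary, and hence $\RCD(-(n-1), n)$ spaces without boundary once $r_i \le 1$ (non-collapsed limits have empty boundary by \cite{BNS22}, and the tangent cone itself fits in the same class). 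For any fixed $R > 0$ and $i$ large enough, the ball $B_R(p_i)$ in the rescaled space is, as a subset of $X$, the set $B_{Rr_i}(p) \subseteq B_r(p)$, and it is orientable because any orientation-reversing loop in $B_{Rr_i}(p)$ would also be an orientation-reversing loop in $B_r(p)$, contradicting Proposition \ref{Pro: non-orientable loops equivalence}.

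At this point I would invoke the local stability of orientability (Remark \ref{Rem: local stability of orientability}) to conclude that $B_R(o)$ is orientable in $C(Y)$. Since this holds for every $R > 0$ and since any continuous loop in $C(Y)$ has compact image contained in some $B_R(o)$, in which it is orientation-preserving by Proposition \ref{Pro: non-orientable loops equivalence}, the full tangent cone $C(Y)$ is orientable, contradicting the standing hypothesis. Thus $p$ is locally non-orientable, and Proposition \ref{Pro: LNO classification} then gives $\pi^{-1}(p) = \{\hat{p}\}$. The only subtlety worth flagging is the need to check that orientability descends to open subsets (a direct loop-lifting check) and that the rescaled sequence has a uniform curvature bound so as to fit into the hypotheses of the cited local stability result; both are straightforward here.
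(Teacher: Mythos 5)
Your proposal is correct and is essentially the paper's own argument: the paper applies the local stability of orientability (Remark \ref{Rem: local stability of orientability}) in the contrapositive direction to deduce that a non-orientable tangent cone forces a sequence of non-orientable balls $B_{r_i}(p)$, hence local non-orientability, and then invokes Proposition \ref{Pro: LNO classification}. You simply spell out the same blow-up/stability step in the direct (contradiction) form, with the routine verifications you flag at the end.
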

\begin{proof}
Since there is a non-orientable tangent cone at $p$, there must be a sequence $r_i \to 0$ so that each $B_{r_i}(p)$ is non-orientable, by the local stability of orientability from Remark \ref{Rem: local stability of orientability}. Hence, $p$ is locally non-orientable and so Proposition \ref{Pro: LNO classification} gives that $\pi^{-1}(p) = \{\hat{p}\}$.
\end{proof}

\subsection{Limits of ramified double covers}

In this subsection, we study the pmGH limits of sequences of ramified double covers that satisfy some additional assumptions. More precisely, let $(X_i, \dd_i, \HH^n, p_i)_{i \in \N}$ be a sequence of non-orientable $\RCD(K,n)$ spaces converging to $(X, \dd, \HH^n, p)$ in the pmGH sense. Let $(\hat{X}_i, \hat{\dd_i}, \HH^n, \hat{p}_i)_{i \in \N}$ be the ramified double covers of the spaces in the sequence, with associated projection $\pi_i$ and isometric involution $\Gamma_i$, and assume in addition that there exists $R > 0$ so that 
\begin{enumerate}
    \item $B_{R}(p_i)$ is non-orientable for each $i \in \N$;
    \item $\hat{X}_i$ is locally $\cden$ on $B_{1000R}(\hat{p}_i)$ for each $i \in \N$.
\end{enumerate}
The main result of this subsection is that $B_{R'}(p)$ is non-orientable for any $R' > R$ under the above assumptions. We will use Lemmas \ref{Lem: local cde ball precompactness} and \ref{Lem: local cde stability}, which say that, up to taking a subsequence, $(\overline{B_{200R}(p_i)})_i$ converges to some $(\tilde{X}, \tilde\dd, \tilde\mm, \tilde{p})$ that is locally $\cden$ on $B_{100R}(\tilde{p})$. A form of the above result was proved and used in \cite[Theorem 4.2]{BBP24} to show the stability of non-orientability for a sequence of spaces $X_i$ under the additional assumption that $\hat{X}_i$ is $\RCD(K,n)$. As we will show, the arguments used in \cite{BBP24} can be adjusted to work under the weaker assumptions of this subsection. 
\begin{Rem}\label{illustration}
We give an example to illustrate the idea behind the proof. A priori it might be possible to construct a sequence of non-orientable $\RCD(K,n)$ spaces $(X_i, p_i)_i$ that converges to $(C(Y^{n-1}), o)$, where $Y^{n-1}$ is an $(n-1)$-sphere of diameter $\ll \pi$, but where all non-orientable loops in $X_i$ must pass through $B_{1/i}(p_i)$. This would be a counterexample to the stability of non-orientability, and so should be conjecturally impossible. For such an example, intuitively $(\hat{X}_i, \hat{p}_i)$ should converge to a space which looks like two copies of $C(Y^{n-1})$ glued at the tip which we denote $\tilde{p}$, with $\hat{p}_i \to \tilde{p}$. Now if in addition we knew that each $\hat{X}_i$ is locally $\cden$ on $B_{1000}(\hat{p_i})$, then $B_{100}(\tilde{p})$ would also be locally $\cden$. The local $\cden$ condition would be incompatible with the limit space looking like two copies of a cone glued at the tip, and hence rule out this counterexample. 
\end{Rem}

Let everything be defined as in the beginning of the subsection. Without loss of generality, we may assume $R=1$. By Lemma \ref{Lem: involution bound on non-orientable ball}, we know that $\dd_{i}(\hat{p}_i, \Gamma_i(\hat{p}_i)) < 2$. As such, up to choosing a subsequence, we may assume that $\Gamma_i(\hat{p}_i) \to \tilde{p}'$ under the pmGH convergence, for some $\tilde p' \in X$ with $\dd(\tilde{p}, \tilde{p}') \leq 2$. 

Since $\pi_i, \Gamma_i$ are $1$-Lipschitz, we may apply the Arzelà--Ascoli theorem to take limits (up to a subsequence) of the sequences of maps $\pi_i: \overline{B_{100}(\hat{p}_i)} \to X_i$ and $\Gamma_i: \overline{B_{50}(\hat{p}_i)} \cup \overline{B_{50}(\Gamma_i(\hat{p}_i))} \to \overline{B_{100}(\hat{p}_i)}$. We obtain two $1$-Lipschitz maps $\pi: \overline{B_{100}(\tilde{p})} \to X$ and $\Gamma: \overline{B_{50}(\tilde{p})} \cup \overline{B_{50}(\tilde{p}')} \to \tilde{X}$ so that the following holds: 
\begin{enumerate}
    \item for any sequence $(x_i)_i$, where $x_i \in \overline{B_{100}(\hat{p}_i)}$, if $x_i \to x \in \overline{B_{100}(\tilde{p})}$ under the pmGH convergence, then $\pi_i(x_i) \to \pi(x)$;
    \item for any sequence $(x_i)_i$, where $x_i \in \overline{B_{50}(\hat{p}_i)}\cup \overline{B_{50}(\Gamma_i(\hat{p}_i))}$, if $x_i \to x \in \overline{B_{50}(\tilde{p})\cup B_{50}(\tilde{p}')}$ under the pmGH convergence, then $\Gamma_i(x_i) \to \Gamma(x)$;
    \item $\pi(\tilde{p})=\pi(\tilde{p}')=p$ and $\Gamma(\tilde{p})=\tilde{p}'$.
\end{enumerate}
For notational simplicity, we denote $\overline{B}_p := \overline{B_{50}(\tilde{p})} \cup \overline{B_{50}(\tilde{p}')}$. We list a few additional properties of the limit maps in the next proposition.  
\begin{Pro}\label{Pro: limit projection involution properties}
The following holds:
\begin{enumerate}  
    \item $\Gamma$ is an isometric involution of $\overline{B}_p$; 
    \item $\pi(\overline{B_{100}(\tilde{p})}) = \overline{B_{100}(p)}$;
    \item $\pi^{-1}(\pi(x))=\{x, \Gamma x\}$ for any $x \in \overline{B}_p$;
    \item $\dd(\pi(x), \pi(y)) = \min\{\tilde{\dd}(x, y), \tilde{\dd}(x, \Gamma(y))\}$ for any $x \in \overline{B_{100}(\tilde{p})}$ and $y \in \overline{B}_p$.
\end{enumerate}
\end{Pro}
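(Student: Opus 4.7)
The plan is to prove each of the four properties by passing to the pmGH limit in the corresponding identity satisfied by $\pi_i$ and $\Gamma_i$ at each finite level, combining the Arzel\`a--Ascoli convergence of these maps with the properties recorded in Remark~\ref{Rem: Ramified double cover properties}.

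For $(1)$, given $x \in \overline{B}_p$, I would choose $x_i \to x$ in the domain of $\Gamma_i$. Since $\Gamma_i$ is an isometric involution that swaps $\hat p_i$ with $\Gamma_i(\hat p_i)$, it preserves $\overline{B_{50}(\hat p_i)} \cup \overline{B_{50}(\Gamma_i(\hat p_i))}$, so $\Gamma_i x_i$ stays in the domain; passing to the limit yields $\Gamma(\overline{B}_p) \subseteq \overline{B}_p$. The identity $\Gamma \circ \Gamma = \Id$ then follows from $\Gamma_i(\Gamma_i x_i) = x_i$ after taking limits, and the isometry property of $\Gamma$ from passing to the limit in $\hat\dd_i(\Gamma_i x_i, \Gamma_i y_i) = \hat\dd_i(x_i, y_i)$. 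For $(2)$, the inclusion $\subseteq$ is immediate since $\pi$ is $1$-Lipschitz with $\pi(\tilde p) = p$. For the reverse inclusion, any $q \in \overline{B_{100}(p)}$ is approximated by some $q_i \in \overline{B_{100}(p_i)}$, and by Remark~\ref{Rem: Ramified double cover properties}$(2)$ at least one of the two lifts of $q_i$, call it $\hat q_i$, satisfies $\hat\dd_i(\hat q_i, \hat p_i) = \dd_i(q_i, p_i) \le 100$; up to a subsequence $\hat q_i \to \hat q \in \overline{B_{100}(\tilde p)}$ with $\pi(\hat q) = q$. For $(4)$, I would pass to the limit in the identity $\dd_i(\pi_i x_i, \pi_i y_i) = \min\{\hat\dd_i(x_i, y_i), \hat\dd_i(x_i, \Gamma_i y_i)\}$ from Remark~\ref{Rem: Ramified double cover properties}$(2)$, using that $y \in \overline{B}_p$ ensures $\Gamma_i y_i \to \Gamma y$ via the defining convergence of $\Gamma$.

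The first inclusion in $(3)$ follows by passing to the limit in $\pi_i \circ \Gamma_i = \pi_i$. The main obstacle will be the reverse inclusion: given $y \in \overline{B_{100}(\tilde p)}$ with $\pi(y) = \pi(x)$, one must show $y \in \{x, \Gamma x\}$. Choosing $x_i \to x$ and $y_i \to y$, the identity $\dd_i(\pi_i x_i, \pi_i y_i) = \min\{\hat\dd_i(x_i, y_i), \hat\dd_i(x_i, \Gamma_i y_i)\} \to 0$ forces, up to a subsequence, either $\hat\dd_i(x_i, y_i) \to 0$ (hence $y = x$), or $\hat\dd_i(x_i, \Gamma_i y_i) \to 0$ (hence $\Gamma_i y_i \to x$). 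In the latter case, deducing $y = \Gamma x$ from $y_i = \Gamma_i(\Gamma_i y_i)$ requires $\Gamma_i y_i$ to eventually lie in the domain $\overline{B_{50}(\hat p_i)} \cup \overline{B_{50}(\Gamma_i \hat p_i)}$ where $\Gamma$ was constructed as an Arzel\`a--Ascoli limit. This is automatic when $x$ is in the interior of $\overline{B}_p$; in the boundary case one can either enlarge the initial construction of $\Gamma$ by a harmless $\epsilon$ (setting it up on $\overline{B_{51}(\tilde p)} \cup \overline{B_{51}(\tilde p')}$ and restricting afterwards), or directly invoke the defining pointwise convergence of $\Gamma$ at $x$ along the sequence $\Gamma_i y_i \to x$. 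Once this technical point is handled, $y_i = \Gamma_i(\Gamma_i y_i) \to \Gamma(x)$, and hence $y = \Gamma x$ as required.
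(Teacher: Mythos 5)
Your proof is correct and follows exactly the route the paper has in mind: the paper omits the argument, stating only that the proposition ``is an easy consequence of the properties of the limit maps above and Remark \ref{Rem: Ramified double cover properties}'' and that (3) follows from (4). The only place you work harder than necessary is (3), where the boundary/domain technicality you flag vanishes if you simply deduce (3) from (4) as the paper suggests, or alternatively note that $\hat\dd_i(y_i,\Gamma_i x_i)=\hat\dd_i(\Gamma_i(\Gamma_i y_i),\Gamma_i x_i)=\hat\dd_i(\Gamma_i y_i,x_i)\to 0$ with $x_i$ already in the domain of definition, so $y=\lim\Gamma_i x_i=\Gamma x$ directly.
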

The proposition is an easy consequence of the properties of the limit maps above and Remark \ref{Rem: Ramified double cover properties}, so we skip its proof (note that (3) follows from (4)). We can also rule out the possibility that $\Gamma$ is the identity map and $\pi$ is an isometry (this would correspond for instance to a situation where the set of fixed points of $\Gamma_i$ in $\hat{X}_i$ gets denser and denser). More precisely, we have the following lemma.
\begin{Lem}\label{lem: regular set double cover}
For $\veps < \veps(n)$ sufficiently small, if $x \in A_\veps(X) \cap B_{10}(p)$ then $\pi^{-1}(x)$ consists of two points and there exists $r > 0$ so that if $\tilde{x} \in \pi^{-1}(x)$ then $\pi$ is an isometry between $B_{r}(\tilde{x})$ and $B_{r}(x)$. 
\end{Lem}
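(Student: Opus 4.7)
The plan is to approximate $x$ by points $x_i \in X_i$ whose behavior in the sequence of ramified double covers can be controlled uniformly, and then pass to the limit. The key inputs will be the $\veps$-regularity theorem of \cite{CC97, DPG18} and Colding's volume convergence (which hold for non-collapsed $\RCD$ limits), together with Lemma \ref{Lem: involution bound on non-orientable ball} and Proposition \ref{Pro: limit projection involution properties}.

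First I would choose $\veps < \veps(n)$ small enough that each $x \in A_\veps(X)$ admits a radius $r_0 > 0$ such that $B_{r_0}(x)$ is bi-H\"older homeomorphic to a Euclidean ball (hence contractible and orientable). By continuity of volumes under non-collapsed pmGH convergence, an approximating sequence $x_i \to x$ in $X_i$ satisfies the same volume ratio property (with slightly worse constant) eventually; applying $\veps$-regularity in $X_i$ yields that $B_{r_0}(x_i)$ is also orientable, after possibly shrinking $r_0$ by a definite factor. By Lemma \ref{Lem: involution bound on non-orientable ball}, this forces $\hat\dd_i(\hat x_i, \Gamma_i \hat x_i) \geq 2r_0$ for every lift $\hat x_i$ of $x_i$. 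Using Remark \ref{Rem: Ramified double cover properties}(2) to select the lift $\hat x_i$ that lies in $B_{10}(\hat p_i)$, together with the fact that $\hat\dd_i(\hat p_i, \Gamma_i \hat p_i) < 2$ (by non-orientability of $B_1(p_i)$, which is our assumed $R=1$), I can extract subsequential limits $\hat x_i \to \tilde x$ and $\Gamma_i \hat x_i \to \Gamma \tilde x$ inside $\overline{B_{50}(\tilde p)} \cup \overline{B_{50}(\tilde p')}$ using the convergence of $\pi_i$ and $\Gamma_i$. The uniform bound passes to the limit as $\tilde\dd(\tilde x, \Gamma \tilde x) \geq 2r_0 > 0$, so $\tilde x \neq \Gamma \tilde x$, and Proposition \ref{Pro: limit projection involution properties}(3) gives $\pi^{-1}(x) = \{\tilde x, \Gamma \tilde x\}$ with exactly two elements.

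For the isometry property, I would take $r := r_0/2$. For $\tilde y, \tilde z \in B_r(\tilde x)$, the triangle inequality applied along $\tilde x \to \tilde y \to \Gamma \tilde z \to \Gamma \tilde x$, combined with the isometry of $\Gamma$, gives $\tilde\dd(\tilde y, \Gamma \tilde z) \geq \tilde\dd(\tilde x, \Gamma \tilde x) - \tilde\dd(\tilde x, \tilde y) - \tilde\dd(\tilde z, \tilde x) \geq 2r_0 - 2r = r_0 > \tilde\dd(\tilde y, \tilde z)$, so by Proposition \ref{Pro: limit projection involution properties}(4) the minimum is attained by $\tilde\dd(\tilde y, \tilde z)$, proving $\pi$ is a distance-preserving injection $B_r(\tilde x) \hookrightarrow B_r(x)$. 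For surjectivity, given $y \in B_r(x)$ approximated by $y_i \to y$ with $y_i \in X_i$, I would select the lift $\hat y_i$ minimizing $\hat\dd_i(\hat x_i, \hat y_i)$; the min formula of Remark \ref{Rem: Ramified double cover properties}(2) places $\hat y_i \in B_r(\hat x_i)$, and a subsequential limit yields $\tilde y \in \overline{B_r(\tilde x)}$ with $\pi(\tilde y) = y$, which then lies in the open ball $B_r(\tilde x)$ by the just-established isometry.

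The main technical point to watch is the bookkeeping needed to ensure all approximating objects ($\hat x_i$, $\Gamma_i \hat x_i$, $\hat y_i$) remain in the bounded regions of $\hat X_i$ where the limit maps $\pi$ and $\Gamma$ are defined, but the explicit distance bounds above (using $\hat\dd_i(\hat p_i, \Gamma_i \hat p_i) < 2$ and $\dd_i(x_i, p_i) \leq 10$) handle this cleanly. No appeal to an $\RCD$ structure on $\tilde X$ is required: only the properties of $\pi$ and $\Gamma$ in Proposition \ref{Pro: limit projection involution properties}, together with $\veps$-regularity and volume continuity in the sequence, enter the argument.
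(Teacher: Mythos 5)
Your proposal is correct and follows essentially the same route as the paper: use $\veps$-regularity to get a uniform radius $r_0$ on which balls around the approximating points $x_i$ are orientable, invoke Lemma \ref{Lem: involution bound on non-orientable ball} to obtain the uniform separation $\hat\dd_i(\hat x_i,\Gamma_i\hat x_i)\ge 2r_0$, and pass this to the limit to get two distinct lifts. The only (cosmetic) difference is that the paper establishes the local isometry already at the level of $\pi_i$ on $B_{r/2}(\hat x_i)$ and passes it to the limit, whereas you derive it directly in the limit from Proposition \ref{Pro: limit projection involution properties}(3)--(4); both work.
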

\begin{proof}
 Let $x \in A_\veps(X) \cap B_{10}(p)$ and let $x_i \in X_i$ be a sequence so that $x_i \to x$ in the pmGH convergence. From a standard argument (see for instance \cite[Theorem 3.1]{KM21}) using the Bishop--Gromov inequality and volume rigidity \cite[Theorem 1.6]{DPG18} for non-collapsed $\RCD(K,n)$ spaces, we have that for any $\delta > 0$ there is an $\veps > 0$ (depending only on $\delta$, $K$, and $n$) and an $r>0$ (depending on $x$) so that, for $i$ sufficiently large, for every $x' \in B_{r}(x_i)$ and every $r' < r$, $d_{GH}(B_{r'}(x'), B_{r'}(0^n)) < \delta r'$. The Cheeger--Colding metric Reifenberg Theorem \cite[Theorem A.1.1]{CC97} then gives that each $B_{r}(x_i)$ is homeomorphic to an open subset of $\R^n$, and hence orientable, as long as $\delta$ (and hence $\veps$) is sufficiently small depending only on $K,n$. Now by Lemma \ref{Lem: involution bound on non-orientable ball} we have that if $\hat{x}_i \in (\pi_i)^{-1}(x_i)$ then $\hat\dd_i(\hat{x}_i, \hat{\Gamma}_i(x_i)) \geq 2r$. Let $\hat{x}_i \in \hat{X}_i$ be a sequence that converges to some $\tilde{x} \in \pi^{-1}(x)$. The sequence $\Gamma_i(\hat{x}_i)$ converges to $\Gamma(\tilde{x}) \in \pi^{-1}(x)$ with $\tilde\dd(\tilde{x}, \Gamma(\tilde{x})) \geq 2r$. This proves the first part. 

 As for the second part, by Remark \ref{Rem: Ramified double cover properties} and the previous paragraph, we have $\pi^{-1}(B_r(x_i)) = B_r(\hat{x}_i) \cup B_r(\Gamma_i(\hat{x}_i))$ with $B_r(\hat{x}_i) \cap B_r(\Gamma_i(\hat{x}_i)) = \emptyset$. In view of Remark \ref{Rem: Ramified double cover properties} (2), this implies $\pi$ is an isometry from $B_{r/2}(\hat{x}_i)$ to $B_{r/2}(x_i)$. This clearly passes to the limit by taking a sequence $\hat{x}_i$ converging to $\tilde{x}$, as required. 
\end{proof}

In the proof of \cite[Theorem 4.2]{BBP24}, the fact that the limit measure on the limit space of $\hat{X}_i$ is $\mathcal{H}^n$ was used in the last stage of the proof. Under their assumptions, this comes for free since $(\hat{X}_i)_i$ was assumed to be a non-collapsing sequence of non-collapsed $\RCD(K,n)$ spaces, and so the limit space is also a non-collapsed $\RCD(K,n)$ space by \cite{DPG18}. We also prove a version of this result in our case. More specifically, we have the following lemma.  
\begin{Lem}\label{Lem: limit measure = hausdorff}
The limit measure $\tilde\mm$ is equal to $\mathcal{H}^n$ on $B_{10}(\tilde{p}) \cup B_{10}(\Gamma(\tilde{p}))$.
\end{Lem}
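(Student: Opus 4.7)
I would argue that both $\tilde\mm$ and the intrinsic Hausdorff measure $\HH^n_{\tilde X}$ satisfy the same pushforward identity under $\pi$ and the same $\Gamma$-invariance, and that both are concentrated on a common open set of full measure on which they agree pointwise.

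\emph{Step 1 (pushforward and $\Gamma$-invariance in the limit).} From the identities $(\pi_i)_*\HH^n_{\hat X_i}=2\HH^n_{X_i}$ and $(\Gamma_i)_*\HH^n_{\hat X_i}=\HH^n_{\hat X_i}$ of Remark \ref{Rem: Ramified double cover properties}(3), I would pass to pmGH limits using test functions. For any $\varphi\in C_c(X)$ supported in $B_{10}(p)$, the function $\varphi\circ\pi_i$ is continuous and, by Remark \ref{Rem: Ramified double cover properties}(2), supported in $B_{10}(\hat p_i)\cup B_{10}(\Gamma_i\hat p_i)\subseteq B_{20}(\hat p_i)$. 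Using that $\pi_i\to\pi$ in the Arzelà--Ascoli sense set up above Proposition \ref{Pro: limit projection involution properties}, together with the weak convergence of measures in the pmGH realisation, one obtains $\int\varphi\circ\pi\,d\tilde\mm=2\int\varphi\,d\HH^n$, and by Proposition \ref{Pro: limit projection involution properties}(4) the preimage $\pi^{-1}(B_{10}(p))$ inside the relevant region equals $B_{10}(\tilde p)\cup B_{10}(\Gamma\tilde p)$, so
\[
\pi_*\bigl(\tilde\mm\mrestr(B_{10}(\tilde p)\cup B_{10}(\Gamma\tilde p))\bigr)=2\HH^n_X\mrestr B_{10}(p).
\]
The same argument applied to $\Gamma_i$ yields $\Gamma_*\tilde\mm=\tilde\mm$ on $B_{10}(\tilde p)\cup B_{10}(\Gamma\tilde p)$.

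\emph{Step 2 (identification on the regular part).} Fix $\veps<\veps(n)$ small so that Lemma \ref{lem: regular set double cover} applies, and set $\tilde A:=\pi^{-1}(A_\veps(X)\cap B_{10}(p))\cap(B_{10}(\tilde p)\cup B_{10}(\Gamma\tilde p))$. For $x\in A_\veps(X)\cap B_{10}(p)$, write $\pi^{-1}(x)=\{\tilde x_1,\tilde x_2\}$ with $\Gamma\tilde x_1=\tilde x_2$, and choose $r>0$ small enough that $\pi\colon B_r(\tilde x_j)\to B_r(x)$ is an isometry, $B_r(\tilde x_1)\cap B_r(\tilde x_2)=\emptyset$, and $\pi^{-1}(B_r(x))\cap(B_{10}(\tilde p)\cup B_{10}(\Gamma\tilde p))=B_r(\tilde x_1)\cup B_r(\tilde x_2)$. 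For any Borel $E\subseteq B_r(x)$, Step 1 gives
\[
\tilde\mm(B_r(\tilde x_1)\cap\pi^{-1}(E))+\tilde\mm(B_r(\tilde x_2)\cap\pi^{-1}(E))=2\HH^n(E),
\]
while the $\Gamma$-invariance (together with $\pi\circ\Gamma=\pi$) forces the two summands to be equal. Hence $\tilde\mm(B_r(\tilde x_j)\cap\pi^{-1}(E))=\HH^n(E)$, and since $\pi|_{B_r(\tilde x_j)}$ is an isometry and isometries preserve Hausdorff measure we also have $\HH^n_{\tilde X}(B_r(\tilde x_j)\cap\pi^{-1}(E))=\HH^n(E)$. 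Thus $\tilde\mm=\HH^n_{\tilde X}$ on each such ball, and by an open covering argument on the whole open set $\tilde A$.

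\emph{Step 3 (negligibility of the complement).} By \cite{KM21}, the complement $B_{10}(p)\setminus A_\veps(X)$ has Hausdorff dimension $\le n-2$ in $X$; in particular it is $\HH^n$-null. The pushforward identity of Step 1 then gives $\tilde\mm\bigl((B_{10}(\tilde p)\cup B_{10}(\Gamma\tilde p))\setminus\tilde A\bigr)=0$. Moreover, $\pi$ is $1$-Lipschitz and at most $2$-to-$1$ on this region, so the preimage also has Hausdorff dimension $\le n-2$ in $\tilde X$, and hence is $\HH^n_{\tilde X}$-null. Both measures are therefore concentrated on $\tilde A$, where they coincide by Step 2; being Radon, they agree on all Borel subsets of $B_{10}(\tilde p)\cup B_{10}(\Gamma\tilde p)$. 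The main (and rather mild) technical point is justifying the weak-convergence limits in Step 1; once this is done, the geometry provided by Lemma \ref{lem: regular set double cover} and the codimension estimate of \cite{KM21} make the remaining measure-theoretic identification routine.
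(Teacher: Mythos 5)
Your plan is correct in outline but follows a genuinely different route from the paper on the regular part. The paper never establishes the limit identity $\pi_*\tilde\mm=2\HH^n$; instead, for $x\in A_\veps(X)\cap B_{10}(p)$ it uses that $\overline{B_{r/2}(\hat x_i)}$ is \emph{isometric} to $\overline{B_{r/2}(x_i)}$ for all large $i$ (second part of Lemma \ref{lem: regular set double cover}), so the two sequences of pointed metric measure balls have the same pmGH limit, and uniqueness of the limit forces $\tilde\mm\mrestr\overline{B_{r/2}(\tilde x)}=\HH^n$. Your Step 1 front-loads the analytic work into passing $(\pi_i)_*\HH^n=2\HH^n$ and the $\Gamma_i$-invariance to the limit, after which Step 2 is soft measure theory (the pushforward identity plus $\Gamma$-symmetry splits $2\HH^n(E)$ into two equal halves, and the local isometry identifies each half with $\HH^n_{\tilde X}$). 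This is a legitimate trade: the varying-test-function argument you defer (extending $\varphi\circ\pi$ to the ambient space by Tietze, showing $\sup_{\hat x}|\varphi(\pi_i(\hat x))-\psi(\iota_i(\hat x))|\to0$ via Arzel\`a--Ascoli and a compactness argument, and controlling supports and masses) is comparable in difficulty to what the paper's Lemma \ref{Lem: limit measure bound} already carries out for ball volumes, so it is standard but is where essentially all the content of your proof lives and must actually be executed. On the singular set, getting $\tilde\mm$-nullity directly from the pushforward identity is cleaner than the paper's route, which re-runs the covering argument together with Lemma \ref{Lem: limit measure bound}.

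One step is not justified as written: in Step 3 you claim that since $\pi$ is $1$-Lipschitz and at most $2$-to-$1$, the preimage of the codimension-$2$ set $B_{10}(p)\setminus A_\veps(X)$ is $\HH^n_{\tilde X}$-null. Being $1$-Lipschitz gives no control on preimages (a coordinate projection $\R^2\to\R$ is $1$-Lipschitz with one-dimensional fibers), and adding "at most $2$-to-$1$" does not by itself repair this. What you actually need is the inclusion $\pi^{-1}(B_r(z))\subseteq B_r(\tilde z)\cup B_r(\Gamma\tilde z)$, which follows from the distance formula in Proposition \ref{Pro: limit projection involution properties} (4); then a cover of $B_{10}(p)\setminus A_\veps(X)$ with $\sum_j\omega_n r_j^n<\veps$ lifts to a cover of the preimage by at most twice as many balls of the same radii, which gives the $\HH^n_{\tilde X}$-nullity. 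This is exactly the covering argument the paper uses, so the gap is easily repaired, but the reason you give is not the right one.
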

\begin{proof}
Let $x \in A_{\veps}(X) \cap B_{10}(p)$ and let $\tilde{x} \in \pi^{-1}(x)$, where $\veps$ is sufficiently small as in Lemma \ref{lem: regular set double cover}. Let $\hat{x}_i \in \hat{X}_i$ be a sequence that converges to $\tilde{x}$ under the pmGH convergence.  Then, by the proof of Lemma \ref{lem: regular set double cover}, there exists $r > 0$ so that, for $i$ sufficiently large, $B_{r}(\hat x_i)$ is isometric to $B_{r}(\pi(\hat x_i))$. Now, since $(X_i, \dd_i, \HH^n, x_i) \overset{\text{pmGH}}{\to} (X, \dd, \HH^n, x)$, we must also have $(\overline{B_{r/2}(x_i)}, \dd_i, \HH^n, x_i) \overset{\text{pmGH}}{\to} (\overline{B_{r/2}(x)}, \dd, \HH^n, x)$, where the metrics are simply restrictions from the larger spaces (provided that $\HH^n(\partial B_{r/2}(x))=0$, which holds for a.e.\ $r>0$;
actually, by Bishop--Gromov, this holds for all $r>0$).
Similarly, since $(\overline{B_{200}(\hat{p}_i)}, \hat\dd_i, \HH^n, \hat{x}_i) \overset{\text{pmGH}}{\to} (\overline{B_{200}(\tilde{p}_i)}, \tilde\dd, \tilde\mm, \tilde{x})$, we can guarantee that
$$(\overline{B_{r/2}(\hat{x}_i)}, \hat\dd_i, \HH^n, \hat{x}_i) \overset{\text{pmGH}}\to (\overline{B_{r/2}(\tilde{x})}, \tilde\dd, \tilde\mm \mrestr {\overline{B_{r/2}(\tilde{x})}}, \tilde{x}).$$
By the isometry between $\overline{B_{r/2}(x_i)}$ and 
$\overline{B_{r/2}(\hat{x}_i)}$ for large $i$ and the uniqueness of pmGH limits, we must have $\tilde\mm \mrestr {\overline{B_{r/2}(\tilde{x})}} = \HH^n$. 

We have shown that, for any $\tilde{x} \in \big(B_{10}(\tilde{p}) \cup B_{10}(\Gamma(\tilde{p}))\big) \cap \pi^{-1}(A_\veps(X))$, $\tilde\mm$ restricted to $B_r(x)$ for $r$ sufficiently small is equal to $\HH^n$. It suffices now to prove that 
\begin{equation*}
\HH^n\Big(\big(B_{10}(\tilde{p}) \cup B_{10}(\Gamma(\tilde{p}))\big) \setminus \pi^{-1}(A_\veps(X))\Big) = \tilde\mm\Big(\big(B_{10}(\tilde{p}) \cup B_{10}(\Gamma(\tilde{p}))\big) \setminus \pi^{-1}(A_\veps(X))\Big) = 0
\end{equation*}
to conclude. Since $\HH^n(B_{10}(p) \setminus A_\veps(X)) = 0$, for any $\delta>0$ we can find some cover $\{B_{r_j}(z_j)\}_j$ of $B_{10}(p) \setminus A_\veps(X)$ with $\sum_j \omega_nr_j^{n} < \veps$, $r_j < \delta$, and $z_j \in B_{20}(p)$. Lifting this cover to $\tilde{X}$ and noticing that each ball $B_{r_j}(z_j)$ gives at most two balls of radius $r_j < \delta$ allows us to conclude that 
\begin{equation*}
    \HH^n_{2\delta}\Big(\big(B_{10}(\tilde{p}) \cup B_{10}(\Gamma(\tilde{p}))\big) \setminus \pi^{-1}(A_\veps(X))\Big) < 2\veps. 
\end{equation*}
Since $\veps, \delta$ are arbitrary we conclude that 
\begin{equation*}
    \HH^n\Big(\big(B_{10}(\tilde{p}) \cup B_{10}(\Gamma(\tilde{p}))\big) \setminus \pi^{-1}(A_\veps(X))\Big) = 0.
\end{equation*} 
Using the very same covers along with Lemma \ref{Lem: limit measure bound} also shows that 
\begin{equation*}
\tilde\mm\Big(\big(B_{10}(\tilde{p}) \cup B_{10}(\Gamma(\tilde{p}))\big) \setminus \pi^{-1}(A_\veps(X))\Big) = 0
\end{equation*}
as required. 
\end{proof}

\begin{Lem}\label{Lem: limit measure bound}
    For any $x \in \overline{B_{20}(p)}$ and any $0 < r < 20$, 
    \begin{equation*}
        \HH^n(B_r(x)) \leq \tilde{\mm}(\pi^{-1}(B_r(x))) \leq 4\HH^n(B_r(x)).
    \end{equation*}
\end{Lem}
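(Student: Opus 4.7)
The plan is to identify $\pi^{-1}(B_r(x))$ explicitly as a union of at most two open balls in $\tilde X$, and then transfer the pushforward identity $(\pi_i)_*\HH^n_{\hat X_i} = 2\HH^n_{X_i}$ of Remark \ref{Rem: Ramified double cover properties} (3) to the pmGH limit.

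First, I would apply Proposition \ref{Pro: limit projection involution properties} (4) twice. With $\tilde y = \tilde p$ and using $x \in \overline{B_{20}(p)}$, it shows that any lift $\tilde x \in \pi^{-1}(x)$ lies in $\overline{B_{20}(\tilde p)} \cup \overline{B_{20}(\tilde p')} \subseteq \overline{B}_p$. With a free variable $\tilde y \in \overline{B_{100}(\tilde p)}$ and the lift $\tilde x \in \overline{B}_p$ fixed, it yields
$$\dd(\pi(\tilde y), x) = \min\bigl\{\tilde\dd(\tilde y, \tilde x), \tilde\dd(\tilde y, \Gamma\tilde x)\bigr\},$$
so $\pi^{-1}(B_r(x)) = B_r(\tilde x) \cup B_r(\Gamma\tilde x)$. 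I would then pick $x_i \in X_i$ with $x_i \to x$ and $\hat x_i \in \pi_i^{-1}(x_i)$ with $\hat x_i \to \tilde x$ (whence $\Gamma_i \hat x_i \to \Gamma\tilde x$ by the uniform convergence of $\Gamma_i$). The analog of Remark \ref{Rem: Ramified double cover properties} (2) at finite $i$ gives $\pi_i^{-1}(B_r(x_i)) = B_r(\hat x_i) \cup B_r(\Gamma_i \hat x_i)$, so combining with the pushforward identity produces
$$\HH^n\bigl(B_r(\hat x_i) \cup B_r(\Gamma_i \hat x_i)\bigr) = 2\HH^n(B_r(x_i)), \qquad \HH^n(B_r(\hat x_i)),\, \HH^n(B_r(\Gamma_i\hat x_i)) \leq 2\HH^n(B_r(x_i)),$$
the second bound because each single ball sits inside $\pi_i^{-1}(B_r(x_i))$ by the $1$-Lipschitz property of $\pi_i$.

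Finally, I would pass $i \to \infty$ via Portmanteau's theorem. The local Bishop--Gromov inequality (Proposition \ref{Pro: Bishop--Gromov}) applies on $X$ and $X_i$ (global $\RCD$) and on $\tilde X$ (local $\CD^e$ on $B_{100}(\tilde p)$, and all balls and basepoints in sight stay well within $B_{50}(\tilde p)$ after a short check), so the volume of every relevant open ball is continuous in its radius, all the relevant spheres have zero measure, and the same holds for boundaries of the two-ball unions. Passing to the limit,
$$\tilde\mm(\pi^{-1}(B_r(x))) = \tilde\mm\bigl(B_r(\tilde x) \cup B_r(\Gamma\tilde x)\bigr) = 2\HH^n(B_r(x)), \qquad \tilde\mm(B_r(\tilde x)),\, \tilde\mm(B_r(\Gamma\tilde x)) \leq 2\HH^n(B_r(x)).$$
The lower bound $\HH^n(B_r(x)) \leq \tilde\mm(\pi^{-1}(B_r(x)))$ is then immediate, and the upper bound follows from
$$\tilde\mm(\pi^{-1}(B_r(x))) \leq \tilde\mm(B_r(\tilde x)) + \tilde\mm(B_r(\Gamma\tilde x)) \leq 4\HH^n(B_r(x)).$$
The main subtle point will be justifying that the pmGH-weak convergence transfers to the two-ball unions, and not just to individual balls; this reduces to verifying that the boundary of each such union has zero measure, which follows from the Bishop--Gromov-induced continuity of the volume functions in the radius.
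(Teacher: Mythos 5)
Your proof is correct and follows essentially the same route as the paper: decompose $\pi^{-1}(B_r(x))$ into the two balls $B_r(\tilde x)\cup B_r(\Gamma\tilde x)$, transfer the finite-$i$ identity $(\pi_i)_*\HH^n=2\HH^n$ to volume bounds on $B_r(\hat x_i)$ and $B_r(\Gamma_i\hat x_i)$, and pass to the limit using the (local) Bishop--Gromov continuity of ball volumes in the radius, which handles both the null boundaries and the moving centers. The only cosmetic difference is that you extract the exact limit $\tilde\mm(\pi^{-1}(B_r(x)))=2\HH^n(B_r(x))$, while the paper only records the one-ball bounds $\tilde\mm(B_r(\tilde x)),\tilde\mm(B_r(\Gamma\tilde x))\le2\HH^n(B_r(x))$ and sums them.
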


\begin{proof}
Let $x_i \in {B_{30}(p_i)}$ be a sequence so that $x_i \to x$ in the pmGH convergence. Let $\tilde{x} \in \pi^{-1}(x)$ and choose a sequence  $\hat{x}_i \in \pi_i^{-1}(x_i) \subseteq B_{30}(\hat{p}_i)$ so that $\hat{x}_i \to \tilde{x}$. Then we have $\Gamma_i(\hat{x}_i) \to \Gamma(\tilde{x})$ as well.

By Bishop--Gromov inequality on $\RCD(K,n)$ spaces, we have that
\begin{equation*}
    \lim_{\delta\to0}\HH^n(B_{r+\delta}(x_i)\setminus B_{r-\delta}(x_i))=0
\end{equation*}
uniformly in $i$. By the definition of pmGH convergence, this gives
$$\HH^n(B_r(x_i)) \to \HH^n(B_r(x)).$$
Similarly, due to the local Bishop--Gromov inequality from the local $\cden$ property of $B_{1000}(\hat{p}_i)$ (Proposition \ref{Pro: Bishop--Gromov}), we have that
\begin{equation*}
    \HH^n(B_r(\hat{x}_i)) \to \tilde\mm(B_r(\tilde{x})) \quad \text{ and } \quad \HH^n(B_r(\Gamma_i(\hat{x}_i))) \to \tilde\mm(B_r(\Gamma(\tilde{x}))).
\end{equation*}
By Remark \ref{Rem: Ramified double cover properties} (1)--(3), we have that
\begin{equation*}
    \HH^n(B_r(\hat{x}_i)), \HH^n(B_r(\Gamma_i(\hat{x}_i))) \leq 2\HH^n(B_r(x_i)),
\end{equation*}
and so 
\begin{equation*}
    \tilde\mm(B_r(\tilde{x})), \tilde\mm(B_r(\Gamma(\tilde{x}))) \leq 2\HH^n(B_r(x)). 
\end{equation*}
Since $\pi^{-1}(B_r(x)) = B_r(\tilde{x}) \cup B_r(\Gamma(\tilde{x}))$ by Proposition \ref{Pro: limit projection involution properties} (3)--(4), we obtain the required upper bound. The lower bound follows by a similar argument. 
\end{proof}

We have the following lemma, which is a replacement for \cite[Lemma 2.2]{BBP24}.
\begin{Lem}\label{Lem: double cover limit connetedness}
Let $C \subseteq \overline{B_{10}(p)}$ be closed with $\mathcal{H}^{n-1}(C) = 0$ and let $\tilde{C} := \pi^{-1}(C) \subseteq  B_{15}(\tilde{p})$. Then, for every $x \in B_{5}(\tilde{p})\setminus \tilde{C}$, it holds that, for $\tilde\mm$-a.e.\ $y \in B_{5}(\tilde{p})\setminus \tilde{C}$, there exists a geodesic $\gamma:[0,1] \to B_{10}(\tilde{p})$ connecting $x$ to $y$ with $\gamma([0,1]) \subseteq B_{10}(\tilde{p}) \setminus \tilde{C}$.
\end{Lem}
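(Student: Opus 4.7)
The plan is to argue by contradiction, exploiting the local $\cden$ structure on $B_{100}(\tilde p)$ from Lemma \ref{Lem: local cde stability} together with Proposition \ref{Pro: local cde density control}: I will bound the $\nu$-measure of geodesics hitting a small ball centered in $\tilde C$ by $C r^{n-1}$, which when summed over a thin cover of $\tilde C$ will yield the desired a.e.\ avoidance. As a preliminary, I would first check that $\HH^{n-1}(\tilde C) = 0$: since $\pi$ is $1$-Lipschitz with fibers of cardinality at most two, any cover of $C$ by balls $\{B_{r_j}(z_j)\}$ with $\sum_j r_j^{n-1} < \veps$ lifts to a cover of $\tilde C$ with at most double the number of balls and the same radii, so $\HH^{n-1}(\tilde C) \le 2\HH^{n-1}(C) = 0$; combining this with Lemma \ref{Lem: limit measure bound} and Bishop--Gromov on the $\RCD(K,n)$ space $X$ also yields $\tilde\mm(\tilde C) = 0$.

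Suppose for contradiction that for some $\eta > 0$ the set
\[
N_\eta := \{y \in B_5(\tilde p) : \dd(y, \tilde C) > \eta \text{ and no geodesic } \gamma:[0,1]\to B_{10}(\tilde p) \text{ from } x \text{ to } y \text{ avoids } \tilde C\}
\]
has positive $\tilde\mm$-measure; it suffices to contradict this, since the set of bad $y$'s equals $\bigcup_k N_{1/k}$. Setting $\mu_1 := \tilde\mm\mrestr{N_\eta}/\tilde\mm(N_\eta)$, which has bounded density, a triangle-inequality convexity argument shows every geodesic with endpoints in $B_5(\tilde p)$ stays in $B_{10}(\tilde p)$. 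By the local $\cden$ condition on $B_{100}(\tilde p)$ and the time-reversed form of Proposition \ref{Pro: local cde density control}, I would pick an optimal dynamical plan $\nu$ from $\delta_x$ to $\mu_1$ for which $\mu_t := (e_t)_*\nu$ has $L^\infty$-density uniformly bounded on each $[\delta, 1-\delta]$. The heart of the argument is then the following hitting estimate: for $r_0 := \min\{\eta, \dd(x, \tilde C)\}/8$, all $r \le r_0$, and $\tilde z \in \tilde C$,
\[
\nu\bigl(\{\gamma : \gamma \cap B_r(\tilde z) \neq \emptyset\}\bigr) \;\le\; C\, r^{n-1},
\]
with $C$ independent of $r$ and $\tilde z$. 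To prove this, note that if $\gamma$ meets $B_r(\tilde z)$ at some $t_\gamma$ and has length $\ell \le 10$, then $\gamma(t) \in B_{2r}(\tilde z)$ for all $\vert t-t_\gamma\vert \le r/\ell$; since $x$ lies at distance $> 4r_0$ from $\tilde C$ and $\gamma(1) \in N_\eta$ sits at distance $> \eta - 2r$ from $\tilde C$, the time $t_\gamma$ must lie in some interval $[\delta_0, 1-\delta_0]$ with $\delta_0 > 0$ depending only on $\eta$ and $\dd(x, \tilde C)$. Fubini then gives
\[
\tfrac{2r}{\ell}\,\nu(\text{hits}) \;\le\; \int_{\delta_0}^{1-\delta_0} \mu_t(B_{2r}(\tilde z))\, dt \;\le\; C_{\delta_0}\, \tilde\mm(B_{2r}(\tilde z)) \;\le\; C\, r^n,
\]
where the final bound uses $B_{2r}(\tilde z) \subseteq \pi^{-1}(B_{2r}(\pi(\tilde z)))$, Lemma \ref{Lem: limit measure bound}, and Bishop--Gromov on $X$.

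With the hitting estimate in hand, I would cover $\tilde C$ by balls $\{B_{r_j}(\tilde z_j)\}$ with $r_j \le r_0$ and $\sum_j r_j^{n-1} \le \veps$ (possible since $\HH^{n-1}(\tilde C) = 0$), sum to obtain $\nu(\{\gamma : \gamma \cap \tilde C \neq \emptyset\}) \le C\veps$, and let $\veps \to 0$ to conclude that $\nu$-a.e.\ $\gamma$ avoids $\tilde C$. But by construction $\gamma(1) \in N_\eta$ for $\nu$-a.e.\ $\gamma$, and by the very definition of $N_\eta$ every geodesic from $x$ to such a point meets $\tilde C$; so $\nu(\{\gamma \text{ meets } \tilde C\})$ would be both $0$ and $1$, a contradiction. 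Hence $\tilde\mm(N_\eta) = 0$ for every $\eta > 0$, and the lemma follows.

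The hard part is the hitting estimate for a single ball $B_r(\tilde z)$, which distills the whole argument into a sharp codimension-$1$ bound obtained by combining Fubini with the $L^\infty$-density control of Proposition \ref{Pro: local cde density control} and the uniform upper volume bound $\tilde\mm(B_r(\tilde z)) \le C r^n$ inherited from $X$ via Lemma \ref{Lem: limit measure bound}. The other delicate point is keeping $t_\gamma$ bounded away from $0$ and $1$: this requires restricting to the stratum $N_\eta$ so that $\gamma(1)$ has positive distance from $\tilde C$, and using $x \notin \tilde C$ for the other endpoint, which is precisely what forces the two-step passage to the limit $\eta \to 0$ at the very end.
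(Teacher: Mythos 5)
Your proof is correct and follows essentially the same strategy as the paper's: cover $C$ (hence $\tilde C$) by balls with $\sum_j r_j^{n-1}$ small, use the intermediate-time $L^\infty$ density bound from Proposition \ref{Pro: local cde density control} together with a Fubini/pigeonhole count of hitting times to get $\nu(\{\gamma \text{ hits } B_{r}(\tilde z)\})\le C\,\tilde\mm(B_{2r}(\tilde z))/r\le Cr^{n-1}$, and sum. The only substantive deviation is that you transport directly onto the bad set $N_\eta$ rather than onto small balls $B_\eta(y)$ as the paper does; this requires knowing that $N_\eta$ is $\tilde\mm$-measurable (it is co-analytic, being the complement of a continuous image of a Borel set of geodesics, hence universally measurable), a point the paper's ball-by-ball formulation sidesteps entirely, and which you should state explicitly.
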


\begin{proof}
The proof is similar to that of \cite[Lemma 2.2]{BBP24} (see also the proof of \cite[Theorem 1.4]{R12}). For the sake of completeness, we give a full proof with the necessary changes. It suffices to prove that, for any $x, y \in B_{5}(\tilde{p})\setminus \tilde{C}$, there exists $\eta$ so that for $\tilde\mm$-a.e.\ $y' \in B_{\eta}(y)$ there exists a geodesic connecting $x$ to $y'$ with its image contained in $B_{10}(\tilde{p}) \setminus \tilde{C}$. At the moment we will let $\eta$ be sufficiently small so that $B_{\eta}(y) \in B_{5}(\tilde{p})$. We will fix $\eta$ later with more conditions. 

Since $X$ is a non-collapsed $\RCD(K,n)$ space, there exists some $c_{K,N}$ so that for any $z \in B_{11}(p)$ and any $r < 2$ we have
\begin{equation*}
    \frac{\mathcal{H}^{n}(B_r(z))}{r} \leq c_{K,N}r^{n-1}.
\end{equation*}
From the assumption that $\HH^{n-1}(C) = 0$, for any $\veps > 0$, there exists some finite covering $\{B_{r_j}(z_j)\}_{j \in J}$ of $C$ in $X$ so that $z_j \in B_{11}(p)$, $r_j < 1$, and 
\begin{equation*}
    \sum_{j \in J} (2r_j)^{n-1} \leq \frac{\veps}{16c_{K,N}}. 
\end{equation*}
Moreover, since $x \notin \tilde{C}$, for $\eta$ sufficiently small we can choose a finite covering as above so that in addition $B_{\eta}(\pi(x)) \cap \big(\bigcup_{j \in J} B_{2r_j}(z_j)\big) = \emptyset$.
Combining the above inequalities, we have
\begin{equation*}
    \sum_{j \in J} \frac{\mathcal{H}^{n}(B_{2r_j}(z_j))}{r_j} \leq \frac{\veps}{8}.
\end{equation*}
Consider the collection of open balls 
\begin{equation*}
    \mathcal{B} := \{B \subseteq B_{15}(\tilde{p}) \,:\, B = B_{r_j}(\tilde{z}) \text{ for some }  \tilde{z} \in \pi^{-1}(z_j)\}.
\end{equation*}
By Proposition \ref{Pro: limit projection involution properties}, $\mathcal{B}$ is finite (with at most two balls corresponding to each $j$) and covers $\tilde{C}$ . Moreover, indexing $\mathcal{B}$ arbitrarily so that $\mathcal{B} = \{B_{r_k}(\tilde{z}_k)\}_{k \in K}$, we have 
\begin{equation*}
    \sum_{k \in K} \frac{\tilde\mm(B_{2r_k}(\tilde{z}_k))}{r_k} \leq \veps
\end{equation*}
by the upper bound from Lemma \ref{Lem: limit measure bound}. By Proposition \ref{Pro: limit projection involution properties} again, we have that $B_{\eta}(x) \cap \big(\bigcup_{k \in K} B_{2r_k}(\tilde{z}_k)\big) = \emptyset$.

Let $\nu$ be an optimal dynamical plan from $\mu_1 := \tilde\mm\mrestr {B_{\eta}(y)}/\tilde\mm(B_\eta(y))$ to $\mu_0 := \delta_x$ (note that $\tilde\mm(B_\eta(y)) \neq 0$ by the lower bound from Lemma \ref{Lem: limit measure bound}). Clearly, $\nu$ is concentrated on the set of geodesics with length less than $10$ whose image is contained in $B_{10}(\tilde{p})$. Since $\tilde{X}$ is locally $\cden$ on $B_{100}(\tilde{p})$ by our assumptions at the beginning of the subsection, we may assume that $\nu$ satisfies the conclusions of Proposition \ref{Pro: local cde density control}. In particular, this means that for $t \geq \eta/10$ 
\begin{equation*}
    (e_t)_*(\nu) \leq D(K, n, \eta)\tilde\mm.
\end{equation*}

Fix some $k \in K$. Let $\gamma$ be a geodesic from $x$ to some $y' \in B_\eta(y)$ and assume that it intersects $B_{r_k}(\tilde{z}_k)$. Since $\gamma$ has length less than $10$, if we choose any $N \in \N$ so that $N \geq 10/r_k$, there exists some $i = 1,\dots, N$ so that $\gamma(i/N) \in B_{2r_k}(\tilde{z}_k)$. Moreover, since $B_\eta(x) \cap B_{2r_k}(\tilde{z}_k) = \emptyset$, we have that $i/N \geq \eta/10=:\eta'$. Note also that we can require $N \leq 20/r_k$. Therefore, denoting by $G$ the set of all geodesics whose image is inside of $B_{10}(\tilde{p})$, we have
\begin{align*}
\nu\Big(\{\gamma \in G\,:\,\gamma(t) \in B_{r_k}(\tilde{z}_k) \text{ for some $t \in [0,1]$}\}\Big)
&\leq \sum_{i\,:\, i/N \geq \eta'}  \nu\Big(\{\gamma \in G \,:\, \gamma(i/N) \in B_{2r_k}(\tilde{z}_k)\}\Big)\\
&\leq \sum_{i\,:\, i/N \geq \eta'} (e_{i/N})_*(\nu)(B_{2r_k}(\tilde{z}_k))\\
&\leq \sum_{i\,:\, i/N \geq \eta'} D(K,n, \eta)\tilde\mm(B_{2r_k}(\tilde{z}_k))\\
&\leq DN\tilde\mm(B_{2r_k}(\tilde{z}_k)) \\
&\leq 20D\frac{\tilde\mm(B_{2r_k}(\tilde{z}_k))}{r_k}.
\end{align*}
Summing across $k$ and using the bounds from earlier, we have that 
\begin{align*}
& \nu\Big(\{\gamma \in G \,:\, \gamma(t) \in \tilde{C} \text{ for some $t \in [0,1]$}\}\Big)\\
&\leq \nu\Big(\Big\{\gamma \in G\,:\,\gamma(t) \in \bigcup_{k \in K} B_{r_k}(\tilde{z_k}) \text{ for some $t \in [0,1]$}\Big\}\Big)\\
&\leq \sum_{k \in K} D\frac{\tilde\mm(B_{2r_k}(\tilde{z}_k))}{r_k} \\
&\leq D\veps.
\end{align*}
Since $\veps > 0$ is arbitrary and $D$ is independent of $\veps$, we may conclude that 
\begin{equation*}
\nu\Big(\{\gamma \in G\,:\,\gamma(t) \in \tilde{C} \text{ for some $t \in [0,1]$}\}\Big) = 0,
\end{equation*}
which shows that for $\mm$-a.e.\ $y' \in B_\eta(y)$ there exists a geodesic connecting $x$ to $y'$ that does not intersect $\tilde{C}$. 
\end{proof}

We state the main result of the subsection. 
\begin{Thm}\label{Thm: stability of non-orientiable local cde case}
Let $(X_i, \dd_i, \HH^n, p_i)_{i \in \N}$ be a sequence of non-orientable $\RCD(K,n)$ spaces without boundary converging to $(X, \dd, \mathcal{H}^n, p)$ in the pmGH sense. For each $i \in \N$, let $\pi_i: (\hat{X}_i, \hat\dd_i, \HH^n, \hat{p}_i) \to (X_i, \dd_i, \HH^n, p_i)$ be the ramified double cover. Assume that 
\begin{enumerate}
    \item $B_{1}(p_i)$ is non-orientable for each $i \in \N$;
    \item $\hat{X}_i$ is locally $\cden$ on $B_{1000}(\hat{p}_i)$ for each $i \in \N$.
\end{enumerate}
Then $B_{1+\delta}(p)$ is non-orientable for any $\delta > 0$. 
\end{Thm}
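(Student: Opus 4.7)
The plan is to construct an orientation-reversing loop in $A_{\veps_0}(X) \cap B_{1+\delta}(p)$ for a suitably small $\veps_0 = \veps_0(n)$, so that Proposition \ref{Pro: non-orientable loops equivalence} immediately yields non-orientability of $B_{1+\delta}(p)$. The limit space $\tilde X$ with its involution $\Gamma$ and projection $\pi$ built above will serve as a bridge between the finite-level ramified double covers $\hat X_i$ and the target space $X$.

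First, I would invoke Lemma \ref{Lem: involution bound on non-orientable ball} to get $\hat\dd_i(\hat p_i, \Gamma_i(\hat p_i)) < 2$, and pass to a subsequence via Lemmas \ref{Lem: local cde ball precompactness} and \ref{Lem: local cde stability} to arrive at the setup of this subsection: a limit $(\tilde X, \tilde\dd, \tilde\mm, \tilde p)$ locally $\CD^e(K,n)$ on $B_{100}(\tilde p)$, with $\tilde p' = \Gamma(\tilde p)$ satisfying $\tilde\dd(\tilde p, \tilde p') \leq 2$. Fix $\veps_0 = \veps_0(n)$ so that $A_{\veps_0}(X)$ is a topological manifold and the set $C := \overline{B_{10}(p)} \setminus A_{\veps_0}(X)$ satisfies $\HH^{n-1}(C) = 0$, using that the non-manifold part has codimension at least two. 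Setting $\tilde C := \pi^{-1}(C)$, Lemma \ref{Lem: limit measure bound} combined with $\HH^n(C) = 0$ gives $\tilde\mm(\tilde C \cap B_{10}(\tilde p)) = 0$.

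Given $\delta > 0$, I choose small $r, r' > 0$ and pick $x_0 \in B_r(\tilde p) \setminus \tilde C$ (which exists since $\tilde\mm(B_r(\tilde p))>0$), so that $\Gamma(x_0) \in B_r(\tilde p') \setminus \tilde C$ and $\tilde\dd(x_0, \Gamma(x_0)) \leq 2+2r$. Lemma \ref{Lem: double cover limit connetedness} then provides $y \in B_{r'}(\Gamma(x_0)) \setminus \tilde C$ together with a geodesic $\tilde\gamma$ in $B_{10}(\tilde p) \setminus \tilde C$ from $x_0$ to $y$ of length at most $2+2r+r'$. Its projection $\sigma := \pi \circ \tilde\gamma$ is a continuous curve in $A_{\veps_0}(X)$ with endpoints within $r+r'$ of $p$; a length-diameter estimate then shows $\sigma \subseteq B_{1+\delta/2}(p)$ for $r, r'$ small. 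Since $A_{\veps_0}(X)$ is a locally path-connected topological manifold (being locally Euclidean), I close up $\sigma$ with a short path $\tau \subseteq A_{\veps_0}(X)$ from $\pi(y)$ to $\pi(x_0)$, producing a continuous loop $\gamma := \sigma * \tau$ in $A_{\veps_0}(X) \cap B_{1+\delta}(p)$.

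The main obstacle is showing that $\gamma$ is orientation-reversing. I plan to do this via a finite-level approximation: using the pmGH convergence $\hat X_i \to \tilde X$ and $X_i \to X$ together with the bi-H\"older topological stability of the regular sets $A_{\veps_0}(X_i) \to A_{\veps_0}(X)$ (of Cheeger--Colding type), I would construct loops $\gamma_i \subseteq A_{\veps_0}(X_i) \cap B_{1+\delta}(p_i)$ corresponding to $\gamma$ through the local homeomorphisms of the regular parts, each obtained by projecting a path $\hat\sigma_i$ in $\hat X_i$ that runs from near $\hat p_i$ to near $\Gamma_i(\hat p_i)$ and closing up with a short path in $A_{\veps_0}(X_i)$. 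Each $\gamma_i$ is orientation-reversing in $X_i$ since its lift to $\hat X_i$ joins the two distinct preimages of $p_i$ under $\pi_i$ and so cannot close up. Since orientation-reversing is a topological (hence homotopy) invariant of loops in a topological manifold, the property transfers from the $\gamma_i$'s to the limit loop $\gamma$, and the theorem then follows from Proposition \ref{Pro: non-orientable loops equivalence}.
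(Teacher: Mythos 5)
Your proposal is correct and follows essentially the same route as the paper's proof: pass to the limit $\tilde X$ of the double covers via the local $\cden$ assumption, use Lemma \ref{Lem: double cover limit connetedness} to join the two sheets by a path avoiding $\pi^{-1}(C)$, project to get a loop in $A_{\veps}(X)\cap B_{1+\delta}(p)$, and certify orientation-reversal by finite-level approximation (your variant of closing up with a short auxiliary path downstairs, rather than connecting an auxiliary point to both lifts $\tilde x$ and $\Gamma(\tilde x)$ as the paper does, works equally well). The only caveat is your last sentence: the transfer of orientation-reversal from the loops $\gamma_i\subseteq X_i$ to $\gamma\subseteq X$ is not a consequence of bare topological invariance, since these loops live in different spaces; it requires the chain-of-Reifenberg-balls argument of \cite[Theorem 4.2, Proof II]{BBP24}, which is exactly what the paper cites at this point.
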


\begin{proof}
We use the same notation for the various objects defined earlier in this subsection. For $\veps < \veps(n)$ sufficiently small to be fixed later and some $\delta < 1/10$, fix some $x \in B_{\delta}(p) \cap A_{\veps}(X)$. By Lemma \ref{lem: regular set double cover}, there are two distinct lifts $\tilde{x}, \Gamma(\tilde{x}) \in B_{\delta}(\tilde{p}) \cup B_{\delta}(\Gamma(\tilde{p}))$. Since $\tilde{\dd}(\tilde{p}, \Gamma(\tilde{p})) \leq 2$, we see that $\tilde{\dd}(\tilde{x}, \Gamma(\tilde{x})) < 2+2\delta$ by the triangle inequality and $B_{\delta}(\tilde{p}) \cup B_{\delta}(\Gamma(\tilde{p})) \subseteq B_{5}(\tilde{p})$.

Taking $C := \overline{B_{10}(p)} \setminus A_\veps(X)$ in Lemma \ref{Lem: double cover limit connetedness}, we can find $y \in B_{\delta}(\tilde{x}) \setminus \pi^{-1}(C)$ so that $y$ can be connected to $\tilde{x}$ and $\Gamma(\tilde{x})$ by geodesics whose images are contained in $B_{10}(\tilde{p}) \setminus \pi^{-1}(C)$ (here we used the fact that $\tilde\mm(B_{\delta}(\tilde{x}))$ is bounded away from $0$ by Lemma \ref{Lem: limit measure bound}). Concatenating the two geodesics, we obtain a Lipschitz curve $\tilde\gamma:[0,2] \to B_{10}(\tilde{p})$ with the $\tilde\gamma(0) = \tilde{x}$ and $\tilde\gamma(2) = \Gamma(\tilde{x})$, and whose image is contained in $B_{10}(\tilde{p}) \setminus \pi^{-1}(C)$. Moreover, since the length of $\gamma$ is $<2+4\delta$, we have that
$$\tilde\gamma([0,2]) \subseteq B_{1+2\delta}(\tilde{x}) \cup B_{1+2\delta}(\Gamma(\tilde{x})) \subseteq B_{1+3\delta}(\tilde{p}) \cup B_{1+3\delta}(\Gamma(\tilde{p})).$$

Consider the loop $\gamma :=\pi \circ \tilde\gamma:[0,2] \to B_{10}(p)$ with $\gamma(0) = \gamma(2) = x$. By Lemma \ref{lem: regular set double cover} and Proposition \ref{Pro: limit projection involution properties}, $\gamma$ is Lipschitz and $\gamma([0,2]) \subseteq B_{1+3\delta}(p) \cap A_\veps(X)$. It suffices now to prove that $\gamma$ is orientation-reversing.

Since this part of the proof is similar to that of \cite[Theorem 4.2]{BBP24}, we only sketch it (see also the proof of Claim \ref{Cla: cone optimal plan support}, where we use a similar argument). As in the proof of \cite[Theorem 4.2]{BBP24} , we can construct a sequence of curves $\hat\gamma_i:[0,2] \to B_{2}(p_i)$ with $\hat\gamma_i(0)=\hat{x}_i$, $\hat\gamma_i(2)=\Gamma_i(\hat{x}_i)$, and which uniformly converges to $\tilde\gamma$ under the pmGH convergence. The uniform convergence implies that, for sufficiently large $i$, the image of $\gamma_i := \pi_i \circ \hat\gamma_i$ is in $A_{\veps'}(X_i)$, where $\veps'$ can be made arbitrarily small by choosing $\veps$ small. Combining this with the fact that $\hat\gamma_i$ connects $\hat{x}_i$ to $\Gamma_i(\hat{x}_i)$, we see that $\gamma_i$ is an orientation-reversing loop (see Definition \ref{Def: orientation-reversing loop}). Indeed, Theorem \ref{Thm: Ramified double cover existence} gives that $\pi_i^{-1}(A_{\veps'}(X_i))$ is homeomorphic to the orientable double cover of $A_{\veps'}(X_i)$ (as a non-orientable manifold) for sufficiently small $\veps' < \veps(n)$. Therefore, the projection of any continuous curve from $\hat{x}_i$ to $\Gamma_i(\hat{x}_i)$ whose image lies in $\pi_i^{-1}(A_{\veps'}(X_i))$ must be an orientation-reversing loop. Now, as in \cite[Theorem 4.2]{BBP24} (see also Proof II of \cite[Theorem 4.2]{BBP24}), we can conclude that $\pi \circ \gamma$ must also be an orientation-reversing loop, which implies that $B_{1+3\delta}(p)$ is non-orientable by Proposition \ref{Pro: non-orientable loops equivalence}.
\end{proof}

\subsection{1D-localization}
In this subsection, we review the constructions of 1D-localization on $\RCD$ spaces; our exposition will follow those of \cite{BC13, C14, CM17a, CM17b, CM18}. 

Let $(R, \mathscr{R})$ be a measurable space and let $\mathfrak{Q}: R \to Q$ be a function where $Q$ is a general set. We may endow $Q$ with the pushfoward $\sigma$-algebra $\mathscr{Q}$ of $\mathscr{R}$ defined by the rule
\begin{equation*}
S \in \mathscr{Q} \text{ if and only if } \mathfrak{Q}^{-1}(S) \in \mathscr{R}.
\end{equation*}
Clearly, any probability measure $\rho$ on $(R, \mathscr{R})$ then gives a natural probability measure $\mathfrak{q} := \mathfrak{Q}_{*}(\rho)$ on $(Q, \mathscr{Q})$ via the pushfoward. 

\begin{Def}\label{Def: Disintegration}
A \textit{disintegration} of the probability measure $\rho$ consistent with $\mathfrak{Q}$ is a map $\rho: \mathscr{R} \times Q \to [0,1]$ so that, setting $\rho_{q}(B) := \rho(B,q)$, the following holds:
\begin{enumerate}
    \item $\rho_{q}(\cdot)$ is a probability measure on $(R, \mathscr{R})$ for all $q \in Q$,
    \item $q\mapsto\rho_{q}(B)$ is $\mathfrak{q}$-measurable for all $B \in \mathscr{R}$,
\end{enumerate}
and the map satisfies for all $B \in \mathscr{R}, S \in \mathscr{Q}$ the consistency condition 
\begin{equation*}
    \rho(B \cap \mathfrak{Q}^{-1}(S)) = \int_{S} \rho_q(B)\,d\mathfrak{q}(q).
\end{equation*}
A disintegration is \textit{strongly consistent with respect to $\mathfrak{Q}$} if for all $q$ we have $\rho_q(\mathfrak{Q}^{-1}(q))=1$. The measures $\rho_q$ are called \textit{conditional probabilities}. 
\end{Def}

In what follows, we assume that $(X, \dd)$ is a complete and locally compact geodesic metric space,
so that it is automatically proper as well. Let $\phi: X \to \R$ be any $1$-Lipschitz function. The \textit{$\dd$-cyclically monotone set associated with $\phi$} is defined by
\begin{equation}\label{Eq: Gamma definiiton}
    \Gamma := \{(x,y) \in X \times X\,:\,\phi(x) - \phi(y) = \dd(x,y)\}.
\end{equation}
In general, a subset $S \subseteq X \times X$ is said to be \textit{$\dd$-cyclically monotone} if, for any finite set of couples $(x_1, y_1),\ldots, (x_N, y_N) \in S$, we have that
\begin{equation*}
    \sum_{i=1}^{N} \dd(x_i, y_i) \leq \sum_{i=1}^N \dd(x_i, y_{i+1}), 
\end{equation*}
where $y_{N+1} := y_1$. It is straightforward to check that the $\dd$-cyclically monotone set associated with $\phi$ is $\dd$-cyclically monotone in the above sense. 

For any $\Gamma$ that is the $\dd$-cyclically monotone set associated with some $\phi$, we have the following lemma, whose simple proof is left to the reader.

\begin{Lem}
    Let $(x,y) \in \Gamma$ and let $\gamma \in \Geo(X)$ be such that $\gamma_0 = x$ and $\gamma_1 = y$. Then 
    \begin{equation*}
        (\gamma_s, \gamma_t) \in \Gamma
    \end{equation*}
    for any $0 \leq s \leq t \leq 1$.
\end{Lem}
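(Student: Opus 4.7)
The plan is a short three-line chain-of-inequalities argument relying only on the 1-Lipschitz property of $\phi$ and the constant-speed geodesic identity. Let $L:=\dd(x,y)$, so that $\dd(\gamma_a,\gamma_b)=(b-a)L$ for all $0\le a\le b\le 1$.

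First, I would decompose the quantity $\phi(x)-\phi(y)$ along the three successive points $\gamma_0=x,\gamma_s,\gamma_t,\gamma_1=y$ as
\[
\phi(x)-\phi(y)=\bigl(\phi(\gamma_0)-\phi(\gamma_s)\bigr)+\bigl(\phi(\gamma_s)-\phi(\gamma_t)\bigr)+\bigl(\phi(\gamma_t)-\phi(\gamma_1)\bigr).
\]
Since $\phi$ is $1$-Lipschitz, each summand is bounded above by the distance between the relevant pair of points, giving
\[
\phi(x)-\phi(y)\le sL+(t-s)L+(1-t)L=L=\dd(x,y).
\]

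Next, the hypothesis $(x,y)\in\Gamma$ asserts $\phi(x)-\phi(y)=\dd(x,y)=L$, so equality must hold in the previous display. Because each summand is already $\le$ the corresponding distance, each of the three inequalities must individually be an equality. In particular,
\[
\phi(\gamma_s)-\phi(\gamma_t)=\dd(\gamma_s,\gamma_t),
\]
which by definition \eqref{Eq: Gamma definiiton} means $(\gamma_s,\gamma_t)\in\Gamma$, as desired.

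There is no real obstacle here: the statement is essentially a trivial consequence of the $1$-Lipschitz property of $\phi$ combined with the fact that constant-speed geodesics saturate the triangle inequality. The only point requiring a moment's care is noting that, since the sum of three nonpositive error terms equals zero, each must vanish separately; this is what allows us to extract the equality for the middle segment $(\gamma_s,\gamma_t)$ rather than only for the full segment $(x,y)$.
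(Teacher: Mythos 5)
Your argument is correct and is precisely the standard "simple proof left to the reader" that the paper omits: decompose $\phi(x)-\phi(y)$ telescopically along $\gamma_0,\gamma_s,\gamma_t,\gamma_1$, bound each increment by the $1$-Lipschitz property, and use that the geodesic saturates the triangle inequality to force equality in each summand. Nothing to add.
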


\begin{Def}\label{Def: transport ray def}
    We define the set of \textit{transport rays} of $\phi$ by
    \begin{equation*}
        R : = \Gamma \cup \Gamma^{-1},
    \end{equation*}
    where $\Gamma^{-1} := \{(x,y) \in X \times X \,:\, (y,x) \in \Gamma\}$. The sets of \textit{initial points} and \textit{final points} are defined by
    \begin{align*}
        &\mathfrak{a} := \{z \in X \,:\, \nexists x \in X \text{ so that } (x, z) \in \Gamma \text{ and } \dd(x, z) > 0\},\\
        &\mathfrak{b} := \{z \in X \,:\, \nexists x \in X \text{ so that } (z, x) \in \Gamma \text{ and } \dd(x, z) > 0\}. 
    \end{align*}
    The set of \textit{end points} is $\mathfrak{a} \cup \mathfrak{b}$. We define the \textit{transport set with end points}:
    \begin{equation*}
        \Tau_e := P_1(\Gamma \backslash \{x=y\}) \cup P_1(\Gamma^{-1} \backslash \{x=y\}),
    \end{equation*}
    where $P_1: X \times X \to X$ is the projection onto the first factor and $\{x=y\}$ is the diagonal subset of $X \times X$. 
\end{Def}

By \cite[Remark 3.3]{CM17b}, we know that $\Gamma, \Gamma^{-1}, R$ are closed, that $\Gamma, \Gamma^{-1}, R, \Tau_e$ are $\sigma$-compact sets (countable union of compact sets), and that $\mathfrak{a}, \mathfrak{b}$ are Borel sets. 

In order to apply disintegration, we would like to have a subset of $\Tau_e$ on which the transport rays induce an equivalence relation. In order to achieve this, it suffices to remove from $\Tau_e$ the branching points of the transport rays. We set $\Gamma(x) := P_2(\Gamma \cap (\{x\} \times X))$ and $\Gamma(x)^{-1} := P_2(\Gamma^{-1} \cap (\{x\} \times X))$. These are the set of points that come before and after $x$ on some transport ray respectively. We then define the \textit{forward} and \textit{backward branching sets}: 
\begin{align*}
&A_+ := \{x \in \Tau_e \,:\, \exists z, w \in \Gamma(x) \text{ so that } (z,w) \notin R\},\\
&A_- := \{x \in \Tau_e \,:\, \exists z, w \in \Gamma^{-1}(x) \text{ so that } (z,w) \notin R\}, 
\end{align*}
and the \textit{transport set}
\begin{equation*}
\Tau := \Tau_e \setminus (A_+ \cup A_-). 
\end{equation*}
We have the following from \cite[Theorem 5.5]{C14}:
\begin{Thm}
Let $(X, \dd, \mm)$ satisfy $\RCD(K,N)$ with $1 \leq N < \infty$. Then the set of transport rays $R \subseteq X \times X$ (restricted to $\Tau \times \Tau$) is an equivalence relation on the transport set $\Tau$ and
\begin{equation*}\label{Eq: set different Tau and Tau_e}
    \mm(\Tau_\e \setminus \Tau) = 0.
\end{equation*}
Moreover, the transport set $\Tau$ is a $\sigma$-compact set. 
\end{Thm}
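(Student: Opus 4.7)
My plan is to establish the three assertions of the theorem in order of increasing difficulty, with the bulk of the work concentrated on the measure-negligibility of the branching set. Reflexivity of $R$ on $\Tau$ is immediate because $(x, x) \in \Gamma$, and symmetry is built into the definition $R = \Gamma \cup \Gamma^{-1}$. For transitivity, I would fix $x, y, z \in \Tau$ with $(x, y), (y, z) \in R$ and do a case analysis on whether each pair lies in $\Gamma$ or $\Gamma^{-1}$. In the two aligned cases, adding the identities $\phi(x) - \phi(y) = \dd(x, y)$ and $\phi(y) - \phi(z) = \dd(y, z)$, together with the $1$-Lipschitz property of $\phi$ and the triangle inequality, force $\phi(x) - \phi(z) = \dd(x, z)$, so $(x, z) \in \Gamma \subseteq R$. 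In the two anti-aligned cases, either $x, z \in \Gamma(y)$ or $x, z \in \Gamma^{-1}(y)$, and then the hypothesis $y \notin A_+ \cup A_-$ (since $y \in \Tau$) yields $(x, z) \in R$ directly from the definition of the branching sets.

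For $\sigma$-compactness of $\Tau$, since $\Tau_e$ is already $\sigma$-compact, it suffices to check that $A_+ \cup A_-$ is Borel (in fact $F_\sigma$). I would express $A_+$ as a countable union, over $n$, of first-coordinate projections of the compact ``witness'' sets $\{(x, z, w) : (x, z), (x, w) \in \Gamma, \, \max(\dd(x, z), \dd(x, w)) \leq n, \, \dd(z, w) \geq 1/n\}$ intersected with the closed set $(X \times X) \setminus R$ in the last two coordinates. Intersecting $\Tau$ with a countable compact exhaustion of $\Tau_e$ then yields the desired decomposition.

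The main content of the theorem is the measure-zero assertion $\mm(\Tau_e \setminus \Tau) = \mm(A_+ \cup A_-) = 0$. I would argue by contradiction, treating $A_+$ (the case of $A_-$ being symmetric). Assuming $\mm(A_+) > 0$, the idea is to produce an optimal dynamical plan between two absolutely continuous probability measures that is concentrated on a branching set, contradicting the essential non-branching of $\RCD(K, N)$ spaces. Using the Kuratowski--Ryll-Nardzewski selection theorem applied to the $\sigma$-compact relation $\Gamma$, I would pick Borel maps $x \mapsto (z(x), w(x))$ on a positive-measure subset $B \subseteq A_+$ with $z(x), w(x) \in \Gamma(x)$ and uniform lower and upper bounds on $\dd(x, z(x))$, $\dd(x, w(x))$, and $\dd(z(x), w(x))$. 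Interpolating $x$ to a small ball around $z(x)$, and separately to a small ball around $w(x)$, along $\Gamma$-geodesics produces two optimal dynamical plans $\nu_1$ and $\nu_2$ starting from the normalization of $\mm \mrestr B$. Their convex combination $\tfrac{1}{2}(\nu_1 + \nu_2)$ is then an optimal dynamical plan between absolutely continuous marginals that is concentrated on a branching set by construction, yielding the desired contradiction.

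The hardest point will be ensuring that the endpoint marginals of $\nu_1$ and $\nu_2$ remain absolutely continuous, because a priori the points $z(x), w(x)$ could cluster onto a set of vanishing measure. The fix requires smearing $z(x)$ and $w(x)$ into small balls whose radii are chosen by measurable selection, and then controlling the densities along the interpolation using the local Bishop--Gromov inequality and the density bound of Proposition~\ref{Pro: local cde density control} supplied by the $\cde$ condition. Once absolute continuity is in place, the essential non-branching of $\RCD(K, N)$ spaces closes the argument cleanly, and the remaining structural steps follow from the cyclical monotonicity of $\Gamma$.
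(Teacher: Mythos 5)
This theorem is not proved in the paper: it is quoted from \cite[Theorem 5.5]{C14} (see also \cite[Theorem 3.4]{CM17b}), so your proposal can only be measured against the argument in those references. Your treatment of the equivalence-relation part is correct and standard: the aligned cases follow from squeezing $\phi(x)-\phi(z)\le\dd(x,z)\le\dd(x,y)+\dd(y,z)=\phi(x)-\phi(z)$, and the anti-aligned cases are exactly what the removal of $A_+\cup A_-$ is designed for. The $\sigma$-compactness step, however, fails as written: knowing that $A_+\cup A_-$ is $F_\sigma$ (indeed $\sigma$-compact, being a continuous image of a closed set intersected with the open set $(X\times X)\setminus R$ in the last two coordinates) tells you nothing about $\Tau=\Tau_e\setminus(A_+\cup A_-)$, since the complement of an $F_\sigma$ set inside a $\sigma$-compact set is in general not $\sigma$-compact --- compare $[0,1]\setminus\mathbb{Q}$. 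Intersecting with a compact exhaustion of $\Tau_e$ does not repair this, so a genuinely different argument is needed for that assertion.

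Your contradiction strategy for $\mm(A_+\cup A_-)=0$ is the right one, but two essential mechanisms are missing. First, optimality: a plan concentrated on $\Gamma$ is cyclically monotone for the cost $\dd$, not for $\dd^2$, whereas essential non-branching concerns $W_2$-optimal dynamical plans. The references resolve this by transporting every point a \emph{constant} distance $s$ along its ray, so that for any competitor $\pi'$ one gets $\int\dd^2\,d\pi'\ge\int(\phi(x)-\phi(y))^2\,d\pi'\ge\big(\int(\phi(x)-\phi(y))\,d\pi'\big)^2=s^2$ by Cauchy--Schwarz, which is the cost of the translation plan; with your varying targets $z(x),w(x)$ and smeared balls, neither $\nu_1$, $\nu_2$, nor their average is $\dd^2$-optimal in any evident way (an average of two optimal plans is not optimal in general). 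Second, branching: two geodesics issuing from the same point $x$ toward $z(x)$ and $w(x)$ need not agree on any interval $[0,t]$ with $t>0$, so the set of geodesics you build need not be branching in the sense of Definition \ref{Def: non-branching}; one must prolong the rays backward through $x$ so that the two curves share a nondegenerate initial segment before splitting (with separate care for $x\in\mathfrak a$), or otherwise locate the genuine branch point. Finally, on absolute continuity of the terminal marginals, Proposition \ref{Pro: local cde density control} concerns transport to a Dirac mass and does not apply directly to your construction; the needed control comes from the constant-displacement translation read through the disintegration. As it stands, the core of your argument is a program rather than a proof.
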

Denoting the set of equivalence classes of $\Tau$ with respect to the relation $R$ by $Q$, we have a quotient map $\mathfrak{Q}: \Tau \to Q$. We say that a map $F: \Tau \to \Tau$ is a \textit{section of the equivalence relation $R$} if we have the following:
\begin{enumerate}
    \item for any $x \in \Tau$, $(x, F(x)) \in R$;
    \item for any $x, y \in \Tau$, if $(x, y) \in R$, then $F(x) = F(y)$.
\end{enumerate}
A set is said to be $\mm$-measurable if it is contained in the completion of the Borel $\sigma$-algebra with respect to $\mm$. We have the following from \cite[Proposition 5.2]{C14} (see also \cite[Section 4]{BC13}).
\begin{Pro}
There exists an $\mm$-measurable section
\begin{equation*}
    \mathfrak{Q}: \Tau \to \Tau
\end{equation*}
for the equivalence relation $R$, i.e., the preimage of any open subset of $\Tau$ under $\mathfrak{Q}$ is $\mm$-measurable. 
\end{Pro}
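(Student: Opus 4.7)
The plan is to define $\mathfrak{Q}$ via level sets of $\phi$. The key observation is that, for every $x \in \Tau$, the restriction $\phi|_{[x]_R}$ is an isometry onto a non-degenerate interval of $\R$: indeed, $(y,z) \in \Gamma$ forces $\phi(y)-\phi(z)=\dd(y,z)$, and non-degeneracy follows because $x \in \Tau \subseteq \Tau_e$ ensures the existence of some $y \neq x$ on the same ray. In particular, for every $a \in \R$ the level set $\phi^{-1}(a)$ meets each $R$-equivalence class in at most one point.

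Fix an enumeration $\{a_k\}_{k \in \N}$ of $\Q$ and set $C_k := \phi^{-1}(a_k) \cap \Tau$, which is $\sigma$-compact since $\phi^{-1}(a_k)$ is closed and $\Tau$ is $\sigma$-compact. Consider the $R$-saturations
\[
E_k := P_1\bigl(R \cap (\Tau \times C_k)\bigr).
\]
Since $R$ is closed in $X \times X$ and $\Tau \times C_k$ is $\sigma$-compact in the proper space $X \times X$, each $E_k$ is $\sigma$-compact, hence Borel. Density of $\{a_k\}$ in $\R$ combined with the non-degenerate $\phi$-image observation forces $\Tau = \bigcup_k E_k$. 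Set $F_k := E_k \setminus \bigcup_{j<k} E_j$, so that $\{F_k\}$ is a disjoint Borel partition of $\Tau$, and define
\[
\mathfrak{Q}(x) \;:=\; \text{the unique element of } [x]_R \cap C_k \quad \text{for } x \in F_k.
\]
By construction $(x,\mathfrak{Q}(x)) \in R$ for every $x \in \Tau$, and $\mathfrak{Q}$ is constant on each $R$-equivalence class, so it is a section in the sense specified before the proposition.

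For Borel measurability, fix an open set $U \subseteq X$. Since $X$ is proper, $U$ is $F_\sigma$, hence $U \cap C_k$ is $\sigma$-compact. Then
\[
\mathfrak{Q}^{-1}(U) \cap F_k \;=\; F_k \cap P_1\bigl(R \cap (\Tau \times (U \cap C_k))\bigr)
\]
is the $P_1$-projection of a $\sigma$-compact set, hence itself $\sigma$-compact and Borel. Taking the union over $k$, we conclude that $\mathfrak{Q}^{-1}(U)$ is Borel, so $\mathfrak{Q}$ is Borel-measurable and in particular $\mm$-measurable, as required (and since $\mathfrak{Q}$ takes values in $\Tau$, the same holds for preimages of relatively open subsets of $\Tau$).

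The main obstacle is the verification that every equivalence class meets at least one $C_k$, which reduces to the observation that $\Tau$ contains only points lying on genuinely non-trivial rays, so $\phi$ maps each such ray onto an interval of positive length; everything else amounts to the routine propagation of $\sigma$-compactness through countable unions, intersections with closed sets, and $P_1$-projections in the proper ambient space $X \times X$.
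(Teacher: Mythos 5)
Your construction (rational level sets $C_k=\phi^{-1}(a_k)\cap\Tau$, their $R$-saturations $E_k$, the first-hit partition $F_k$, and the propagation of $\sigma$-compactness through closed intersections and $P_1$-projections in the proper space $X\times X$) is essentially the construction of Bianchini--Cavalletti and Cavalletti that the paper simply cites for this proposition, and the measurability bookkeeping is sound (indeed it yields a Borel section, which is stronger than the $\mm$-measurability claimed). The saturation of $E_k$, hence of $F_k$, which you need for $\mathfrak{Q}$ to be constant on classes, also holds, though you should say a word about it.

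The genuine gap is exactly at the point you flag as ``the main obstacle'': the identity $\Tau=\bigcup_k E_k$, i.e.\ the claim that every equivalence class of $R|_{\Tau\times\Tau}$ meets some rational level set of $\phi$. Your justification only produces a point $y\neq x$ with $(x,y)\in R$ and $y\in\Tau_e$; it does not put $y$, nor the points of a geodesic from $x$ to $y$, inside $\Tau$. This matters because $C_k$ is by definition intersected with $\Tau$: the point of the ray $R(x)$ at level $a_k$ could a priori lie in $A_+\cup A_-$ and hence be absent from every $C_k$, in which case $x\notin\bigcup_k E_k$ and $\mathfrak{Q}(x)$ is undefined. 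Concretely, for $z=\gamma_t$ with $\gamma$ a geodesic from $x$ to $y$ and $(x,y)\in\Gamma$, one rules out $z\in A_+$ using $\Gamma(z)\subseteq\Gamma(x)$ and $x\notin A_+$, but the symmetric argument for $z\in A_-$ would need a point of $\Tau$ lying \emph{after} $z$ on the ray, which is not available from $x\in\Tau$ alone. What is missing is the structural lemma (this is [BC13, Section 4]/[C14, Lemma 5.3], proved in the references \emph{before} the section is constructed) that for $x\in\Tau$ the set $R(x)\cap\Tau$ is isometric via $\phi$ to a non-degenerate interval; note that you need both non-degeneracy and order-convexity of $\phi([x]_R)$, since a two-point class with image $\{0,\sqrt2\}$ would already defeat the rational level-set trick. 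Once that lemma is in place, the rest of your argument goes through verbatim.
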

There is a natural identification between the set of equivalence classes and the image of $\mathfrak{Q}$, so we identify $Q = \mathfrak{Q}(\Tau) = \{x \in \Tau \,:\, x = \mathfrak{Q}(x)\}$ and endow it with the subset topology. The pushforward measure $\mathfrak{q} := \mathfrak{Q}_{*}(\mm)$ on $Q$ is Borel since $\mathfrak{Q}$ is $\mm$-measurable.
In the sequel, let us assume that $\mm(X)<\infty$.
Viewing $\mathfrak{Q}$ as a map $\Tau\to X$ and recalling that $X$ is Polish,
we may then apply the standard disintegration theorem (see \cite[Corollary 452P]{F03})
and find a strongly consistent disintegration $\{\mm_q\}_{q\in Q}$, where $\mm_q$
is a probability measure concentrated on $\mathfrak{Q}^{-1}(q)\subseteq\Tau$.

We have a ray map associated with the disintegration as in \cite[Definition 3.6]{CM17b} which we denote by $g: \text{Dom}(g) \subseteq Q \times \RR \to \Tau$. The ray map associates with each $(q,t) \in \text{Dom}(g)$ the unique element $x \in \Tau$ so that $(q,x) \in \Gamma$ at distance $t$ from $q$ if $t$ is positive or the unique element $x \in \Tau$ so that $(x,q) \in \Gamma$ at distance $-t$ from $q$ if $t$ is negative, provided that such an element exists. 

We have the following regularity properties for $g$.
\begin{Pro}\label{Pro: Ray map properties}
The following holds.
\begin{enumerate}
    \item $g$ is continuous and $\text{Dom}(g)$ is analytic;
    \item $t \mapsto g(q,t)$ is an isometry and if $s, t \in \text{Dom}(g(q, \cdot))$ with $s \leq t$ then $g(q,s), g(q,t) \in \Gamma$;
    \item $(q,t) \mapsto g(q,t)$ is bijective from $\text{Dom}(g)$ to $\Tau$, and its inverse is 
    \begin{equation*}
        x \mapsto g^{-1}(x) = (\mathfrak{Q}(x), \pm\dd(x, \mathfrak{Q}(x))),
    \end{equation*}
    where $\mathfrak{Q}$ is the quotient map previously introduced and the positive or negative sign depends on $(\mathfrak{Q}(x), x) \in \Gamma$ or $(x, \mathfrak{Q}(x)) \in \Gamma$;
    \item $g^{-1}$ is $\mm$-measurable, i.e., the image of any open set in $Q \times \RR$ through $g$ is $\mm$-measurable. 
\end{enumerate}
\end{Pro}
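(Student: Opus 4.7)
The plan is to establish the four claims in the order (2), (3), (1), (4), since each builds on the previous. For (2), I would use that on $\Tau$ the equivalence relation $R$ has no branching, so each equivalence class is totally ordered by $\phi$: along $\Gamma$ one has $\phi(x)-\phi(y)=\dd(x,y)$, while $\phi$ is 1-Lipschitz in general. Given $(q,s),(q,t)\in\text{Dom}(g)$ with $s\leq t$, a case analysis on the signs of $s,t$ (using that $g(q,\cdot)$ moves forward along $\Gamma$ for positive parameter and backward along $\Gamma^{-1}$ for negative) yields $\phi(g(q,s))-\phi(g(q,t))=t-s$. The membership $(g(q,s),g(q,t))\in\Gamma$ follows from the earlier lemma on subsegments of geodesics in $\Gamma$ together with the no-branching property of $\Tau$ (which forces $g(q,s)$ to lie on any geodesic from $q$ to $g(q,t)$ when $0\leq s\leq t$, with symmetric reasoning otherwise). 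The isometry $\dd(g(q,s),g(q,t))=t-s$ is then just the identification of $\dd$ with the $\phi$-difference along $\Gamma$.

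For (3), bijectivity is forced by the removal of branching: if $g(q,t)=g(q',t')$, then $q,q'$ lie in the same equivalence class, so $q=q'$ (being the representative of the class), and $t=t'$ by the isometry from (2). Surjectivity onto $\Tau$ is built into the construction: any $x\in\Tau$ belongs to exactly one equivalence class whose representative is $\mathfrak{Q}(x)$, and since branching has been removed, exactly one of $(\mathfrak{Q}(x),x)\in\Gamma$ or $(x,\mathfrak{Q}(x))\in\Gamma$ holds, yielding the explicit inverse formula.

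Claim (1) follows by combining closedness of $\Gamma$ with properness of $X$. Given $(q_n,t_n)\to(q,t)$ in $\text{Dom}(g)$, the points $g(q_n,t_n)$ lie in a bounded set and any subsequential limit $x$ satisfies $(q,x)\in\Gamma$ or $(x,q)\in\Gamma$ with $\dd(x,q)=|t|$, hence $x=g(q,t)$ by (3). Analyticity of $\text{Dom}(g)$ is obtained by writing it as the union of
\[
\{(q,t)\in Q\times[0,\infty):\exists\,x,\ (q,x)\in\Gamma,\ \dd(q,x)=t\}
\]
and its backward analog; each is the continuous image of a $\sigma$-compact subset of $X\times X\times\R$, hence analytic. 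Claim (4) is then immediate from the formula $g^{-1}(x)=(\mathfrak{Q}(x),\pm\dd(x,\mathfrak{Q}(x)))$: $\mathfrak{Q}$ is $\mm$-measurable by construction, $\dd(\cdot,\mathfrak{Q}(\cdot))$ is a continuous function of $x$ composed with $\mathfrak{Q}$, and the sign is determined by membership in the Borel sets $\Gamma,\Gamma^{-1}$.

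The most delicate ingredient is the analyticity claim in (1), which requires carefully tracking the analytic structure on $Q$ (inherited as a subset of $\Tau$) and verifying that the relevant projections preserve analyticity; the remainder is routine bookkeeping using closedness of $\Gamma$, absence of branching on $\Tau$, and the 1-Lipschitz property of $\phi$.
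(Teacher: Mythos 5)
Your proposal is correct. For items (1)--(3) you essentially reconstruct the argument of Cavalletti's Proposition 5.4 in \cite{C14}, which is exactly what the paper cites for these parts (and your justification of continuity via closedness of $\Gamma$ plus properness of $X$ matches the paper's one-line remark verbatim); the reliance on the removal of $A_+\cup A_-$ to totally order each class, and the combination of $1$-Lipschitzness of $\phi$ with the triangle inequality to get $\dd(g(q,s),g(q,t))=t-s$, is the standard and correct route. The genuine divergence is in (4). The paper argues that $g$, being continuous, maps relatively closed (hence analytic, since $\text{Dom}(g)$ is analytic) subsets of $\text{Dom}(g)$ to analytic subsets of $\Tau$, and analytic sets are universally measurable; this leans on the analyticity established in (1). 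You instead read off $\mm$-measurability of $g^{-1}$ directly from the formula $g^{-1}(x)=(\mathfrak{Q}(x),\pm\dd(x,\mathfrak{Q}(x)))$: the quotient map $\mathfrak{Q}$ is $\mm$-measurable by construction, $\dd(\cdot,\mathfrak{Q}(\cdot))$ is a continuous function composed with an $\mm$-measurable one, the sign is decided by membership in the closed sets $\Gamma$, $\Gamma^{-1}$, and second countability of $Q\times\R$ reduces arbitrary open sets to countable unions of basic products. This is a legitimate alternative that bypasses descriptive set theory for (4) at the price of invoking the measurable-section property of $\mathfrak{Q}$; the paper's route is shorter given that analyticity of $\text{Dom}(g)$ is needed anyway in (1). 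You correctly flag that the analyticity of $\text{Dom}(g)$ is the delicate point (it requires $Q$ itself to carry an analytic structure, as in the Bianchini--Cavalletti construction); since the paper defers this to \cite{C14} as well, no further detail is required, but in a self-contained write-up you would need to record why $Q=\{x\in\Tau\,:\,x=\mathfrak{Q}(x)\}$ is analytic before taking the projection.
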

The first three properties were stated in \cite[Proposition 5.4]{C14}, with the continuity of $g$ following from its definition, properness of $X$, and closedness of $\Gamma$. The last one follows 
from the fact that $g$ maps (relatively) closed subsets of $\text{Dom}(g)$, which are then analytic,
to analytic subsets of $\Tau$, as $g$ is continuous.

The following theorem summarizes the results presented so far and states the main disintegration theorem for $\RCD$ spaces \cite[Theorem 3.8]{CM17b}. 
\begin{Thm}\label{Thm: Main disintegration thm}
Let $(X, \dd, \mm)$ be $\RCD(K,N)$ for some $k, N \in \RR$ with $1 \leq N \leq  \infty$, with $\mm(X) < \infty$. Let $\phi: X \to \RR$ be a $1$-Lipschitz function. Then, in the notation already presented, the following disintegration formula holds:
\begin{equation*}
    \mm \mrestr {\Tau} = \int_{Q} \mm_q \,d\mathfrak{q}(q), \quad \mm_q(Q^{-1}(q))=1 \text{ for $\mathfrak{q}$-a.e.\ $q \in Q$}.
\end{equation*}
Finally, for $\mathfrak{q}$-a.e.\ $q \in Q$ the conditional measure $\mm_q$ is absolutely continuous with respect to $\HH^1 \mrestr {\{g(q,t) \,:\, t\in \text{Dom}(g(q, \cdot))\}} = g(q, \cdot)_{*}(\LL^1)$.
\end{Thm}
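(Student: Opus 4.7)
The plan is to combine the measurable setup constructed earlier with a one-dimensional curvature estimate inherited from the $\RCD(K,N)$ condition. The disintegration identity is a formal consequence of the existence of the $\mm$-measurable quotient map $\mathfrak{Q}:\Tau\to Q$ together with standard measure-theoretic machinery, while the absolute continuity of the conditional measures $\mm_q$ is where the curvature hypothesis does real work.

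For the first piece, I would normalize $\mm\mrestr\Tau$ to a probability measure (using $\mm(X)<\infty$ and the fact that $\mm(\Tau_e\setminus\Tau)=0$) and apply the standard disintegration theorem of Fremlin (Corollary 452P of \cite{F03}) to the $\mm$-measurable map $\mathfrak{Q}$, viewed as taking values in the Polish space $X$. This directly yields a family $\{\mm_q\}_{q\in Q}$ of probability measures satisfying $\mm\mrestr\Tau=\int_Q\mm_q\,d\mathfrak{q}(q)$. Strong consistency, $\mm_q(\mathfrak{Q}^{-1}(q))=1$ for $\mathfrak{q}$-a.e.\ $q$, follows from the fact that $\mathfrak{Q}$ is a genuine section of the equivalence relation $R$ on $\Tau$, after identifying $Q$ with the image $\mathfrak{Q}(\Tau)$ equipped with the subspace topology.

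Next I would prove the absolute continuity of $\mm_q$ with respect to $g(q,\cdot)_*(\LL^1)$ for $\mathfrak{q}$-a.e.\ $q$. The ray map $g(q,\cdot)$ provides an isometric arclength parameterization of the equivalence class $\mathfrak{Q}^{-1}(q)\subseteq\Tau$. Using the $\MCP(K,N)$ property implied by $\RCD(K,N)$, one can produce, for each pair of points along a given transport ray, an optimal dynamical plan whose marginals contract a small neighborhood along the ray with Jacobian comparable to the distortion coefficient $\sigma_{K,N}^{(t)}$. Pushing such plans through the disintegration and comparing with the global curvature-dimension bound on $\mm$ forces, for $\mathfrak{q}$-a.e.\ $q$, every $g(q,\cdot)_*(\LL^1)$-null set on the ray to be $\mm_q$-null; in fact the resulting conditional density satisfies a one-dimensional $\CD(K,N)$-type concavity inequality, which is more than we need here.

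The main obstacle is the passage from the global curvature condition on $(X,\dd,\mm)$ to the one-dimensional estimate on $\mathfrak{q}$-a.e.\ ray, in a way that is jointly measurable in $q$. This requires selecting a measurable family of transport plans aligned with the rays (using the analyticity of $\mathrm{Dom}(g)$, the $\sigma$-compactness of $\Tau$, and the regularity of $g$ recorded in Proposition \ref{Pro: Ray map properties}), and then invoking the essentially non-branching character of $\RCD(K,N)$ spaces to ensure that the ray decomposition is preserved under these compressions rather than destroyed by branching. Once these measurability and non-branching issues are handled, the absolute continuity of $\mm_q$ with respect to $g(q,\cdot)_*(\LL^1)$ follows exactly as in \cite[Theorem 3.8]{CM17b}.
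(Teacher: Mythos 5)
Your proposal matches the paper's treatment: the paper obtains the disintegration identity exactly as you do, by applying Fremlin's disintegration theorem \cite[Corollary 452P]{F03} to the $\mm$-measurable section $\mathfrak{Q}:\Tau\to X$ (using $\mm(X)<\infty$ and $\mm(\Tau_e\setminus\Tau)=0$), and it does not reprove the absolute continuity of $\mm_q$, instead citing \cite[Theorem 6.6]{C14} and \cite[Theorem 3.8]{CM17b} — which is where your sketch of the one-dimensional $\MCP$/non-branching argument also ultimately defers.
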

For proof of the last part of the Theorem, see \cite[Theorem 6.6]{C14}.
The curvature lower bound can be localized to the needles of the disintegration. For each $q \in Q$, denote by $X_q$ the closure of $\mathfrak{Q}^{-1}(q)$. We have the following theorem from \cite{CM17b}.
\begin{Thm}\label{Thm: needle CD property}
If $(X, \dd, \mm)$ is essentially non-branching and $\CD(K,N)$, satisfying $\mm(X) < \infty$, then for $\mathfrak{q}$-a.e.\ $q \in Q$ the space $(X_q, \dd, \mm_q)$ is also $\CD(K,N)$, with $\supp(\mm_q) = X_q$.
\end{Thm}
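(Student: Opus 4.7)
The plan is to reduce the one-dimensional $\CD(K,N)$ condition on each needle to the global $\CD(K,N)$ condition on $X$, using the essentially non-branching hypothesis to guarantee that optimal transport plans between measures concentrated on a small tube of needles respect the needle decomposition. This is the standard ``localization'' strategy: encode a pair of needle-wise measures as a global measure via a bundle of needles, apply $\CD(K,N)$ globally, and then differentiate through the disintegration to recover an inequality on the single needle.

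First I would fix a needle $X_q$ and two absolutely continuous probability measures $\mu_0 = f_0 \, \mm_q$, $\mu_1 = f_1 \, \mm_q$ on $X_q$ with bounded densities and compact support in $\mathfrak{Q}^{-1}(q)$. Using the ray map $g$ from Proposition \ref{Pro: Ray map properties}, I would transfer $\mu_0$ and $\mu_1$ to nearby needles: for every $q'$ in a small measurable set $A \subseteq Q$ containing $q$, define $\mu_i^{q'}$ as the pushforward through $g(q',\cdot)$ of the pullback of $\mu_i$ via $g(q,\cdot)$. Then I would form the globally defined measures $\bar\mu_i := \mathfrak{q}(A)^{-1}\int_A \mu_i^{q'}\, d\mathfrak{q}(q')$, which are absolutely continuous with respect to $\mm$ thanks to Theorem \ref{Thm: Main disintegration thm}, and have finite entropy for suitable choices of $f_0,f_1$ and of the transferred densities.

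Next I would apply the global $\CD(K,N)$ condition to $\bar\mu_0$ and $\bar\mu_1$, obtaining a $W_2$-geodesic $(\bar\mu_t)$ and a dynamical optimal plan $\bar\nu\in\OptGeo(\bar\mu_0,\bar\mu_1)$ satisfying the entropic/distortion-coefficient inequality. The crucial point is that the coupling $(e_0,e_1)_*\bar\nu$ is concentrated on $\Gamma$ defined in \eqref{Eq: Gamma definiiton}: indeed, pairs $(x,y)$ coming from the same needle $X_{q'}$ automatically satisfy $\phi(x)-\phi(y) = \pm\dd(x,y)$, and the natural coupling that moves mass along each needle provides an admissible transport whose cost equals the $L^2$-cost of $\bar\mu_0,\bar\mu_1$; any optimal coupling must therefore remain supported on $R$. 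Combined with the essentially non-branching hypothesis, this forces $\bar\nu$-a.e.\ geodesic to stay on a single needle, because two geodesics that start on the same needle, share an initial segment, and then branch onto different needles would violate non-branching (geodesics on the transport set are uniquely determined by one of their points and a direction along the needle, since needles are isometric copies of intervals by Proposition \ref{Pro: Ray map properties}(2)). Consequently $\bar\nu$ disintegrates as $\bar\nu = \mathfrak{q}(A)^{-1}\int_A \nu_{q'}\, d\mathfrak{q}(q')$ with $\nu_{q'} \in \OptGeo(\mu_0^{q'},\mu_1^{q'})$ supported in $\Geo(X_{q'})$, and $\bar\mu_t$ disintegrates accordingly as $\bar\mu_t = \mathfrak{q}(A)^{-1}\int_A \mu_t^{q'}\,d\mathfrak{q}(q')$.

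Finally, I would expand the global entropy inequality for $\bar\mu_t$ using $\Ent(\bar\mu_t) = \mathfrak{q}(A)^{-1}\int_A \Ent_{\mm_{q'}}(\mu_t^{q'})\, d\mathfrak{q}(q') - \log\mathfrak{q}(A)$, and similarly for the distortion-coefficient terms, exploiting that $W_2(\mu_0^{q'},\mu_1^{q'})$ is continuous in $q'$. A Lebesgue differentiation argument as $A$ shrinks to $\{q\}$ then yields the desired $\CD(K,N)$ inequality for $(\mu_t^q)$ on $X_q$ at $\mathfrak{q}$-a.e.\ $q$; a standard exhaustion over countably many pairs of densities $(f_0,f_1)$ in a dense subclass upgrades the almost-everywhere statement to one that gives the $\CD(K,N)$ condition on $(X_q,\dd,\mm_q)$ for $\mathfrak{q}$-a.e.\ $q$. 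The claim $\supp(\mm_q) = X_q$ follows because $\mm_q$ is absolutely continuous with respect to $\HH^1$ restricted to the needle (Theorem \ref{Thm: Main disintegration thm}) and the one-dimensional $\CD(K,N)$ condition forces the density to be locally positive throughout the closed interval $X_q$.

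The main obstacle is the non-branching step: showing that $\bar\nu$ is concentrated on geodesics lying inside single needles. This is where essential non-branching is indispensable, and where one must carefully distinguish the needle-wise coupling (which has the right monotonicity) from any other optimal coupling, using the structural facts about $\Gamma$, $\Tau$, and the absence of branching points in $\Tau$ established before the statement.
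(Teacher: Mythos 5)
This theorem is quoted in the paper from [CM17b] without proof, so the relevant comparison is with the argument of Cavalletti--Mondino. Your overall strategy (test the global $\CD(K,N)$ condition against measures spread over a bundle of needles, then differentiate through the disintegration) is indeed the right one, but there is a genuine gap at the step you yourself flag as crucial: the claim that the optimal coupling between $\bar\mu_0$ and $\bar\mu_1$ is concentrated on $\Gamma\cup\Gamma^{-1}$. Your justification is that the needle-wise coupling is admissible and supported on $R$; but admissibility does not imply optimality for the quadratic cost, and for \emph{arbitrary} measures $\mu_0,\mu_1$ transferred to nearby needles the needle-wise coupling is in general \emph{not} $W_2$-optimal. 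A model example: two parallel needles at small distance $\epsilon$ in $\R^2$ with $\phi$ a linear function, where $\mu_0\to\mu_1$ requires displacement $+1$ along the first needle and $-1$ along the second; the cross-needle coupling has cost $O(\epsilon^2)$ while the needle-wise one has cost $2$. So the optimal plan for $\bar\mu_0,\bar\mu_1$ can leave the needles entirely, the interpolation $\bar\mu_t$ need not disintegrate needle by needle, and the entropy identity you use in the last step breaks down. Essential non-branching cannot repair this, because it only controls branching of geodesics selected by the optimal plan; it says nothing unless you already know the endpoints of those geodesics lie on a common transport ray.

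The way [CM17b] (and [C14]) circumvents this is to use only a very special family of test measures: normalized restrictions of $\mm$ to ``rectangles'' $g(A\times[a_0,a_0+\epsilon])$ and their translates $g(A\times[a_0+\delta,a_0+\delta+\epsilon])$ by a \emph{constant} displacement $\delta$ along the rays. For constant displacement, the ray-wise coupling is $\dd^2$-cyclically monotone (this follows from the $\dd$-cyclical monotonicity of $\Gamma$ together with Jensen's inequality, since all transport distances are equal), hence genuinely optimal, and essential non-branching then gives uniqueness of the plan and its concentration on single needles. Plugging this into the global convexity inequality, disintegrating, and letting $\epsilon\to0$ at Lebesgue points yields the pointwise concavity inequality for $h_q^{1/(N-1)}$, i.e., that $h_q$ is a $\CD(K,N)$ density; one then invokes the one-dimensional characterization (as recalled after \eqref{Eq: MCP(K,N) density def} for $\MCP$, with the analogous statement for $\CD$) to conclude that $(X_q,\dd,\mm_q)$ is $\CD(K,N)$. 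Your proposal would become correct if you replaced the arbitrary transferred measures by these constant translations and verified the cyclical monotonicity explicitly; as written, the key optimality claim is false.
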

Here by abuse of notation $\dd$ is the restriction of the distance function $\dd$ on $X$ to $X_q$ and $\mm_q$ is the pushforward of the measure $\mm_q$ on $\mathfrak{Q}^{-1}(q)$ via the inclusion map. In particular, $(X_q, \dd, \mm_q)$ verifies the $\MCP(K,N)$ property proposed by \cite{S06b, O07}. 

Following \cite[Section 2.3]{CM18}, given $K \in \R$ and $N \in (1, \infty)$, we say that a nonnegative Borel function $h$ defined on $I \subseteq \R$, where $I$ is a closed interval, is an \textit{$\MCP(K,N)$ density} if for all $x_0, x_1 \in I$ and $t \in [0,1]$:
\begin{equation}\label{Eq: MCP(K,N) density def}
    h(tx_1 + (1-t)x_0) \geq \sigma_{K,N-1}^{(1-t)}(|x_1-x_0|)^{N-1}h(x_0),
\end{equation}
where $\sigma^t_{K,N}(\cdot)$ are the distortion coefficients defined in \eqref{Eq: distortion coefficients}. It is known that $(I, \dd_{\R}, h\LL^1)$ is $\MCP(K,N)$ if and only if $h$ is an $\MCP(K,N)$ density.  

\subsection{Disintegration with respect to the distance to a point}
In this subsection we will take our underlying space to be an $\RCD(K,N)$ space $(X, \dd, \mm)$ and assume that the $1$-Lipschitz function under consideration for the disintegration theorem from the previous subsection is $\dd_p(\cdot) := \dd(p, \cdot)$, where $p$ is a fixed point. In addition, we assume that $\mm(X) < \infty$ for ease of exposition; the assumption is so that we can apply Theorem \ref{Thm: Main disintegration thm} directly as stated and is not necessary. Indeed, the argument goes through in the $\mm(X) = \infty$ case either by considering disintegration on large balls or by using \cite[Theorem 3.10]{L24} (see the discussion below it).

We will also use the following notation: given a measure space $(Y, \Sigma, \mu)$ and a nonnegative function $f: Y \to \R$ that is summable with respect to $\mu$, we write $\mu^f$ to indicate the measure which takes
\begin{equation*}
    \Sigma \ni S \mapsto \mu^f(S) := \int_{S} f \, d\mu.
\end{equation*}
Sometimes, we will also write $f \mu$ to indicate $\mu^f$. 

Since $X$ is a geodesic space, it is easy to see that $\Tau_e = X$ (as long as $X$ is not a single point). Therefore, by \eqref{Eq: set different Tau and Tau_e}, we have  
\begin{equation*}
    \mm(X \setminus \Tau) = 0,
\end{equation*}
and so in particular Theorem \ref{Thm: Main disintegration thm} gives that
\begin{equation}\label{Eq: m disintegration formula}
    \mm = \int_{Q} \mm_q \, d\mathfrak{q}(q),
\end{equation}
As a reminder, the equality holds in that sense that, for any Borel set $B \subseteq X$, the function $Q \ni q \mapsto \mm_q(B)$ is integrable with respect to $\mathfrak{q}$ and $\int_Q \mm_q(B) \, d\mathfrak{q}(q) = \mm(B)$. We note that technically the collection $\{\mm_q\}$ in Theorem \ref{Thm: Main disintegration thm} are probability measures on $\Tau$, which for simplicity of exposition we identify with a Borel probability measure on $X$ via pushforward through the inclusion map. Moreover, we know that, for $\mathfrak{q}$-a.e.\ $q \in Q$, $\mm_q$ is absolutely continuous with respect to $g(q, \cdot)_{*}(\LL^1)$ and $t \mapsto g(q,t)$ is an isometry onto its image. Therefore, denoting $I_q := \text{Dom}(g(q,\cdot))$, for any such $q$ we may find $m_q: I_q \to \R$ that is positive and integrable with respect to the Lebesgue measure $\LL^1$ on $I_q$, so that $g(q, \cdot)_{*}((\LL^1)^{m_q})=\mm_q$.

Given a Borel probability measure $\mu \ll \mm$, we have that $\mu = \mm^u$ for some nonnegative Borel density $u$. We claim the following.
\begin{Lem}\label{Lem: another probability disintegration}
The following holds:
\begin{enumerate}
    \item for $\mathfrak{q}$-a.e.\ $q \in Q$, $\mm_q^{u}$ is a nonnegative finite Borel measure on $X$;
    \item for any Borel set $B \in X$, the function $q\mapsto\mm_{q}^u(B)$ is $\mathfrak{q}$-measurable and 
    \begin{equation*}
        \mu(B) = \int_{Q} \mm_q^u(B)d\mathfrak{q}(q);
    \end{equation*}
    \item if $\{\nu_q\}_{q\in Q}$ is another collection of nonnegative finite Borel measures on $X$ so that
        \begin{itemize}
            \item $\nu_q$ is concentrated on $\mathfrak{Q}^{-1}(q)$ for $\mathfrak{q}$-a.e.\ $q \in Q$,
            \item (2) holds with $\{\nu_q\}$ in place of $\{\mm_q^u\}$,
        \end{itemize}
    then for $\mathfrak{q}$-a.e.\ $q \in Q$ we have $\nu_q = \mm_q^u$ (on all Borel subsets of $X$). 
\end{enumerate}
\end{Lem}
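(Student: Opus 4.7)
The plan is to derive (1) and (2) by a direct application of Tonelli's theorem to the disintegration identity \eqref{Eq: m disintegration formula}, and to obtain (3) through the standard uniqueness argument for disintegrations, combined with a countable generating family, which is the main technical step.

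For (1) and (2), I would use that $u \geq 0$ is Borel and $\mm$-integrable with $\int u\, d\mm = \mu(X) = 1$. Applying Tonelli to \eqref{Eq: m disintegration formula} against $\mathbf{1}_B u$ for any Borel $B \subseteq X$ gives both the $\mathfrak{q}$-measurability of $q \mapsto \mm_q^u(B)$ (as a consequence of the measurability statements inherent in Theorem \ref{Thm: Main disintegration thm}) and the identity
\begin{equation*}
\mu(B) = \int_B u\, d\mm = \int_Q \mm_q^u(B)\, d\mathfrak{q}(q).
\end{equation*}
Specializing to $B = X$ gives $\mm_q^u(X) < \infty$ for $\mathfrak{q}$-a.e.\ $q$, which is (1); the displayed identity is (2).

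For (3), given another family $\{\nu_q\}$ as in the statement, I would fix a Borel $B \subseteq X$ and any $S$ in the pushforward $\sigma$-algebra on $Q$. By fiber concentration, $\nu_q(B \cap \mathfrak{Q}^{-1}(S)) = \mathbf{1}_S(q) \nu_q(B)$ and likewise for $\mm_q^u$. Applying the integration formula of (2) to the $\mm$-measurable set $B \cap \mathfrak{Q}^{-1}(S)$ (extending (2) from Borel to $\mm$-measurable sets via $\mu \ll \mm$ and the fact that $\mm$-null Borel sets are $\nu_q$-null for $\mathfrak{q}$-a.e.\ $q$, which follows from applying (2) to the Borel null set itself) produces
\begin{equation*}
\int_S \nu_q(B)\, d\mathfrak{q}(q) = \mu(B \cap \mathfrak{Q}^{-1}(S)) = \int_S \mm_q^u(B)\, d\mathfrak{q}(q),
\end{equation*}
whence $\nu_q(B) = \mm_q^u(B)$ for $\mathfrak{q}$-a.e.\ $q$, with an exceptional $\mathfrak{q}$-null set $N_B$ depending on $B$.

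The main obstacle is upgrading this to a single $\mathfrak{q}$-null exceptional set valid for all Borel $B$ simultaneously. Here I plan to exploit that $(X, \dd)$ is Polish: fix a countable base of open balls and let $\mathcal{A}$ denote the countable Boolean algebra it generates, which is a $\pi$-system generating the Borel $\sigma$-algebra. The union $N := \bigcup_{B \in \mathcal{A}} N_B$ is still $\mathfrak{q}$-null. For every $q \notin N$, both $\nu_q$ and $\mm_q^u$ are finite Borel measures on $X$ that agree on the $\pi$-system $\mathcal{A}$ with matching total mass (taking $B = X \in \mathcal{A}$), so by Dynkin's $\pi$-$\lambda$ theorem they coincide on every Borel subset of $X$, proving (3).
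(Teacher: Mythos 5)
Your proposal is correct and follows essentially the same route as the paper: (1) and (2) come from monotone approximation of $u$ by simple functions (which is exactly what a Tonelli-type theorem for disintegrations amounts to), and (3) is proved by integrating $\nu_q(B)$ and $\mm_q^u(B)$ over measurable subsets of $Q$, handling the merely $\mm$-measurable set $B\cap\mathfrak{Q}^{-1}(S)$ via a Borel modification inside an $\mm$-null set, and then passing to a single exceptional null set through a countable generating family. The only (welcome) difference is that you make the last reduction precise by taking a countable Boolean algebra as a $\pi$-system and invoking Dynkin's $\pi$--$\lambda$ theorem, where the paper appeals somewhat loosely to ``countable additivity.''
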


\begin{proof}
It is possible to construct a sequence of nonnegative simple functions (i.e., finite linear combinations of characteristic functions of Borel sets) $u_i: X \to \R$ that is monotone increasing and converging to $u$ from below. It is clear from the definition of disintegration (Definition \ref{Def: Disintegration}) that 
\begin{equation*}
    \int_{X} u_i(x) \, d\mm(x) = \int_{q \in Q} \bigg(\int_{X} u_i(x) \, d\mm_q(x)\bigg) \, d\mathfrak{q}(q),
\end{equation*}
and so the bounded monotone convergence theorem gives 
\begin{equation*}
    \int_{X} u(x) \, d\mm(x) = \int_{q \in Q} \bigg(\int_{X} u(x) \, d\mm_q(x)\bigg) \, d\mathfrak{q}(q), 
\end{equation*}
where the left-hand side integrates to $1$ since $\mu$ is a probability measure. This implies that, for $q$-a.e.\ $q \in Q$, $u$ is summable with respect to $\mm_q$ and so $\mm_q^u$ is a finite nonnegative Borel measure, proving the first claim. 

The second claim follows similarly by approximation. Fix any Borel set $B \in X$. From the definition of disintegration we have that the functions $v_i: Q \to \R$ defined by $v_i(q) := \int_{B} u_i(x) \, d\mm_q(x)$ are $\mathfrak{q}$-measurable. We see that this sequence $(v_i)_{i\in\N}$ is monotone increasing and by the bounded monotone convergence theorem converges to $v:Q \to \R$ defined by $v(q) := \int_{B} u(x) \, d\mm_q(x)$ for $\mathfrak{q}$-a.e.\ $q \in Q$. This implies that $v$ is $\mathfrak{q}$-measurable since all of the functions $v_i$ are. The equality in claim (2) then follows by applying the bounded monotone convergence theorem to the sequence $(v_i)_{i\in\N}$. 

For the third claim, notice that the Borel $\sigma$-algebra of $X$ is countably generated by some collection $\{B_i\}_{i \in \N}$. Using the countable additivity of measures, it then suffices to show that, for any fixed $i \in \N$, $\nu_q(B_i) = \mm_q^u(B_i)$ for $\mathfrak{q}$-a.e.\ $q \in Q$. Given $i \in \N$, for any $\mathfrak{q}$-measurable subset $R \subseteq Q$ we have that $\mathfrak{Q}^{-1}(R) \cap B_i$ is $\mm$-measurable. By definition, this means we can find a Borel set $M$ so that $M \subseteq \mathfrak{Q}^{-1}(R) \cap B_i$ and $(\mathfrak{Q}^{-1}(R)\cap B_i) \setminus M \subseteq N$, where $N$ is Borel and $\mm(N) = 0$. Using the fact that, for $\mathfrak{q}$-a.e.\ $q \in Q$, $\mu_q^u$ is concentrated on $\mathfrak{Q}^{-1}(q)$, we see that
\begin{align*}
    \mm^u(M) &= \int_{Q} \mm_q^u(M) \, d\mathfrak{q}(q)\\
    &= \int_{R} \mm_q^u(M) \, d\mathfrak{q}(q)\\
    &\leq \int_{R} \mm_q^u(B_i) \, d\mathfrak{q}(q)\\
    &\leq \int_{Q} \mm_q^u(M \cup N) \, d\mathfrak{q}(q)\\ 
    &= \mm^u(M \cup N),
\end{align*}
and so the inequalities must be equalities because $\mm^u(M) = \mm^u(M \cup N)$. A similar computation holds for $\nu_q$. In particular, we have
\begin{equation*}
    \int_{R}\mm_q^u(B_i) \, d\mathfrak{q}(q) = \int_{R}\nu_q(B_i) \, d\mathfrak{q}(q).
\end{equation*}
In other words, the functions $Q \ni q \mapsto \mm_q^u(B_i)$ and $Q \ni q \mapsto \nu_q(B_i)$ are Borel functions whose integrals with respect to $\mathfrak{q}$ agree on all measurable subsets $R$ of $Q$. This clearly implies that $\mm_q^u(B_i)=\nu_q(B_i)$ for $\mathfrak{q}$-a.e.\ $q \in Q$ as desired. 
\end{proof}

Since, for $\mathfrak{q}$-a.e.\ $q \in Q$, $\mm_q$ is absolutely continuous with respect to $g(q, \cdot)_{*}(\LL^1)$, we have that $\mm_q^u$ is also absolutely continuous with respect to $g(q, \cdot)_{*}(\LL^1)$. Moreover, it is clear that $\mm_q^u = g(q, \cdot)_*((\LL^1)^{u(q,\cdot)m_q})$, where $u(q,t):=u(g(q,t))$
and $m_q$ is such that $\mm_q=m_q\mathcal{L}^1$.

We now consider an optimal plan between two probability measures that is concentrated in $\Gamma$ (see \eqref{Eq: Gamma definiiton}). More precisely, we assume that we have a $\mu_1, \mu_2 \in \mathcal{P}_2(X)$ so that the following holds: 
\begin{itemize}
    \item $\mu_1 := u_1\mm, \mu_2 := u_2\mm \ll \mm$;
    \item $\gamma \in \Opt(\mu_1, \mu_2)$ is such that $\gamma(\Gamma)=1$.
\end{itemize}

From \cite{G12} it is known that, on non-branching $\CD(K,N)$ spaces, $\gamma$ is unique and is induced by a map, i.e., there exists a Borel map $T:X \to X$ so that $\gamma = (\text{Id}, T)_{*}(\mu_1)$. Since $\RCD(K,N)$ spaces were shown to be non-branching in \cite{D25}, this result applies in our setting. 

Take disintegrations $\{\mm_q^{u_1}\}_{q \in Q}$ and $\{\mm_q^{u_2}\}_{q \in Q}$ for $\mu_1$ and $\mu_2$ as in Lemma \ref{Lem: another probability disintegration}. We claim the following.
\begin{Lem}\label{Lem: needle transport}
For $\mathfrak{q}$-a.e.\ $q \in Q$, $T_{*}(\mm_q^{u_1})=\mm_q^{u_2}$.
\end{Lem}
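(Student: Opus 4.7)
The plan is to verify that the family $\{T_*(\mm_q^{u_1})\}_{q\in Q}$ of Borel measures on $X$ satisfies the three hypotheses of Lemma \ref{Lem: another probability disintegration} (3) with $\mu = \mu_2$, and then invoke that uniqueness statement. Thus the core of the argument is to check: (a) $T_*(\mm_q^{u_1})$ is concentrated on $\mathfrak{Q}^{-1}(q)$ for $\mathfrak{q}$-a.e.\ $q$; (b) for every Borel $B \subseteq X$, the map $q \mapsto T_*(\mm_q^{u_1})(B)$ is $\mathfrak{q}$-measurable and integrates to $\mu_2(B)$.

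The essential observation for (a) is that $T$ sends almost every point to the same transport ray. Since $\gamma = (\Id,T)_*\mu_1$ is concentrated on $\Gamma$, there is a Borel set $N_0 \subseteq X$ with $\mu_1(N_0)=0$ such that $(x,T(x)) \in \Gamma$ for every $x \in X \setminus N_0$. Using $\mu_1 \ll \mm$ and $\mm(X\setminus \Tau)=0$, and enlarging $N_0$ to a Borel $\mm$-null set $N$, we may further assume $x, T(x) \in \Tau$ for every $x \in X\setminus N$. By definition of $R = \Gamma\cup\Gamma^{-1}$ and of $\mathfrak{Q}$ as the quotient map for the equivalence relation $R$ on $\Tau$, this forces $\mathfrak{Q}(T(x)) = \mathfrak{Q}(x)$ on $X\setminus N$. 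Applying Theorem \ref{Thm: Main disintegration thm} to the $\mm$-null set $N$ gives $\mm_q(N) = 0$ for $\mathfrak{q}$-a.e.\ $q$, hence $\mm_q^{u_1}(N)=0$, so $\mm_q^{u_1}$ is concentrated on $\mathfrak{Q}^{-1}(q)\setminus N$, a set on which $T$ lands inside $\mathfrak{Q}^{-1}(q)$; this gives (a).

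For (b), I would use that $T$ is Borel, so $T^{-1}(B)$ is Borel whenever $B$ is, and then the $\mathfrak{q}$-measurability of $q \mapsto T_*(\mm_q^{u_1})(B) = \mm_q^{u_1}(T^{-1}(B))$ is immediate from Lemma \ref{Lem: another probability disintegration} (2). The consistency identity comes from a direct computation:
\[
\int_Q T_*(\mm_q^{u_1})(B)\, d\mathfrak{q}(q) = \int_Q \mm_q^{u_1}(T^{-1}(B))\, d\mathfrak{q}(q) = \mu_1(T^{-1}(B)) = (T_*\mu_1)(B) = \mu_2(B),
\]
where the second equality uses Lemma \ref{Lem: another probability disintegration} (2) applied to $T^{-1}(B)$, and the last uses $\mu_2 = T_*\mu_1$ (the marginal condition for $\gamma = (\Id,T)_*\mu_1 \in \Opt(\mu_1,\mu_2)$).

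Having checked (a) and (b), Lemma \ref{Lem: another probability disintegration} (3) applies with $\nu_q := T_*(\mm_q^{u_1})$ and yields $T_*(\mm_q^{u_1}) = \mm_q^{u_2}$ for $\mathfrak{q}$-a.e.\ $q$. The only slightly delicate step is (a): one must be careful to promote the $\mu_1$-a.e.\ statement ``$T$ preserves the fiber of $\mathfrak{Q}$'' to a $\mathfrak{q}$-a.e.\ concentration statement, which is why the argument routes through an $\mm$-null Borel set and the disintegration of $\mm$, rather than directly through $\mu_1$; beyond this bookkeeping, everything is a formal consequence of non-branching (for the existence of $T$) and the disintegration machinery already developed.
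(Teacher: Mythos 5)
Your overall strategy is the same as the paper's: verify that $\{T_*(\mm_q^{u_1})\}_q$ satisfies the hypotheses of Lemma \ref{Lem: another probability disintegration} (3) for $\mu_2$, with the only nontrivial point being the fiber-concentration (a), which both you and the paper reduce to the fact that $(x,T(x))\in\Gamma\cap(\Tau\times\Tau)$ for $\mu_1$-a.e.\ $x$ and hence $\mathfrak{Q}(T(x))=\mathfrak{Q}(x)$ off a $\mu_1$-null set.

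There is, however, a slip in exactly the step you flag as delicate. The bad set $N$ you construct is only $\mu_1$-null, not $\mm$-null: $N_0$ is $\mu_1$-null because $(\Id,T)_*\mu_1$ is concentrated on $\Gamma$, and the set where $T(x)\notin\Tau$ is $\mu_1$-null because $T_*\mu_1=\mu_2\ll\mm$ and $\mm(X\setminus\Tau)=0$ --- neither of these gives $\mm(N)=0$ (e.g.\ $N_0$ may have positive $\mm$-measure where $u_1=0$). Consequently the inference ``$\mm_q(N)=0$ for $\mathfrak{q}$-a.e.\ $q$, hence $\mm_q^{u_1}(N)=0$'' is not justified as written. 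The fix is immediate and is what the paper does: since $\mu_1(N)=0$, apply the disintegration of $\mu_1$ itself (Lemma \ref{Lem: another probability disintegration} (2)) to get $\int_Q\mm_q^{u_1}(N)\,d\mathfrak{q}=\mu_1(N)=0$, hence $\mm_q^{u_1}(N)=0$ for $\mathfrak{q}$-a.e.\ $q$. Also note that ruling out $T(x)\notin\Tau$ uses $\mu_2\ll\mm$, not just $\mu_1\ll\mm$. With these corrections your argument coincides with the paper's; your part (b) is the ``obvious'' verification the paper omits, and it is correct.
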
 
\begin{proof}
The claim will follow by using the third part of Lemma \ref{Lem: another probability disintegration} as soon as we check that $\{T_{*}(\mm_q^{u_1})\}_{q \in Q}$ is a collection of measures satisfying the conditions of the lemma for $\mu_2$. It is enough to show that, for $\mathfrak{q}$-a.e.\ $q \in Q$, $T_{*}(\mm_q^{u_1})$ is concentrated on $Q^{-1}(q)$, as the other conditions are obvious. 

By our assumption that $\gamma(\Gamma)=1$, we have $(x, T(x)) \in \Gamma$ for $\mu_1$-a.e.\ $x \in X$. Moreover, since $\mm(X \setminus \Tau) =0$ and $\mu_1, \mu_2 \ll \mm$, we have $(x, T(x)) \in \Gamma \cap (\Tau \times \Tau)$ for $\mu_1$-a.e.\ $x \in X$, which implies that $T(x) \in \mathfrak{Q}^{-1}(\mathfrak{Q}(x))$. As such, there exists a Borel set $A \subseteq X$ so that $\mu_1(A)=1$ and $T(x) \in \mathfrak{Q}^{-1}(\mathfrak{Q}(x))$ for any $x \in A$.

We have
\begin{equation*}
  \int_{q \in Q} \mm_{q}^{u_1}(A) \, d\mathfrak{q}(q) = \mm_q^{u_1}(A) = 1 =  \mm_q^{u_1}(X)=\int_{q \in Q} \mm_{q}^{u_1}(X) \, d\mathfrak{q}(q).
\end{equation*}
In particular, for $\mathfrak{q}$-a.e.\ $q \in Q$, we must have $\mm_q^{u_1}(A)=\mm_q^{u_1}(X)$, i.e., $T(x) \in \mathfrak{Q}^{-1}(\mathfrak{Q}(x))$ for $\mm_q^{u_1}$-a.e.\ $x \in X$. Since for $\mm_q^{u_1}$-a.e.\ $x \in X$ we also have that $\mathfrak{Q}(x)=q$, we conclude the proof. 
\end{proof}

For $q \in Q$ so that $\mm_q^{u_1}(X), \mm_q^{u_2}(X) \neq 0$, let $(\mu_1)_q$ and $(\mu_2)_q$ denote the normalizations of $\mm_q^{u_1}$ and $\mm_q^{u_2}$ respectively.  We have the following lemma. 
\begin{Lem}\label{Lem: needle optimal transport}
For $\mathfrak{q}$-a.e.\ $q \in Q$, either $\mm_q^{u_1}(X) = 0 = \mm_q^{u_2}(X)$ or the optimal transport from $(\mu_1)_q$ to $(\mu_2)_q$ is induced by the map $T$. 
\end{Lem}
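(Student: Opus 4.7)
The starting point is Lemma \ref{Lem: needle transport}, which yields $T_*(\mm_q^{u_1})=\mm_q^{u_2}$ for $\mathfrak{q}$-a.e.\ $q\in Q$; applying this to the full space gives $\mm_q^{u_1}(X)=\mm_q^{u_2}(X)=:c(q)$. On the set $\{c(q)>0\}$ the map $T$ therefore pushes $(\mu_1)_q$ to $(\mu_2)_q$, so $\gamma_q:=(\mathrm{Id},T)_*((\mu_1)_q)$ is an admissible transport plan between them. The task is to show that $\gamma_q$ is $W_2$-optimal for $\mathfrak{q}$-a.e.\ such $q$.

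The first key step is to transfer the global $d^2$-cyclical monotonicity of $\gamma$ down to the needles. Since $\gamma\in\mathrm{Opt}(\mu_1,\mu_2)$ and $X$ is Polish, a classical characterization (see e.g.\ \cite[Theorem 6.1.4]{AG13}-type results for optimal plans in Polish spaces) gives a $d^2$-cyclically monotone Borel set $M\subseteq X\times X$ with $\gamma(X\times X\setminus M)=0$. Integrating the disintegration identity $\gamma=\int_Q c(q)\,\gamma_q\,d\mathfrak{q}(q)$, which follows from Lemma \ref{Lem: another probability disintegration} applied to $\mu_1$ together with $\gamma=(\mathrm{Id},T)_*\mu_1$, we conclude $\gamma_q(X\times X\setminus M)=0$ for $\mathfrak{q}$-a.e.\ $q$. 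Combining this with Lemma \ref{Lem: needle transport} (which guarantees that $T$ maps each needle into itself), $\gamma_q$ is concentrated on $M\cap(X_q\times X_q)$.

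The second step reduces everything to an optimal-transport problem on the real line. The ray map $g(q,\cdot):I_q\to X_q$ is an isometric parameterization by Proposition \ref{Pro: Ray map properties} (2). Denote by $\tilde\gamma_q$, $(\tilde\mu_1)_q$, $(\tilde\mu_2)_q$ the pullbacks under $g(q,\cdot)$ of $\gamma_q$, $(\mu_1)_q$, $(\mu_2)_q$ respectively. Isometry preserves $d^2$, so $\tilde\gamma_q$ is an admissible plan between $(\tilde\mu_1)_q$ and $(\tilde\mu_2)_q$ on $\R$ whose support is contained in a $|\cdot|^2$-cyclically monotone subset of $\R\times\R$. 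It is a classical one-dimensional fact that a transport plan with $|\cdot|^2$-cyclically monotone support must coincide with the monotone rearrangement, which is the unique $W_2$-optimal plan between its marginals. Hence $\tilde\gamma_q$ is optimal on $\R$, and pushing forward by the isometry $g(q,\cdot)$ yields that $\gamma_q$ is $W_2$-optimal between $(\mu_1)_q$ and $(\mu_2)_q$.

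The main technical hurdle is the measurability/a.e.\ bookkeeping in the first step: one has to combine several ``for $\mathfrak{q}$-a.e.\ $q$'' statements (from Lemmas \ref{Lem: another probability disintegration} and \ref{Lem: needle transport}, and from the disintegration of $\gamma(X\times X\setminus M)=0$) into a single full-measure set of $q$ on which $\gamma_q$ is simultaneously concentrated on $M$ and on $X_q\times X_q$. This is routine once the disintegration of $\gamma$ along $\mathfrak{Q}\circ\pi_1$ is set up, since $M$ and $\bigcup_q X_q\times X_q$ are both Borel (the latter via $\sigma$-compactness of $\Tau$ and Borel measurability of $\mathfrak{Q}$). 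The 1D rearrangement step itself is entirely standard and requires no new ingredient.
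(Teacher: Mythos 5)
Your proposal is correct and follows the same basic strategy as the paper's proof: take a global certificate of optimality for $\gamma=(\mathrm{Id},T)_*\mu_1$, expressed as concentration on a fixed Borel subset of $X\times X$, and then transfer that concentration to $\mathfrak{q}$-a.e.\ needle via the disintegration identity (the ``$\mu_1(A)=1$ forces $\mm_q^{u_1}(A)=\mm_q^{u_1}(X)$ for $\mathfrak{q}$-a.e.\ $q$'' argument), concluding with Lemma \ref{Lem: needle transport}. The only substantive difference is which clause of the fundamental theorem of optimal transport you invoke as the certificate: you use concentration on a $\dd^2$-cyclically monotone set $M$, while the paper uses concentration on the $c$-superdifferential $\partial^c\varphi$ of a single $c$-concave potential $\varphi$ supplied by Theorem \ref{Thm: fundamental theorem of optimal transport]}. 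Both work. Be aware that the implication ``concentrated on a cyclically monotone set $\Rightarrow$ optimal'' (like the potential version) carries an integrability hypothesis, so you should record that $(\mu_1)_q,(\mu_2)_q$ have finite second moments for $\mathfrak{q}$-a.e.\ $q$; this follows from $\int_Q\int \dd^2(\cdot,x_0)\,d\mm_q^{u_i}\,d\mathfrak{q}=\int \dd^2(\cdot,x_0)\,d\mu_i<\infty$. Finally, your reduction to the one-dimensional monotone rearrangement is superfluous: once $\gamma_q$ is concentrated on a $\dd^2$-cyclically monotone set and has finite cost, it is optimal directly by the same characterization applied on $X$ (admissible plans between measures supported on $X_q$ are automatically supported on $X_q\times X_q$, so the two transport problems coincide), with no need to identify the plan as the monotone map on $I_q$.
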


Before giving a proof of the lemma, we review some basic but necessary tools from the theory of optimal transport following \cite{AG13}. Let $c: X \times X \to \R$ be the cost function defined by $c(x,y):=\dd(x,y)^2$. Recall that given any function $\psi: X \to \R \cup \{\pm \infty\}$, its \textit{$c$-transform} $\psi^c: X \to \R \cup \{-\infty\}$ is defined by
\begin{equation}\label{Eq: c-transform def}
\psi^c(x) := \inf_{y \in X} (c(x,y) - \psi(y)).
\end{equation}
We say that a function $\varphi: X \to \R \cup \{-\infty\}$ is \textit{$c$-concave} if there exists $\psi: X \to \R \cup \{-\infty\}$ so that $\varphi = \psi^c$. For a $c$-concave function $\varphi$, we define its \textit{$c$-superdifferential} to be 
\begin{equation}\label{Eq: c-superdifferential}
\partial^c \varphi := \Big\{(x,y) \in X \times X \,:\, \varphi(x) + \varphi^c(y) = c(x,y) \Big\}. 
\end{equation}
It follows easily from the definitions that $c$-concave functions are upper semi-continuous and $\varphi(x) + \varphi^c(y) \leq c(x,y)$ for any $x,y \in X$. These imply that $\partial^c \varphi$ is a closed set. 

We have the following fundamental theorem from \cite[Theorem 1.13]{AG13}, formulated in a slightly different way for our specific case.  
\begin{Thm}\label{Thm: fundamental theorem of optimal transport]} (Fundamental theorem of optimal transport).
Let $\mu, \nu \in \mathcal{P}_2(X)$ and let $\gamma \in \Adm(\mu, \nu)$. Then $\gamma$ is optimal if and only if there exists a $c$-concave function $\varphi$ so that $\max\{\varphi, 0\} \in L^1(\mu)$ and $\gamma(\partial^{c}\varphi)=1$.
\end{Thm}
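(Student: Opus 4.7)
The plan is to prove both implications of this equivalence by leveraging the characterization of optimality through $c$-cyclical monotonicity, which is the standard bridge between the primal and dual formulations of optimal transport.

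For the easy direction ($\Leftarrow$), suppose $\varphi$ is $c$-concave with $\max\{\varphi,0\} \in L^1(\mu)$ and $\gamma(\partial^c\varphi)=1$. The definition of $c$-superdifferential and $c$-transform give $\varphi(x) + \varphi^c(y) \leq c(x,y)$ for all $(x,y)$, with equality $\gamma$-a.e. Integrating both inequalities against any competitor $\tilde\gamma \in \Adm(\mu,\nu)$ and against $\gamma$, and using that all admissible plans share the same marginals $\mu$ and $\nu$, one obtains
\begin{equation*}
\int c \, d\gamma = \int \varphi\, d\mu + \int \varphi^c\, d\nu \leq \int c\, d\tilde\gamma,
\end{equation*}
which is exactly the optimality of $\gamma$. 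The main technical care is verifying that $\varphi \in L^1(\mu)$ and $\varphi^c \in L^1(\nu)$ so that the splitting above is meaningful; this follows by combining the hypothesis $\max\{\varphi,0\} \in L^1(\mu)$ with the pointwise bounds coming from $\varphi^c(y) = \inf_x (c(x,y) - \varphi(x))$ and the finite-second-moment assumption on $\mu,\nu$, using $c(x,y) = \dd(x,y)^2 \leq 2\dd(x,x_0)^2 + 2\dd(y,x_0)^2$.

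For the harder direction ($\Rightarrow$), I would first establish that optimality of $\gamma$ forces $\supp(\gamma)$ to be a $c$-cyclically monotone set by a swap-and-compare argument on cyclic perturbations; this is classical and uses that the cost $c$ is continuous and the marginals are fixed. Then I would explicitly construct the desired $\varphi$ via a Rockafellar-type formula: fix $(x_0,y_0) \in \supp(\gamma)$ and set
\begin{equation*}
\varphi(x) := \inf \Big\{ c(x,y_N) - c(x_N,y_N) + \sum_{i=0}^{N-1} \bigl[c(x_{i+1},y_i) - c(x_i,y_i)\bigr] \Big\},
\end{equation*}
where the infimum runs over $N \geq 0$ and finite chains $(x_i,y_i)_{i=1}^N \subseteq \supp(\gamma)$. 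The $c$-cyclical monotonicity of $\supp(\gamma)$ guarantees $\varphi(x_0) = 0$ (so $\varphi \not\equiv -\infty$), and the very form of the infimum exhibits $\varphi$ as $c$-concave, namely $\varphi = \psi^c$ with an explicit $\psi$. A direct inspection of the formula then yields $\supp(\gamma) \subseteq \partial^c\varphi$, and since $\gamma$ is concentrated on its support we get $\gamma(\partial^c\varphi) = 1$.

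The main obstacle will be the integrability condition $\max\{\varphi,0\} \in L^1(\mu)$: the Rockafellar construction alone does not see the finite-second-moment hypothesis. To handle it I would estimate $\varphi(x)$ from above by the trivial one-point chain $N=0$, giving $\varphi(x) \leq c(x,y_0) - c(x_0,y_0)$, and then use $c(x,y_0) \leq 2\dd(x,x_0)^2 + 2\dd(x_0,y_0)^2$ together with $\mu \in \mathcal{P}_2(X)$ to control $\int \max\{\varphi,0\}\, d\mu$. This is the step where the finite-second-moment assumption is genuinely used, and it is also where one must be careful that the $\varphi$ produced by the Rockafellar formula is Borel (or at least $\mu$-measurable), which can be arranged by restricting the infimum to chains drawn from a countable dense subset of $\supp(\gamma)$ and then invoking upper semi-continuity of $c$-concave functions.
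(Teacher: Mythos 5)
This theorem is not proved in the paper at all: it is quoted verbatim from \cite[Theorem 1.13]{AG13}, so there is no internal argument to compare against. Your proposal is correct and is precisely the standard proof from that reference --- the easy direction via the pointwise inequality $\varphi(x)+\varphi^c(y)\le c(x,y)$ with $\gamma$-a.e.\ equality (plus the integrability bookkeeping you flag), and the hard direction via $c$-cyclical monotonicity of $\supp(\gamma)$ followed by the Rockafellar-type construction, with the $N=0$ chain and the quadratic bound $c(x,y_0)\le 2\dd(x,x_0)^2+2\dd(y_0,x_0)^2$ giving $\max\{\varphi,0\}\in L^1(\mu)$.
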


\begin{proof}[Proof of Lemma \ref{Lem: needle optimal transport}]
Applying Theorem \ref{Thm: fundamental theorem of optimal transport]} to the optimal plan $(\text{Id}, T)_{*}(\mu_1)$, we find a $c$-concave function $\varphi: X \to \R\cup\{-\infty\}$ so that $(\text{Id}, T)_{*}(\mu_1)$ is concentrated on $\partial^{c}\varphi$. It now suffices to show that, for $\mathfrak{q}$-a.e.\ $q \in Q$, $(\text{Id}, T)_{*}(\mm_q^{u_1})$ is concentrated on $\partial^{c}\varphi$ to conclude. Indeed this, combined with Lemma \ref{Lem: needle transport}, would imply that for $\mathfrak{q}$-a.e.\ $q \in Q$:  
\begin{itemize}
    \item either $\mm_q^{u_1}(X) = \mm_q^{u_2}(X) = 0$, 
    \item or $\mm_q^{u_1}(X) = \mm_q^{u_2}(X) \neq 0$ and $(\text{Id}, T)_{*}(\mm_q^{u_1})$ is concentrated on $\partial^{c}\varphi$, in which case $T$ induces an optimal plan from $(\mu_1)_q$ to $(\mu_2)_q$ by Theorem \ref{Thm: fundamental theorem of optimal transport]}.
\end{itemize} 

Since $(\text{Id}, T)_{*}(\mu_1)$ is concentrated on $\partial^{c}\varphi$, we can find a Borel set $A \subseteq X$ so that $\mu_1(A) = 1$ and $(x, T(x)) \in \partial^c\varphi$ for any $x \in A$. By the definition of $\{\mm_{q}^{\mu_1}\}$ we have
\begin{equation*}
\int_{q \in Q} \mm_q^{\mu_1}(A) \, d\mathfrak{q}(q) = \mu_1(A) = 1 = \mu_1(X) = \int_{q \in Q} \mm_q^{\mu_1}(X) \, d\mathfrak{q}(q).
\end{equation*}
In particular, $\mm_q^{\mu_1}(A)=\mm_q^{\mu_1}(X)$ for $\mathfrak{q}$-a.e.\ $q \in Q$, as required. 
\end{proof}

\section{Main lemma}

In this section, we prove Lemma \ref{Lem: main lemma}. Roughly, it says that, given a ramified double cover $(\hat{X}, \hat{p})$ of a non-orientable $\RCD(K,3)$ space $(X, p)$, if $p$ has a tangent cone that looks topologically like a cone over $\mathbb{RP}^2$, then any optimal dynamical plan between absolutely continuous measures in $\hat{X}$ must have zero measure on the geodesics passing through $\hat{p}$. By rescaling if necessary, it suffices to consider only $\RCD(-2, 3)$ spaces. 

We establish some necessary concepts and notation. Let $(X, \dd)$ be a complete and separable geodesic space. For any $a, b \in \R$ with $a < b$, we denote by $C([a,b], X)$ and $\Lip([a,b], X)$ the spaces of continuous and Lipschitz curves from $[a,b]$ into $X$. Recall that $\Geo(X)$ is the space of all constant speed geodesics on $X$ parameterized on $[0,1]$ (see \eqref{Eq: Geo(X)}). We metrize all the spaces above with the $\sup$ norm. In addition, if we fix $p \in X$, we denote 
\begin{equation*}
C_p([a,b], X) := \Big\{\gamma \in C([a,b], X) \,:\, \gamma(t) = p \text{ for some $t \in [a,b]$} \Big\}.
\end{equation*}
The spaces $\Lip_p([a,b], X)$ and $\Geo_p(X)$ are defined in the same way. As with $\Geo(X)$, we will denote by $e_t$ the evaluation map on these various spaces in the obvious way (see \eqref{Eq: evaluation map def}). 

We recall the following theorem on the tangent cone of non-collapsed $\RCD(K,N)$ spaces, which follows from \cite[Theorem 1.2]{DPG18}, \cite[Theorem 1.1]{DPG16}, and \cite[Theorem 1.2]{K15}.
\begin{Thm}\label{Thm: non-collapsed RCD tangent cone main theorem}
Let $(X, \dd, \HH^n)$ be a non-collapsed $\RCD(K,n)$ space and let $p \in X$. Then any tangent cone at $p$ is of the form $(C(Y), \dd_{C(Y)}, \HH^n)$ where $(Y, \dd_{Y}, \HH^{n-1})$ is a non-collapsed $\RCD(n-2,n-1)$ space and $(C(Y), \dd_{C(Y)}, \HH^n)$ is the standard metric cone over $Y$ (equipped with the Hausdorff measure associated with the metric structure).
\end{Thm}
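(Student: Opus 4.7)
The plan is to combine three ingredients: Gromov compactness to produce the tangent cone as a pmGH-limit, a rigidity argument to identify it as a metric cone, and Ketterer's characterization of the cross-section. For any sequence $r_i \downarrow 0$, consider the rescaled pointed spaces $(X, r_i^{-1}\dd, r_i^{-n}\HH^n, p)$, which are non-collapsed $\RCD(Kr_i^2, n)$ by the standard scaling of the curvature-dimension condition. The Bishop--Gromov inequality gives uniform doubling on bounded regions around $p$, and the non-collapsing hypothesis (combined with Bishop--Gromov) yields a uniform positive lower bound on the measure of the unit ball. By Gromov's precompactness (Lemma \ref{Lem: pmGH precompactness}), a subsequence converges in the pmGH sense to some $(Z, \dd_Z, \mm_Z, o)$. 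Since $Kr_i^2 \to 0$, stability of the $\RCD$ condition yields $\RCD(0, n)$ for the limit, and by the stability of the non-collapsed condition \cite[Theorem 1.2]{DPG18}, the reference measure persists as the Hausdorff measure, $\mm_Z = \HH^n$.

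Next, I would show that $Z$ is a metric cone. On the original space the Bishop--Gromov volume ratio $r \mapsto \HH^n(B_r(p)) / \int_0^r \sin_{K/n}(t)^n\,dt$ is monotone non-increasing and converges as $r \to 0^+$ to a finite positive density $\theta(p)$. For each fixed $R > 0$, the rescaled Euclidean-type volume ratio at scale $R$ equals the original ratio at scale $Rr_i$, which also converges to $\theta(p)$ as $r_i \to 0$. Passing to the pmGH limit, the ratio $R \mapsto \HH^n(B_R(o))/(\omega_n R^n)$ on $Z$ is identically $\theta(p)$. By the volume-cone-implies-metric-cone rigidity for non-collapsed $\RCD(0,n)$ spaces \cite[Theorem 1.1]{DPG16}, $Z$ is isometric to the metric cone $C(Y)$ over its unit sphere $Y := \partial B_1(o)$, equipped with $\HH^n$.

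Finally, I would identify $Y$ as a non-collapsed $\RCD(n-2, n-1)$ space via Ketterer's cone characterization \cite[Theorem 1.2]{K15}: the metric measure cone $(C(Y), \dd_{C(Y)}, \HH^n)$ is $\RCD(0, n)$ if and only if $\diam(Y) \leq \pi$ and $(Y, \dd_Y, \HH^{n-1})$ is $\RCD(n-2, n-1)$, where the cone measure decomposes as $d\HH^n = t^{n-1}\,dt \otimes d\HH^{n-1}$ under the cone isomorphism. This identifies the cross-section with the structure asserted in the theorem.

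The main conceptual subtlety lies in the second step: the rigidity identifying $Z$ as a metric cone depends crucially on the stability of the non-collapsed condition, since without $\mm_Z = \HH^n$ the Bishop--Gromov equality case need not force a cone structure. Once \cite[Theorem 1.2]{DPG18} and \cite[Theorem 1.1]{DPG16} provide the cone identification, Ketterer's characterization closes the argument cleanly.
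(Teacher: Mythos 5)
Your argument is correct and assembles exactly the three ingredients the paper itself cites for this statement: the stability/volume-convergence results for non-collapsed $\RCD$ spaces from \cite{DPG18}, the volume-cone-to-metric-cone rigidity of \cite{DPG16}, and Ketterer's cone characterization \cite{K15}. The paper gives no further argument beyond these citations, so your write-up is simply a correct fleshing-out of the same route.
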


We now state our main lemma. 
\begin{Lem}\label{Lem: main lemma}(Main lemma).
Let $(X, \dd, \HH^3)$ be a non-collapsed $\RCD(-2, 3)$ space without boundary and $(\hat{X}, \hat{\dd}, \HH^3)$ be its ramified double cover. Let $\hat{p} \in \hat{X}$ and suppose that there is a tangent cone  at $p = \pi(\hat{p})$ of the form $(C(Y), \dd_{C(Y)}, \HH^3)$, where $Y$ is homeomorphic to $\mathbb{RP}^2$. Then, for any $\hat{\mu}_0, \hat{\mu}_1 \in \mathcal{P}(\hat{X})$ with $\hat{\mu}_0, \hat{\mu}_1 \ll \mathcal{H}^3$ and any $\hat{\nu} \in \OptGeo(\hat{\mu}_0, \hat{\mu}_1)$, we have $\hat{\nu}(\Geo_{\hat{p}}(\hat{X}))=0$. 
\end{Lem}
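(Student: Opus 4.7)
We plan to argue by contradiction via a concentration scheme: construct a sequence of dynamical optimal plans $\nu_k$ supported on geodesics through $\hat p$ in ever smaller balls, then derive a contradiction by combining a sharp marginal density bound with the local Bishop--Gromov volume estimate on $X$ at $p$.

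Suppose $\hat\nu(\Geo_{\hat p}(\hat X))>0$. By truncating densities and restricting $\hat\nu$ accordingly, we reduce to the case that $\hat\mu_0,\hat\mu_1\in L^\infty(\HH^3)$ with supports of diameter at most $D$ and $\hat\nu$ is concentrated entirely on $\Geo_{\hat p}(\hat X)$. Constant-speed parameterization forces each $\gamma\in\Geo(\hat X)$ to visit $\hat p$ at a unique time $\tau(\gamma)\in[0,1]$, and the map $\tau$ is Borel. Our hypothesis combined with Proposition \ref{Lem: non-orientable tangent cone implies fixed point} gives $\pi^{-1}(p)=\{\hat p\}$. For each large $k$, pigeonhole on the partition $[0,1]=\bigsqcup_{j=0}^{k-1}[j/k,(j+1)/k]$ produces $t_k\in[1/k,1-2/k]$ with $\alpha_k:=\hat\nu(\tau^{-1}([t_k,t_k+1/k]))\ge c_0/k$. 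Setting $\nu_k':=\alpha_k^{-1}\hat\nu\mrestr\tau^{-1}([t_k,t_k+1/k])$ and $\nu_k:=(F_k)_*\nu_k'$, where $F_k$ linearly reparameterizes $\gamma|_{[t_k-1/k,t_k+2/k]}$ on $[0,1]$, we obtain a dynamical optimal plan $\nu_k$ whose supporting geodesics meet $\hat p$ at some $s\in[1/3,2/3]$, have length at most $3D/k$, and have image in $B_{3D/k}(\hat p)$.

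The central technical step is the marginal density bound
\[
(e_0)_*(\nu_k)\le c_1 k^{\beta}\,\HH^3_{\hat X}\qquad\text{for some }\beta<3\text{ independent of }k.
\]
A first crude bound $\le Ck\,\HH^3_{\hat X}$ follows from the uniform Wasserstein-marginal density estimate for $\hat\nu$, available on the part of $\hat X$ where the local $\CD^{e}(-2,3)$ condition has been verified (supplied inductively by the paper's global strategy, together with the $\RCD(-2,3)$-structure on $X$ used after projection near $\hat p$), combined with $\alpha_k\ge c_0/k$. This bound is substantially sharpened by performing $1$D-localization of $(X,\dd,\HH^3)$ with respect to $\dd_p$: by Lemma \ref{Lem: Geodesic break} and the unique-lift property, each $\gamma\in\supp(\nu_k)$ projects under $\pi$ to a concatenation of two $X$-geodesics meeting at $p$, so each half runs along a single transport ray emanating from $p$. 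The $\MCP(-2,3)$ structure on each needle, combined with the $1/k$ time-window factor and the shortness of the curves, contributes additional negative powers of $k$; Remark \ref{Rem: Ramified double cover properties}~(3) lifts the resulting bound back to $\hat X$.

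With this estimate in place, the contradiction is purely volumetric. The probability $(e_0)_*(\nu_k)$ is supported in $B_{3D/k}(\hat p)$; pushing forward by the $1$-Lipschitz map $\pi$ preserves total mass $1$, gives $X$-density at most $2c_1 k^{\beta}$ by Remark \ref{Rem: Ramified double cover properties}~(3), and places the support inside $B_{3D/k}(p)\subseteq X$. The Bishop--Gromov inequality for the $\RCD(-2,3)$-space $(X,\dd,\HH^3)$ yields $\HH^3(B_{3D/k}(p))\le c_2 k^{-3}$, so the total mass is at most $2c_1c_2 k^{\beta-3}\to 0$, impossible for large $k$. Equivalently, rescaling $X$ by $k$ and extracting a pmGH-convergent subsequence $(X_k,p)\to(C(Y),o)$, Lemma \ref{Lem: weak convergence density bound} forces the limit marginal on $C(Y)$ to vanish identically, contradicting the preservation of total mass on a uniformly bounded support. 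The principal obstacle is precisely the sharp density bound in the third paragraph: it requires a delicate $1$D-localization along needles emanating from $p$, carefully combined with the $\MCP(-2,3)$ structure of the conditional measures and the narrow time-window, and must be lifted through the ramified double cover $\pi$ to $\hat X$ using only the partial $\CD^{e}(-2,3)$ information available on $\hat X$ at this stage of the paper's inductive argument.
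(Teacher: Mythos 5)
Your setup (argue by contradiction, truncate densities, pigeonhole a time window of $\hat\nu$-measure $\ge c_0/k$, restrict and reparameterize) matches the paper's construction, but the endgame has a fatal flaw: the density bound $(e_0)_*(\nu_k)\le c_1k^{\beta}\,\HH^3$ with $\beta<3$, on which your ``purely volumetric'' contradiction rests, is not merely hard to prove --- it is false. Since $(e_0)_*(\nu_k)$ is a \emph{probability} measure supported in $B_{3D/k}(\hat p)$ and $\HH^3(B_{3D/k}(\hat p))\le Ck^{-3}$ by Bishop--Gromov, integrating any density bound $c_1k^{\beta}$ over the support gives total mass at most $c_1Ck^{\beta-3}$, which forces $\beta\ge3$ up to constants. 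In other words, the volumetric computation you present as the contradiction is precisely the computation showing that your claimed estimate cannot hold. The 1D-localization along the needles of $\dd_p$ does not push the exponent below $3$: in the paper it is used to improve the naive $k^4$ bound (a factor $k$ from the restriction times $k^3$ from the Cavalletti--Mondino interpolation estimate) down to \emph{exactly} $k^3$ --- the $\MCP(-2,3)$ needle densities contribute a factor $k^{2}$ and the cyclical-monotonicity (``no squishing'') argument only a constant --- and $k^3$ is the critical exponent, yielding uniformly bounded densities after rescaling by $k$ but no contradiction yet.

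Relatedly, your argument never uses the hypothesis that the cross-section $Y$ is homeomorphic to $\mathbb{RP}^2$, which is essential. After rescaling, the paper obtains optimal plans with uniformly bounded densities, supported in fixed annuli around $\hat p_i$ and concentrated on geodesics through $\hat p_i$, and derives the contradiction from the geometry of $C(\mathbb{RP}^2)$: the limiting optimal pairing on $C(Y)$ can only match points with the same angular coordinate, because lifts of points over distinct angles of $Y$ sit over non-antipodal points of the sphere double cover $\hat Y$, hence at distance $<\pi$ in $\hat Y$, so in $C(\hat Y)$ they are joined by a path strictly shorter than the route through the tip; approximating such paths in $\hat X_i$ then violates $\hat\dd_i^2$-cyclical monotonicity for suitable pairs with equal radii and distinct angles (which exist because the limit marginal is absolutely continuous). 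This geometric step is the heart of the proof and is entirely absent from your proposal; without it, and with the density exponent stuck at $3$, the argument does not close.
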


\begin{Rem}\label{Rem: one tangent cone all tangent cone}
As observed in \cite[Remark 9.2]{BPS24}, at any given point $p$, the cross-sections of the tangent cones at $p$ must either all be homeomorphic to $\mathbb{RP}^2$ or all be homeomorphic to $\mathbb{S}^2$. Indeed, it can be checked that the set of all cross-sections of tangent cones at $p$ is connected in the class of non-collapsed $\RCD(1,2)$ spaces with some uniform lower volume bound (with topology induced by the Gromov--Hausdorff distance); see for instance \cite[Lemma 12.16]{BPS24}. It then follows that all such cross-sections are homeomorphic, using the characterization that all non-collapsed $\RCD(1,2)$ spaces are $2$-dimensional Alexandrov spaces with $\curv \geq 1$ from \cite[Theorem 1.1]{LS23} along with either Perelman's stability theorem \cite{P91} (see also \cite{K07}) or \cite{P90} and \cite[Proposition 3.5]{BPS24}. That the cross-sections are homeomorphic to $\mathbb{RP}^2$ or $\mathbb{S}^2$ follows from the topological classification of $2$-dimensional Alexandrov spaces with $\curv \geq 1$ and without boundary. 
\end{Rem}

We will prove Lemma \ref{Lem: main lemma} by contradiction. By Proposition \ref{Lem: non-orientable tangent cone implies fixed point}, we know that $\hat{p}$ is the unique lift of $p$ via $\pi$. Assuming that a counterexample $\hat{\nu}$ exists, the strategy of our proof is as follows.
\begin{enumerate}
    \item First, we use $\hat\nu$ to construct a sequence of pairs of probability measures contained in $B_{r_i}(\hat{p})$, where $r_i \to 0$, so that
    \begin{itemize}
        \item for each $i$, an optimal dynamical plan between the two measures is concentrated on the set of geodesics that pass through $p$;
        \item for each $i$, the two measures have bounded density when $B_{r_i}(\hat{p})$ is rescaled to radius $1$.
    \end{itemize}
    \item Second, we show that if $B_{10}(p)$ is sufficiently close in Gromov--Hausdorff distance to the ball of radius $10$ in a cone over $Y^2$, where $Y^2$ is homeomorphic to $\mathbb{RP}^2$ and Alexandrov with $\curv \geq 1$, then the optimal dynamical plan between two probability measures with bounded density and concentrated in $B_1(\hat{p})$ cannot be supported on the set of geodesics that pass through $\hat{p}$. This is done by exploiting our knowledge of the geometry of metrics on $\mathbb{RP}^2$ with $\curv \geq 1$.
\end{enumerate}
The first step is made difficult because we do not have a priori $\RCD$ structure on $\hat{X}$, so it is difficult to establish density control on $(e_t)_*(\hat\nu)$ for intermediate times $t \in (0,1)$, which is what we will use to construct our sequence of measures. Here we will exploit the 1D-localization of $\dd_p$ and the idea that the optimal transport in question can be thought of as transporting mass along the needles of the localization. 

We proceed with our plan. Suppose that Lemma \ref{Lem: main lemma} is false. Fix $(X, \dd, \HH^3)$, $\hat{p} \in \hat{X}$, $\hat\mu_0, \hat\mu_1 \in \mathcal{P}(\hat{X})$, and $\hat\nu \in \OptGeo(\hat\mu_0, \hat\mu_1)$ that give a counterexample. We show that we may construct possibly different $\hat\mu_0, \hat\mu_1 \in \mathcal{P}(\hat{X})$ and $\hat\nu \in \OptGeo(\hat\mu_0,\hat\mu_1)$ with a few additional properties, which still give a counterexample. First, by replacing $\hat\nu$ with $\hat\nu \mrestr {\Geo_{\hat{p}}(\hat{X})}/\hat\nu(\Geo_{\hat{p}}(\hat{X}))$ and $\hat\mu_0,\hat\mu_1$ correspondingly, we may assume that $\hat\nu$ is concentrated on $\Geo_{\hat{p}}(\hat{X})$, i.e., $\hat\nu(\Geo_p(\hat{X}))=1$. Next, writing $\hat\mu_i=\hat u_i\mathcal{H}^3$ and taking a disintegration of $\hat\nu$ with respect to $\hat\mu_0 = \hat{u}_0 \HH^3$ through the evaluation map $e_0$, we have
\begin{equation*}
    \hat\nu = \int_{\hat{X}} \hat\nu_x \, d\hat\mu_0(x) = \int_{\hat{X}} \hat\nu_x \hat{u}_0(x) \, d\HH^3(x). 
\end{equation*}
For a sufficiently large $L > 0$, the measure
\begin{equation*}
    \hat\nu' := \int_{\hat{X}} \hat\nu_x \min\{\hat{u}_0(x), L\} \, d\HH^3(x)
\end{equation*}
satisfies $\hat\nu'(\Geo(\hat{X})) \neq 0$. Replacing $\hat\nu$ with $\hat\nu'/\hat\nu'(\Geo(X))$ (which is an optimal dynamical plan, since whenever $\rho$ is an optimal plan and $0\le\rho'\le\rho$ is nonzero then
the normalization of $\rho'$ is an optimal plan), we may assume that $\hat\mu_0 = (e_0)_*(\hat\nu)$ has bounded density. Repeating the same procedure for $\hat\mu_1$, we may also assume that $\hat\mu_1 = (e_1)_*(\hat\nu)$ has bounded density. Finally, there exists $R > r > 0$ so that $\hat\nu$ restricted to the set of geodesics that start and end in $A_{r, R}(\hat{p}):= B_{R}(\hat{p})/B_{r}(\hat{p})$ has positive measure. Normalizing the restriction of $\hat\nu$ to this set and possibly rescaling $\hat{X}$ around $\hat{p}$, we may assume that $\hat\nu$ is concentrated on the set of geodesics that start and end in $A_{1, R}(\hat{p})$. 

To summarize, this means that we can find $\hat\mu_0, \hat\mu_1 \in \mathcal{P}(\hat{X})$ and an optimal dynamical plan $\hat\nu$ from $\hat\mu_0$ to $\hat\mu_1$ so that
\begin{itemize}
    \item $\hat\nu(\Geo_{\hat{p}}(\hat{X}))=1$;
    \item there exists $C_0 > 1$ so that $\hat\mu_0, \hat\mu_1$ are concentrated in $A_{1, C_0}(\hat{p}):=B_{C_0}(\hat{p}) \setminus \overline{B_{1}(\hat{p})}$;
    \item $\hat\mu_0 = \hat{u}_0\HH^3$ and $\hat\mu_1 = \hat{u}_1\HH^3$ where $\hat{u}_0, \hat{u}_1 \in L^1 \cap L^\infty(\hat{X}, \HH^3)$.
\end{itemize} 
Fix such a counterexample. For each $k \in \N$ and any $i = 1, \dots ,k$, we define 
\begin{equation*}
G_k^i := \Big\{\gamma \in \Geo(\hat{X}) \,:\, \gamma(t)=\hat{p} \text{ for some $t \in [(i-1)/k, i/k]$}\Big\}
\end{equation*}
and choose some $i_k$ so that $\hat\nu(G_k^{i_k}) \geq 1/k$. We define
\begin{equation}\label{Eq: nuk definition}
    \hat\nu_k := \frac{\hat\nu \mrestr {G_k^{i_k}}}{\hat\nu(G_k^{i_k})}.
\end{equation}
Since $\hat\mu_0, \hat\mu_1$ are concentrated in $A_{1,C_0}(\hat{p})$, we have $\hat\nu(G_k^1)=\hat\nu(G_k^2)=0$ for sufficiently large $k$ and so $i_k > 2$ in these cases. From now on, we will only consider sufficiently large $k$ so that $i_k > 2$. 

Let $\Pi: \Geo(\hat{X}) \to C([0,1],X)$ be the map that takes a geodesic $\gamma$ to the curve $\pi \circ \gamma$. For each $k$, define the map $R_k: C([0,1],X) \to C([0,1], X)$ which takes a curve $\gamma:[0,1] \to X$ to the curve $R_k(\gamma):[0,1] \to X$ defined by $R_k(\gamma)(t) := \gamma((i_k-2)t/k)$, i.e., $R_k(\gamma)$ is a linear reparameterization of $\gamma$ on the interval $[0, (i_k-2)/k]$. Since $\hat\nu_k$ is concentrated on $\Geo_{\hat{p}}(\hat{X})$ and $\hat{p}$ is the unique lift of $p$, by Lemma \ref{Lem: Geodesic break} we see that $\nu_k := (R_k \circ \Pi)_{*}(\hat\nu_k)$ is concentrated on $\Geo(X) \subseteq C([0,1], X)$.
\begin{Cla}\label{Cla: projection optimal dynamical}
The measure $\nu_k$ is an optimal dynamical plan in $X$.
\end{Cla}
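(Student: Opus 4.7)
The plan is to establish optimality in $X$ by pulling back the structure from the ambient optimal plan $\hat\nu$ on $\hat X$ and transferring it through the projection using Lemma \ref{Lem: Geodesic break}. I would proceed in four steps.

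First, since $\hat\nu$ is an optimal dynamical plan on $\hat X$ and $\hat\nu_k$ is obtained by renormalizing the restriction of $\hat\nu$ to the Borel set $G_k^{i_k}\subseteq\Geo(\hat X)$, $\hat\nu_k$ is itself optimal on $\hat X$ between its own marginals (a sub-plan of an optimal plan, after renormalization, remains optimal). Now let $\sigma_k\colon\Geo(\hat X)\to\Geo(\hat X)$ send $\gamma$ to $s\mapsto\gamma((i_k-2)s/k)$, which is again a constant-speed geodesic because $\gamma$ is. The push-forward $(\sigma_k)_*\hat\nu_k$ is still optimal on $\hat X$: the restriction of an optimal dynamical plan to an initial time interval is optimal for the corresponding pair of intermediate marginals $\hat\mu_0^k:=(e_0)_*\hat\nu_k$ and $\hat\mu_{\mathrm{mid}}^k:=(e_{(i_k-2)/k})_*\hat\nu_k$.

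Second, for $\hat\nu_k$-a.e.\ $\gamma$, $\gamma$ passes through $\hat p$ at some $t_0\in[(i_k-1)/k,i_k/k]$, so in particular $(i_k-2)/k<t_0$. Since $\hat p$ is the unique lift of $p$, Lemma \ref{Lem: Geodesic break} implies $\pi\circ\gamma\rvert_{[0,t_0]}$ is a geodesic in $X$, so its initial subsegment $R_k(\pi\circ\gamma)$ lies in $\Geo(X)$ and
\[
d_X(\pi\gamma(s),\pi\gamma(t))=d_{\hat X}(\gamma(s),\gamma(t))\qquad\forall\,s,t\in[0,(i_k-2)/k].
\]
Consequently, the cost of $\nu_k$ on $X$ equals the cost of $(\sigma_k)_*\hat\nu_k$ on $\hat X$, and the latter equals $W_2^{\hat X}(\hat\mu_0^k,\hat\mu_{\mathrm{mid}}^k)^2$ by Step 1.

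Third, to conclude that $\nu_k$ is an optimal dynamical plan on $X$ it suffices to verify that its cost equals $W_2^X(\mu_0^k,\mu_1^k)^2$, where $\mu_0^k:=\pi_*\hat\mu_0^k=(e_0)_*\nu_k$ and $\mu_1^k:=\pi_*\hat\mu_{\mathrm{mid}}^k=(e_1)_*\nu_k$. The inequality $W_2^X(\mu_0^k,\mu_1^k)\le W_2^{\hat X}(\hat\mu_0^k,\hat\mu_{\mathrm{mid}}^k)$ is automatic from the fact that $\pi$ is $1$-Lipschitz. For the reverse, I would use a lifting argument: given any transport plan $\rho_X$ on $X$ between $\mu_0^k$ and $\mu_1^k$, I construct a lift $\hat\rho$ on $\hat X\times\hat X$ with marginals $\hat\mu_0^k,\hat\mu_{\mathrm{mid}}^k$ and cost $\int d_{\hat X}^2\,d\hat\rho\le\int d_X^2\,d\rho_X$. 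For each pair $(x,y)\in\mathrm{supp}\,\rho_X$, Remark \ref{Rem: Ramified double cover properties}(2) guarantees the existence of lifts $(\hat x,\hat y)$ with $d_{\hat X}(\hat x,\hat y)=d_X(x,y)$, and the four-choice fiberwise selection can be arranged measurably so as to match the prescribed marginal densities; this is the step requiring the specific structure provided by the construction.

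The main obstacle is the fiberwise matching in the lifting argument. The crucial geometric input is that $\hat\mu_{\mathrm{mid}}^k$ is concentrated in the small ball $B_{4C_0/k}(\hat p)$ (since each $\hat y=\gamma((i_k-2)/k)$ satisfies $d_{\hat X}(\hat y,\hat p)\le(2/k)\cdot\mathrm{speed}(\gamma)\le 4C_0/k$) and that $\hat p$ is fixed by $\Gamma$, so both lifts of any $y\in\mathrm{supp}\,\mu_1^k$ lie inside $B_{4C_0/k}(\hat p)$; this local symmetry near $\hat p$ is what allows a compatible fiberwise assignment to exist. Once the lifting is constructed, applying it to an optimal $\rho_X$ yields $W_2^{\hat X}(\hat\mu_0^k,\hat\mu_{\mathrm{mid}}^k)\le W_2^X(\mu_0^k,\mu_1^k)$, closing the argument.
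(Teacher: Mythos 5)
Your overall strategy (reduce to a cost comparison between $X$ and $\hat X$, using Lemma \ref{Lem: Geodesic break} to identify the cost of $\nu_k$ with $W_2^2((e_0)_*\hat\nu_k,(e_{(i_k-2)/k})_*\hat\nu_k)$) is the same as the paper's, and Steps 1--2 are fine. The gap is in Step 3, and it is exactly the point where the real work lies. You need to lift an arbitrary (in particular, an optimal) plan $\rho_X$ between $\mu_0^k$ and $\mu_1^k$ to a plan on $\hat X\times\hat X$ whose marginals are \emph{both} prescribed ($\hat\mu_0^k$ and $\hat\mu_{\mathrm{mid}}^k$) and whose cost does not increase. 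Cost preservation forces $\hat\dd(\hat x,\hat z)=\dd(x,y)$ for a.e.\ lifted pair, and by Remark \ref{Rem: Ramified double cover properties}(2) this generically pins down \emph{which} of the two lifts of $y$ must be paired with a given lift of $x$. Hence the second marginal of any cost-preserving lift is essentially determined by $\rho_X$ and the first-marginal matching, and there is no remaining freedom to make it equal to $\hat\mu_{\mathrm{mid}}^k$. The observation that both lifts of each $y\in\operatorname{supp}\mu_1^k$ lie in a small ball around $\hat p$ does not help: the difficulty is not where the lifts sit but how the mass of $\hat\mu_{\mathrm{mid}}^k$ is split between the two lifts of each point, and nothing in your argument controls that split.

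The paper's proof circumvents precisely this obstruction. It lifts the competitor plan $\eta$ matching only the \emph{first} marginal (via disintegration plus a Kuratowski--Ryll-Nardzewski measurable selection), obtaining $\hat\eta_1$ with $(\pi_1)_*\hat\eta_1=(e_0)_*\hat\nu_k$, cost $<W_2^2((e_0)_*\hat\nu_k,(e_{(i_k-2)/k})_*\hat\nu_k)$, but a second marginal $(\pi_2)_*\hat\eta_1$ that only projects to the right measure on $X$. It then uses the fact that $\hat\nu_k$ continues past $\hat p$ up to time $1$: on pairs joined by a geodesic through $\hat p$ one has $\hat\dd(\hat x,\hat y)=\dd(\pi(\hat x),p)+\dd(p,\pi(\hat y))$, which depends only on the projections, so the plan $(e_{(i_k-2)/k},e_1)_*\hat\nu_k$ can be re-lifted with first marginal $(\pi_2)_*\hat\eta_1$ at no extra cost. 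The strict gain on the first leg and the non-loss on the second leg then contradict the Wasserstein-geodesic property of $t\mapsto(e_t)_*\hat\nu_k$ via the triangle inequality. If you want to complete your argument you should replace the two-sided marginal matching with this one-sided lift plus the second-leg comparison; as written, Step 3 does not close.
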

\begin{proof}
We prove the claim by showing that if $\eta \in \mathcal{P}(X \times X)$ is an optimal plan between $(e_0)_*(\nu_k)$ and $(e_1)_*(\nu_k)$ so that 
\begin{equation*}
    \int_{X \times X} \dd^2(x,y) \, d\eta(x,y) < \int_{X \times X} \dd^2(x, y) \, d((e_0, e_1)_*(\nu_k))(x,y),
\end{equation*}
then we can lift $\eta$ to an admissible pairing in $\hat{X} \times \hat{X}$ and use it to construct a plan between $(e_0)_*(\hat\nu_k)$ and $(e_1)_*(\hat\nu_k)$ with strictly lower cost than $(e_0, e_1)_*(\hat\nu_k)$ for a contradiction. Assume that such an $\eta$ exists. We first note that
\begin{align*}
    \int_{X \times X} \dd^2(x, y) \, d((e_0, e_1)_*(\nu_k))(x,y)
    &= \int_{\hat{X} \times \hat{X}} \dd^2(\pi(x), \pi(y)) \, d((e_0, e_{(i_k-2)/k})_*(\hat\nu_k))(x,y)\\
    &= \int_{\hat{X} \times \hat{X}} \hat{\dd}^2(x,y) \, d((e_0, e_{(i_k-2)/k})_*(\hat\nu_k))(x,y)\\
    &= W_2^2((e_0)_*(\hat\nu_k), (e_{(i_k-2)/k})_*(\hat\nu_k)),
\end{align*}
where the second equality comes from Lemma \ref{Lem: Geodesic break}. Therefore, we have
\begin{equation}\label{Eq: first part geodesic length bound}
     \int_{X \times X} \dd^2(x,y) \, d\eta(x,y) < W_2^2((e_0)_*(\hat\nu_k), (e_{(i_k-2)/k})_*(\hat\nu_k)). 
\end{equation}

The process of taking a lift of $\eta$ is clear but somewhat technical. We first lift $\eta$ to a probability measure $\tilde\eta \in \mathcal{P}(\hat{X} \times X)$. Since $\pi: \hat{X} \to X$ is continuous and $(\pi_1)_*(\eta)=\pi_*((e_0)_*(\hat\nu_k))$, it is standard to construct, using disintegration, a probability measure $\tilde\eta \in \mathcal{P}(\hat{X} \times X)$ so that $(\pi, \text{Id})_*(\tilde\eta) = \eta$ and $(\pi_1)_*(\tilde\eta) = (e_0)_*(\hat\nu_k)$, where $\pi_i$ is the projection onto the $i$-th factor. 

Next we lift $\tilde\eta$ to a probability measure in $\mathcal{P}(\hat{X} \times \hat{X})$. For a set $A$ we denote by $P(A)$ the power set of $A$. Let $F: \hat{X} \times X \to P(\hat{X} \times \hat{X})$ be the set-valued function defined by
\begin{equation*}
F(x,y) := \{(x,z) \in \hat{X} \times \hat{X} \,:\, z \in \pi^{-1}(y) \text{ and } \hat{\dd}(x,z) = \dd(\pi(x),y)\}.
\end{equation*}
By Remark \ref{Rem: Ramified double cover properties} (1)--(2), $F(x,y)$ is non-empty and contains at most two elements for any $(x,y) \in \hat{X} \times X$. Let $\mathscr{B}$ be the Borel $\sigma$-algebra on $\hat{X} \times X$. Then $F$ is clearly $\mathscr{B}$-weakly measurable in the sense that, for any open $U \subseteq \hat{X} \times \hat{X}$, the set
$$\{(x,y) \in \hat{X} \times X : F(x,y) \cap U \neq \emptyset\} \in \mathscr{B},$$
since in fact it is $\sigma$-compact (as $F$ has closed graph).
By Kuratowski and Ryll-Nardzewski's measurable selection theorem, we can take a section $G: \hat{X} \times X \to \hat{X} \times \hat{X}$ of $F$ (i.e., $G(x,y) \in F(x,y)$ for any $(x,y) \in \hat{X} \times X$) that is Borel. Defining $\hat\eta_1 := G_{*}(\tilde\eta)$, the following are easy consequences of the construction: 
\begin{itemize}
    \item $(\pi, \pi)_*(\hat\eta_1)=(\pi, \text{Id})_*(\tilde\eta) = \eta$;
    \item we have
    \begin{align*}
        \int_{\hat{X} \times \hat{X}} \hat{\dd}^2(x,y) \, d\hat\eta_1(x,y) &= \int_{\hat{X} \times \hat{X}} \dd^2(\pi(x),\pi(y)) \, d\hat\eta_1(x,y)\\
        &= \int_{X \times X} \dd^2(x,y) \, d\eta(x,y)\\
        &< W_2^2((e_0)_*(\hat\nu_k), (e_{(i_k-2)/k})_*(\hat\nu_k)),
    \end{align*}
    where the first equality comes from the definition of $G$ and the last inequality comes from \eqref{Eq: first part geodesic length bound};
    \item $(\pi_1)_*(\hat\eta_1)=(\pi_1)_*(\tilde\eta) = (e_0)_*(\hat\nu_k)$ and so $\pi_{*}((\pi_1)_*(\hat\eta_1))= (e_0)_*(\nu_k)$;
    \item $(\pi_2)_*(\hat\eta_1)$ is not necessarily the same as $(e_{(i_k-2)/k})_*(\hat\nu_k)$, but their pushforwards via $\pi$ are the same and both equal $(e_1)_*(\nu_k)$. 
\end{itemize}
We now claim that the $L^2$-Wasserstein distance from $(\pi_2)_*(\hat\eta_1)$ to $(e_1)_*(\hat\nu_k)$ is no greater than the $L^2$-Wasserstein distance from $(e_{(i_k-2)/k})_*(\hat\nu_k)$ to $(e_1)_*(\hat\nu_k)$. Indeed, by assumption $\hat\eta_2' := (e_{(i_k-2)/k}, e_1)_*(\hat\nu_k) \in \mathcal{P}(\hat{X} \times \hat{X})$ gives an optimal plan from $(e_{(i_k-2)/k})_*(\hat\nu_k)$ to $(e_1)_*(\hat\nu_k)$ that is concentrated on the set 
\begin{equation*}
B:=\{(x,y) \in \hat{X} \times \hat{X} \,:\, \text{there exists a geodesic from $x$ to $y$ passing through $\hat{p}$}\}.
\end{equation*}
Define $\tilde\eta_2' := (\pi, \text{Id})_*(\hat\eta_2') \in \mathcal{P}(X \times \hat{X})$. The first marginal is $(\pi_1)_*(\tilde\eta_2')=(e_1)_*(\nu_k)$. As before, since $\pi_*((\pi_2)_*(\hat\eta_1)) = (e_1)_*(\nu_k)$ as well, we may lift $\tilde\eta_2'$ to a probability measure $\hat\eta_2 \in \mathcal{P}(\hat{X} \times \hat{X})$ so that $(\pi, \text{Id})_*(\hat\eta_2) = \tilde\eta_2'$ and $(\pi_1)_*(\hat\eta_2)= (\pi_2)_*(\hat\eta_1)$. It is straightforward to check that $\hat\eta_2$ is also concentrated on $B$. We claim that
\begin{equation*}
    \int_{\hat{X} \times \hat{X}} \hat{\dd}^2(x,y) \, d\hat\eta_2(x,y) =  \int_{\hat{X} \times \hat{X}} \hat{\dd}^2(x,y) \, d\hat\eta'_2(x,y).
\end{equation*}
Indeed, every $(x,y)$ in $B$ has the property that $\hat{\dd}(x,y) = \dd(\pi(x), p) + \dd(p, \pi(y))$ by Remark \ref{Rem: Ramified double cover properties} (2) and so
\begin{align*}
    \int_{\hat{X} \times \hat{X}} \hat{\dd}^2(x,y) \, d\hat\eta_2(x,y) &=
    \int_{\hat{X} \times \hat{X}}
    \Big(\dd(\pi(x), p)+\dd(p, \pi(y))\Big)^2 \, d\hat\eta_2(x,y)\\
    &= \int_{X \times \hat{X}} \Big(\dd(\check{x}, p)+\dd(p, \pi(y))\Big)^2 \, d\tilde\eta'_2(\check{x},y)\\
    &= \int_{\hat{X} \times \hat{X}}
    \Big(\dd(\pi(x), p)+\dd(p, \pi(y))\Big)^2 \, d\hat\eta'_2(x,y)\\
    &=  \int_{\hat{X} \times \hat{X}} \hat{\dd}^2(x,y) \, d\hat\eta'_2(x,y),
\end{align*}
where the second equality comes from the definition of $G$, as required. 
To summarize, we have shown that 
\begin{equation*}
    W_2((e_0)_*(\hat\nu_k), (\pi_2)_*(\hat\eta_1)) < W_2((e_0)_*(\hat\nu_k), (e_{(i_k-2)/k})_*(\hat\nu_k))
\end{equation*}
and 
\begin{equation*}
    W_2((\pi_2)_*(\hat\eta_1), (e_1)_*(\hat\nu_k)) \leq W_2((e_{(i_k-2)/k})_*(\hat\nu_k), (e_1)_*(\hat\nu_k)),
\end{equation*}
which is a contradiction to the assumption that $\hat\nu_k$ is an optimal dynamical plan. Therefore, Claim \ref{Cla: projection optimal dynamical} holds. 
\end{proof}

Next, we would like to show that $(e_1)_*(\nu_k) = \pi_*((\e_{(i_k-2)/k})_*(\hat\nu_k))$ has uniformly bounded density (in $k$) once the ball $B_{1/k}(p)$ is rescaled to radius $1$. We outline our strategy: first, note that $(e_0)_*(\nu_k)$ has a density upper bound which scales like $k$. Now, since $\nu_k$ can be extended a little further as an optimal dynamical plan on $X$ (by taking the reparameterization map $R_k$ to in the definition of $\nu_k$ to the interval $[0, (i_k-3/2)/k]$ instead of $[0, (i_k-2)/k]$), we can use \cite[Theorem 1.1]{CM17a} to put a density upper bound on $(e_1)_*(\nu_k)$ which would scale like $k^3$ times the original density upper bound on $(e_0)_*(\nu_k)$. In all we would obtain a density upper bound which scales like $k^4$.

The rescaling would only give a factor of $k^{-3}$, so this strategy would not give us a uniform bound on the density. What we will do instead is use 1D-localization with respect to $\dd_p$ and the fact that $\hat\nu_k$ extends well past $\hat{p}$ to show that the transport of the masses along the needles by $\nu_k$ cannot bring two points on the same needle that are far away from each other too close around $p$. The intuition behind this is that if one has an optimal dynamical plan on $\R$ for quadratic cost and two parts of the initial mass are far apart, then they cannot get too close together in the middle of the optimal transport. This allows us to get some quantitative control on the transport along the needles and shows in particular that the transport does not ``squish" the distance too much in the direction of the needles, which saves us one factor of $k$ for the upper bound on density. 

We begin with the following lemma. 
\begin{Lem}
For any $\gamma_1, \gamma_2 \in \supp(\hat\nu_k)$, if $\hat{\dd}(\gamma_1(0), \hat{p}) > \hat{\dd}(\gamma_2(0), \hat{p})$ then $\hat{\dd}(\hat{p},\gamma_1(1)) \leq \hat{\dd}(\hat{p}, \gamma_2(1))$.
\end{Lem}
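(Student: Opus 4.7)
The plan is to reduce the statement to $c$-cyclical monotonicity applied to the endpoints of two geodesics that both pass through $\hat p$. Set $a_i := \hat\dd(\gamma_i(0), \hat p)$ and $b_i := \hat\dd(\hat p, \gamma_i(1))$ for $i = 1,2$. The key initial observation is that since each $\gamma_i \in \supp(\hat\nu_k) \subseteq G_k^{i_k}$ passes through $\hat p$ and is a constant-speed geodesic on $[0,1]$, its length decomposes cleanly as
\[
\hat\dd(\gamma_i(0), \gamma_i(1)) = a_i + b_i.
\]

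Next I would transfer cyclical monotonicity from $\hat\nu$ down to $\hat\nu_k$. Since $\hat\nu \in \OptGeo(\hat\mu_0,\hat\mu_1)$, the coupling $(e_0, e_1)_*\hat\nu$ is an optimal plan for the quadratic cost $c = \hat\dd^2$, so its support is $c$-cyclically monotone. Because $\hat\nu_k$ is the normalized restriction of $\hat\nu$ to the Borel set $G_k^{i_k}$ (see \eqref{Eq: nuk definition}), we get $\supp\hat\nu_k \subseteq \supp\hat\nu$, and since $(e_0,e_1)$ is continuous this pushes forward to $\supp((e_0,e_1)_*\hat\nu_k) \subseteq \supp((e_0,e_1)_*\hat\nu)$. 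Thus the former is $c$-cyclically monotone as well, and in particular the pair $\{(\gamma_1(0),\gamma_1(1)),(\gamma_2(0),\gamma_2(1))\}$ satisfies
\[
(a_1+b_1)^2 + (a_2+b_2)^2 \leq \hat\dd^2(\gamma_1(0),\gamma_2(1)) + \hat\dd^2(\gamma_2(0),\gamma_1(1)).
\]

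Finally, I would bound the right-hand side using the triangle inequality through $\hat p$, namely $\hat\dd(\gamma_i(0),\gamma_j(1)) \leq a_i + b_j$ for $i\ne j$. Substituting yields
\[
(a_1+b_1)^2 + (a_2+b_2)^2 \leq (a_1+b_2)^2 + (a_2+b_1)^2,
\]
which upon expansion collapses to $(a_1-a_2)(b_1-b_2) \leq 0$. If $a_1 > a_2$, this forces $b_1 \leq b_2$, proving the claim.

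There is no real obstacle: the only potentially delicate point is the inclusion $\supp\hat\nu_k\subseteq\supp\hat\nu$ together with the fact that restrictions of optimal plans are themselves optimal between their marginals, but both of these are immediate from standard optimal transport theory. The geometric content of the statement is entirely captured by the two-line cyclical monotonicity inequality combined with the triangle inequality.
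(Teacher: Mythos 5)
Your proof is correct and is essentially the paper's argument: both reduce the claim to $\hat\dd^2$-cyclical monotonicity of $\supp((e_0,e_1)_*\hat\nu_k)$, use that every geodesic in the support passes through $\hat p$ so that $\hat\dd(\gamma(0),\gamma(1))=a+b$, bound the cross distances by the triangle inequality through $\hat p$, and conclude from the algebraic identity equivalent to $(a_1-a_2)(b_1-b_2)\le 0$. The paper phrases it as a contradiction to the fundamental theorem of optimal transport applied to $\hat\nu_k$ itself rather than via the support inclusion $\supp\hat\nu_k\subseteq\supp\hat\nu$, but this is a purely cosmetic difference.
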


\begin{proof}
Since for every $\gamma \in \supp(\hat\nu_k)$ we have $(\gamma(0), \gamma(1)) \in \supp((e_0,e_1)_*(\hat\nu_k))$, it suffices to show that for any $(x_1,y_1)$ and $(x_2, y_2)$ in $\supp((e_0,e_1)_*(\hat\nu_k))$, if $\hat{\dd}(x_1, \hat{p}) > \hat{\dd}(x_2, \hat{p})$, then $\hat{\dd}(\hat{p},y_1) \leq \hat{\dd}(\hat{p}, y_2)$. Assuming this fails, let us show that $\supp((e_0,e_1)_*(\hat\nu_k))$ is not $\hat{\dd}^2$-cyclically monotone, which is a contradiction to the fundamental theorem of optimal transport \cite[Theorem 1.13]{AG13}. Indeed, we have
$$(\hat{\dd}(x_1, \hat{p})-\hat{\dd}(x_2, \hat{p}))(\hat{\dd}(\hat{p}, y_1)-\hat{\dd}(\hat{p}, y_2))>0,$$
from which we get
$$\hat{\dd}(x_1, \hat{p})\hat{\dd}(\hat{p}, y_2)+\hat{\dd}(x_2, \hat{p})\hat{\dd}(\hat{p}, y_1)
<\hat{\dd}(x_1, \hat{p})\hat{\dd}(\hat{p}, y_1)+\hat{\dd}(x_2, \hat{p})\hat{\dd}(\hat{p}, y_2).$$
We deduce that
\begin{align*}
&\hat{\dd}^2(x_1,y_2)+\hat{\dd}^2(x_2,y_1)\\
    \le \;&(\hat{\dd}(x_1, \hat{p})+\hat{\dd}(\hat{p}, y_2))^2+(\hat{\dd}(x_2, \hat{p})+\hat{\dd}(\hat{p}, y_1))^2\\
    <\;& (\hat{\dd}(x_1, \hat{p})+\hat{\dd}(\hat{p}, y_1))^2+
    (\hat{\dd}(x_2, \hat{p})+\hat{\dd}(\hat{p}, y_2))^2\\
    =\;&\hat{\dd}^2(x_1,y_1)+\hat{\dd}^2(x_2,y_2),
\end{align*}
as required; note that every $(x,y) \in \supp((e_0,e_1)_*(\hat\nu_k))$ necessarily has the property that $\hat{\dd}(x,y) = \hat{\dd}(x,\hat{p})+\hat{\dd}(\hat{p}, y)$ by our assumption on $\hat\nu_k$.
\end{proof}

Therefore, for any $\gamma_1, \gamma_2 \in \supp(\hat\nu_k)$, if $\hat{\dd}(\gamma_1(0), \hat{p}) > \hat{\dd}(\gamma_2(0),\hat{p})$, then
\begin{align*}
\hat{\dd}\Big(\gamma_1\Big(\frac{i_k-2}{k}\Big), \hat{p}\Big) &= \hat{\dd}(\gamma_1(0), \hat{p})-\frac{i_k-2}{k}\hat{\dd}(\gamma_1(0), \gamma_1(1))\\
    &\geq \hat{\dd}(\gamma_1(0), \hat{p})-\frac{i_k-2}{k}\Big(\hat{\dd}(\gamma_1(0), \hat{p})+\hat{\dd}(\hat{p}, \gamma_2(1))\Big)\\
    &= \hat{\dd}(\gamma_1(0), \hat{p})-\frac{i_k-2}{k}\Big(\big(\hat{\dd}(\gamma_1(0), \hat{p})-\hat{\dd}(\gamma_2(0), \hat{p})\big)+\hat{\dd}(\gamma_2(0), \gamma_2(1))\Big)\\
    &=\hat{\dd}\Big(\gamma_2\Big(\frac{i_k-2}{k}\Big),\hat{p}\Big)+\Big(1-\frac{i_k-2}{k}\Big)\Big(\hat{\dd}(\gamma_1(0), \hat{p})-\hat{\dd}(\gamma_2(0), \hat{p})\Big),
\end{align*}
where we used the previous lemma in the first inequality. 
By our initial assumptions, $\hat\nu$ is an optimal dynamical plan from $\hat\mu_0$ and $\hat\mu_1$ which are both concentrated in the open annulus $A_{1,C_0}(\hat{p})$. Therefore, for every $\gamma \in \supp(\hat\nu_k)$, we have 
\begin{equation*}
    \frac{i_k-2}{k} = \frac{\hat{\dd}(\gamma(0), \gamma(\frac{i_k-2}{k}))}{\hat{\dd}(\gamma(0), \gamma(1))} \leq \frac{\hat{\dd}(\gamma(0), \hat{p})}{\hat{\dd}(\gamma(0), \hat{p})+\hat{\dd}(\hat{p}, \gamma(1))} < \frac{C_0}{C_0+1} < 1. 
\end{equation*}
Putting everything together and setting $c_0 := 1/(C_0+1)$, we have proved the following lemma. 
\begin{Lem}\label{Lem: double cover monotonicity bound}
For any $\gamma_1, \gamma_2 \in \supp(\hat\nu_k)$, if $\hat{\dd}(\gamma_1(0), \hat{p}) > \hat{\dd}(\gamma_2(0),\hat{p})$ then 
\begin{equation*}
    \hat{\dd}(\gamma_1((i_k-2)/k), \hat{p})- \hat{\dd}(\gamma_2((i_k-2)/k), \hat{p}) > c_0\Big(\hat{\dd}(\gamma_1(0), \hat{p})-\hat{\dd}(\gamma_2(0), \hat{p})\Big).
\end{equation*}
\end{Lem}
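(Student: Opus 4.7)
The plan is to use the monotonicity from the previous lemma together with a direct distance computation exploiting the fact that each $\gamma \in \supp(\hat\nu_k)$ passes through $\hat p$ at a parameter $t_\gamma \in [(i_k-1)/k, i_k/k]$, so that $\gamma((i_k-2)/k)$ lies on the geodesic segment strictly before $\hat p$.

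First, because $\gamma_i$ is a constant-speed geodesic passing through $\hat p$, decomposing along $\hat p$ gives
\[
\hat\dd(\gamma_i((i_k-2)/k), \hat p) = \hat\dd(\gamma_i(0), \hat p) - \tfrac{i_k-2}{k} L_i,
\]
where $L_i := \hat\dd(\gamma_i(0), \gamma_i(1)) = \hat\dd(\gamma_i(0), \hat p) + \hat\dd(\hat p, \gamma_i(1))$. Next, I would invoke the previous lemma, which under the hypothesis $\hat\dd(\gamma_1(0), \hat p) > \hat\dd(\gamma_2(0), \hat p)$ yields $\hat\dd(\hat p, \gamma_1(1)) \le \hat\dd(\hat p, \gamma_2(1))$, so
\[
L_1 \le \hat\dd(\gamma_1(0), \hat p) + \hat\dd(\hat p, \gamma_2(1)) = L_2 + \bigl(\hat\dd(\gamma_1(0), \hat p) - \hat\dd(\gamma_2(0), \hat p)\bigr).
\]
Substituting this into the expression for $\hat\dd(\gamma_1((i_k-2)/k), \hat p)$ and subtracting the corresponding identity for $\gamma_2$, the $L_2$ terms cancel and I am left with
\[
\hat\dd(\gamma_1((i_k-2)/k), \hat p) - \hat\dd(\gamma_2((i_k-2)/k), \hat p) \ge \Bigl(1 - \tfrac{i_k-2}{k}\Bigr)\bigl(\hat\dd(\gamma_1(0), \hat p) - \hat\dd(\gamma_2(0), \hat p)\bigr).
\]

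Finally, I need a uniform lower bound on the factor $1 - (i_k-2)/k$. Here I use that $\hat\mu_0, \hat\mu_1$ are concentrated in $A_{1, C_0}(\hat p)$: for any $\gamma \in \supp(\hat\nu_k)$ we have $\hat\dd(\gamma(0), \hat p) < C_0$ and $\hat\dd(\hat p, \gamma(1)) > 1$, and since $\gamma((i_k-2)/k)$ lies before $\hat p$ on the geodesic,
\[
\tfrac{i_k-2}{k} = \tfrac{\hat\dd(\gamma(0), \gamma((i_k-2)/k))}{\hat\dd(\gamma(0), \gamma(1))} \le \tfrac{\hat\dd(\gamma(0), \hat p)}{\hat\dd(\gamma(0), \hat p) + \hat\dd(\hat p, \gamma(1))} < \tfrac{C_0}{C_0+1},
\]
which gives $1 - (i_k-2)/k > 1/(C_0+1) = c_0$, completing the proof.

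The conceptual content is essentially contained in the previous lemma (the cyclical monotonicity of the optimal plan through $\hat p$); the only subtlety is the quantitative step combining the monotonicity bound on $L_1$ with the annular confinement of $\hat\mu_0, \hat\mu_1$ to produce a constant $c_0$ depending only on $C_0$ and independent of $k$. There is nothing hard here beyond bookkeeping, but it is important that $c_0$ does not deteriorate as $k \to \infty$, which is exactly what the bound $1-(i_k-2)/k > c_0$ ensures.
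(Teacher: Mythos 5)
Your proposal is correct and follows essentially the same route as the paper: the same decomposition $\hat\dd(\gamma_i((i_k-2)/k),\hat p)=\hat\dd(\gamma_i(0),\hat p)-\frac{i_k-2}{k}L_i$, the same invocation of the preceding monotonicity lemma to bound $L_1$ in terms of $L_2$, and the same annulus argument giving $\frac{i_k-2}{k}<\frac{C_0}{C_0+1}$ and hence $c_0=1/(C_0+1)$. No gaps.
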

Now recall that by definition $\nu_k = (R_k \circ \Pi)_*(\hat\nu_k)$. Since $R_k \circ \Pi$ is continuous, $\supp(\hat\nu_k)$ is closed, and $\Geo(\hat{X})$ is Polish, $(R_k \circ \Pi)(\supp(\hat\nu_k)) \subseteq \Geo(X)$ is analytic and hence measurable with respect to the completion of $\nu_k$. This implies that we can find a Borel set $S \subseteq \Geo(X)$ so that $S \subseteq (R_k \circ \Pi)(\supp(\hat\nu_k))$ and $\nu_k(S)=1$. For each $\gamma_1, \gamma_2 \in S$, Lemma \ref{Lem: double cover monotonicity bound} and Lemma \ref{Lem: Geodesic break} imply that if $\dd(\gamma_1(0), p) > \dd(\gamma_2(0),p)$ then
\begin{equation*}
    \dd(\gamma_1(1), p)- \dd(\gamma_2(1)), p) > c_0\Big(\dd(\gamma_1(0), p)-\dd(\gamma_2(0), p)\Big).
\end{equation*}
Define now the analytic set $(e_0, e_1)(S) \subseteq X \times X$. Arguing as above we can find some Borel set $S' \subseteq (e_0, e_1)(S)$ so that $(e_0, e_1)_*(\nu_k)(S')=1$.

Let $\mu_0^k := (e_0)_*(\nu_k)$ and $\mu_1^k := (e_1)_*(\nu_k)$. We see that $\mu_0^k \ll \HH^3$ and if $u_0^k$ is such that $\mu_0^k = u_0^k\HH^3$ then $\norm{u_0^k}{L^\infty(X, \HH^3)}\leq2k\norm{\hat{u}_0}{L^\infty(\hat{X}, \HH^3)}$; this follows from the fact that $\pi_*(\HH_{\hat{X}}^3)=2\HH^3_{X}$ and the definition of $\hat\nu_k$ (see \eqref{Eq: nuk definition}). Furthermore, it can be checked that $\mu_1^k$ is absolutely continuous with respect to $\HH^3$. Indeed, if we define $R_k': C([0,1], X) \to C([0,1], X)$ by $R_k'(\gamma)(t) := \gamma((i_k-3/2)t/k)$ and consider $\nu_k' := (R_k' \circ \Pi)_*(\hat\nu_k)$, arguing as in Claim \ref{Cla: projection optimal dynamical} we can check that $\nu_k'$ is an optimal dynamical plan in $X$. Since $\mu_0^k = (e_0)_*(\nu_k')$ has bounded density and $\mu_1^k = (e_{(i_k-2)/(i_k - 3/2)})_*(\nu_k')$, \cite[Theorem 1.1]{CM17a} gives that $\mu_1^k$ is absolutely continuous with respect to $\HH^3$ (with an explicit $L^\infty(X, \HH^3)$ density bound of the order $k^4$). We write $\mu_1^k=u_1^k\mathcal{H}^3$ in the sequel.

The optimal plan $(e_0, e_1)_*(\nu_k)$ is induced by some map $T_k: X \to X$ by \cite[Theorem 1.1]{CM17a}
(see also \cite{G12} and \cite{D25}). As such, we can find a Borel set $S''\subseteq X$ so that $\mu_0^k(S'')=1$ and $(\text{Id}, T_k)(S'') \subseteq S'$. The latter implies that, for any $x, y \in S''$, if $\dd(x, p)>\dd(y, p)$ then
\begin{equation}\label{Eq: optimal plan co-Lipschitz bound}
\dd(T_k(x), p)- \dd(T_k(y), p) > c_0\Big(\dd(x, p)-\dd(y, p)\Big).
\end{equation}

Using $\mm$ to denote $\HH_{X}^3$ for notational simplicity, we now apply Theorem \ref{Thm: Main disintegration thm} (see also \eqref{Eq: m disintegration formula}) to disintegrate $\mm$ into $\{\mm_q\}_{q \in Q}$ with respect to the $1$-Lipschitz function $d_p$. We then apply Lemma \ref{Lem: another probability disintegration} to $\mu_0^k$ and $\mu_1^k$ to write
\begin{equation*}
\mu_0^k = \int_{q} \mm_q^{u_k^0} \, d\mathfrak{q}(q) \quad\text{and}\quad \mu_1^k = \int_{q} \mm_q^{u_k^1} \, d\mathfrak{q}(q). 
\end{equation*}

Consider now $Q_0$, the set of all $q \in Q$ so that the following holds: 
\begin{enumerate}
    \item $\mm_q$ is concentrated on $\mathfrak{Q}^{-1}(q)$ and is absolutely continuous with respect to $g(q, \cdot)_*(\LL^1)$, where $g$ is the ray map introduced in Proposition \ref{Pro: Ray map properties};
    \item $(X_q, \dd_{X_q}, \mm_q)$ verifies the $\MCP(-2,3)$ property with $\supp(\mm_q)=X_q$, where $X_q$ is the closure of $\mathfrak{Q}^{-1}(q)$ and $\dd_{X_q}$ is the restriction of $\dd$ to $X_q$;
    \item $\mm_q^{u^k_0}$ and $\mm_q^{u^k_1}(X)$ are finite Borel measures which are absolutely continuous with respect to $\mm_q$, with $(T_k)_*(\mm_q^{u^k_0})=\mm_q^{u^k_1}$;
    \item $\mm_q^{u^k_0}(X) = \mm_q^{u^k_1}(X) \neq 0$ and, letting $(\mu^k_0)_q := \mm_q^{u^k_0}/\mm_q^{u^k_0}(X)$ and $(\mu^k_1)_q := \mm_q^{u^k_1}/\mm_q^{u^k_1}(X)$, we have that the optimal transport from $(\mu^k_0)_q$ to $(\mu^k_1)_q$ is induced by the map $T_k$ and, for $(\mu^k_0)_q \times (\mu^k_0)_q$-a.e.\ $(x,y)$, we have 
\begin{equation*}
\dd(T_k(x), p)- \dd(T_k(y), p) > c_0\Big(\dd(x, p)-\dd(y, p)\Big).
\end{equation*}
\end{enumerate}
By Theorems \ref{Thm: Main disintegration thm}, \ref{Thm: needle CD property}, Lemmas \ref{Lem: needle transport}, \ref{Lem: needle optimal transport}, and \eqref{Eq: optimal plan co-Lipschitz bound} we see that, outside of the set of $q \in \mathfrak{Q}$ so that $\mm_q^{u^k_0}(X) = \mm_q^{u^k_1}(X) = 0$, $Q_0$ has full measure. 

It is clear from the non-branching property of $\RCD(K,N)$ spaces and Proposition \ref{Pro: Ray map properties} that $p \in X_q$ and there exists an isometry $g_q: (X_q, \dd_q) \to (I_q, \dd_{\R})$, where $I_q$ is either a closed interval $[0,a]$ or $[0,\infty)$ and $p = g_q(0)$. Indeed, $g_q$ may be taken to be $g_q(t) = g(q, \dd(q, p)-t)$, where $g$ is the ray map, completed by continuity on endpoints. 

Letting $q \in Q_0$, by the first property of $Q_0$ we have that $(g_q^{-1})_*(\mm_q) = m_q \LL^1$ for some $m_q \in L^1(I_q)$. By the second property, $(I_q, \dd_{\R}, m_q\LL^1)$ satisfies the $\MCP(-2,3)$ condition and so $m_q$ must be an $\MCP(-2,3)$ density (see \eqref{Eq: MCP(K,N) density def} and the discussion following it). By the third and fourth properties, we may write $(g_q^{-1})_*(\mm_q^{u^k_0}) = (u^k_0\circ g_q)m_q \LL^1$ and $(g_q^{-1})_*(\mm_q^{u^k_1}) = (u^k_1 \circ g_q)m_q \LL^1$, and, defining $T_k^q: I_q \to I_q$ by
\begin{align*}
T_k^q(t):=\begin{cases}
	g_q^{-1}(T_k(g_q(t))) &\text{if } T_k(g_q(t)) \in X_q,\\
	0 &\text{otherwise,}
	\end{cases}
\end{align*}
we have 
\begin{equation*}
    (T_k^q)_*((u^k_0\circ g_q)m_q \LL^1) = (u^k_1 \circ g_q)m_q \LL^1.
\end{equation*}

Let $C_1 := \norm{\hat{u}_0}{L^\infty(\hat{X}, \HH^3)}$. As mentioned previously, we have $\norm{u^k_0}{L^\infty(X, \mm)} \leq 2kC_1$. By Theorem \ref{Thm: Main disintegration thm} this implies that, for $\mathfrak{q}$-a.e.\ $q \in Q_0$, $\norm{u^k_0 \circ g_q}{L^{\infty}(I_q, m_q\LL^1)} \leq 2kC_1$. By our initial assumption that $\mu_0$ is concentrated in $A_{1, C_0}({p})$ (as $\hat\mu_0$ is concentrated in $A_{1, C_0}(\hat{p})$) and our construction, we see that for $\mathfrak{q}$-a.e.\ $q \in Q_0$, for $(u_0^k \circ g_q)m_q \LL^1$-a.e.\ $t \in I_q$, we have
$$t \in [1,C_0] \cap I_q\quad\text{and}\quad T_k^q(t) \in [2/k, C_0] \cap I_q$$
(as the speed of any $\gamma\in\text{supp}(\hat\nu_k)$ is at least $2$
and thus $\hat\dd(\gamma((i_k-2)/k),\hat p)\ge2/k$).
Using the fact that $m_q$ is an $\MCP(-2,3)$ density, we see that for any such $t$ it holds
\begin{equation*}
    \frac{m_q(T_k^q(t))}{m_q(t)} \geq c'(C_0)/k^2.
\end{equation*} 
By property (4) of $Q_0$, we can find a set $I_q^0$ of full $(u_0^k \circ g_q)m_q \LL^1$ measure so that, for any $t_1, t_2 \in I_q^0$, if $t_2 > t_1$ then
\begin{equation*}
T_k^q(t_2) - T_k^q(t_1) > c_0(t_2 - t_1).
\end{equation*}
Therefore, we have
\begin{equation*}
(T_k^q)_*(\LL^1 \mrestr {I_q^0}) \leq \frac{1}{c_0} \LL^1\mrestr {I_q}.
\end{equation*}
Putting everything together, we have that 
\begin{equation*}
    \norm{u_1^k \circ g_q}{L^{\infty}(I_q, (u_1^k \circ g_q) m_q\LL^1)}\leq (2kC_1)(k^2/c')(1/c_0) = C(C_0,C_1)k^3.
\end{equation*}
The same bound holds for $\norm{u_1^k \circ g_q}{L^{\infty}(I_q, \mm_q\LL^1)}$ since, in general, for any nonnegative $f \in L^1(Y, m)$, we have $\norm{f}{L^{\infty}(Y, m)}=\norm{f}{L^{\infty}(Y, fm)}$. By Theorem \ref{Thm: Main disintegration thm}, we immediately obtain that 
\begin{equation}\label{Eq: main intermediate density bound}
    \norm{u_1^k}{L^{\infty}(X, \mm)} \leq C(C_0, C_1)k^3, 
\end{equation}
as desired. 

By the assumptions of Lemma \ref{Lem: main lemma} and Remark \ref{Rem: one tangent cone all tangent cone}, there exists an increasing sequence of natural numbers $k_i \to \infty$ so that $(X, k_i\dd, \HH^3, p) \to (C(Y), \dd_{C(Y)}, \HH^3, o_{C(Y)})$ in the pmGH sense, where each $\HH^3$ is understood to be the $3$-dimensional Hausdorff measure corresponding to the associated metric, $(Y, \dd_Y)$ is a $2$-dimensional Alexandrov space homeomorphic to $\mathbb{RP}^2$, $\dd_{C(Y)}$ is the standard cone metric on $C(Y)$ induced from $\dd_Y$, and $o_{C(Y)}$ is the cone tip. 

For notational clarity we will from now on denote $(X_i, \dd_i, \HH^3, p_i) := (X, k_i\dd, \HH^3, p)$ and $\pi_i: (\hat{X}_i, \hat{\dd}_i, \HH^3, \hat{p}_i) \to (X_i, \dd_i, \HH^3, p_i)$ their respective ramified double covers with involutions $\Gamma_i$. We claim that if Lemma \ref{Lem: main lemma} is false, then we can find $R, C > 0$ so that the following claim holds.  
\begin{Cla}\label{Cla: Existence of sequence}
For each $i$, there exist $\hat{\mu}^0_i, \hat\mu^1_i \in \mathcal{P}(\hat{X}_i)$ so that the following holds:
\begin{enumerate}
    \item $\hat\mu^0_i, \hat\mu^1_i$ are supported in $A_{1,R}(\hat{p}_i)$;
    \item $\hat\mu^0_i, \hat\mu^1_i$ are absolutely continuous with respect to $\HH^3$ and their densities $\hat{u}^0_i, \hat{u}^1_i$ satisfy
    \begin{equation*}
        \norm{\hat{u}^0_i}{L^{\infty}(\hat{X}_i, \HH^3)}, \norm{\hat{u}^1_i}{L^{\infty}(\hat{X}_i, \HH^3)} \leq C;
    \end{equation*}
    \item there exists $\hat\nu_i \in \OptGeo(\hat\mu^0_i, \hat\mu^1_i)$ so that $\hat\nu_i$ is concentrated on $\Geo_{\hat{p}_i}(\hat X_i)$.
\end{enumerate}
\end{Cla}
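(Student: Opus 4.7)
The plan is to produce the sequence $(\hat\nu_i)$ by further restricting and reparameterizing $\hat\nu_k$, and then reinterpreting the result on the rescaled spaces $\hat X_i$. For each $k$ large enough that $i_k+1\le k$ (which holds for all large $k$ because $\hat\mu_0,\hat\mu_1$ are concentrated in $A_{1,C_0}(\hat p)$, forcing the passing time $t_0$ to lie in $[1/(2C_0),1-1/(2C_0)]$), define
$$F_k:\Geo(\hat X)\to\Geo(\hat X),\qquad F_k(\gamma)(\tau):=\gamma\!\left(\frac{i_k-2}{k}+\frac{3\tau}{k}\right),$$
and set $\tilde\nu_k:=(F_k)_*(\hat\nu_k)$. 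The standard restriction-and-reparameterization principle for optimal transport (valid on any metric space: the restriction of an optimal plan to a positive-mass subset, glued back against the complement, remains optimal; linear reparameterization of each geodesic to a subinterval preserves optimality) shows that $\hat\nu_k$ is optimal in $\hat X$, and hence $\tilde\nu_k$ is optimal as well. Each $\gamma\in\supp(\hat\nu_k)$ passes through $\hat p$ at some $t_0\in[(i_k-1)/k,i_k/k]$, which becomes $\tau_0\in[1/3,2/3]$ after applying $F_k$, so $\tilde\nu_k$ is concentrated on $\Geo_{\hat p}(\hat X)$. Moreover, $\gamma$ has constant speed $s\in(2,2C_0)$ (since its endpoints lie in $A_{1,C_0}(\hat p)$ and the triangle inequality is saturated at $\hat p$), so the endpoints of $\gamma|_{[(i_k-2)/k,(i_k+1)/k]}$ are at $\hat\dd$-distance in $[s/k,2s/k]\subseteq[2/k,4C_0/k]$ from $\hat p$.

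The main work is bounding the marginal densities of $\tilde\nu_k$. By \eqref{Eq: main intermediate density bound}, $(e_1)_*(\nu_k)=\pi_*((e_{(i_k-2)/k})_*(\hat\nu_k))$ has $L^\infty(X,\HH^3)$-density at most $C(C_0,C_1)k^3$. To lift this to $\hat X$, first note that since $\pi$ is $1$-Lipschitz and $\pi_*\HH^3_{\hat X}=2\HH^3_X$, Hausdorff-null sets in $\hat X$ project to Hausdorff-null sets in $X$, whence $(e_{(i_k-2)/k})_*(\hat\nu_k)\ll\HH^3_{\hat X}$. Since $\pi$ is a local isometry off the (measure-zero) fixed set of $\Gamma$, with fibers of cardinality at most $2$, writing the pushforward via a Borel section of $\pi^{-1}$ gives the pointwise domination $\hat u^{1/2}_k(\hat x)\le u^k_1(\pi(\hat x))$ for $\HH^3_{\hat X}$-a.e.\ $\hat x$, hence $\|\hat u^{1/2}_k\|_{L^\infty(\hat X,\HH^3)}\le C k^3$. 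For the marginal at time $(i_k+1)/k$, apply the entire preceding argument (including \eqref{Eq: main intermediate density bound}) verbatim to the time-reversed plan $J_*(\hat\nu)$, where $J(\gamma)(t):=\gamma(1-t)$; this is optimal with marginals $\hat\mu_1,\hat\mu_0$ (both of bounded density) and is concentrated on $\Geo_{\hat p}(\hat X)$. Choosing $j_k:=k-i_k+1$, one checks $(J_*(\hat\nu))_k=J_*(\hat\nu_k)$, so the resulting $Ck^3$ bound on $(e_{(j_k-2)/k})_*((J_*(\hat\nu))_k)$ translates directly to a $Ck^3$ density bound on $(e_{(i_k+1)/k})_*(\hat\nu_k)$.

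Finally we rescale. Taking $k=k_i$ and viewing $\tilde\nu_{k_i}$ on $\hat X_i=(\hat X,k_i\hat\dd,\HH^3,\hat p)$: under this rescaling $W_2$ scales by $k_i$ and $\HH^3_{k_i\hat\dd}=k_i^3\HH^3_{\hat\dd}$, so optimality and concentration on $\Geo_{\hat p_i}(\hat X_i)$ are preserved, the $Ck_i^3$ density bound in the original Hausdorff measure becomes a uniform $L^\infty$ bound by a constant $C$ in the rescaled Hausdorff measure, and the marginal supports $A_{2/k_i,4C_0/k_i}(\hat p)$ (original metric) become $A_{2,4C_0}(\hat p_i)\subseteq A_{1,R}(\hat p_i)$ for $R:=4C_0$. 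Setting $\hat\nu_i:=\tilde\nu_{k_i}$ and $\hat\mu^j_i:=(e_j)_*(\hat\nu_i)$ for $j\in\{0,1\}$ yields the required sequence.

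The main obstacle is the lift of the $L^\infty$ density bound from $X$ to $\hat X$. Since no $\RCD$ structure on $\hat X$ is available (its existence is precisely what the paper aims to establish), the argument must rely entirely on the concrete ramified-double-cover structure of $\pi$ and, crucially, on the unique-lift property at $\hat p$ given by Proposition \ref{Lem: non-orientable tangent cone implies fixed point}, together with the fact that a geodesic through $\hat p$ projects via $\pi$ to a geodesic in $X$ through $p$ (Lemma \ref{Lem: Geodesic break}). Once this lift is in place, the remaining verifications are essentially bookkeeping under restriction, linear time reparameterization, and metric rescaling.
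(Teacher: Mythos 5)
Your proposal is correct and follows essentially the same route as the paper: the paper likewise takes $\hat\mu_i^0=(e_{(i_{k_i}-2)/k_i})_*(\hat\nu_{k_i})$ and $\hat\mu_i^1=(e_{(i_{k_i}+1)/k_i})_*(\hat\nu_{k_i})$, lifts the $Ck^3$ bound of \eqref{Eq: main intermediate density bound} from $X$ to $\hat X$ using $\pi_*(\HH^3_{\hat X})=2\HH^3_X$, and obtains the bound for the second marginal by the symmetric (time-reversed) analysis. You merely make explicit details the paper leaves implicit, such as the reparameterization map $F_k$, the index $j_k=k-i_k+1$ for the reversed plan, and the rescaling bookkeeping.
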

To show the claim one may simply take $\hat\mu_i^0 := (e_{(i_{k_i}-2)/k_i})_*(\hat\nu_{k_i})$ from the previous part. The bound obtained in \eqref{Eq: main intermediate density bound} gives a uniform 
$L^\infty$ bound for the density $(\pi_i)_*(\hat\mu^0_i)$, which, when combined with Remark \ref{Rem: Ramified double cover properties} (3), gives a uniform $L^\infty$ bound for the density of $\hat\mu_i^0$ as well. Similarly, one may take $\hat\mu_i^1 = (e_{(i_{k_i}+1)/k_i})_*(\hat\nu_{k_i})$. A symmetric analysis starting at $\hat\mu_1$ as in the previous part gives the required uniform $L^\infty$ density bound for $\hat\mu_i^1$.

Here we collect a few facts about $2$-dimensional Alexandrov spaces with $\curv \geq 1$ and homeomorphic to $\mathbb{RP}^2$. These will be useful for the next stage of our argument. 
\begin{Rem}\label{Fac: Alexandrov facts}
Let $(Y, \dd_Y)$ be such a space.
\begin{enumerate}
    \item Defining $\diam(Y) := \sup_{x, y \in Y} \dd_{Y}(x,y)$, we have $\diam(Y) \leq \pi/2$.
    This follows from the diameter sphere theorem for Alexandrov spaces \cite[Theorem 4.5]{P91} (see also \cite{GP93}), which in the $2$-dimensional case says that any Alexandrov space with $\curv \geq 1$ and $\diam > \pi/2$ is homeomorphic to the $2$-dimensional sphere. 
    \item There exists a $2$-dimensional Alexandrov space $(\hat{Y}, \dd_{\hat{Y}})$ with $\curv \geq 1$ and homeomorphic to $\mathbb{S}^2$, equipped with a free involutive isometry $\Gamma: \hat{Y} \to \hat{Y}$, so that 
    \begin{equation*}
    (\hat{Y}/\langle \Gamma \rangle, \dd_{\hat{Y}/\langle \Gamma \rangle}) = (Y, \dd_Y).
    \end{equation*}
    This follows from the globalization theorem for Alexandrov spaces \cite{BGP92}. 
    \item If $x, y \in \hat{Y}$ are so that $\dd_{\hat{Y}}(x,y) = \pi$, then $\Gamma(x) = y$. Indeed, from the maximal diameter theorem for Alexandrov spaces with $\curv \geq 1$ it holds that, since $\dd_{\hat{Y}}(x,y)=\pi$ is maximal,  $\hat{Y}$ is a spherical suspension over a circle with diameter at most $2\pi$, with $x$ and $y$ as the tips of the suspension. If $\Gamma(x) \neq y$ then we have two distinct pairs of points of $\hat{Y}$ whose distance is maximal, namely $(x,y)$ and $(\Gamma(x),\Gamma(y))$. This is only possible if $\hat{Y}$ is a spherical suspension over a circle of diameter $2\pi$ and so $\hat{Y}$ is the standard sphere, but the only non-trivial free involutive isometry on the sphere is the standard one. 
\end{enumerate}
\end{Rem}

We now show that Claim \ref{Cla: Existence of sequence} leads to a contradiction. Let $\hat\Pi_i := (e_0, e_1)_*(\hat\nu_i)$ be the optimal pairing between $\hat\mu_i^0$ and $\hat\mu_i^1$ associated with $\hat\nu_i$. We denote by $\mu_i^0$, $\mu_i^1$, and $\Pi_i$ the probability measures $(\pi_i)_*(\hat\mu_i^0)$, $(\pi_i)_*(\hat\mu_i^1)$, and $(\pi_i, \pi_i)_*(\hat\Pi_i)$ respectively. 

Let $(Z, \dd_{Z}, \{\iota_i\})$ be a realization (see Definition \ref{Def: pmGH convergence}) for the pmGH convergence of spaces $(X_i, \dd_i, \mathcal{H}^3, p_i) \to (C(Y), \dd_{C(Y)}, \mathcal{H}^3, o_{C(Y)})$. It follows that $(X_i \times X_i, \dd_i \times \dd_i, \HH^{6}, (p_i, p_i))$ also converges to $(C(Y) \times C(Y), \dd_{C(Y)} \times \dd_{C(Y)}, \HH^{6}, (o_{C(Y)}, o_{C(Y)}))$ in the pmGH sense and $(Z \times Z, \dd_{Z} \times \dd_{Z}, \{(\iota_i, \iota_i)\})$ can be taken as a realization of the convergence. As such, up to a subsequence, we may take a weak limit of the sequences $(\mu_i^0)_i$, $(\mu_i^1)_i$ and $(\Pi_i)_i$ in the sense of Lemma \ref{Lem: weak convergence of probability measures} with respect to these realizations. Denote by $\mu_\infty^0, \mu_\infty^1 \in \mathcal{P}(C(Y))$ and $\Pi_\infty \in \mathcal{P}(C(Y) \times C(Y))$ these limits respectively. It is not difficult to check the following from the very definition of pmGH convergence and property (1) of Claim \ref{Cla: Existence of sequence}: 
\begin{itemize}
    \item $(\pr_1)_*(\Pi_\infty) = \mu_\infty^0$ and $(\pr_2)_*(\Pi_\infty) = \mu_\infty^1$, where $\pr_j: C(Y) \times C(Y) \to C(Y)$ is the projection onto the $j$-th factor for $j=1,2$;
    \item $\mu_\infty^0$ and $\mu_\infty^1$ are supported in $\overline{A_{1, R}(o_{C(Y)})}$.
\end{itemize}

Property (2) of Claim \ref{Cla: Existence of sequence} and property (3) of Remark \ref{Rem: Ramified double cover properties} gives that $\mu^0_i$ and $\mu^1_i$ are absolutely continuous with respect to $\HH^3_{X_i}$ and the $L^\infty$-norms of their densities are bounded by $2C$. Therefore, $\mu^0_\infty$ and $\mu^1_\infty$ are also absolutely continuous with respect to $\HH^3_{C(Y)}$ with their densities bounded by $2C$ due to Lemma \ref{Lem: weak convergence density bound}.

Let $(\hat{Y}, \dd_{\hat{Y}}, \Gamma)$ be as in Remark \ref{Fac: Alexandrov facts} (2) and let $\pi^{\hat{Y}}_Y: \hat{Y} \to Y$ be the associated projection map.
This induces a projection map $\pi^{C(\hat{Y})}_{C(Y)}:C(\hat{Y}) \to C(Y)$. In order to avoid confusion in the sequel, let us stress that $C(\hat Y)$ will be used as an auxiliary space
and that we will not claim at any point in the following argument that $C(\hat Y)$ is a limit of $\hat X_i$
(cf.\ Remark \ref{illustration}), even if this fact will hold a posteriori.

Let $\veps < \veps(n=3)$ be sufficiently small and consider $A_\veps := A_\veps(C(Y))$; notice that, as long as we set $\veps$ sufficiently small, $o_{C(Y)} \notin A_\veps$. Since $\mu^0_\infty$, $\mu^1_\infty$ are absolutely continuous with respect to $\HH^3_{C(Y)}$ and $\HH^3(C(Y) \setminus A_\veps)=0$, we see that $\mu^0_\infty$, $\mu^0_\infty$ are concentrated on $A_\veps$ and $\Pi_\infty$ is concentrated on $A_\veps \times A_\veps$.

Let $\pi^{C(Y)}_Y: C(Y) \setminus \{o_{C(Y)}\} \to Y$ be the standard projection map. We have the following claim. 
\begin{Cla}\label{Cla: cone optimal plan support}
$\Pi_\infty$ is concentrated on the set
\begin{equation*}
\Big\{(x_0,x_1) \in A_\veps \times A_\veps \,:\, \pi^{C(Y)}_Y(x_0) = \pi^{C(Y)}_Y(x_1)\Big\}.
\end{equation*}
\end{Cla}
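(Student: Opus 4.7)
The plan is to argue by contradiction: suppose $\Pi_\infty(\{(x_0, x_1)\in A_\veps\times A_\veps: \pi^{C(Y)}_Y(x_0) \neq \pi^{C(Y)}_Y(x_1)\}) > 0$. By inner regularity and the weak convergence $\Pi_i \rightharpoonup \Pi_\infty$, along a subsequence I can pick pairs $(\hat a_i, \hat b_i) \in \supp(\hat\Pi_i)$ whose projections $(\pi_i(\hat a_i), \pi_i(\hat b_i))$ converge to some $(x_0, x_1) = ((r_0, y_0), (r_1, y_1))$, with $r_0, r_1 \ge r_{\min} > 0$ and $\dd_Y(y_0, y_1) \ge \theta_0 > 0$. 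Writing $\alpha_i := \dd_i(\pi_i(\hat a_i), p_i)$ and $\beta_i := \dd_i(p_i, \pi_i(\hat b_i))$, since $\hat\gamma_i \in \supp(\hat\nu_i)$ joins $\hat a_i$ and $\hat b_i$ through $\hat p_i$, I have $\hat\dd_i(\hat a_i, \hat b_i) = \alpha_i + \beta_i \to r_0 + r_1$; while by the cone distance formula and $\diam(Y) \le \pi/2$ from Remark \ref{Fac: Alexandrov facts}~(1), the direct downstairs distance $\dd_i(\pi_i(\hat a_i), \pi_i(\hat b_i))$ converges to $\dd_{C(Y)}(x_0, x_1) < r_0 + r_1$ strictly.

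The first step is to lift the unique direct geodesic $\sigma_i$ in $X_i$ from $\pi_i(\hat a_i)$ to $\pi_i(\hat b_i)$, which for large $i$ avoids $p_i$ (as its cone limit $\sigma_\infty$ does). By the local isometric double-cover property of $\pi_i$ away from $\hat p_i$, $\sigma_i$ admits a unique continuous lift $\hat\sigma_i$ in $\hat X_i \setminus \{\hat p_i\}$ starting at $\hat a_i$, of the same length, with endpoint either $\hat b_i$ or $\Gamma_i(\hat b_i)$; the endpoint $\hat b_i$ is excluded, since it would give $\hat\dd_i(\hat a_i, \hat b_i) \le \dd_i(\pi_i(\hat a_i), \pi_i(\hat b_i)) < \alpha_i + \beta_i = \hat\dd_i(\hat a_i, \hat b_i)$, a contradiction. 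Next, letting $t_i$ be the passage time of $\hat\gamma_i$ at $\hat p_i$ and $s > 0$ small, I take $\tilde{\hat\gamma}_i$ to be the concatenation of $\hat\gamma_i\lvert_{[0, t_i - s]}$ with any minimizing geodesic in $\hat X_i$ from $\hat\gamma_i(t_i - s)$ to $\hat b_i$. This curve avoids $\hat p_i$, so its projection $\tilde\gamma_i := \pi_i \circ \tilde{\hat\gamma}_i$ avoids $p_i$, has $\tilde{\hat\gamma}_i$ as its unique lift from $\hat a_i$ (ending at $\hat b_i$), and, applying $\Gamma_i$, has unique lift from $\Gamma_i(\hat a_i)$ ending at $\Gamma_i(\hat b_i)$. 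Thus the loop $L_i := \sigma_i \cdot \tilde\gamma_i^{-1}$ lies in the topological manifold part $X_i \setminus \{p_i\}$ and its lift from $\hat a_i$ (following $\hat\sigma_i$ then $\Gamma_i(\tilde{\hat\gamma}_i)^{-1}$) ends at $\Gamma_i(\hat a_i)$, so $L_i$ is orientation-reversing in the sense of Definition \ref{Def: orientation-reversing loop}.

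To conclude, I extract a subsequential pGH limit $\hat Z$ of $(\hat X_i, \hat p_i)$ (existence follows from uniform local doubling of $\hat X_i$ inherited from $X_i$ via Remark \ref{Rem: Ramified double cover properties}~(3)), with limit projection $\pi_\infty : \hat Z \to C(Y)$ and limit isometric involution $\Gamma_\infty$; then $\hat a_i \to \hat a_\infty$ and $\Gamma_i(\hat a_i) \to \Gamma_\infty(\hat a_\infty)$, which are distinct, the last by uniform positivity of the orientability radius at $a_\infty \in C(Y) \setminus \{o_{C(Y)}\}$ combined with Lemma \ref{Lem: involution bound on non-orientable ball} and Remark \ref{Rem: local stability of orientability}. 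The uniformly bounded-length lifts of $L_i$ subsequentially converge to a continuous lift of $L_\infty$ in $\hat Z$ with distinct endpoints $\hat a_\infty$ and $\Gamma_\infty(\hat a_\infty)$. On the other hand, for $s$ small the perturbed limit curve $\tilde\gamma_\infty$ passes through a point with cross-section $y_0$, so that the cross-sectional projection of $L_\infty$ to $Y$ is a $Y$-geodesic from $y_0$ to $y_1$ concatenated with its reverse, hence null-homotopic; via $C(Y) \setminus \{o_{C(Y)}\} \simeq Y$, $L_\infty$ is null-homotopic in $C(Y) \setminus \{o_{C(Y)}\}$. Since the restriction $\hat Z \setminus \{\hat z\} \to C(Y) \setminus \{o_{C(Y)}\}$ is a connected double cover (obtained as a limit of the non-trivial connected covers $\hat X_i \setminus \{\hat p_i\} \to X_i \setminus \{p_i\}$, non-trivial since $X_i$ is non-orientable near $p_i$), it is isomorphic to the unique non-trivial connected double cover $C(\hat Y) \setminus \{o_{C(\hat Y)}\} \to C(Y) \setminus \{o_{C(Y)}\}$, and so null-homotopic loops lift to closed loops, contradicting the distinct endpoints above. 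The main technical obstacle is justifying this limiting behavior of the ramified double covers and the continuity of lifts, in the spirit of Subsection \ref{subsec: Ramified double cover} and Section 2.5 on limits of ramified double covers.
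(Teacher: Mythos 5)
Your opening reductions are fine (extracting $(\hat a_i,\hat b_i)\in\supp(\hat\Pi_i)$ with $\hat\dd_i(\hat a_i,\hat b_i)=\alpha_i+\beta_i\to r_0+r_1$ while the direct distance downstairs tends to $\dd_{C(Y)}(x_0,x_1)<r_0+r_1$, and lifting $\sigma_i$ to a short path from $\hat a_i$ to $\Gamma_i(\hat b_i)$), but the route you take from there has genuine gaps. First, $\tilde{\hat\gamma}_i$ is not shown to avoid $\hat p_i$: since $\hat\dd_i(\hat\gamma_i(t_i-s),\hat b_i)=\hat\dd_i(\hat\gamma_i(t_i-s),\hat p_i)+\hat\dd_i(\hat p_i,\hat b_i)$, the segment $\hat\gamma_i\lvert_{[t_i-s,1]}$ is itself a minimizing geodesic passing through $\hat p_i$, and nothing guarantees the existence of a minimizer avoiding it. Second, your loop $L_i$ is not known to lie in an open topological-manifold subset of $X_i$ (local finiteness of $\mathcal{P}_i$ is not yet available at this stage of the paper), so Definition \ref{Def: orientation-reversing loop} does not apply to it; this is exactly why the paper routes all curves through $A_{\veps'}$ using Lemma \ref{Lem: RCD almost everywhere connectedness}. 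Third, the identification of a limit $\hat Z\setminus\{\hat z\}\to C(Y)\setminus\{o_{C(Y)}\}$ as the connected nontrivial double cover is precisely the delicate point the paper refuses to assume (cf.\ Remark \ref{illustration}); the paper deliberately never passes to a limit of the $\hat X_i$ and works instead with the auxiliary space $C(\hat Y)$.

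The fatal step, however, is the claimed null-homotopy of $L_\infty$ in $C(Y)\setminus\{o_{C(Y)}\}$. The final piece of $\tilde\gamma_\infty$ is the limit of minimizing $\hat X_i$-geodesics whose lengths saturate the triangle inequality through $\hat p_i$; its cross-sectional projection is some path from $y_0$ to $y_1$ in $Y$ with no control on its homotopy class rel endpoints, and there is no reason for it to be homotopic to the $Y$-geodesic that $\sigma_\infty$ projects to. A decisive sanity check: your argument uses the hypothesis $\pi^{C(Y)}_Y(x_0)\neq\pi^{C(Y)}_Y(x_1)$ only through $\dd_Y(y_0,y_1)<\pi$, which is automatic from $\diam(Y)\le\pi/2$; run verbatim on a pair with $y_0=y_1$ and $r_0\neq r_1$ (which genuinely occurs in $\supp(\Pi_\infty)$, e.g.\ in the model where $\hat X_i$ is locally $C(\hat Y)$ near $\hat p_i$), it would "exclude" that pair too, so the argument cannot be sound — in that case $L_\infty$ is genuinely orientation-reversing rather than null-homotopic. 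The missing idea is the one the paper extracts from Remark \ref{Fac: Alexandrov facts}\,(3): because $\theta_0\neq\theta_1$, \emph{both} lifts $\hat\theta_0,\hat\theta_0'$ of $\theta_0$ in $\hat Y$ satisfy $\dd_{\hat Y}(\hat\theta_0,\hat\theta_1),\dd_{\hat Y}(\hat\theta_0',\hat\theta_1)<\pi$, so \emph{both} lifts of $x_0$ connect to a fixed lift of $x_1$ by paths strictly shorter than $r_0+r_1$. Producing one short path to $\Gamma_i(\hat b_i)$, as you do, is entirely consistent with optimality; only the two-sided shortness contradicts the identity $\hat\dd_i(\hat y_0^i,\hat y_1^i)=\dd_i(y_0^i,p_i)+\dd_i(p_i,y_1^i)$ forced on $\supp(\hat\Pi_i)$, whichever lift $\hat y_0^i$ happens to realize it.
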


\begin{Rem}
Before giving the full proof, let us illustrate the idea in the simpler case where $C(Y)\setminus \{o_{C(Y)}\} = A_\veps(C(Y))$. If in this case the above claim is not true, then we can find $(x_0,x_1) \in \supp(\Pi_\infty)$ so that $\pi^{C(Y)}_Y(x_0) \neq \pi^{C(Y)}_Y(x_1)$. Choose a lift $\hat{x}_0$ of $x_0$ and then consider two geodesics from $\hat{x}_0$ to the two lifts of $x_1$. Since $\pi^{C(Y)}_Y(x_0) \neq \pi^{C(Y)}_Y(x_1)$, by Remark \ref{Fac: Alexandrov facts} (3), we see that neither of these geodesics passes through $o_{C(\hat{Y})}$
(see, e.g., \cite[Definition 3.6.12]{BBI01} for the formula of $\dd_{C(\hat Y)}$).
By projecting these two geodesics and concatenating, we arrive at an orientation-reversing curve in $C(Y) \setminus \{o_{C(Y)}\} = A_\veps(C(Y))$. Moreover, this loop can be broken into two curves from $x_0$ to $x_1$ both of whose length is strictly shorter than $\dd_{C(Y)}(x_0, o_{C(Y)})+\dd_{C(Y)}(o_{C(Y)}, x_1)$.

It can then be shown that this loop can be approximated by orientation-reversing loops in $A_{\delta}(X_i) \subseteq X_i$ for large $i$ (where $\delta$ can be made arbitrarily small by choosing $\veps$ small). We can find a sequence $(\hat{x}_0^i, \hat{x}_1^i) \in \supp(\hat{\Pi}_i)$ so that $(\pi_i(\hat{x}_0^i), \pi_i(\hat{x}_1^i))$ converges to $(x_0, x_1)$ in the pmGH convergence. It is then not difficult to use the orientation-reversing loop in $X_i$ to construct a curve from $\hat{x}_0^i$ to $\hat{x}_1^i$ whose length is strictly shorter than $\dd_{i}(x_0^i, p_i) + \dd_{i}(p_i, x_1^i)$ for large $i$, which contradicts our assumption on $\supp(\hat{\Pi}_i)$. The proof in the general case follows essentially the same idea, but makes technical adjustments to make sure that the constructed curves stay in a sufficiently regular part of $C(Y)$ (so that it makes sense to talk about orientation-reversing curves). 
\end{Rem}

\begin{proof}
It suffices to show that, for any $(x_0, x_1) \in \supp(\Pi_\infty)\cap (A_\veps \times A_\veps)$, $\pi^{C(Y)}_Y(x_0) = \pi^{C(Y)}_Y(x_1)$. Suppose for the sake of contradiction that this is not true. Denoting $\theta_0 := \pi^{C(Y)}_Y(x_0)$ and $\theta_1 := \pi^{C(Y)}_Y(x_1)$, we have that $\theta_0 \neq \theta_1$. Choose $\hat{x}_1 \in (\pi^{C(\hat{Y})}_{C(Y)})^{-1}(x_1)$ and let $\hat{x}_0$ and $\hat{x}_0'$ be the disjoint points in $(\pi^{C(\hat{Y})}_{C(Y)})^{-1}(x_0)$. Let $\pi^{C(\hat{Y})}_{\hat{Y}}: C(\hat{Y}) \setminus \{o_{C(\hat{Y})}\} \to \hat{Y}$ be the standard projection map and let $\hat\theta_0, \hat\theta_0', \hat\theta_1$ be $\pi^{C(\hat{Y})}_{\hat{Y}}(\hat{x}_0), \pi^{C(\hat{Y})}_{\hat{Y}}(\hat{x}_0'), \pi^{C(\hat{Y})}_{\hat{Y}}(\hat{x}_1)$ respectively. It is clear that $\pi^{\hat{Y}}_{Y}(\hat\theta_0) =\pi^{\hat{Y}}_{Y}(\hat\theta_0')= \theta_0$ and $\pi^{\hat{Y}}_{Y}(\hat\theta_1) = \theta_1$. Now, since $\theta_0 \neq \theta_1$, we have that $\Gamma(\hat\theta_1) \neq \hat\theta_0, \hat\theta_0'$. By Remark \ref{Fac: Alexandrov facts} (3), this implies $\dd_{\hat{Y}}(\hat\theta_0, \hat\theta_1), \dd_{\hat{Y}}(\hat\theta_0', \hat\theta_1) < \pi$. Therefore, by the formula of
$\dd_{C(\hat Y)}$ in terms of $\dd_{\hat Y}$ (see, e.g., \cite[Definition 3.6.12]{BBI01}), we have
\begin{align}\label{Eq: distance to cone tip bound}
\begin{split}
    \dd_{C(\hat{Y})}(\hat{x}_0, \hat{x}_1) &< \dd_{C(\hat{Y})}(\hat{x}_0, o_{C(\hat{Y})}) + \dd_{C(\hat{Y})}(o_{C(\hat{Y})}, \hat{x}_1),\\
    \dd_{C(\hat{Y})}(\hat{x}_0', \hat{x}_1) &< \dd_{C(\hat{Y})}(\hat{x}_0', o_{C(\hat{Y})}) + \dd_{C(\hat{Y})}(o_{C(\hat{Y})}, \hat{x}_1).
\end{split}
\end{align}

Recall the following lemma from \cite[Lemma 2.2]{BBP24}.
\begin{Lem}\label{Lem: RCD almost everywhere connectedness}
Let $(X, \dd, \HH^n)$ be a non-collapsed $\RCD(-(n-1),n)$ space with no boundary. Let $C \subseteq X$ be closed with $\HH^{n-1}(C)$ = 0. Then, for every $x \in X \setminus C$, the following holds. For $\mathcal{H}^n$-a.e.\ $y \in X \setminus C$, there exists a geodesic $\gamma:[0,1] \to X$ connecting $x$ to $y$ with $\gamma([0,1]) \subseteq X \setminus C$.
\end{Lem}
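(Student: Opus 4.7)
The plan is to mirror the argument used in the proof of Lemma \ref{Lem: double cover limit connetedness}, but carried out directly in the $\RCD(-(n-1),n)$ space $X$ (so that we no longer need to pass to a limit and may use the full $\RCD$ machinery, in particular Proposition \ref{Pro: Bishop--Gromov} globally and Proposition \ref{Pro: local cde density control} with $X$ itself in place of a small ball). It suffices to prove the following localized statement: for every $x\in X\setminus C$ and every $y\in X\setminus C$, there exists $\eta>0$ such that for $\HH^n$-a.e.\ $y'\in B_\eta(y)$ there is a geodesic from $x$ to $y'$ whose image is contained in $X\setminus C$. A standard covering argument then upgrades this to the full statement.

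Fix $\veps>0$. Since $\HH^{n-1}(C)=0$ and $C$ is closed, by the Bishop--Gromov inequality we can find a finite cover $\{B_{r_j}(z_j)\}_{j\in J}$ of $C\cap\overline{B_L(x)}$ (with $L$ large enough that every geodesic under consideration stays in $B_L(x)$) such that $r_j<1$, each $z_j$ lies in a controlled ball, $\sum_j(2r_j)^{n-1}\le\veps/(16 c_{K,N})$, and moreover $x,y\notin\bigcup_j B_{2r_j}(z_j)$ (this is where we use $x,y\notin C$ and shrink $\eta$ so that $B_\eta(y)$ is disjoint from this cover too). Bishop--Gromov then gives
\begin{equation*}
\sum_{j\in J}\frac{\HH^n(B_{2r_j}(z_j))}{r_j}\le c_{K,N}\sum_j(2r_j)^{n-1}\cdot 2\le \tfrac{\veps}{8}.
\end{equation*}

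Let $\mu_0:=\HH^n\mrestr{B_\eta(y)}/\HH^n(B_\eta(y))$ and $\mu_1:=\delta_x$. By Proposition \ref{Pro: local cde density control} applied to the $\RCD(-(n-1),n)$ space $X$, there exists an optimal dynamical plan $\nu$ from $\mu_0$ to $\mu_1$ such that for every $t\in[0,1-\eta/(10L)]$ the measure $(e_t)_*(\nu)$ is absolutely continuous with density bounded by some constant $D=D(K,n,\eta,L)$. For each $j$, if a geodesic of length $\le L$ meets $B_{r_j}(z_j)$, then for any integer $N_j$ with $L/r_j\le N_j\le 20L/r_j$ there is an index $i\in\{0,\dots,N_j\}$ with $i/N_j$ bounded away from $1$ by $\eta/(10L)$ and $\gamma(i/N_j)\in B_{2r_j}(z_j)$ (this is where we use that $\mu_1=\delta_x$ is far from $\bigcup B_{2r_j}(z_j)$). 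Summing over $i$ and using the density bound,
\begin{equation*}
\nu\bigl(\{\gamma:\gamma([0,1])\cap B_{r_j}(z_j)\neq\emptyset\}\bigr)\le N_j\cdot D\cdot\HH^n(B_{2r_j}(z_j))\le 20LD\,\frac{\HH^n(B_{2r_j}(z_j))}{r_j}.
\end{equation*}
Summing over $j\in J$ yields $\nu(\{\gamma:\gamma([0,1])\cap C\neq\emptyset\})\le 20LD\veps/8$, and since $D$ is independent of $\veps$ we can let $\veps\to0$ to conclude that $\nu$-a.e.\ geodesic avoids $C$. As $(e_0)_*(\nu)=\mu_0$ is the normalized volume on $B_\eta(y)$, Fubini shows that for $\HH^n$-a.e.\ $y'\in B_\eta(y)$ at least one geodesic joining $y'$ to $x$ lies entirely in $X\setminus C$, as desired.

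The main subtlety lies in the density control of $(e_t)_*(\nu)$: here we crucially use that the source is a bounded-density measure on a small ball, so Proposition \ref{Pro: local cde density control} gives uniform $L^\infty$ bounds on intermediate densities up to some $t<1$ (one cannot hope for such a bound all the way to $t=1$, where the target is a Dirac mass). The restriction of the range of $t$ is exactly matched by the separation of $x$ from the covering balls $B_{2r_j}(z_j)$, which is why we took care to arrange $x\notin\bigcup_j B_{2r_j}(z_j)$.
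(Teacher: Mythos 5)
Your proof is correct and takes essentially the same approach as the paper: the statement is recalled there from \cite[Lemma 2.2]{BBP24} without proof, and your argument is exactly the one the paper carries out (in the harder limit setting) for the analogous Lemma \ref{Lem: double cover limit connetedness} — a cover of $C$ with $\sum_j r_j^{n-1}$ small, intermediate-time density bounds for the optimal dynamical plan between the normalized volume of $B_\eta(y)$ and $\delta_x$, and a time-discretization count of the geodesics meeting each covering ball. The only cosmetic point is that the separation of the hitting times from the Dirac endpoint should be quantified via $\dd(x,C)$ (after requiring $r_j\ll\dd(x,C)$, so that $\bigcup_j B_{2r_j}(z_j)$ stays a definite distance from $x$) rather than via $\eta$, but this is precisely the mechanism you already identify.
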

Define $\hat{A}_\veps := (\pi^{C(\hat{Y})}_{C(Y)})^{-1}(A_\veps)$. Since $C(Y) \setminus A_\veps$ has Hausdorff dimension at most $n-2$, $C(\hat{Y}) \setminus \hat{A}_{\veps}$ does as well. Applying Lemma \ref{Lem: RCD almost everywhere connectedness} with $C := C(\hat{Y}) \setminus \hat{A}_\veps$, we see that for any ball $B_\delta(\hat{x}_1)$ we can find $\hat{z} \in B_\delta(\hat{x}_1)$ so that there exist geodesics from $\hat{z}$ to $\hat{x}_1$, $\hat{x}_0$, $\hat{x}_0'$ respectively that only pass through $\hat A_\veps$. Let $\hat\gamma: [0,1] \to C(\hat{Y})$
(resp.\ $\hat\gamma': [0,1] \to C(\hat{Y})$) be the constant speed reparameterization of the curve obtained by concatenating the above geodesics from $\hat{x}_0$ (resp.\ $\hat{x}_0'$) to $\hat{z}$ and then from $\hat{z}$ to $\hat{x}_1$. Denoting by $\ell(\cdot)$ the length of a rectifiable curve, by construction we have $0\leq\ell(\hat{\gamma}) - \dd_{C(\hat{Y})}(\hat{x}_0, \hat{x}_1), \ell(\hat{\gamma}') - \dd_{C(\hat{Y})}(\hat{x}_0', \hat{x}_1) < 2\delta$. Therefore, by \eqref{Eq: distance to cone tip bound}, we have that
\begin{align}\label{Eq: distance to cone tip bound 3}
\begin{split}
    \ell(\hat\gamma) &< \dd_{C(\hat{Y})}(\hat{x}_0, o_{C(\hat{Y})}) + \dd_{C(\hat{Y})}(o_{C(\hat{Y})}, \hat{x}_1)
    =\dd_{C(Y)}(x_0, o_{C(Y)}) + \dd_{C(Y)}(o_{C(Y)}, x_1),\\
   \ell(\hat\gamma')  &< \dd_{C(\hat{Y})}(\hat{x}_0', o_{C(\hat{Y})}) + \dd_{C(\hat{Y})}(o_{C(\hat{Y})}, \hat{x}_1)
   =\dd_{C(Y)}(x_0, o_{C(Y)}) + \dd_{C(Y)}(o_{C(Y)}, x_1),
\end{split}
\end{align}
provided that we chose $\delta$ sufficiently small.

Define the curve $\gamma:[0,2] \to C(Y)$ by
\begin{align}
\begin{split}
 \gamma(t):=\begin{cases}
    \pi^{C(\hat{Y})}_{C(Y)}(\hat\gamma(t))
	    &\text{if } t \in [0,1],\\
	\pi^{C(\hat{Y})}_{C(Y)}(\hat\gamma'(2-t)) &\text{if } t \in [1,2].
	\end{cases}
\end{split}
\end{align}
Since $\pi^{C(\hat{Y)}}_{C(Y)}$ is a local isometry away from $o_{C(\hat{Y})}$, we see that $\gamma$ is a Lipschitz curve, and $\ell(\gamma \lvert_{[0,1]}) = \ell(\hat{\gamma})$, $\ell(\gamma \lvert_{[1,2]}) = \ell(\hat{\gamma}')$. Moreover, $C(\hat{Y}) \setminus \{o_{C(\hat{Y})}\}$ is the orientable double cover of $C(Y) \setminus \{o_{C(Y)}\}$ as topological manifolds, so $\gamma$ is an orientation-reversing loop (see Definition \ref{Def: orientation-reversing loop}) in $C(Y) \setminus \{o_{C(Y)}\}$ and hence in $A_\veps(C(Y))$ as well. 

From Proof II of \cite[Theorem 4.1]{BBP24}, one can construct loops $\gamma_i: [0,2] \to A_{\veps'}(X_i)$, where $\veps'$ can be made arbitrarily small by choosing $\veps$ sufficiently small, which are non-orientable for large $i$ and uniformly converge to $\gamma$ under the pmGH convergence realized by $Z$. More precisely, the loops are constructed for each $i$ by approximating $\gamma(k/2^i)$ for each $k = 0,\dots, 2^i$ by points $x_i^k \in X_i$ under the pmGH convergence and connecting consecutive points by geodesics. Since $\gamma$ is Lipschitz, $\gamma_i$ converges to $\gamma$ uniformly in $Z$. It is evident in the proof of \cite[Theorem 4.1]{BBP24} that uniform convergence of $\gamma_i$ to $\gamma$ is all that is needed. In particular, approximating exactly $2^i$ points is not important and the proof would also work by approximating $\gamma(k/m_i)$ for $k = 0,\dots, m_i$ by points in $X_i$ for any sequence $m_i \to \infty$. Therefore, by choosing $m_i \to \infty$ slowly enough, it is possible to construct $\gamma_i$ so that $\ell(\gamma_i \lvert_{[0,1]})$ (resp.\ $\ell(\gamma_i \lvert_{[1,2]})$) converges to $\ell(\gamma \lvert_{[0,1]})$ (resp.\ $\ell(\gamma \lvert_{[1,2]})$).

Define $x_0^i := \gamma_i(0) = \gamma_i(2)$ and $x_1^i := \gamma_i(1)$. By the uniform convergence of $\gamma_i$, we have that $x_0^i \to x_0 = \gamma(0)$ and  $x_1^i \to x_1 = \gamma(1)$ under the pmGH convergence realized by $Z$. 

Let $\delta > 0$ to be fixed later. Since $(x_0, x_1) \in \supp(\Pi_\infty)$, for any $i$ sufficiently large there exists some $(y^i_0, y^i_1) \in B_{\delta}(x_0^i) \times B_{\delta}(x_1^i)$ so that $(y^i_0, y^i_1) \in \supp(\Pi_i)$. By property (3) of Claim \ref{Cla: Existence of sequence}, it holds that for any $\hat{y}^i_1 \in (\pi_i)^{-1}(y^i_1)$ we can choose $\hat{y}^i_0 \in (\pi_i)^{-1}(y^i_0)$ so that
 \begin{equation}\label{Eq: contradiction equation}
    \hat{\dd}_i(\hat{y}^i_0, \hat{y}^i_1) =  \hat\dd_{i}(\hat{y}^i_0, \hat{p}_i) +  \hat\dd_{i}(\hat{p}_i, \hat{y}^i_1) = \dd_{i}(y^i_0, p_i) +  \dd_{i}(p_i,y^i_1). 
 \end{equation}
Fix some $\hat{x}_1^i \in (\pi_i)^{-1}(x_1^i)$. Consider the curves $\gamma_i^1, \gamma_i^2:[0,1] \to X_i$ defined by $\gamma_i^1(t) := \gamma_i(t)$ and $\gamma_i^2(t) := \gamma_i(2-t)$. Since $\gamma_i([0,2]) \subseteq A_{\veps'}(X_i)$ and the restriction of $\pi_i$ to $(\pi_i)^{-1}(A_{\veps'}(X_i))$ is a local isometry (see Theorem \ref{Thm: Ramified double cover existence}), there are unique lifts $\hat{\gamma}_i^1, \hat{\gamma}_i^2: [0,1] \to \hat{X}_i$ of $\gamma_i^1$, $\gamma_i^2$ so that $\hat{\gamma}_i^1(1), \hat{\gamma}_i^2(1) = \hat{x}_1^i$ and, moreover, $\ell(\gamma_i^1)=\ell(\hat{\gamma}_i^1)$, $\ell(\gamma_i^2)=\ell(\hat{\gamma}_i^2)$. Since $\gamma_i$ is orientation-reversing, we see that $\gamma_i^1(0), \gamma_i^2(0)$ must be the two distinct lifts of $x_0^i$.

 By Remark \ref{Rem: Ramified double cover properties} (2), since $\dd_i(x_1^i, y_1^i) < \delta$, there must be some $\hat{y}^i_1 \in (\pi_i)^{-1}(y^i_1)$ so that $\hat{\dd}_i(\hat{x}_1^i, \hat{y}^i_1) < \delta$. As mentioned previously, we can choose some $\hat{y}^i_0 \in (\pi_i)^{-1}(y^i_0)$ so that \eqref{Eq: contradiction equation} holds. Finally, by Remark \ref{Rem: Ramified double cover properties} (2), we can choose (at least) one of $\hat\gamma_i^1(0), \hat\gamma_i^2(0)$, which we denote $\hat{x}_0^i$, so that $\hat\dd_i(\hat{x}_0^i, \hat{y}_0^i) < \delta$. By \eqref{Eq: contradiction equation}, we have that
 \begin{align*}
    \dd_i(x^i_0, p_i) + \dd_i(p_i, x^i_1)&= \hat{\dd}_i(\hat{x}^i_0, \hat{p}_i) +\hat{\dd}_i(\hat{p}_i, \hat{x}^i_1)\\
    &< \hat{\dd}_i(\hat{y}^i_0, \hat{p}_i) +\hat{\dd}_i(\hat{p}_i, \hat{y}^i_1) + 2\delta\\
    &= \hat{\dd}_i(\hat{y}^i_0,\hat{y}^i_1)+2\delta\\
    &< \hat{\dd}_i(\hat{x}^i_0, \hat{x}^i_1)+4\delta\\
    &\leq \max\{\ell(\gamma_i^1), \ell(\gamma_i^2)\}+4\delta.
 \end{align*}
 On the other hand, as $i \to \infty$ we have that $\max\{\ell(\gamma_i^1), \ell(\gamma_i^2)\} \to \max\{\ell(\hat{\gamma}), \ell(\hat{\gamma}')\}$ and $\dd_i(x^i_0, p_i) + \dd_i(p_i, x^i_1) \to \dd_{C(Y)}(x_0, o_{C(Y)}) + \dd_{C(Y)}(o_{C(Y)}, x_1)$.
 Therefore, we have a contradiction with \eqref{Eq: distance to cone tip bound 3} when $\delta$ is sufficiently small and $i$ is sufficiently large. 
\end{proof}

By Claim \ref{Cla: cone optimal plan support} and the fact that $(\pr_1)_*(\Pi_\infty)=\mu_\infty^0 \ll \mathcal{H}^3$, we see that we can find $(x_0, x_1), (z_0, z_1) \in \supp(\Pi_\infty)$ so that
\begin{enumerate}
    \item $x_0, x_1, z_0, z_1 \in A_\veps(C(Y))$ with $\pi_Y^{C(Y)}(x_0) = \pi_Y^{C(Y)}(x_1)$ and $\pi_Y^{C(Y)}(z_0) = \pi_Y^{C(Y)}(z_1)$;
    \item $\pi_Y^{C(Y)}(x_0) \neq \pi_Y^{C(Y)}(z_0)$;
    \item $\dd_{C(Y)}(x_0, o_{C(Y)})=\dd_{C(Y)}(z_0, o_{C(Y)})$.
\end{enumerate}
Let $(\hat{x}_0^i, \hat{x}_1^i), (\hat{z}_0^i, \hat{z}_1^i) \in \supp(\hat{\Pi}_i)$ so that $\pi_i(\hat{x}_0^i), \pi_i(\hat{x}_1^i), \pi_i(\hat{z}_0^i), \pi_i(\hat{z}_1^i)$ converge to $x_0, x_1, z_0, z_1$ respectively under the pmGH convergence. Due to conditions (1) and (2) above, we can apply exactly the arguments of the previous proof to the pairs $(x_0, z_1)$ and $(z_0, x_1)$. The conclusion of the argument is that, for sufficiently large $i$, we have
\begin{itemize}
    \item $\hat{\dd}_i(\hat{x}_0^i, \hat{z}_1^i) < \hat{\dd}_i(\hat{x}_0^i, \hat{p}_i)+\hat\dd_i(\hat{p}_i, \hat{z}_1^i) - 2\delta = \dd_i(\pi_i(\hat{x}_0^i),p_i) + \dd_i(p_i, \pi_i(\hat{z}_1^i)) - 2\delta$,
    \item $\hat{\dd}_i(\hat{z}_0^i, \hat{x}_1^i) < \hat{\dd}_i(\hat{z}_0^i, \hat{p}_i)+\hat\dd_i(\hat{p}_i, \hat{x}_1^i) - 2\delta = \dd_i(\pi_i(\hat{z}_0^i),p_i) + \dd_i(p_i, \pi_i(\hat{x}_1^i)) - 2\delta$,
\end{itemize}
for some $\delta$ independent of $i$. Using condition (3), these can be rewritten as
\begin{align*}
    \hat{\dd}_i(\hat{x}_0^i, \hat{z}_1^i) &< \hat{\dd}_i(\hat{z}_0^i, \hat{p}_i)+\hat\dd_i(\hat{p}_i, \hat{z}_1^i) - \delta = \hat{\dd}_i(\hat{z}_0^i, \hat{z}_1^i) - \delta,\\
    \hat{\dd}_i(\hat{z}_0^i, \hat{x}_1^i) &< \hat{\dd}_i(\hat{x}_0^i, \hat{p}_i)+\hat\dd_i(\hat{p}_i, \hat{x}_1^i) - \delta = \hat{\dd}_i(\hat{x}_0^i, \hat{x}_1^i) - \delta
\end{align*}
for $i$ large enough, where we used the assumption that all geodesics in the support of $\hat\nu_i$ pass through $\hat p_i$.
Hence, the $\hat{\dd}_i^2$-cyclical monotonicity is violated for the pairs $(\hat{x}_0^i, \hat{x}_1^i), (\hat{z}_0^i, \hat{z}_1^i)$. In all, we have shown that Claim \ref{Cla: Existence of sequence} is false, which means that Lemma \ref{Lem: main lemma} must be true.

\section{Orbifold structure}

The main achievement of \cite{BPS24} was showing that
a non-collapsed $\RCD(K,3)$ space $X$ without boundary is a topological manifold
if and only if no point $x\in X$ admits a tangent cone whose cross-section is homeomorphic to $\mathbb{RP}^2$, see \cite[Theorem 1.8]{BPS24}.
In fact, the proof is entirely local, and gives the following.

\begin{Thm}\label{loc.reg}
    Let $(X,\dd,\mathcal{H}^3)$ be a non-collapsed $\rcd(K,3)$ space without boundary and $U\subseteq X$ an open set such that, for each $x\in U$,
    the cross-sections of all tangent cones at $x$ are homeomorphic to $\mathbb{S}^2$. Then $U$ is a topological manifold.
\end{Thm}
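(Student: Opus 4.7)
The plan is to localize the proof of \cite[Theorem 1.8]{BPS24}. Since being a topological $3$-manifold is a local property, it suffices to show that each $x\in U$ admits an open neighborhood $V\subseteq U$ homeomorphic to an open subset of $\mathbb{R}^3$. Fix $x\in U$ and, by openness of $U$, choose $r_0>0$ such that $\overline{B_{r_0}(x)}\subseteq U$.

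The global proof in \cite{BPS24} proceeds by constructing, at each point $y$ and at sufficiently small scale, a \emph{good Green ball} around $y$: a ball $B_r(y)$ together with a harmonic function (built from Green's functions on a slightly larger scale) whose regular level sets are topological $2$-spheres and whose gradient flow produces a topological chart onto an open subset of $\mathbb{R}^3$. These local charts are then patched together into a manifold structure. All of this is carried out through local estimates on $B_R(y)$ for a definite scale $R=R(y)>0$; the crucial analytic input is that, at every scale $r\in(0,R)$ and at every point in $B_R(y)$, the rescaled unit ball is $\veps$-close in the GH sense (with $\veps$ small, depending only on $K$) to the unit ball in a non-collapsed $\rcd(0,3)$ cone whose cross-section is homeomorphic to $\mathbb{S}^2$.

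I would verify this uniform local input on $B_{r_0}(x)$ by a standard contradiction-and-compactness argument: if no uniform scale $R>0$ and threshold $\veps>0$ existed that worked simultaneously for all $y\in B_{r_0/2}(x)$, one could extract by pmGH precompactness a blow-up limit centered at some $y_\infty\in\overline{B_{r_0/2}(x)}\subseteq U$ whose tangent cone has cross-section homeomorphic to $\mathbb{RP}^2$, contradicting $y_\infty\in U$ in view of Remark \ref{Rem: one tangent cone all tangent cone}. With this uniform input secured, the good Green ball machinery of \cite{BPS24} runs verbatim on $B_{r_0/2}(x)$ and yields a homeomorphism of some $B_r(x)$ onto an open subset of $\mathbb{R}^3$, as desired.

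The main obstacle is to audit the arguments of \cite{BPS24} and confirm that no step invokes global hypotheses on $X$, such as compactness, connectedness of the regular set on all of $X$, or the absence of $\mathbb{RP}^2$-tangent cones at points far from $x$. In particular, one needs to check that the Green's function used in the construction of a good Green ball at $y\in B_{r_0/2}(x)$ can be replaced by a harmonic function built from purely local data—for instance, via harmonic replacement with prescribed boundary data on a ball $B_{R'}(y)\subseteq U$—and that all estimates on growth, level-set transversality, gradient flow, and topological gluing depend only on the geometry of $U$ inside a neighborhood of $\overline{B_{r_0}(x)}$. Once this localization is verified, the theorem follows at once.
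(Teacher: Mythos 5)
Your high-level strategy---localizing the proof of \cite[Theorem 1.8]{BPS24}---is the same as the paper's, but the one concrete step you supply rests on a false premise, and the step that genuinely needs adapting is left as an unexamined ``audit.'' The ``crucial analytic input'' you isolate, namely that at \emph{every} point of $B_R(y)$ and \emph{every} scale $r\in(0,R)$ the rescaled ball is $\veps$-close to a ball centered at the tip of a non-collapsed $\rcd(0,3)$ cone, is simply not true: at a point $q$ near (but distinct from) a conical singularity, at scales comparable to $\dd(q,\mathrm{Sing})$, the volume ratio $s\mapsto\HH^3(B_s(q))/s^3$ is not almost constant, so by volume rigidity $B_s(q)$ is not $\delta$-conical there. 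The argument of \cite{BPS24} never assumes conicality at all points and scales; it only uses conicality along the set of good scales $\mathcal{G}_p$. Accordingly, your compactness argument cannot ``secure'' this input---and even where conicality does fail along a sequence $y_i\to y_\infty$, $r_i\to0$, the resulting blow-up limit is centered at moving base points and need not be a tangent cone at $y_\infty$, so no contradiction with $y_\infty\in U$ is obtained.

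What actually has to be localized is the one genuinely global ingredient of \cite{BPS24}: a Baire-category argument applied to the closed set $\mathcal{S}_{\mathrm{gm}^+}=X\setminus\mathcal{R}_{\mathrm{gm}^+}$ of non-manifold points, which produces a point $p_0$ and a radius $r$ such that all bad points near $p_0$ are $\delta_0$-conical at \emph{all} scales below $r$. The paper's proof handles exactly this: $U$ is open in a complete space, hence Polish, hence so is the closed subset $\mathcal{S}_{\mathrm{gm}^+}\cap U$, so Baire's theorem still applies to the exhaustion $\mathcal{S}_{\mathrm{gm}^+}\cap U=\bigcup_{r}\mathcal{S}^r_{\mathrm{gm}^+}$ and yields the required $p_0$; after rescaling, the rest of \cite{BPS24} runs unchanged, because the Green-type distances $b_p$ are already constructed on balls $B_{2r}(p)$ from purely local data with constants depending only on the non-collapsing bound (so the ``obstacle'' you flag in your last paragraph in fact requires no modification). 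As written, your proposal therefore has a genuine gap: the uniform conicality claim is false, and the Baire step is neither identified nor localized.
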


Recall that, by Remark \ref{Rem: one tangent cone all tangent cone}, at each $x$ either all tangent cones have cross-section homeomorphic to $\mathbb{S}^2$ or all of them have cross-section homeomorphic to $\mathbb{RP}^2$.
Before detailing the simple adaptation required in the local version, let us recall
some properties of \emph{Green balls} $\mathbb{B}_r(p)$, an object which played a fundamental role in \cite{BPS24}.
To lighten notation, we will often write $\bar B_r(p)$ and $\bar{\mathbb{B}}_r(p)$
for the closures of $B_r(p)$ and $\mathbb{B}_r(p)$, respectively.

\begin{Pro}\label{green.balls}
    Given a non-collapsed $\rcd(-2,3)$ space $(X^3,\dd,\HH^3,p)$ with $\HH^3(B_1(p))\ge v>0$,
    there exist $\delta_0(v)\in(0,1)$, $C(v)>1$, a Borel set $\mathcal{G}_p\subset(0,\delta_0^2)$ and,
    for each $r\in\mathcal{G}_p$, an open set $\mathbb{B}_r(p)$ called the \emph{Green ball}
    of radius $r$ and center $p$, with the following properties:
    \begin{enumerate}
        \item $(r,Cr)\cap\mathcal{G}_p\neq\emptyset$
        for all $r\in(0,\delta_0^2/C)$;
        \item all values in $\{0\}\cup\mathcal{G}_p$ have density $1$ in $\mathcal{G}_p$
        (with respect to the measure $\mathcal{L}^1\lfloor (0,\infty)$);
        \item $B_s(p)$ is $\delta_0$-conical for all $s\in[\delta_0r,r/\delta_0]$
        (see \cite[Definition 3.11]{BPS24});
        \item $B_{r/2}(p)\subseteq \mathbb{B}_r(p)\subseteq B_{2r}(p)$;
        \item the \emph{Green sphere} $\mathbb{S}_r(p):=\partial\mathbb{B}_r(p)$
        is a topological surface, homeomorphic to either $\mathbb{S}^2$ or $\mathbb{RP}^2$
        (in fact, to the Alexandrov surface $\Sigma^2$ such that $B_r(p)$ is $\delta_0r$-GH close to $B_r(o)\subset C(\Sigma^2)$);
        \item for any $s\in(0,r/C)$ and $q\in\mathbb{S}_r(p)$, $\mathbb{S}_r(p)\cap B_s(q)$
        is 2-connected in $\mathbb{S}_r(p)\cap B_{Cs}(q)$;
        \item there exists a retraction $\rho:U\to\mathbb{S}_r(p)$,
        where $U$ is the $(r/C)$-neighborhood of the Green sphere, and moreover
        $$\dd(x,\rho(x))\le C\dd(x,\mathbb{S}_r(p))\quad\text{for all }x\in U;$$
        \item if $\mathbb{S}_r(p)$ is included in the topologically regular part of $X$,
        then it is tamely embedded there; in particular, it admits a neighborhood
        $V$ and a homeomorphism $h:V\to(-1,1)\times\mathbb{S}_r(p)$ such that
        $$h(V\cap\mathbb{B}_r(p))=(-1,0)\times\mathbb{S}_r(p),
        \quad h(V\setminus\bar{\mathbb{B}}_r(p))=(0,1)\times\mathbb{S}_r(p).$$
    \end{enumerate}
    In fact, each $\mathbb{B}_r(p)=\{b_p<r\}$ for a suitable local Green-type distance $b_p:B_{2r}(p)\to[0,\infty)$ (see \cite[Definition 4.3]{BPS24} and \cite[Remark 4.5]{BPS24}),
    while $\mathbb{S}_r(p)=\{b_p=r\}$.
\end{Pro}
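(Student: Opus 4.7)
The plan is to follow the construction from \cite{BPS24}, of which this proposition is essentially a compact summary of results scattered across Sections 3--5 of that paper. The key auxiliary function is the Green-type distance $b_p$, defined locally on $B_{2r}(p)$ as a suitable normalization of (a negative power of) a locally-defined Green's function for the Laplacian with pole at $p$, as in \cite[Definition 4.3]{BPS24}. Standard elliptic theory on $\rcd(-2,3)$ spaces combined with the non-collapsing assumption $\mathcal{H}^3(B_1(p))\ge v$ ensures that $b_p$ is well-defined, proper near $p$, and satisfies a two-sided bound $c_1(v)\dd(p,\cdot)\le b_p\le c_2(v)\dd(p,\cdot)$ on $B_{\delta_0^2}(p)$. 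The set $\mathcal{G}_p$ is then defined as the collection of radii $r\in(0,\delta_0^2)$ at which both (a) $B_r(p)$ is $\delta_0$-conical in the sense of \cite[Definition 3.11]{BPS24} and (b) suitable scale-invariant integral regularity estimates for $b_p$ at scale $r$ hold.

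\textbf{Density of good radii and elementary inclusions.} Properties (1)--(2) reduce to showing that the complement $(0,\delta_0^2)\setminus\mathcal{G}_p$ occupies a set of small $\mathcal{L}^1$-density at the origin and meets only a controlled fraction of each interval $(r,Cr)$. This is carried out via Bishop--Gromov-type averaging: the relevant monotone quantities (volume deficit and the integrated defect of $b_p$ from a radial cone profile) are uniformly bounded on $(0,\delta_0^2)$, so the measure of "bad" radii on each dyadic scale is controlled. Since tangent cones at $p$ exist by the non-collapsed assumption, one also gets at least one $\delta_0$-conical scale inside each $(r,Cr)$. Property (3) is built into the definition of $\mathcal{G}_p$, while (4) follows directly from the pointwise equivalence $b_p\asymp\dd(p,\cdot)$ once $\delta_0$ is chosen small enough.

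\textbf{Topology of the Green sphere (main obstacle).} Property (5) is the deepest step. The idea is that $B_r(p)$ is $\delta_0r$-GH close to the unit ball $B_r(o)\subset C(\Sigma^2)$ for some $2$-dimensional Alexandrov space $\Sigma^2$ with $\curv\ge 1$, whose topological type is $\mathbb{S}^2$ or $\mathbb{RP}^2$ by the classification of $2$-dim Alexandrov surfaces with $\curv\ge1$ and no boundary. Using quantitative stability of level sets of harmonic-type functions under GH convergence — together with the transversality encoded in the good-radius condition — one builds an explicit homeomorphism between $\mathbb{S}_r(p)=\{b_p=r\}$ and the radial level set $\{r\}\times\Sigma^2\subset C(\Sigma^2)$. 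Property (6), the 2-connectedness of $\mathbb{S}_r(p)\cap B_s(q)$ inside $\mathbb{S}_r(p)\cap B_{Cs}(q)$, is then inherited from the corresponding (and elementary) 2-connectedness on small scales in the smooth Alexandrov surface $\Sigma^2$, transferred through the stability homeomorphism. This whole step is where essentially all of the topological work in \cite{BPS24} concentrates and is the main obstacle.

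\textbf{Retraction and tame embedding.} Properties (7) and (8) are constructive once (5) is in hand. For (7), the quantitative non-degeneracy of $b_p$ in a uniform neighborhood $U$ of $\mathbb{S}_r(p)$ (again encoded in the good-radius conditions, together with gradient estimates from elliptic theory) permits defining $\rho$ via a gradient-type flow of $b_p$ that sends each $x\in U$ to the first hitting point of $\{b_p=r\}$; Lipschitz control on the flow yields $\dd(x,\rho(x))\le C\dd(x,\mathbb{S}_r(p))$. For (8), if $\mathbb{S}_r(p)$ lies in the topologically regular part, then Theorem \ref{loc.reg} gives that $X$ is a topological manifold near $\mathbb{S}_r(p)$, so $h(x):=(b_p(x)-r,\rho(x))$ gives local product coordinates, and the sign of $b_p(x)-r$ distinguishes the two sides of $\mathbb{S}_r(p)$, yielding the required bicollar and hence the tame embedding.
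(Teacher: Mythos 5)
Your overall architecture matches the paper's: the proposition is a repackaging of the Green-ball machinery of \cite{BPS24}, and the paper's proof consists entirely of pointers — \cite[Lemma 9.8]{BPS24} for (1)--(3), \cite[Proposition 9.4]{BPS24} for (4)--(6) (with (4) coming from the bound $|b_p-\dd_p|<r/4$ on $B_{2r}(p)$), the proof of \cite[Lemma 9.10]{BPS24} for (7), and \cite[Proposition 9.21]{BPS24} for (8). Where you attempt to fill in the underlying arguments, however, two steps do not hold up as written. First, property (6) asks for 2-connectedness of $\mathbb{S}_r(p)\cap B_s(q)$ inside $\mathbb{S}_r(p)\cap B_{Cs}(q)$ for \emph{all} $s\in(0,r/C)$, whereas the GH-approximation of $B_r(p)$ by $B_r(o)\subset C(\Sigma^2)$ carries metric information only down to scale $\delta_0 r$. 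Transferring local 2-connectedness from $\Sigma^2$ ``through the stability homeomorphism'' therefore only covers $s\gtrsim\delta_0 r$; for smaller $s$ one needs a genuinely multi-scale argument (in \cite{BPS24} this exploits that each $q\in\mathbb{S}_r(p)$ is itself the center of controlled conical balls at all smaller scales), not a single homeomorphism fixed at scale $r$.

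Second, the retraction in (7) cannot be obtained by a ``gradient-type flow of $b_p$'' with a first-hitting-time map: in the nonsmooth $\RCD$ setting $b_p$ admits no pointwise flow whose trajectories and hitting times depend continuously on the initial point, and continuity of $\rho$ is precisely the content of the statement. The paper instead imports the topological construction of \cite[Lemma 9.10]{BPS24}, which produces $\rho$ from the quantitative connectedness properties of the Green sphere and yields the stronger estimate $\dd(\rho(x),y)\le C\dd(x,y)$ for all $x\in U$ and $y\in\mathbb{S}_r(p)$, from which the stated bound follows by taking $y$ nearly closest to $x$. Your sketches of (1)--(5) and (8) are consistent in spirit with the cited proofs (and you rightly flag (5) as the locus of the hard topological work), but as a self-contained argument the proposal is incomplete at the two points above.
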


\begin{proof}
    The set $\mathcal{G}_p$ is constructed as in \cite[Lemma 9.8]{BPS24}, guaranteeing in particular conclusions (1)--(3).
    Properties (4)--(6) are proved in \cite[Proposition 9.4]{BPS24}: in particular, (4) follows
    from the fact that $|b_p-\dd_p|<r/4$ on $B_{2r}(p)$,
    while (5) and (6) are precisely \cite[Proposition 9.4 (vi)]{BPS24} and \cite[Proposition 9.4 (v)]{BPS24}, respectively.
    Finally, exactly the same proof of \cite[Lemma 9.10]{BPS24} gives a retraction $\rho:U\to\mathbb{S}_r(p)$ with
    $$\dd(\rho(x),y)\le C(v)\dd(x,y)\quad\text{for all }x\in U,\ y\in\mathbb{S}_r(p),$$
    from which the desired bound in (7) follows. Property (8) is checked along the proof of \cite[Proposition 9.21]{BPS24}.
\end{proof}

\begin{Rem}
    In fact, property (6) implies that $\mathbb{S}_r(p)\cap B_s(q)$
        is contractible in $\mathbb{S}_r(p)\cap B_{Cs}(q)$ (for a possibly larger $C$), although we will not need this fact in the sequel.
        Indeed, we can triangulate the surface $\mathbb{S}_r(p)\cap B_s(q)$ and build a homotopy between the inclusion and the constant map (given by $q$), defining it inductively on the $k$-skeleton, for $k=0,1,2$,
        by exploiting the fact that maps $S^k\to\mathbb{S}_r(p)\cap B_s(q)$
        are nullhomotopic in $\mathbb{S}_r(p)\cap B_{Cs}(q)$,
\end{Rem}

\begin{proof}[Proof of Theorem \ref{loc.reg}]
     Without loss of generality we can assume that $X$ is $\rcd(-2,3)$ and that the assumption on tangent cones holds on a superset $V\supseteq U$, with
    $$B_{10}(p)\subseteq V,\quad \mathcal{H}^3(B_1(p))\ge \nu>0$$
    for all $p\in U$. All the steps in the proof of \cite[Theorem 1.8]{BPS24} are local
    (with several quantitative statements depending only on the non-collapsing lower bound $\nu>0$).

    The only additional observation that we need in order to carry out the proof is the following:
    letting $\mathcal{R}_{\textrm{gm}^+}$ denote the (open) set defined in \cite[Definition 9.26]{BPS24}, assuming by contradiction that $U\not\subseteq\mathcal{R}_{\textrm{gm}^+}$, for $r>0$ we let
    $$\mathcal{S}_{\textrm{gm}^+}^r:=\{p\in U\setminus\mathcal{R}_{\textrm{gm}^+}\,:\,B_r(p)\subseteq U,\ B_s(p)\text{ is $\delta_0$-conical for all $0<s<r$}\},$$
    where the precise notion of $\delta$-conicality is given in \cite[Definition 3.11]{BPS24}.
    Each $\mathcal{S}_{gm^+}^r$ is closed in $U$ and, trivially,
    $$\mathcal{S}_{\textrm{gm}^+}:=U\setminus \mathcal{R}_{\textrm{gm}^+}
    =\bigcup_{r\in(0,1)}\mathcal{S}_{\textrm{gm}^+}^r.$$
    
    Although $U$ may not be a complete metric space with respect to the the metric $\dd$, it is a Polish space (namely, it is complete for another metric inducing the same topology, as it is an open set in a complete space: see, e.g., \cite[Example 6.1.11]{B07}). The same then holds for $\mathcal{S}_{\textrm{gm}^+}$, as it is closed in $U$.
    Thus, we can still apply Baire's category theorem and find $r\in(0,1)$
    such that $\mathcal{S}_{\textrm{gm}^+}^r$ (which is closed in $\mathcal{S}_{\textrm{gm}^+}$) has nonempty interior in $\mathcal{S}_{\textrm{gm}^+}$. This means that, up to rescaling, we have
    $$\mathcal{S}_{\textrm{gm}^+}\cap B_{10}(p_0)\subseteq\mathcal{S}_{\textrm{gm}^+}^{10}$$
    for some $p_0\in\mathcal{S}_{\textrm{gm}^+}$. The rest of the proof proceeds as in \cite{BPS24}.
\end{proof}

\begin{Def}\label{Def: P definition}
    Let $\mathcal{P}\subset X$ be the set of locally non-orientable points of $X$ as in Definition \ref{Def: lno points}. 
\end{Def}

\begin{Rem}
    Once the orbifold structure of $X$ is proved, the set $\mathcal{P}$
    a posteriori consists exactly of the topological singularities
    (as, removing the cone tip $o$, we have $C(\mathbb{RP}^2)\setminus\{o\}\cong\R\times\mathbb{RP}^2$, which is non-orientable). We will also see through our arguments in this section that $\mathcal{P}$ consists exactly of those points whose tangent cone cross-sections are homeomorphic to $\mathbb{RP}^2$. Note that it follows from Proposition \ref{Lem: non-orientable tangent cone implies fixed point} that all such points are contained in $\mathcal{P}$. 
\end{Rem}

The following lemma characterizes $p \in \mathcal{P}$ in the case that it is an isolated point.
\begin{Lem}\label{Lem: isolated lno}
Let $(X, \dd, \HH^3)$ be a non-collapsed $\RCD(-2,3)$ space without boundary and assume that $p \in \mathcal{P}$. If $p$ is isolated in $\mathcal{P}$, i.e., there exists $r>0$ so that $B_{r}(p) \cap \mathcal{P}=\{p\}$, then the cross-sections of all tangent cones at $p$ are homeomorphic to $\mathbb{RP}^2$. 
\end{Lem}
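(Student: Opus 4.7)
The plan is to argue by contradiction. By Remark \ref{Rem: one tangent cone all tangent cone}, the cross-sections of the tangent cones at $p$ are either all homeomorphic to $\mathbb{RP}^2$ or all homeomorphic to $\mathbb{S}^2$, so it suffices to rule out the latter possibility. Assume for contradiction that all tangent cone cross-sections at $p$ are $\cong \mathbb{S}^2$.

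First I will show that $B_r(p)$ is a topological manifold. For any $q \in B_r(p) \setminus \{p\}$, the isolation hypothesis gives $q \notin \mathcal{P}$, so $q$ is locally orientable; by the contrapositive of Proposition \ref{Lem: non-orientable tangent cone implies fixed point}, every tangent cone at $q$ is orientable, and hence its cross-section must be $\cong \mathbb{S}^2$ (the alternative $C(\mathbb{RP}^2)$ being non-orientable). Combined with the standing assumption at $p$, every point of $B_r(p)$ then has all tangent cone cross-sections homeomorphic to $\mathbb{S}^2$, so Theorem \ref{loc.reg} applies and yields that $B_r(p)$ is a topological $3$-manifold.

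Next, I will fix a chart $\phi \colon V \to \mathbb{R}^3$ around $p$ with $V \subseteq B_r(p)$, and invoke the Green ball machinery of Proposition \ref{green.balls}. By property (1), there exist Green radii $s \in \mathcal{G}_p$ arbitrarily small, in particular small enough that $\bar{\mathbb{B}}_s(p) \subseteq V$. By property (5), together with the assumption on tangent cone cross-sections at $p$, for all sufficiently small such $s$ the Green sphere $\mathbb{S}_s(p)$ is homeomorphic to $\mathbb{S}^2$, and since it lies in the topologically regular part of $X$, property (8) ensures it is tamely (bicollared) embedded in $B_r(p)$. Transporting through the chart, $\phi(\mathbb{S}_s(p))$ is a bicollared topological $\mathbb{S}^2$ in $\mathbb{R}^3$, and the generalized Schoenflies theorem of Brown--Mazur gives that it bounds a closed topological $3$-ball $\bar D \subseteq \mathbb{R}^3$. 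The image $\phi(\mathbb{B}_s(p))$ is open with boundary $\phi(\mathbb{S}_s(p))$, hence clopen in $\mathbb{R}^3 \setminus \phi(\mathbb{S}_s(p))$; being nonempty and bounded, it must equal the bounded component $D$, so $\mathbb{B}_s(p) \cong B^3$. Thus $\mathbb{B}_s(p)$ is orientable in the sense of Definition \ref{Def: local orientability}, contradicting $p \in \mathcal{P}$.

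The main conceptual step is the reduction to a topological manifold via Theorem \ref{loc.reg}, which is only possible because the isolation hypothesis allows us to propagate the tangent-cone-cross-section assumption from $p$ alone to a full neighborhood. After that, the identification of each small Green ball with a Euclidean $3$-ball is a standard consequence of Schoenflies, and there is no substantial further obstacle.
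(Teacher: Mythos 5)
Your proof is correct and its main step—propagating the $\mathbb{S}^2$ tangent-cone condition from $p$ to all of $B_r(p)$ via the isolation hypothesis and Proposition \ref{Lem: non-orientable tangent cone implies fixed point}, then invoking Theorem \ref{loc.reg}—is exactly the paper's argument. The entire second half (Green balls, bicollaring, Schoenflies) is superfluous: once $B_r(p)$ is a topological $3$-manifold, $p$ already has a neighborhood homeomorphic to an open subset of $\R^3$, which is orientable in the sense of Definition \ref{Def: local orientability}, contradicting $p\in\mathcal{P}$ immediately.
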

\begin{proof}
Assume that this is not the case. Then the cross-sections of all tangent cones at $p$ are homeomorphic to $\mathbb{S}^2$ by Remark \ref{Rem: one tangent cone all tangent cone}. Moreover, any $x \in B_{r}(x) \cap \mathcal{P}$ does not admit a tangent cone whose cross-section is homeomorphic to $\mathbb{RP}^2$ either, since any such point is in $\mathcal{P}$ by Proposition \ref{Lem: non-orientable tangent cone implies fixed point}. Therefore, Theorem \ref{loc.reg} applies and $B_r(p)$ is a topological manifold. This implies that $p$ is locally orientable, which is a contradiction. 
\end{proof}

We have the following stability theorem for locally non-orientable points in the case where they are locally finite. 
\begin{Thm}\label{Thm: P stability}
    Assume that $(X_i, \dd_i, \HH^3, p_i)_{i \in \N}$ is a sequence of non-collapsed, non-orientable $\rcd(-2,3)$ spaces without boundary converging to some non-collapsed $\rcd(-2,3)$ space $(X, \dd, \HH^3, p)$ in the pmGH sense. Let $\mathcal{P}_i$ (resp.\ $\mathcal{P})$ be the set of locally non-orientable points of $X_i$ (resp.\ $X$). If for every $i \in \N$ we have that $p_i \in \mathcal{P}_i$ and $\mathcal{P}_i \cap B_{1}(p_i)$ is locally finite in $B_1(p_i)$, then $p \in \mathcal{P}$. In particular, $X$ is non-orientable. 
\end{Thm}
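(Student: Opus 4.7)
The plan is to invoke the stability theorem for non-orientability (Theorem \ref{Thm: stability of non-orientiable local cde case}) after establishing, for each $i$, a local strong $\CD^e(-2,3)$ condition for the ramified double cover $\hat X_i$ on a ball around $\hat p_i$ whose radius is comparable to the isolation scale of $p_i$ in $\mathcal{P}_i$. That local $\CD^e$ condition is the key bridge, and it will be produced by combining Lemma \ref{Lem: main lemma} (which rules out optimal plans charging geodesics through the unique lift of $p_i$) with the globalization theorem (Theorem \ref{Thm: main globalization theorem}). A suitable rescaling together with a case analysis will then either conclude directly for $(X,p)$ or detect a non-orientable tangent cone at $p$ and invoke Proposition \ref{Lem: non-orientable tangent cone implies fixed point}.

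First, for each $i$, local finiteness of $\mathcal{P}_i\cap B_1(p_i)$ produces a maximal $s_i\in(0,1)$ with $B_{s_i}(p_i)\cap\mathcal{P}_i=\{p_i\}$. Since $p_i$ is thus isolated in $\mathcal{P}_i$, Lemma \ref{Lem: isolated lno} gives that every tangent cone at $p_i$ has $\mathbb{RP}^2$ cross-section, and Proposition \ref{Pro: LNO classification} gives $\pi_i^{-1}(p_i)=\{\hat p_i\}$. For any $\hat q\in B_{s_i/2}(\hat p_i)\setminus\{\hat p_i\}$ the image $\pi_i(\hat q)\in B_{s_i/2}(p_i)\setminus\{p_i\}$ is not in $\mathcal{P}_i$, hence locally orientable; Remark \ref{Rem: Ramified double cover properties} then shows that $\pi_i$ restricts to a local isometry near $\hat q$ onto a neighborhood of $\pi_i(\hat q)$, and since $X_i$ is $\rcd(-2,3)$ the local strong $\CD^e(-2,3)$ condition transfers to small balls around $\hat q$ in $\hat X_i$. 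At $\hat p_i$, Lemma \ref{Lem: main lemma} applies (because the tangent cones at $p_i$ have $\mathbb{RP}^2$ cross-section), so no optimal dynamical plan between absolutely continuous measures on $\hat X_i$ charges $\Geo_{\hat p_i}(\hat X_i)$. Theorem \ref{Thm: main globalization theorem} applied with center $\hat p_i$ and radius $s_i/4$ then yields that $\hat X_i$ is locally strongly $\CD^e(-2,3)$ on $B_{s_i/4}(\hat p_i)$.

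Passing to a subsequence, either $\liminf_i s_i>0$ or $s_i\to 0$. In the first case, fix any $s'<\liminf_i s_i$ and rescale by the constant $\lambda:=4000/s'$: the rescaled sequence $(X_i,\lambda\dd_i,\lambda^3\HH^3,p_i)$ still pmGH-converges to the same rescaling of $(X,p)$, is uniformly $\rcd(-2,3)$, $\hat X_i$ is locally $\CD^e(-2,3)$ on the rescaled $B_{1000}(\hat p_i)$ (which is contained in $B_{s_i/4}(\hat p_i)$ in the original metric), and the rescaled $B_1(p_i)=B_{s'/4000}(p_i)$ is non-orientable since $p_i\in\mathcal{P}_i$. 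Theorem \ref{Thm: stability of non-orientiable local cde case} then yields $B_{(1+\delta)s'/4000}(p)$ non-orientable for every $\delta>0$, i.e.\ $B_R(p)$ non-orientable for every $R>s'/4000$; letting $s'\downarrow 0$ produces $B_R(p)$ non-orientable for every $R>0$, so $p\in\mathcal{P}$. In the second case $s_i\to 0$, set $\lambda_i:=4000/s_i\to\infty$; the same uniform verifications hold (and $-2/\lambda_i^2\ge -2$ keeps the sequence uniformly $\rcd(-2,3)$). By Lemma \ref{Lem: pmGH precompactness} and a standard diagonal argument, any subsequential pmGH limit of the rescaled sequence is a tangent cone at $p$, and by Theorem \ref{Thm: non-collapsed RCD tangent cone main theorem} together with Remark \ref{Rem: one tangent cone all tangent cone} it has the form $(C(Y),o)$ with $Y$ a 2D Alexandrov space with $\curv\ge 1$, homeomorphic to either $\mathbb{S}^2$ or $\mathbb{RP}^2$. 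Theorem \ref{Thm: stability of non-orientiable local cde case} forces $B_{1+\delta}(o)$ non-orientable in this cone, which is only possible if $Y\cong\mathbb{RP}^2$; Proposition \ref{Lem: non-orientable tangent cone implies fixed point} then gives $p\in\mathcal{P}$. Either way, $X$ is non-orientable. The main technical obstacle should be the diagonal argument needed to identify the $\lambda_i\to\infty$ rescaled limits with tangent cones of $X$ at $p$, which should follow from the uniform Bishop--Gromov and doubling bounds coming from the non-collapsed $\rcd(-2,3)$ hypothesis.
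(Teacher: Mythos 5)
Your first two steps (reducing to a local strong $\CD^e(-2,3)$ condition for $\hat X_i$ near $\hat p_i$, produced via Lemma \ref{Lem: isolated lno}, Lemma \ref{Lem: main lemma} and Theorem \ref{Thm: main globalization theorem}, and then feeding this into Theorem \ref{Thm: stability of non-orientiable local cde case}) are exactly the paper's strategy, and your Case 1 ($\liminf_i s_i>0$) is fine. The gap is in Case 2. When $s_i\to0$ you rescale by $\lambda_i=4000/s_i\to\infty$ and assert that a subsequential pmGH limit of $(X_i,\lambda_i\dd_i,p_i)$ is a tangent cone of $X$ at $p$. This is not justified and is false in general: a diagonal argument identifies blow-ups of the sequence with tangent cones of the limit only when the scales $\lambda_i$ are \emph{chosen} to grow slowly relative to the rate of convergence $d_{GH}(X_i,X)\to0$, whereas here $\lambda_i$ is dictated by $s_i$, over which you have no control. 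If $\lambda_i$ outpaces the convergence, the rescaled limit $(Z,z)$ is some non-collapsed $\rcd(0,3)$ space with no identified relation to $X$ (it need not even be a cone, so Theorem \ref{Thm: non-collapsed RCD tangent cone main theorem} does not apply to it), and Theorem \ref{Thm: stability of non-orientiable local cde case} then only tells you that a unit ball in $Z$ is non-orientable --- which says nothing about $B_r(p)\subseteq X$. Note also that you cannot rule out $s_i\to0$ by appealing to a uniform separation of $\mathcal{P}_i$ (Lemma \ref{p.lb}), since that is proved later via the orbifold structure theorem, which itself rests on the present statement; that would be circular.

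The repair is to work at a scale independent of $i$: local finiteness of $\mathcal{P}_i\cap B_1(p_i)$ means \emph{every} $x\in B_{1/2}(p_i)\cap\mathcal{P}_i$ (not just $p_i$) is isolated in $\mathcal{P}_i$, so the same combination of Lemma \ref{Lem: isolated lno}, Lemma \ref{Lem: main lemma} and Theorem \ref{Thm: main globalization theorem} gives a local strong $\CD^e(-2,3)$ neighborhood of its unique lift $\hat x$. Together with the local-isometry argument at lifts of locally orientable points, every point of $B_{1/2}(\hat p_i)$ has such a neighborhood, and one further application of the globalization theorem (in the version where every point is assumed to have a locally $\CD^e$ neighborhood) yields the local strong $\CD^e(-2,3)$ condition on the fixed ball $B_{1/10}(\hat p_i)$. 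After a fixed rescaling this puts you in the hypotheses of Theorem \ref{Thm: stability of non-orientiable local cde case} for every $i$, with no case analysis needed.
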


\begin{proof}
By \cite[Theorem 1.6]{BNS22}, the limit space $(X,\dd,\HH^3)$ has no boundary as well.
It suffices to prove that $B_r(p)$ is non-orientable for any $r < 1/10^5$. Let $(\hat{X}_i, \hat\dd_i, \HH^n, \hat{p}_i)$ be the ramified double cover of $X_i$, with $\pi_i$ and $\Gamma_i$ the associated projection and isometric involution maps respectively. Thanks to Theorem \ref{Thm: stability of non-orientiable local cde case}, we need only to prove that each $\hat{X}_i$ is locally $\text{CD}^e(-2, 3)$ on $B_{1/10}(\hat{p}_i)$ for each $i$ to conclude. 

Fix any $i \in \N$. We will show that $\hat{X}_i$ is locally strongly $\CD^e(-2,3)$ at every point in $B_{1/2}(\hat{p}_i) = \pi^{-1}(B_{1/2}(p_i))$, where the equality follows from Proposition \ref{Pro: LNO classification} and Remark \ref{Rem: Ramified double cover properties}. We first consider the lifts of locally orientable points. Let $x \in B_{1/2}(p_i) \setminus \mathcal{P}_i$ and choose any $\hat{x} \in \pi^{-1}(x)$. Then $\Gamma_i(\hat{x}) \neq \hat{x}$ by Proposition \ref{Pro: LNO classification}. Let $r := \hat\dd_i(\Gamma_i(\hat{x}), \hat{x})$. By Remark \ref{Rem: Ramified double cover properties} (2) and the fact that $\Gamma(B_{r/4}(\hat x))$ has distance at least $r/2$ from $B_{r/4}(\hat x)$, we see that $\pi$ is an isometry from $B_{r/4}(\hat{x})$ to $B_{r/4}(x)$. As a consequence, $\pi: (B_{r/4}(\hat{x}), \hat\dd_i, \HH^3) \to (B_{r/4}(x), \dd_i, \HH^3)$ is a metric measure isomorphism. Since $B_{r/4}(x)$ is a subset of an $\RCD(-2,3)$ space, we conclude that $\hat{X}_i$ is locally strongly $\CD^e(-2,3)$ on $B_{r/8}(\hat{x})$.

Next, we consider the locally non-orientable points. Let $x \in B_{1/2}(p_i) \cap \mathcal{P}_i$ and let $\hat{x} \in \pi^{-1}(x)$ be the unique lift. Since $\mathcal{P}_i$ is locally finite in $B_{1}(p_i)$, there exists $r > 0$ so that $B_{r}(x) \cap \mathcal{P}_i = \{x\}$. By Lemma \ref{Lem: isolated lno}, we have that all tangent cones at $x$ have cross-sections homeomorphic to $\mathbb{RP}^2$. Consider $\overline{B_{r/2}(\hat{x})} = \pi^{-1}(\overline{B_{r/2}(x)})$. If $\hat{y} \in \overline{B_{r/2}(\hat{x})}$ and $\hat{y} \neq \hat{x}$, then $\hat{y} \in \pi^{-1}(y)$ for some $y \notin \mathcal{P}_i$. Using the same argument as with the first case, we have that $\hat{X}_i$ is locally strongly $\CD^e(-2,3)$ on $B_s(\hat{y})$ for some $s > 0$. Applying Lemma \ref{Lem: main lemma} and the globalization Theorem \ref{Thm: main globalization theorem}, we conclude that $\hat{X}_i$ is locally strongly $\CD^e(-2,3)$ on $B_{r/4}(x)$. 

At this point, we have shown that, for every $x \in B_{1/2}(\hat{p}_i)$, there exists $r>0$ so that $\hat{X}_i$ is locally strongly $\CD^e(-2,3)$ on $B_r(x)$. Using a variation of the same globalization theorem, where in this case one assumes that all points have a neighborhood that is locally $\CD^e(-2,3)$ (the proof of Theorem \ref{Thm: main globalization theorem} works under this assumption: cf.\ \cite[Theorem 3.14]{EKS15}), we conclude that $\hat{X}_i$ is locally strongly $\CD^e(-2,3)$ on $B_{1/10}(\hat{p}_i)$ as desired. 
\end{proof}

Continuing with our local finiteness assumption on $\mathcal{P}$, we can now prove that the ramified double cover is an $\RCD$ space. At this point, we recall that a metric measure space $(X, \dd, \mm)$ is $\RCD(K,N)$ if and only if it is $\CD(K,N)$ and infinitesimally Hilbertian. A metric measure space is said to be \textit{infinitesimally Hilbertian} according to \cite[Definition 4.19]{G15} if the normed Sobolev space $(W^{1,2}(X), \norm{\cdot}{1,2})$ is a Hilbert space, i.e., $\norm{\cdot}{1,2}$ satisfies the parallelogram law. Here $(W^{1,2}(X), \norm{\cdot}{1,2})$ is defined as in \cite{AGS14}, as a variant of \cite{C99} (see, for instance, \cite[Section 2]{NPS25} for a concise summary).
\begin{Lem}\label{Lem: ramified double cover is RCD LF version}
Let $(X, \dd, \HH^3)$ be a non-collapsed, non-orientable $\RCD(-2,3)$ space without boundary and let $(\hat{X}, \hat\dd, \HH^3)$ be its ramified double cover. If $\mathcal{P}$ is locally finite then $\hat{X}$ is an orientable non-collapsed $\RCD(-2,3)$ space without boundary and a topological manifold.  
\end{Lem}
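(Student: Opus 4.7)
\emph{Proof plan.} The strategy is to first verify that $\hat X$ satisfies $\cde(-2,3)$ locally (and then globally), deduce infinitesimal Hilbertianity so that $\hat X$ is $\rcd(-2,3)$, and finally use Theorem \ref{loc.reg} to upgrade this to topological manifold. Orientability and the non-collapsed, no-boundary properties will follow essentially from the construction of Theorem \ref{Thm: Ramified double cover existence} together with what we prove about tangent cones at the lifts of $\mathcal{P}$.

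\textbf{Local and global $\cde(-2,3)$.} Fix $\hat x \in \hat X$ and set $x := \pi(\hat x)$. If $x \notin \mathcal{P}$, Proposition \ref{Pro: LNO classification} gives $\Gamma(\hat x) \neq \hat x$, and Remark \ref{Rem: Ramified double cover properties} (2) implies that $\pi$ restricts to a measure-preserving isometry between $B_r(\hat x)$ and $B_r(x)$ for some $r>0$; this transfers the local strong $\cde(-2,3)$ property from $X$ to $\hat X$ at $\hat x$. If instead $x \in \mathcal{P}$, local finiteness of $\mathcal{P}$ makes $x$ isolated there, so Lemma \ref{Lem: isolated lno} gives that every tangent cone at $x$ has cross-section homeomorphic to $\mathbb{RP}^2$; the previous case applied at each $\hat y \in B_r(\hat x)\setminus\{\hat x\}$ supplies the countable cover of $B_r(\hat x)\setminus\{\hat x\}$ needed in Theorem \ref{Thm: main globalization theorem}, while Lemma \ref{Lem: main lemma} provides the vanishing-on-$\Geo_{\hat x}$ hypothesis; we conclude that $\hat X$ is locally strongly $\cde(-2,3)$ at $\hat x$. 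A second application of a Theorem \ref{Thm: main globalization theorem}-type globalization, now covering arbitrary transport geodesics by these local $\cde$ balls, yields the global (strong) $\cde(-2,3)$ condition on $\hat X$.

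\textbf{Infinitesimal Hilbertianity, non-collapsedness, no boundary.} Set $\hat{\mathcal P} := \pi^{-1}(\mathcal P)$, which is locally finite in $\hat X$. On $\hat X \setminus \hat{\mathcal P}$ the map $\pi$ is a local measure-preserving isometry onto $X \setminus \mathcal P$, so $\hat X\setminus\hat{\mathcal P}$ is locally $\rcd(-2,3)$ and in particular infinitesimally Hilbertian. Thanks to the global Bishop--Gromov inequality coming from $\cde(-2,3)$, a single point in the $3$-dimensional space $\hat X$ is polar (it has vanishing $W^{1,2}$-capacity), so the countable set $\hat{\mathcal P}$ is polar; hence the Cheeger energy on $\hat X$ coincides with that on $\hat X \setminus \hat{\mathcal P}$ and is quadratic. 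This proves $\hat X$ is $\rcd(-2,3)$. Since $\HH^3$ is the reference measure by construction and Bishop--Gromov is sharp at regular points inherited from $X$, $\hat X$ is non-collapsed. The absence of boundary follows from the absence of boundary for $X$ at unbranched points and from the identification of tangent cones at branch points described next, together with \cite[Theorem 1.6]{BNS22}.

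\textbf{Tangent cones at branch points and topological manifold structure.} At a lift $\hat p$ of $p\in\mathcal P$, we identify every tangent cone by a blow-up argument. Rescale by a sequence $k_i \to \infty$ so that $(X,k_i\dd, p)$ converges in the pmGH sense to a tangent cone $(C(\mathbb{RP}^2), o)$ (Theorem \ref{Thm: non-collapsed RCD tangent cone main theorem} and Lemma \ref{Lem: isolated lno}); by Bishop--Gromov precompactness (Proposition \ref{Pro: Bishop--Gromov}) applied to $\hat X$, up to a subsequence $(\hat X, k_i \hat\dd, \hat p)$ converges to some non-collapsed $\rcd(0,3)$ cone $(C(\hat Y), \tilde o)$. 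Extracting further using Arzelà--Ascoli exactly as in Subsection 2.4, the projections $\pi$ and the involutions $\Gamma$ pass to $1$-Lipschitz limits $\tilde\pi : C(\hat Y) \to C(\mathbb{RP}^2)$ and an involutive isometry $\tilde\Gamma$ of $C(\hat Y)$ for which all the defining properties of Theorem \ref{Thm: Ramified double cover existence} hold for the pair $(C(\mathbb{RP}^2), C(\hat Y))$ (for the non-branching open dense set one uses $\tilde\pi^{-1}(A_\veps(C(\mathbb{RP}^2)))$). By the uniqueness part of Theorem \ref{Thm: Ramified double cover existence}, $C(\hat Y)$ is isometric to $\mathbb{R}^3 = C(\mathbb{S}^2)$, so every tangent cone at $\hat p$ has cross-section $\mathbb{S}^2$; at lifts of locally orientable points tangent cones are inherited from $X$ via the local isometry, and these cross-sections are $\mathbb{S}^2$ by Proposition \ref{Lem: non-orientable tangent cone implies fixed point}. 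Theorem \ref{loc.reg} therefore makes $\hat X$ a topological manifold. Finally, the orientable open dense $\hat A$ from Theorem \ref{Thm: Ramified double cover existence} has complement of Hausdorff dimension $\leq 1$, so all of $\hat X$ is orientable in the sense of Definition \ref{Def: Orientability} via Proposition \ref{Pro: Equivalent definition of orientability}.

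\emph{Main obstacle.} The most delicate point is identifying the tangent cones at the branch points $\hat p$: one must carry the ramified-double-cover data $(\pi,\Gamma)$ to the pmGH limit at rescaled blow-ups of $\hat p$, verify the axioms of Theorem \ref{Thm: Ramified double cover existence} for the limit, and invoke its uniqueness. All other steps are technical but follow the patterns already established in the proofs of Theorem \ref{Thm: P stability} and Theorem \ref{Thm: stability of non-orientiable local cde case}.
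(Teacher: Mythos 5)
Your proposal is correct in substance and coincides with the paper's proof for the $\CD$ part (local strong $\cde$ at lifts of orientable points via the local isometry, at lifts of points of $\mathcal P$ via Lemma \ref{Lem: main lemma} plus Theorem \ref{Thm: main globalization theorem}, then globalization) and for infinitesimal Hilbertianity (the paper likewise shows $\pi^{-1}(\mathcal P)$ has null $2$-capacity). Where you genuinely diverge is the final step. The paper does not identify the tangent cones at the branch points by a blow-up of the cover: it first deduces orientability of $\hat X$ from Theorem \ref{Thm: Ramified double cover existence} (2) and Proposition \ref{Pro: Equivalent definition of orientability}, and then invokes the stability of orientability (Theorem \ref{Thm: stability of orientability}) along the blow-up sequence to conclude that every tangent cone of $\hat X$ is orientable, hence has cross-section $\mathbb{S}^2$ (since $C(\mathbb{RP}^2)$ minus its tip is non-orientable); the manifold recognition theorem then applies. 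This sidesteps entirely the "main obstacle" you identify — carrying $(\pi,\Gamma)$ to the rescaled limit, re-verifying the axioms of Theorem \ref{Thm: Ramified double cover existence} there (which essentially means redoing the analysis of Subsection 2.3 in the blow-up setting), and invoking uniqueness. Your route can be made to work, but it is substantially heavier, and you should fix two slips in it: (i) uniqueness of the ramified double cover gives that the limit is $C(\hat Y)$ with $\hat Y$ the Alexandrov double cover of some $\curv\ge1$ metric on $\mathbb{RP}^2$, hence only \emph{homeomorphic} to $\R^3$, not isometric to it (fortunately homeomorphism of the cross-section to $\mathbb{S}^2$ is all you need); (ii) for the absence of boundary, the relevant input is \cite[Theorem 1.2]{BNS22} (positivity of $\HH^2(\partial\hat X)$ when the boundary is nonempty), not \cite[Theorem 1.6]{BNS22} — combined with the observation that $\partial\hat X\subseteq\pi^{-1}(\mathcal P)$ is countable, this gives the contradiction directly, without needing the tangent-cone identification at all.
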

\begin{proof}
Let $\pi: \hat{X} \to X$ denote the associated projection map. It follows from the proof of Theorem \ref{Thm: P stability} that $\hat{X}$ is locally strongly $\CD^e(-2,3)$ at each $x \in X$. Applying the globalization theorem \cite[Theorem 3.14]{EKS15}, we see that $\hat{X}$ satisfies the strong $\CD^e(-2,3)$ condition. It is known that strong $\CD^e(-2,3)$ spaces are essentially non-branching by \cite{EKS15} (cf.\ Lemma \ref{Lem: cde implies essential non-branching}). It follows that $\hat{X}$ satisfies the $\CD^{*}(-2,3)$ condition by \cite[Theorem 3.12]{EKS15}, and hence the $\CD(-2,3)$ condition by \cite[Corollary 13.6]{CM21}. 

We now show that $\hat{X}$ is infinitesimally Hilbertian:
this would follow easily from what we have proven so far about the tangent cones ($\HH^3$-a.e.) of $(\hat{X}, \hat\dd, \HH^3)$ and \cite[Theorem 1.2]{NPS25} but, since there is no need for the full generality of \cite{NPS25} in our situation, we also sketch a proof using basic nonsmooth calculus. It suffices to prove the parallelogram law for any $f, g \in W^{1,2}(X)$, i.e.,
\begin{equation*}
    \norm{f+g}{1,2}^2+\norm{f-g}{1,2}^2=2\norm{f}{1,2}^2+2\norm{g}{1,2}^2. 
\end{equation*} 
As we have shown before, Remark \ref{Rem: Ramified double cover properties} and Proposition \ref{Pro: LNO classification} imply that $\pi$ yields a local metric measure isomorphism $\pi^{-1}(X \setminus \mathcal{P}) \to X \setminus \mathcal{P}$. Since $X$ is $\RCD(-2,3)$, it follows that, for any $\hat{x} \in \pi^{-1}(X \setminus \mathcal{P})$, the parallelogram law is satisfied for any $f, g \in W^{1,2}(X)$ supported in some open neighborhood of $\hat{x}$. It follows by a partition of unity argument, using the properties of the minimal relaxed gradient \cite[Section 4.1]{AGS14}, that any pair $f, g \in W^{1,2}(X)$ supported in $X \setminus \mathcal{P}$ satisfies the parallelogram law. Therefore, to prove the parallelogram law for any general $f,g \in W^{1,2}(X)$, it suffices to show that any $f \in W^{1,2}(X)$ can be approximated in $W^{1,2}$ by functions supported in $X \setminus \mathcal{P}$. This is equivalent to asking that $\mathcal{P}$ has null $2$-capacity. Since $\mathcal{P}$ is locally finite, one can show this directly by using the distance functions to the points $\hat{p} \in \mathcal{P}$, along with 
the fact that $\limsup_{r \to 0^+} \frac{\HH^3(B_r(\hat{p}))}{\omega_3 r^3} \leq 2$ (by Remark \ref{Rem: Ramified double cover properties} (2)--(3) and the fact that $X$ is non-collapsed $\RCD(-2,3)$).

Next, we show that $\hat{X}$ has no boundary. Assume otherwise and let $\partial \hat{X}$ denote the boundary. By \cite[Theorem 1.2]{BNS22}, $\mathcal{H}^{2}(\partial \hat{X}) > 0$ if $\partial \hat{X} \neq \emptyset$. As before, $\pi: \pi^{-1}(X \setminus \mathcal{P}) \to X \setminus \mathcal{P}$ is a local metric measure isomorphism. Combining this with the assumption that $X$ has no boundary, we see that $\partial \hat{X} \cap \pi^{-1}(X \setminus \mathcal{P}) = \emptyset$. As such, we have $\partial \hat{X} \subseteq \pi^{-1}(\mathcal{P})$,
which is at most countable, contradicting $\mathcal{H}^{2}(\partial \hat{X}) > 0$.

The orientability of $\hat{X}$ now follows directly from Theorem \ref{Thm: Ramified double cover existence} (2) and Proposition \ref{Pro: Equivalent definition of orientability}. The stability of orientability (Theorem \ref{Thm: stability of orientability}) gives that every tangent cone of (any point in) $\hat{X}$ has cross-section homeomorphic to $\mathbb{S}^2$, which by \cite[Theorem 1.8]{BPS24} proves that $\hat{X}$ is a topological manifold. 
\end{proof} 

We are now in a position to prove our main orbifold structure theorem.

\begin{Thm}\label{Thm: orbifold structure thm}(Orbifold structure theorem).
    Let $(X, \dd, \HH^3)$ be a non-collapsed $\RCD(-2,3)$ space with no boundary. Then $X$ is an orbifold and, denoting $\mathcal{P}$ the set of its locally non-orientable points, we have:
    \begin{enumerate}
        \item $\mathcal{P}$ is locally finite;
        \item $X\setminus\mathcal{P}$ is a topological manifold;
        \item each $x\in\mathcal{P}$ has a neighborhood homeomorphic to $C(\mathbb{RP}^2)$.
    \end{enumerate}
\end{Thm}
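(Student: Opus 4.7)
I prove the three conclusions in order. Conclusion (1), the local finiteness of $\mathcal{P}$, is the main point and is established by an induction on the density $\theta$. Conclusions (2) and (3) then follow fairly quickly by combining Theorem \ref{loc.reg}, Lemma \ref{Lem: isolated lno}, Lemma \ref{Lem: ramified double cover is RCD LF version}, Proposition \ref{green.balls} and standard arguments from geometric topology applied to the ramified double cover.

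\textbf{Local finiteness of $\mathcal{P}$ (step 1).} Fix a bounded open set $B\subseteq X$ and, for $\delta>0$ small (to be chosen), set
\begin{equation*}
U_k:=\{x\in B\,:\,\theta(x)>4\pi-k\delta\},
\end{equation*}
where $\theta(x)=\lim_{r\downarrow0}\HH^2(\partial B_r(x))/r^2$ is the area of the cross-section of any tangent cone at $x$ (well-defined on a non-collapsed $\RCD$ space by Theorem \ref{Thm: non-collapsed RCD tangent cone main theorem}). I argue by induction on $k$ that $\mathcal{P}$ has no accumulation point in $U_k$. The base case $k=0$ is trivial since $U_0=\emptyset$. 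Assuming the claim for $U_k$, let $p\in U_{k+1}$ be an accumulation point of $\mathcal{P}$. If $p\notin\mathcal{P}$, then $p$ has an orientable neighbourhood $V$; since every open subset of an orientable open set is orientable (directly from Definitions \ref{Def: Orientability} and \ref{Def: local orientability}), $V$ is disjoint from $\mathcal{P}$, contradicting accumulation. So $p\in\mathcal{P}$. Pick $q_i\in\mathcal{P}\setminus\{p\}$ with $q_i\to p$; setting $r_i:=\dd(p,q_i)\downarrow0$ and blowing up, $X_i:=(X,r_i^{-1}\dd,\HH^3,p)$ subconverges in pmGH to some tangent cone $(C(Y),o)$ at $p$, with the rescaled $q_i'$ converging to some $q_\infty\in C(Y)$ at distance $1$ from $o$. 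When $Y\cong\mathbb{RP}^2$, the density jump $\theta(p)\le 2\pi$ combined with Bishop--Gromov and cone approximation shows that $B_{r_0}(p)\setminus\{p\}\subseteq U_k$ for some $r_0>0$, once $\delta$ is small enough relative to $k$; when $Y\cong\mathbb{S}^2$, so $p$ is a topologically regular LNO point, the same conclusion requires a finer local analysis. The inductive hypothesis then gives, uniformly at the rescaled scale around each $q_i'$, that $\mathcal{P}(X_i)$ is locally finite. Applying Theorem \ref{Thm: P stability} to the pointed sequence $(X_i,q_i')$ (using Lemma \ref{Lem: ramified double cover is RCD LF version} in localised form to guarantee the $\RCD$ structure of the needed ramified double covers), one gets that $q_\infty$ is LNO in $C(Y)$. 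However, if $Y\cong\mathbb{S}^2$ then $C(Y)\cong\R^3$ has no LNO point at all, and if $Y\cong\mathbb{RP}^2$ the only LNO point is the tip $o$; in either case $q_\infty$ sitting at distance $1$ from $o$ is incompatible, a contradiction.

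\textbf{Conclusions (2) and (3).} With (1) in hand, (2) is immediate: for $p\in X\setminus\mathcal{P}$, take an orientable neighbourhood $V\subseteq X\setminus\mathcal{P}$; by Proposition \ref{Lem: non-orientable tangent cone implies fixed point} and Remark \ref{Rem: one tangent cone all tangent cone}, all tangent cones at points of $V$ have $\mathbb{S}^2$ cross-section, so Theorem \ref{loc.reg} applies and $V$ is a topological manifold. For (3), fix $p\in\mathcal{P}$: by (1) it is isolated in $\mathcal{P}$, and Lemma \ref{Lem: isolated lno} gives that all tangent cones at $p$ have cross-section $\cong\mathbb{RP}^2$. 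Then Lemma \ref{Lem: ramified double cover is RCD LF version} gives that $\hat X$ is a non-collapsed orientable $\RCD(-2,3)$ topological $3$-manifold, while the unique lift $\hat p$ of $p$ is fixed by $\Gamma$, with $\Gamma$ free on a punctured neighbourhood of $\hat p$. For $r\in\mathcal{G}_{\hat p}$ small, Proposition \ref{green.balls}~(5) and (8) give that the Green sphere $\mathbb{S}_r(\hat p)\subseteq\hat X$ is homeomorphic to $\mathbb{S}^2$ and tamely embedded, so by the topological Sch\"onflies theorem $\overline{\mathbb{B}_r(\hat p)}$ is a closed topological $3$-ball with $\hat p$ as an interior point. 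The involution $\Gamma$ restricts to a topological involution of this $3$-ball with $\hat p$ as unique interior fixed point and free on the boundary $\mathbb{S}^2$; classical topological arguments on such actions (together with the model computation that the quotient of $\bar B^3$ by the standard antipodal involution is $C(\mathbb{RP}^2)$) yield that the quotient is homeomorphic to $C(\mathbb{RP}^2)$. Since $\pi$ identifies $\overline{\mathbb{B}_r(\hat p)}/\Gamma$ with a neighbourhood of $p$ in $X$, conclusion (3) follows.

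\textbf{Main obstacle.} The core difficulty lies in step 1, specifically in the subcase where $p\in\mathcal{P}$ admits a tangent cone with $\mathbb{S}^2$ cross-section (a topologically regular LNO point, a phenomenon we need to rule out in dimension $3$). There is no strict density drop to exploit, so establishing $B_{r_0}(p)\setminus\{p\}\subseteq U_k$ requires combining Lemma \ref{Lem: main lemma} (which equips the ramified double cover with local $\CD^e$ structure near $p$), the stability of non-orientability of Theorem \ref{Thm: P stability}, and a careful blow-up analysis that propagates the LNO condition onto a smooth point of the tangent cone. The final step (3) also poses a secondary technical challenge in identifying the quotient with $C(\mathbb{RP}^2)$, since it rests on a topological rigidity statement for $\Z_2$-actions on the closed $3$-ball with a single interior fixed point.
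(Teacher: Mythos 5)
Your overall architecture matches the paper's: induction on the density for the local finiteness of $\mathcal{P}$, Theorem \ref{loc.reg} for conclusion (2), and the ramified double cover together with Green balls and $3$-manifold topology for conclusion (3). The genuine gap is in the inductive step, precisely in the subcase you flag yourself: an accumulation point $p\in\mathcal{P}'\cap U_{k+1}$ all of whose tangent cones have cross-section $\mathbb{S}^2$. Your mechanism for invoking the inductive hypothesis is the inclusion $B_{r_0}(p)\setminus\{p\}\subseteq U_k$, but this rests on a density jump (Lemma \ref{proj.dens}) that is specific to cones over $\mathbb{RP}^2$ and fails for cones over $\mathbb{S}^2$ (near a splitting cone the density of nearby points need not exceed $\theta(p)+\delta$). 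So in that subcase you cannot conclude that $\mathcal{P}$ is locally finite near the approximating points $q_i$, and Theorem \ref{Thm: P stability} --- whose hypotheses require exactly that local finiteness --- cannot be applied; ``a finer local analysis'' is not a proof, and the blow-up-at-$p$ strategy does not by itself produce the missing hypothesis. The paper closes this case with a different idea: a Baire category argument on the closed sets $\mathcal{P}'_r$ of accumulation points that are $\gamma$-conical at every scale below $r$, which produces a point $\bar p\in\mathcal{P}'$ that is $\gamma$-conical at a definite scale $2\sigma$ together with a point $q\in\mathcal{P}$ with $\mathbb{RP}^2$ tangent cones at distance exactly $\sigma$ from $\bar p$ and realizing the distance from $q$ to $\mathcal{P}'$, so that $\mathcal{P}$ is locally finite in $B_{\sigma/2}(q)$; this configuration contradicts Lemma \ref{cone.ok}. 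Some substitute for this step is indispensable, and your write-up does not contain one.

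Two smaller points on conclusion (3), which is otherwise a legitimate variant of the paper's argument. First, Proposition \ref{green.balls} (5) and (8) alone do not imply that $\bar{\mathbb{B}}_r(\hat p)$ is a closed $3$-ball: a tamely embedded $2$-sphere in a $3$-manifold need not bound a ball. You also need the simple connectedness (in fact contractibility) of Green balls in $\hat X$, i.e.\ \cite[Proposition 9.15]{BPS24} or \cite[Proposition 9.24]{BPS24}, after which capping off and the Poincar\'e conjecture do the job. Second, the ``classical topological argument'' identifying the quotient of the ball by the involution with $C(\mathbb{RP}^2)$ is exactly Livesay-type rigidity \cite{L63}; the paper instead applies \cite[Theorem 1]{L63} to a nested sequence of invariant annular regions $\bar{\mathbb{B}}_{r_k}(\hat x)\setminus\mathbb{B}_{r_{k+1}}(\hat x)\cong[0,1]\times\mathbb{S}^2$ and pastes the resulting product structures, thereby never having to analyze the involution at the fixed point itself. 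Your single-ball version is workable but should name the rigidity theorem it relies on rather than treating it as folklore.
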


We begin with the following observation. 

\begin{Pro}\label{Pro: orbifold struture thm up to local finiteness}
    Theorem \ref{Thm: orbifold structure thm} follows from the local finiteness of $\mathcal{P}$
\end{Pro}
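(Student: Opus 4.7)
The plan is to handle the two topological conclusions separately, with (2) following quickly from Theorem \ref{loc.reg} and (3) being reduced to a topological question about $\mathbb{Z}/2$-actions via the orientable manifold structure of the ramified double cover already established in Lemma \ref{Lem: ramified double cover is RCD LF version}. Since $\mathcal{P}$ is locally finite it has no accumulation points in $X$, hence $\mathcal{P}$ is closed and $X\setminus\mathcal{P}$ is open. For any $x\in X\setminus\mathcal{P}$, $x$ is locally orientable, so Proposition \ref{Lem: non-orientable tangent cone implies fixed point} (contrapositive) together with Remark \ref{Rem: one tangent cone all tangent cone} forces every tangent cone at $x$ to have cross-section homeomorphic to $\mathbb{S}^2$; applying Theorem \ref{loc.reg} with $U:=X\setminus\mathcal{P}$ yields (2).

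For (3), fix $x\in\mathcal{P}$, which is isolated in $\mathcal{P}$ by local finiteness. Lemma \ref{Lem: isolated lno} implies that all tangent cones at $x$ have cross-section homeomorphic to $\mathbb{RP}^2$. After restricting to a small non-orientable neighborhood of $x$ (which exists since $x$ is locally non-orientable), Lemma \ref{Lem: ramified double cover is RCD LF version} makes the ramified double cover $\hat X$ an orientable $\RCD(-2,3)$ topological $3$-manifold, carrying the isometric involution $\Gamma$; the unique lift $\hat x$ of $x$ is the only $\Gamma$-fixed point in a neighborhood, by Proposition \ref{Pro: LNO classification} and the fact that $\mathcal{P}$ is isolated at $x$. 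By stability of orientability (Theorem \ref{Thm: stability of orientability}), all tangent cones at $\hat x$ are Euclidean. Since the Green-type distance $b_{\hat x}$ is intrinsically determined by $\hat\dd$ and $\hat x$, each Green ball $\hat{\mathbb{B}}_s(\hat x)$ is $\Gamma$-invariant. I would then pick $r\in\mathcal{G}_{\hat x}$ so small that $\hat x$ is the only $\Gamma$-fixed point in $\hat{\mathbb{B}}_r(\hat x)$; by Proposition \ref{green.balls}(5),(8) the boundary $\hat{\mathbb{S}}_r(\hat x)$ is a tamely embedded topological $2$-sphere in the topological $3$-manifold $\hat X$, and by the $3$-dimensional Schoenflies theorem $\bar{\hat{\mathbb{B}}}_r(\hat x)\cong\bar D^3$. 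The neighborhood of $x$ to be analyzed is then $\pi(\bar{\hat{\mathbb{B}}}_r(\hat x))\cong\bar{\hat{\mathbb{B}}}_r(\hat x)/\Gamma$.

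The main obstacle is the topological identification $\bar{\hat{\mathbb{B}}}_r(\hat x)/\Gamma\cong \bar C(\mathbb{RP}^2)$: equivalently, that the $\Gamma$-action on $\bar D^3$ above (involutive, with unique fixed point at the image of $\hat x$, and free on the boundary) is topologically conjugate to the standard antipodal action. The boundary restriction is a free involution of $\mathbb{S}^2$ and is conjugate to the antipodal map by the classical classification of free $\mathbb{Z}/2$-actions on $\mathbb{S}^2$, yielding $\hat{\mathbb{S}}_r(\hat x)/\Gamma\cong\mathbb{RP}^2$. To upgrade this to a conjugation on the full ball, I would choose a sequence $r_n\in\mathcal{G}_{\hat x}$ with $r_0=r$ and $r_{n+1}<r_n/C$ (Proposition \ref{green.balls}(1)) and use the $\Gamma$-equivariant retractions $\rho_n$ from Proposition \ref{green.balls}(7), together with the tame bicollars from Proposition \ref{green.balls}(8), to identify each closed annular region $\bar{\hat{\mathbb{B}}}_{r_n}(\hat x)\setminus\hat{\mathbb{B}}_{r_{n+1}}(\hat x)$ $\Gamma$-equivariantly with $[0,1]\times\mathbb{S}^2$ equipped with the antipodal action on the second factor. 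Stacking these product identifications yields a $\Gamma$-equivariant homeomorphism $\bar{\hat{\mathbb{B}}}_r(\hat x)\setminus\{\hat x\}\cong(0,1]\times\mathbb{S}^2$ which, after collapsing the tip, gives the desired $\Gamma$-equivariant homeomorphism from $\bar{\hat{\mathbb{B}}}_r(\hat x)$ to $\bar D^3$ with the antipodal action. Passing to the $\Gamma$-quotient then produces a neighborhood of $x$ homeomorphic to $\bar C(\mathbb{RP}^2)$, whose interior is homeomorphic to $C(\mathbb{RP}^2)$ as required, and the orbifold structure of $X$ follows at once from (2), (3), and the description of $C(\mathbb{RP}^2)$ as $\R^3/(\mathbb{Z}/2)$.
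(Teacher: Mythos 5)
Your treatment of parts (1) and (2), and the overall strategy for (3) (pass to the ramified double cover, which is an orientable topological $3$-manifold by Lemma \ref{Lem: ramified double cover is RCD LF version}, and analyze the involution $\Gamma$ near its isolated fixed point via a nested sequence of Green balls), matches the paper. However, the key topological step has a genuine gap. You claim that the $\Gamma$-equivariant retractions of Proposition \ref{green.balls}(7) together with the tame bicollars of Proposition \ref{green.balls}(8) identify each annular region $\bar{\hat{\mathbb{B}}}_{r_n}(\hat x)\setminus\hat{\mathbb{B}}_{r_{n+1}}(\hat x)$ $\Gamma$-equivariantly with $[0,1]\times\mathbb{S}^2$ carrying the antipodal action. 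Neither tool delivers this: the retraction in (7) is only defined on an $(r/C)$-neighborhood of a single Green sphere, and a bicollar is likewise a local product structure near the boundary; neither gives a global product structure on an annulus whose two boundary spheres have radii differing by a definite factor. Establishing that the annulus is a product at all is exactly where the paper must work: it proves the annulus is simply connected (Van Kampen plus Proposition \ref{green.balls}(8) and \cite[Proposition 9.15]{BPS24}), caps off the two boundary spheres, invokes the \emph{Poincar\'e conjecture} to identify the result with $\mathbb{S}^3$, and then uses isotopy uniqueness of embedded $3$-balls in $\mathbb{S}^3$ to extract the product structure. None of this is elementary, and your Schoenflies citation for $\bar{\hat{\mathbb{B}}}_r(\hat x)\cong\bar D^3$ hides the same issue (the sphere sits in the abstract manifold $\hat X$, not in $\R^3$, so one again needs simple connectivity of the Green ball plus Poincar\'e).

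The second, independent gap is the equivariance. Even granting $\hat A_n\cong[0,1]\times\mathbb{S}^2$, upgrading this to a $\Gamma$-equivariant homeomorphism with the standard antipodal action — equivalently, showing the quotient is $[0,1]\times\mathbb{RP}^2$ — is precisely the content of Livesay's theorem (\cite[Theorem 1]{L63}), which the paper cites at this point. The classification of free involutions of $\mathbb{S}^2$, which you do invoke, only controls the action on each boundary sphere separately; it does not propagate across the $3$-dimensional annulus (this is a genuine theorem about free involutions of $\mathbb{S}^2\times[0,1]$, in the same circle of ideas as the Smith conjecture). A minor further point: the $\Gamma$-invariance of the Green balls in $\hat X$ is not automatic from "intrinsic" considerations, since the local Green-type distances of \cite{BPS24} involve choices; the paper secures it by lifting a Green-type distance from the base, setting $\hat b_k:=b_k\circ\pi$, which you should do as well.
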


\begin{proof}

    Assume that $\mathcal{P}$ is locally finite. By Proposition \ref{Lem: non-orientable tangent cone implies fixed point} and Lemma \ref{Lem: isolated lno}, we have that $\mathcal{P}$ consists precisely of those points whose tangent cones all have cross-sections homeomorphic to $\mathbb{RP}^2$. It follows that $X\setminus\mathcal{P}$ is a topological manifold by Theorem \ref{loc.reg}. Moreover, $X$ admits a ramified double cover $(\hat{X}, \hat\dd, \HH^3)$ that is a non-collapsed orientable $\RCD(-2,3)$ space without boundary and a topological manifold thanks to Lemma \ref{Lem: ramified double cover is RCD LF version}. Denote by $\pi: \hat{X} \to X$ the associated projection map. Since $\hat{X}$ is orientable, it follows from the stability of orientability (Theorem \ref{Thm: stability of orientability}) that every tangent cone of (any point in) $\hat{X}$ has cross-section homeomorphic to $\mathbb{S}^2$. 

    Now, fix some $x\in\mathcal{P}$ and let $\hat{x} \in \pi^{-1}(x)$ be its unique lift, and fix
    a decreasing sequence of radii $R_k\to0$
    such that ${{B}}_{R_1}(x)\cap\mathcal{P}=\{x\}$
    and $B_{s}(x),B_{s}(\hat x)$ are $\delta_0$-conical for all $s\in[C^{-1}\delta_0R_k,R_k/\delta_0]$.
    Applying \cite[Theorem 4.7]{BPS24}, we find a good Green-type distance $b_k:B_{R_k}(x)\to[0,\infty)$ (see \cite[Remark 4.11]{BPS24}).
    Since $\pi$ is a (two-sheeted) local metric measure isomorphism on
    ${{B}}_{R_k}(\hat x)\setminus\{\hat x\}$, we deduce that the lift
    $$\hat b_k:=b_k\circ\pi:B_{R_k}(\hat x)\to[0,\infty)$$
    still satisfies the conclusions of \cite[Theorem 4.7]{BPS24} (recall that $\pi^{-1}(B_{R_k}(x))=B_{R_k}(\hat x)$ by Remark \ref{Rem: Ramified double cover properties} and $\Gamma(\hat x)=\hat x$).
    By construction of $\mathcal{G}_x$ (which is built using \cite[Theorem 5.4]{BPS24}), we can now take
    $$r_k\in(R_k/C(v),R_k/2)\cap\mathcal{G}_x\cap\mathcal{G}_{\hat x}$$
    such that the conclusions of Proposition \ref{green.balls} hold for the Green balls
    $$\mathbb{B}_{r_k}(x):=\{b_k<r_k\},\quad \mathbb{B}_{r_k}(\hat x):=\{\hat b_k<r_k\}.$$
    
    By Proposition \ref{green.balls} (5), we know that
    $$\mathbb{S}_{r_k}(x)\cong\mathbb{RP}^2,\quad\mathbb{S}_{r_k}(\hat x)\cong\mathbb{S}^2.$$
    By \cite[Proposition 9.15]{BPS24}, $\bar{\mathbb{B}}_{r_k}(\hat x)$ is simply connected.
    Assuming without loss of generality $r_k>r_{k+1}$, a direct application of Van Kampen's theorem and
    Proposition \ref{green.balls} (8) shows that
     $$\hat A_k:=\bar{\mathbb{B}}_{r_k}(\hat x)\setminus \mathbb{B}_{r_{k+1}}(\hat x)$$
     is simply connected as well. It easily follows that
     $$\hat A_k\cong[0,1]\times \mathbb{S}^2.$$
     Indeed, by capping $\hat A_k$ off with two copies of $\bar B^3$ (glued according to the
     homeomorphisms $\partial\mathbb{B}_{r_k}(\hat x),\partial\mathbb{B}_{r_{k+1}}(\hat x)\cong \mathbb{S}^2$), we obtain a simply connected
     manifold $M_k$, which is homeomorphic to $\mathbb{S}^3$ by the solution to the Poincar\'e conjecture.
     We can endow $M_k$ with a smooth structure (diffeomorphic to $\mathbb{S}^3$) with respect to which
     the boundary components $S_k',S_k''$ of $\hat A_k\subset M_k\cong \mathbb{S}^3$ are two embedded copies of $\mathbb{S}^2$.
     Thus, $\hat A_k$ is diffeomorphic to $\mathbb{S}^3$ with two balls removed;
     since any smooth embedding $\bar B^3\hookrightarrow \mathbb{S}^3$ is isotopic to a spherical cap, the claim follows.
     
     Since $\hat A_k$ covers $A_k:=\bar{\mathbb{B}}_{r_k}(x)\setminus\mathbb{B}_{r_{k+1}}(x)$,
     we can apply \cite[Theorem 1]{L63} to obtain that
     $$A_k\cong[0,1]\times\mathbb{RP}^2\cong[2^{-k-1},2^{-k}]\times\mathbb{RP}^2,$$
     with $\partial\bar{\mathbb{B}}_{r_k}(x)$ corresponding to $\{2^{-k}\}\times\mathbb{RP}^2$
     and $\partial\bar{\mathbb{B}}_{r_{k+1}}(x)$ corresponding to $\{2^{-k-1}\}\times\mathbb{RP}^2$.
     Pasting together these homeomorphisms, we see that
     $\bar{\mathbb{B}}_{r_1}(x)$ is homeomorphic to (the compact version of) the cone over $\mathbb{RP}^2$.
\end{proof}

In order to show the local finiteness of $\mathcal{P}$, let us start with two simple observations.

\begin{Lem}\label{cone.ok}
    Given $\theta_0>0$, there exists $\gamma>0$ with the following property:
    Let $(X,\dd,\mathcal{H}^3)$ be any non-collapsed $\rcd(-2,3)$ space without boundary such that $\mathcal{H}^3(B_r(p))\ge\theta_0r^3/3$ for some ball $B_r(p)\subseteq X$ with $r\le1$.
    Moreover, assume that there exists $q\in \mathcal{P}$ such that $\dd(p,q)=r$ and
    $$\mathcal{P}\cap B_{r/2}(q)\text{ is locally finite in }B_{r/2}(q).$$
    Then, for any non-collapsed $\rcd(0,3)$ space $Y=C(\Sigma^2)$, where $\Sigma^2$ is a $2$-dimensional Alexandrov space with $\curv \geq 1$, we must have
    $$d_{GH}(B_{2r}(p),B_{2r}(o))>\gamma\cdot 2r,$$
    where $B_{2r}(o)\subset Y$ is the ball of radius $2r$ centered at the cone tip of $Y$.
\end{Lem}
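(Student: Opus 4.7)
The plan is to argue by contradiction, reducing the lemma to an application of Theorem \ref{Thm: P stability} combined with the topological observation that nontip points of a cone over a $2$-dimensional Alexandrov space are locally orientable.

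Suppose no such $\gamma$ exists. Then for each $n\in\N$ I would obtain non-collapsed $\rcd(-2,3)$ spaces $(X_n,\dd_n,\HH^3)$ without boundary, balls $B_{r_n}(p_n)\subseteq X_n$ with $r_n\in(0,1]$, points $q_n\in\mathcal{P}_n$, and cones $Y_n=C(\Sigma_n^2)$ violating the conclusion with constant $\gamma=1/n$. Rescaling each $X_n$ by the factor $1/r_n$, I obtain non-collapsed $\rcd(-2r_n^2,3)\subseteq\rcd(-2,3)$ spaces without boundary, satisfying $\HH^3(B_1(p_n))\ge\theta_0/3$, $\dd_n(p_n,q_n)=1$, $\mathcal{P}_n\cap B_{1/2}(q_n)$ locally finite in $B_{1/2}(q_n)$, and $d_{GH}(B_2(p_n),B_2(o_n))\to 0$, where $o_n$ is the tip of the rescaled $Y_n$. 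The Bishop--Gromov inequality applied at $p_n$ gives uniform non-collapsing near $q_n$, so by pmGH compactness for non-collapsed $\rcd$ sequences together with the boundaryless-stability result \cite{BNS22}, up to a subsequence $(X_n,q_n)$ pmGH-converges to a non-collapsed $\rcd(-2,3)$ space $(X_\infty,q_\infty)$ without boundary, and $p_n\to p_\infty$ with $\dd_\infty(p_\infty,q_\infty)=1$. Since the cross-sections $\Sigma_n^2$ have $\diam\le\pi$ and uniformly bounded volume from below (from the non-collapsing of the cones via $\HH^3(B_1(o_n))=\HH^2(\Sigma_n^2)/3$), up to a further subsequence $\Sigma_n^2\to\Sigma_\infty^2$ in GH and hence $Y_n\to Y_\infty:=C(\Sigma_\infty^2)$ in pmGH. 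Combining the two convergences, $B_2(p_\infty)\subseteq X_\infty$ is isometric to $B_2(o)\subseteq Y_\infty$, and $q_\infty$ corresponds under this isometry to a point $q'\in Y_\infty$ with $\dd_{Y_\infty}(o,q')=1$.

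Next I would apply Theorem \ref{Thm: P stability}, after a further rescaling by $2$ to convert local finiteness on $B_{1/2}(q_n)$ into local finiteness on $B_1(q_n)$ (the rescaled spaces remain $\rcd(-2,3)$ without boundary, since the new curvature is at least $-1/2$); each $X_n$ is non-orientable, as it contains the locally non-orientable point $q_n$, so the theorem yields $q_\infty\in\mathcal{P}_\infty$. On the other hand, since $X_\infty$ is without boundary and the boundary is a local notion preserved by isometries, the open isometric copy $B_2(o)\subseteq Y_\infty$ has no boundary, which forces $\Sigma_\infty^2$ to be a 2-dimensional Alexandrov space with $\curv\ge 1$ and no boundary, hence a topological surface homeomorphic to $\mathbb{S}^2$ or $\mathbb{RP}^2$. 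Consequently $Y_\infty\setminus\{o\}$ is a topological 3-manifold, and since $\dd_{Y_\infty}(o,q')=1>0$, the point $q'$ has a small neighborhood homeomorphic to an open ball in $\R^3$, which is orientable. Transferring this back via the isometry shows that $q_\infty$ is locally orientable in $X_\infty$, contradicting $q_\infty\in\mathcal{P}_\infty$.

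The main obstacle is verifying that the limiting setup and successive rescalings correctly match the hypotheses of Theorem \ref{Thm: P stability} --- in particular, that the local finiteness of $\mathcal{P}_n$ near $q_n$ passes through the rescaling by $2$ and the pmGH limit, and that the isometric copy of $B_2(o)$ inside the limit $X_\infty$ is rigid enough to constrain $\Sigma_\infty^2$. Once these are in place, the contradiction is essentially topological, based only on the fact that $2$-dimensional Alexandrov spaces with $\curv\ge 1$ and no boundary are topological surfaces, so nontip points of their cones have Euclidean neighborhoods.
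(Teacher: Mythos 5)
Your proposal is correct and follows essentially the same route as the paper, which proves the lemma by exactly this compactness/contradiction argument: rescale so $r=1$, pass to a pmGH limit, invoke Theorem \ref{Thm: P stability} to get a locally non-orientable point at distance $1$ from the tip of the limit cone, and contradict the fact that $\mathcal{P}(C(\Sigma^2))\subseteq\{o\}$ since $\Sigma^2$ is a topological surface and $C(\Sigma^2)\setminus\{o\}$ is a topological manifold. Your write-up simply fills in the bookkeeping (the two rescalings and the identification of the limit ball with a cone ball) that the paper leaves implicit.
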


\begin{proof}
    Up to rescaling, we can assume $r=1$. The lemma then follows immediately by a compactness argument,
    using Theorem \ref{Thm: P stability} and the fact that $\mathcal{P}(Y)\subseteq\{o\}$
    for any $Y=C(\Sigma^2)$, as $\Sigma^2$ is an Alexandrov space and thus a topological surface, and hence $Y\setminus\{0\}$ is a topological manifold.
\end{proof}

Recall that, for any point $p$ in a non-collapsed $\RCD(K,n)$ space, the \textit{density} at $p$ is defined as
\begin{equation*}
    \theta(p) := \HH^{n-1}(\Sigma_p),
\end{equation*}
where $\Sigma_p$ is the cross-section of a tangent cone at $p$. It is not difficult to check that $\theta_p$ is well-defined (i.e., independent of the choice of cross-section) by the Bishop--Gromov inequality and \cite{DPG18}.

\begin{Lem}\label{proj.dens}
    Given $\theta_0>0$, there exists $\delta>0$ with the following property. Let $(\Sigma^2, \dd_{\Sigma^2}, \HH^2)$ be any $2$-dimensional Alexandrov space with $\curv \geq 1$ that is homeomorphic to $\mathbb{RP}^2$ and let $(Y, \dd_Y, \HH^3, o)$ be the metric measure cone over $\Sigma^2$, where $o$ is the cone tip. If $\mathcal{H}^2(\Sigma^2) \geq \theta_0$, or equivalently $\mathcal{H}^3(B_1(o))\ge\theta_0/3$, then the density $\theta(q) \geq \theta(o) + \delta$ for any $q \in Y \setminus \{o\}$.
\end{Lem}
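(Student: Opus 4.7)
The plan is to compute $\theta(o)$ and $\theta(q)$ explicitly in terms of the geometry of $\Sigma^2$ and its cone-angle function, and then compare the two via the Alexandrov Gauss--Bonnet formula. Since $Y=C(\Sigma^2)$ is its own tangent cone at the vertex $o$, I immediately read off $\theta(o)=\HH^2(\Sigma^2)$.

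For a non-tip point $q$ at distance $r_0>0$ from $o$ sitting above some $x_0\in\Sigma^2$, I would verify by a direct computation, using that the cone metric near $q$ agrees to first order with a product of a radial $\R$-factor and a rescaled copy of $\Sigma^2$ near $x_0$, that the tangent cone at $q$ splits isometrically as $\R\times T_{x_0}\Sigma^2$. Since $\Sigma^2$ is a $2$-dimensional Alexandrov space with $\curv\geq1$, the tangent cone $T_{x_0}\Sigma^2$ is the standard circular cone $C(S^1_{\alpha(x_0)})$ for some total cone angle $\alpha(x_0)\in(0,2\pi]$. Viewing the product $\R\times C(S^1_{\alpha(x_0)})$ as itself a $3$-dimensional metric cone, its unit cross-section is the spherical suspension of $S^1_{\alpha(x_0)}$, whose $2$-dimensional Hausdorff measure equals
\[
\int_0^\pi \sin\phi\,d\phi\cdot\alpha(x_0)=2\alpha(x_0),
\]
so $\theta(q)=2\alpha(x_0)$.

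Next, I would invoke the Gauss--Bonnet theorem for $2$-dimensional Alexandrov surfaces applied to $\Sigma^2\cong\mathbb{RP}^2$ with $\curv\geq1$: every total cone angle is at most $2\pi$, $\chi(\Sigma^2)=1$, and the Alexandrov curvature measure $\omega$ satisfies
\[
2\pi=2\pi\chi(\Sigma^2)=\omega(\Sigma^2)\geq\HH^2(\Sigma^2)+(2\pi-\alpha(x_0)),
\]
since the pointwise curvature lower bound contributes at least $\HH^2(\Sigma^2)$ on the regular part while the singular defect at $x_0$ contributes $2\pi-\alpha(x_0)\geq0$ (and all other singular contributions are likewise nonnegative). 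Rearranging gives $\alpha(x_0)\geq\HH^2(\Sigma^2)$, whence
\[
\theta(q)=2\alpha(x_0)\geq 2\HH^2(\Sigma^2)=\theta(o)+\HH^2(\Sigma^2)\geq\theta(o)+\theta_0,
\]
so the lemma holds with $\delta:=\theta_0$. The most delicate step will be the identification of the tangent cone at non-tip points of $Y$ as $\R\times C(S^1_{\alpha(x_0)})$ and the explicit computation of its cross-sectional area; once those are in place, the Alexandrov Gauss--Bonnet inequality is classical and delivers the required uniform gap for free, independently of the finer structure of $\Sigma^2$ beyond the volume lower bound $\theta_0$.
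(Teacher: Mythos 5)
Your proof is correct but follows a genuinely different route from the paper. The paper argues by compactness and contradiction: a sequence of cones $(Y_i,o_i)$ with $\theta(q_i)\le\theta(o_i)+\delta_i$, $\delta_i\to0$, would produce a limit cone in which, by lower semicontinuity of the density and Bishop--Gromov rigidity, a line splits off; the cross-section of the limit is then homeomorphic to $\mathbb{S}^2$, and by \cite{P90} the cross-sections $\Sigma_i^2$ would eventually be homotopy equivalent to $\mathbb{S}^2$, contradicting $\Sigma_i^2\cong\mathbb{RP}^2$. You instead compute everything explicitly: $\theta(o)=\HH^2(\Sigma^2)$, $\theta(q)=2\alpha(x_0)$ via the standard splitting $T_qY=\R\times T_{x_0}\Sigma^2=C(S(S^1_{\alpha(x_0)}))$ and the area $2\alpha$ of the spherical suspension of $S^1_\alpha$, and then the Gauss--Bonnet inequality $2\pi=\omega(\Sigma^2)\ge\HH^2(\Sigma^2)+(2\pi-\alpha(x_0))$ to get $\alpha(x_0)\ge\HH^2(\Sigma^2)$, hence $\theta(q)-\theta(o)\ge\HH^2(\Sigma^2)\ge\theta_0$. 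Two external inputs need explicit justification in your write-up: the identification of tangent cones at non-vertex points of a metric cone (standard Alexandrov theory, via $\Sigma_{(r_0,x_0)}C(\Sigma)=S(\Sigma_{x_0}\Sigma)$, together with the fact that RCD tangent cones agree with Alexandrov ones here), and the comparison $\omega\ge\HH^2$ between the Alexandrov curvature measure and the area measure for surfaces with $\curv\ge1$, which is classical (Alexandrov--Zalgaller; see also Machigashira's work on the Gaussian curvature of Alexandrov surfaces) but not free. What your approach buys is an explicit and in fact sharp constant $\delta=\theta_0$, whereas the paper's soft argument produces no effective $\delta$; what the paper's approach buys is that it avoids all two-dimensional bounded-curvature machinery and reuses tools (semicontinuity of density, splitting rigidity, homotopy stability) already present elsewhere in the article.
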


\begin{proof}
We note that the density is the same along any radial line of the cone away from $\{o\}$. Therefore, if the claim is false then we can find a sequence of cones $(Y_i,\dd_i,o_i)$ and points $q_i \in Y_i$ with $\dd_{i}(q_i, o_i) = 1$,
so that $\mathcal{H}^3(B_r(o_i))=\theta(o_i)r^3/3\ge\theta_0r^3/3$ for all $i$
and $\theta(q_i)\le\theta(o_i)+\delta_i$, with $\delta_i\to0$.
Up to a subsequence, we could find a limit cone $(Y_\infty, \dd_\infty, o_\infty)$ and a point $q_\infty \in Y_\infty$ so that $q_i \to q_\infty$ under the pmGH convergence. By lower semi-continuity of density,
\begin{equation*}
\theta(q_\infty)\le\bar\theta:=\lim_{i\to\infty}\theta(q_i)=\lim_{i\to\infty}\theta(o_i)
\end{equation*}
(where the limit exists up to a subsequence, as $\theta(o_i)\in[\theta_0,4\pi]$),
while $B_r(o_\infty)$ has volume $\bar\theta r^3/3$ for all $r>0$. Hence, $Y_\infty$ must split a factor $\R$.
Its cross-section is then homeomorphic to $\mathbb{S}^2$;
by uniform local contractibility of Alexandrov surfaces and \cite{P90}, the cross-section
$\Sigma_i^2$ of $Y_i$ must be homotopically equivalent to $\mathbb{S}^2$ for $i$ large enough, a contradiction.
\end{proof}

We are now ready to show that $\mathcal{P}$ is locally finite, which will immediately imply the orbifold structure theorem (Theorem \ref{Thm: orbifold structure thm}) thanks to Proposition \ref{Pro: orbifold struture thm up to local finiteness}. Let $\mathcal{P}'$ be the set of accumulation points of the set $\mathcal{P}$.
Since $\mathcal{P}$ is closed by definition, we have $\mathcal{P}'\subseteq\mathcal{P}$. The local finiteness of $\mathcal{P}$ is clearly equivalent to $\mathcal{P}' = \emptyset$.

\begin{Pro}
 Let $(X, \dd, \HH^3)$ be a non-collapsed $\RCD(-2,3)$ space with no boundary and let $\mathcal{P}$ be its set of locally non-orientable points. Then $\mathcal{P}$ is locally finite in $X$.
\end{Pro}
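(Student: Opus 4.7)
The plan is to prove local finiteness of $\mathcal{P}$ by induction on a density stratification, following the outline in the introduction. Since the claim is local, it suffices to fix a bounded open set $B\subseteq X$ and show $\mathcal{P}\cap B$ has no accumulation point in $B$; by the non-collapsing hypothesis and Bishop--Gromov, the density $\theta(x)=\lim_{r\to 0^+}3\HH^3(B_r(x))/r^3$ is uniformly bounded below by some $\theta_0>0$ on $\overline B$. A preliminary observation is that $\mathcal{P}$ is closed: if $p_n\in\mathcal{P}$ and $p_n\to p$, any open set $U\ni p$ eventually contains some $p_n\in\mathcal{P}$, so $U$ is non-orientable and $p\in\mathcal{P}$. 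In particular, every accumulation point of $\mathcal{P}$ already lies in $\mathcal{P}$.

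Fix $\delta>0$ small (to be chosen below) and, for $k\in\N$, define
\begin{equation*}
U_k:=\{x\in B:\theta(x)>4\pi-k\delta\},
\end{equation*}
which is open by the lower semi-continuity of $\theta$ (a consequence of Bishop--Gromov monotonicity combined with pmGH continuity of ball volumes). I will show by induction on $k$ that $\mathcal{P}\cap U_k$ has no accumulation point in $U_k$; since $U_k=B$ for $k>4\pi/\delta$, this will yield the proposition. The base case $k=0$ is immediate, as $U_0=\emptyset$.

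For the inductive step, assume the conclusion at step $k$ and suppose for contradiction that $p\in U_{k+1}$ accumulates $\mathcal{P}$. Then $p\in\mathcal{P}$ and the inductive hypothesis forces $p\notin U_k$, so $\theta(p)\in(4\pi-(k+1)\delta,4\pi-k\delta]$. By Remark \ref{Rem: one tangent cone all tangent cone}, the tangent cone cross-sections at $p$ are all homeomorphic either to $\mathbb{S}^2$ or to $\mathbb{RP}^2$. In the $\mathbb{RP}^2$ case, Lemma \ref{proj.dens} applied with parameter $\theta_0$ gives a uniform density gap $\delta_L=\delta_L(\theta_0)>0$; combined with lower semi-continuity of $\theta$ and a standard blow-up/compactness argument (rescaling by $\dd(p,\cdot)^{-1}$ and passing to a tangent cone at $p$), this produces $r>0$ such that $\theta(q)\ge\theta(p)+\delta_L/2$ for every $q\in B_r(p)\setminus\{p\}$. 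Fixing $\delta<\delta_L/2$ at the outset forces $B_r(p)\setminus\{p\}\subseteq U_k$, so by the inductive hypothesis $\mathcal{P}\cap(B_r(p)\setminus\{p\})$ is locally finite. Picking $q_n\in\mathcal{P}\setminus\{p\}$ with $q_n\to p$ and setting $r_n:=\dd(p,q_n)\to0$, for large $n$ the ball $B_{r_n/2}(q_n)$ is contained in $B_r(p)\setminus\{p\}$, so $\mathcal{P}\cap B_{r_n/2}(q_n)$ is locally finite and all hypotheses of Lemma \ref{cone.ok} hold with $(p,q_n,r_n)$. The lemma then yields $d_{GH}(B_{2r_n}(p),B_{2r_n}(o))>2\gamma r_n$ for every $\RCD(0,3)$ cone, contradicting the convergence of $(X,r_n^{-1}\dd,p)$ along a subsequence to a tangent cone at $p$, which is precisely such a cone.

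The remaining case, in which the cross-section at $p$ is $\mathbb{S}^2$, is the main technical obstacle, since no uniform density gap is available and we cannot simply place a punctured neighborhood of $p$ inside $U_k$. I would handle it by distinguishing further: if some subsequence of the accumulating $q_n\in\mathcal{P}$ has $\mathbb{RP}^2$ cross-section, then $\theta(q_n)\le2\pi$ and lower semi-continuity forces $\theta(p)\le2\pi$, so the previous argument can be rerun centered at each such $q_n$ rather than at $p$, reducing to the solved case at a smaller scale; otherwise, all but finitely many $q_n$ have $\mathbb{S}^2$ cross-section and one uses the inductive hypothesis together with (a local version of) Lemma \ref{Lem: ramified double cover is RCD LF version} to make the ramified double cover locally $\CD^e(-2,3)$ on a punctured neighborhood of $p$, allowing Theorem \ref{Thm: stability of non-orientiable local cde case} to transport the non-orientability of small balls around the $q_n$'s to the tangent cone at $p$, which is homeomorphic to $\R^3$ and hence orientable --- yielding the desired contradiction. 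The key difficulty I expect is to carry out this last transfer rigorously without assuming more local finiteness of $\mathcal{P}$ near $p$ than the inductive hypothesis provides.
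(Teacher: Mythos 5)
Your density induction and your treatment of the case where the accumulation point $p$ has $\mathbb{RP}^2$ tangent-cone cross-section are essentially the paper's argument, and that part is sound. The genuine gap is the $\mathbb{S}^2$ case, exactly where you flag the difficulty, and neither of your two proposed sub-arguments closes it. In the first sub-case you cannot ``rerun the previous argument centered at $q_n$'': that argument needs its center to be an \emph{accumulation point} of $\mathcal{P}$ lying in $U_{k+1}$ with $\mathbb{RP}^2$ cross-section --- the accumulation is what supplies, after rescaling, both the conicality of the ball around the center and the nearby point $q$ with $\mathcal{P}\cap B_{r/2}(q)$ locally finite that Lemma \ref{cone.ok} requires --- whereas your $q_n$ are merely points of $\mathcal{P}$ converging to $p$ and may be isolated in $\mathcal{P}$; isolated $\mathbb{RP}^2$-points are precisely the legitimate orbifold singularities and yield no contradiction. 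In the second sub-case, invoking a local version of Lemma \ref{Lem: ramified double cover is RCD LF version} on a punctured neighborhood of $p$ presupposes that $\mathcal{P}$ is locally finite there, which is exactly what the inductive hypothesis fails to provide when $p$ has $\mathbb{S}^2$ cross-section (no density gap forces a punctured neighborhood of $p$ into $U_k$); the argument is circular, as you yourself suspect.

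The missing ingredient is a uniform-conicality selection via Baire category. The paper first shows (your first case) that no point of $\mathcal{P}'\cap U_{k+1}$ has $\mathbb{RP}^2$ cross-section, where $\mathcal{P}'$ denotes the set of accumulation points of $\mathcal{P}$. It then writes $\mathcal{P}'\cap U_{k+1}$ as the countable union of the closed sets $\mathcal{P}'_r$ of points all of whose balls of radius $<r$ are $\gamma$-conical ($\gamma$ from Lemma \ref{cone.ok}); Baire's theorem gives $p\in\mathcal{P}'\cap U_{k+1}$ and $\rho>0$ with $\mathcal{P}'\cap B_\rho(p)\subseteq\mathcal{P}'_r$ for some fixed $r$. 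By Theorem \ref{loc.reg} there must exist $q\in\mathcal{P}\cap B_{\rho/2}(p)$ whose tangent cones have cross-section $\mathbb{RP}^2$ (otherwise $B_{\rho/2}(p)$ would be a manifold and $p$ locally orientable), and such $q$ does not lie in $\mathcal{P}'$. Taking $\bar p$ a closest point of $\mathcal{P}'$ to $q$, at distance $\sigma\in(0,\rho/2)$, one gets that $\mathcal{P}$ is locally finite in $B_{\sigma/2}(q)$ while $B_{2\sigma}(\bar p)$ is $\gamma$-conical, contradicting Lemma \ref{cone.ok}. Without this device (or an equivalent one) the induction does not close.
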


\begin{proof}
Let $U \subseteq X$ be bounded and open. By the lower semi-continuity of density there exists some $\theta_0 > 0$ so that $\theta(x)>\theta_0$ for all $x\in U$.
Let $K\in\N$ be the smallest integer such that $\theta_0+K\delta>4\pi$, where $\delta>0$ is given by Lemma \ref{proj.dens}.

By induction on $k=0,1,\dots,K$, we will show that $\mathcal{P}$ has no accumulation points in 
\begin{equation*}
U_k:=\{x\in U\,:\,\theta(x)>\theta_k\},\quad\theta_k:=\theta_0+(K-k)\delta.
\end{equation*}
Clearly, this is enough to show that $\mathcal{P}$ is locally finite in $X$ since $U=U_K$ is arbitrary. We note that $U_k$ is open by the lower semi-continuity of density. The base case is trivial since $U_0=\emptyset$, as no point can have density larger than $4\pi$.

Assuming now that the claim holds for $k$, let us prove it for $k+1$. Assume by contradiction that
\begin{equation*}
\mathcal{P}'\cap U_{k+1}\neq\emptyset.
\end{equation*}
We claim that none of the points in $\mathcal{P}'\cap U_{k+1}$ have a tangent cone with cross-section homeomorphic to $\mathbb{RP}^2$. Indeed, if this is not the case then we can find a sequence of distinct $x_i\in\mathcal{P}$
converging to $x \in\mathcal{P}'\cap U_{k+1}$, a point where (some, and so by Remark \ref{Rem: one tangent cone all tangent cone}) all tangent cones have cross-section homeomorphic to $\mathbb{RP}^2$. Let $\rho_i:=d(x_i,x)>0$
and consider the pointed spaces $X_i$ obtained by dilating $X$ by a factor $\rho_i^{-1}$ (with reference point $x$). Up to a subsequence, these converge in the pmGH sense to a limit cone $(C(\Sigma^2), \dd_{C(\Sigma^2)}, \HH^3, o)$, where $o$ is the cone tip, with $\Sigma^2 \cong \mathbb{RP}^2$ and $\theta(o)=\theta(x)$. Calling $\tilde x_i\in X_i$ the point corresponding to $x_i\in X$, we may assume that, up to a subsequence, $\tilde x_i$ converges to some $\tilde{x} \in C(\Sigma^2)$ with $\dd_{C(\Sigma^2)}(\tilde{x},o) = 1$. By Lemma \ref{proj.dens},
we get
\begin{equation*}
\theta_{k+1}<\theta(x)=\theta(o)\le\theta(\tilde{x})-\delta\le\liminf_{i\to\infty}\theta(\tilde{x}_i)-\delta.
\end{equation*}
Since $\theta(\tilde{x}_i)=\theta(x_i)$ we deduce that, for sufficiently large $i$,
\begin{equation*}
\theta(x_i)>\theta_{k+1}+\delta=\theta_k.
\end{equation*}
In fact, we can repeat the same argument for any sequence of points $x_i'\in B_{\rho_i/2}(x_i)$.
Since the corresponding points in the rescaled spaces $X_i$ belong to $B_{1/2}(\tilde x_i)$, they converge up to a subsequence to a point $\tilde{x}'\in\bar B_{1/2}(\tilde{x})\subseteq Y\setminus\{o\}$.
We deduce that
$$B_{\rho_i/2}(x_i)\subseteq U_k$$
for $i$ large enough. Hence, by inductive assumption, we are in the hypotheses of Lemma \ref{cone.ok},
which then yields a contradiction for $i$ large enough.

Now, taking $\gamma>0$ as in Lemma \ref{cone.ok},
recalling the terminology of \cite[Definition 3.11]{BPS24}
we say that a ball $B_{r}(x)\subseteq X$ with $r\in(0,\gamma)$ is \emph{$\gamma$-conical} if
\begin{equation*}
d_{GH}(B_{r}(x),B_{r}(o))\le\gamma r\quad\text{for some non-collapsed $\rcd(0,3)$ cone }C(\Sigma^2),
\end{equation*}
where as usual $B_{r}(o)$ denotes the ball centered at the tip $o\in C(\Sigma^2)$.
We let
$$\mathcal{P}_r':=\{x\in\mathcal{P}'\,:\,B_{s}(x)\text{ is $\gamma$-conical for all }s\in(0,r)\}.$$
It is clear that $\mathcal{P}_r'\subset X$ is a closed set (as $\mathcal{P}'$ is closed).
Since $U_{k+1}$ is open in $X$, it is a Polish space (i.e., $U$ is homeomorphic to a complete metric space:
see \cite[Example 6.1.11]{B07}). Since $\mathcal{P}'\cap U_{k+1}$ is closed in $U_{k+1}$,
the topological space $\mathcal{P}'\cap U_{k+1}$
is itself a Polish space, in which each $\mathcal{P}_r'\cap U_{k+1}$ is closed.
Since
$$\bigcup_{k=1}^{\infty}(\mathcal{P}_{2^{-k}\gamma}'\cap U_{k+1})=\mathcal{P}'\cap U_{k+1}\neq\emptyset,$$
by Baire's lemma there exists $r\in(0,\gamma)$ such that $\mathcal{P}_r'\cap U_{k+1}$ has nonempty interior, relative to $\mathcal{P}'\cap U_{k+1}$.

In other words, for this fixed $r$, there exist $p\in \mathcal{P}'\cap U_{k+1}$ and $\rho\in(0,r)$
such that
$$B_\rho(p)\subseteq U_{k+1},\quad \mathcal{P}'\cap B_\rho(p)\subseteq\mathcal{P}_r'.$$
Since $p\in\mathcal{P}' \subseteq \mathcal{P}$, there exists $q\in\mathcal{P}$ with $\dd(p,q)<\rho/2$ such that any tangent cone at $q$ has cross-section homeomorphic to $\mathbb{RP}^2$. Indeed, if this is not possible then $B_{\rho/2}(p)$ would be a topological manifold by Theorem \ref{loc.reg} and so $p$ would be locally orientable. 

Now let $\bar p\in\mathcal{P}'$ be a closest point to $q$ in $\mathcal{P}'$,
so that $\sigma:=d(\bar p,q)\le d(p,q)<\rho/2$. Since $q$ has tangent cones with cross-sections homeomorphic to $\mathbb{RP}^2$, as proved above we have that $q\notin\mathcal{P}'$ and so $\sigma>0$.
Also,
$$\bar p\in \mathcal{P}'\cap B_\rho(p)\subseteq\mathcal{P}_r'.$$
By definition of closest point, we have
$$B_{\sigma/2}(q)\cap\mathcal{P}'=\emptyset;$$
in other words, $\mathcal{P}$ is locally finite in $B_{\sigma/2}(q)$.
Since
$$\sigma<\rho/2<r/2,$$
the ball $B_{2\sigma}(\bar p)$ is $\gamma$-conical. However, this contradicts Lemma \ref{cone.ok}.
\end{proof}

As corollaries, we immediately obtain Theorem \ref{Thm: orbifold structure thm} and the following structure and stability theorems for the ramified double cover. 

\begin{Thm}\label{Thm: ramified double cover is RCD} (Regularity of ramified double cover).
Let $(X, \dd, \HH^3)$ be a non-orientable $\RCD(-2,3)$ space without boundary. Then the ramified double cover $(\hat{X}, \hat\dd, \HH^3)$ is an orientable $\RCD(-2,3)$ space without boundary. 
\end{Thm}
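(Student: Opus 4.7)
The plan is to deduce this theorem essentially as a direct corollary of the orbifold structure theorem (Theorem \ref{Thm: orbifold structure thm}) combined with Lemma \ref{Lem: ramified double cover is RCD LF version}. The heavy lifting has already been accomplished earlier in the paper; what remains is to assemble the pieces cleanly.

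First, I would invoke Theorem \ref{Thm: orbifold structure thm}, whose conclusion (1) gives that the set $\mathcal{P}$ of locally non-orientable points of $X$ is locally finite. With this local finiteness in hand, Lemma \ref{Lem: ramified double cover is RCD LF version} applies verbatim to $(X,\dd,\HH^3)$ and yields that the ramified double cover $(\hat{X}, \hat{\dd}, \HH^3)$ is an orientable, non-collapsed $\RCD(-2,3)$ space without boundary (in fact also a topological manifold, although this stronger conclusion is not part of the present statement).

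Since the stated theorem is about $\RCD(K,3)$ rather than just $\RCD(-2,3)$, I would briefly address the general $K$ case by noting that the $\RCD$ condition scales under dilations: rescaling the metric turns an $\RCD(K,3)$ space into an $\RCD(-2,3)$ space on any sufficiently small compact region, and the ramified double cover construction commutes with such a rescaling (since $\hat X$ is defined uniquely up to isometry by Theorem \ref{Thm: Ramified double cover existence}). Alternatively, one can simply run the entire argument with $-2$ replaced by $K$ throughout, since none of the globalization, essential non-branching, or manifold recognition arguments used in Lemma \ref{Lem: ramified double cover is RCD LF version} and the proof of Theorem \ref{Thm: orbifold structure thm} are sensitive to the specific value $K=-2$.

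There is no real obstacle in this final assembly step; the conceptual difficulty lies entirely in the preceding sections, particularly in establishing local finiteness of $\mathcal{P}$ (which required the main lemma on concentration of optimal plans through topological singularities, the local $\cde$ globalization via Theorem \ref{Thm: main globalization theorem}, and the Baire category induction on density). Once those tools are in place, the proof of Theorem \ref{Thm: ramified double cover is RCD intro} reduces to the two-line citation described above.
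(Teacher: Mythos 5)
Your proposal is correct and is exactly the paper's argument: once local finiteness of $\mathcal{P}$ is established (via the orbifold structure results), the theorem is an immediate application of Lemma \ref{Lem: ramified double cover is RCD LF version}, which is all the paper itself says. One small remark: the statement you were given is already normalized to $\RCD(-2,3)$, so the rescaling digression for general $K$ is unnecessary here (and for the intro version one reduces to $K=-2$ by a global rescaling or by monotonicity in $K$, not by restricting to small compact regions).
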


\begin{Thm}\label{Thm: ramified double cover stability} (Stability of non-orientability and ramified double cover).
Given a sequence $(X_i, \dd_i, \HH^3, p_i)_{i\in \N}$ of non-orientable $\RCD(-2,3)$ spaces without boundary converging in the pmGH sense to some $\RCD(-2,3)$ space without boundary $(X, \dd, \HH^3, p)$, if for some $R > 0$, $B_{R}(p_i)$ is non-orientable for all $i \in \N$, then $B_{R'}(p)$ is non-orientable for all $R' > R$. Moreover, denoting $(\hat{X}_i, \hat\dd_i, \HH^3, \hat{p}_i)$ the ramified double covers of $X_i$, we have that $(\hat{X}_i, \hat\dd_i, \HH^3, \hat{p}_i)$ converges in the pmGH sense to $(\hat{X}, \hat\dd, \HH^3, \hat{p})$, where the latter is the ramified double cover of $X$ and $\pi(\hat{p}) = p$ (with $\pi: \hat{X} \to X$ being the associated projection map).
\end{Thm}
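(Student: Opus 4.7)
The theorem combines two assertions, and the plan is to derive both from results established earlier. For the first assertion (stability of non-orientability), I would simply invoke Theorem \ref{Thm: stability of non-orientiable local cde case}. The key point is that, by Theorem \ref{Thm: ramified double cover is RCD} applied to each $X_i$, the ramified double cover $\hat X_i$ is itself a non-collapsed $\RCD(-2,3)$ space without boundary; in particular, $\hat X_i$ satisfies the (local) strong $\CD^e(-2,3)$ condition globally, and hence on $B_{1000R}(\hat p_i)$. After rescaling by $1/R$, the hypotheses of Theorem \ref{Thm: stability of non-orientiable local cde case} are met and we conclude that $B_{R'}(p)$ is non-orientable for every $R'>R$.

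For the stability of the double cover, the first step is to extract a subsequential pmGH limit of $(\hat X_i,\hat\dd_i,\mathcal{H}^3,\hat p_i)$. By Theorem \ref{Thm: ramified double cover is RCD}, each $\hat X_i$ is non-collapsed $\RCD(-2,3)$; combining the Bishop--Gromov inequality on $\hat X_i$ with the identity $(\pi_i)_*\mathcal{H}^3_{\hat X_i}=2\mathcal{H}^3_{X_i}$ and the uniform volume lower bound inherited from the converging sequence $X_i$, we obtain uniform doubling and a uniform non-collapsing bound on $\hat X_i$. Hence, up to a subsequence, $(\hat X_i,\hat\dd_i,\mathcal{H}^3,\hat p_i)$ converges in the pmGH sense to a non-collapsed $\RCD(-2,3)$ limit $(\tilde X,\tilde\dd,\mathcal{H}^3,\tilde p)$ without boundary (using stability of $\RCD$ and \cite[Theorem 1.6]{BNS22}), and $\tilde X$ is orientable by Theorem \ref{Thm: stability of orientability} combined with the orientability of each $\hat X_i$ given by Theorem \ref{Thm: ramified double cover is RCD}.

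Next, I would apply Arzelà--Ascoli to the $1$-Lipschitz maps $\pi_i$ and $\Gamma_i$, restricted to balls of arbitrary fixed radius around $\hat p_i$ and $\Gamma_i(\hat p_i)$, to extract (up to a further subsequence) limit $1$-Lipschitz maps $\tilde\pi:\tilde X\to X$ and $\tilde\Gamma:\tilde X\to\tilde X$, satisfying $\tilde\pi(\tilde p)=p$, $\tilde\Gamma^2=\mathrm{Id}$, $\tilde\pi\circ\tilde\Gamma=\tilde\pi$, and $\tilde\pi_*\mathcal{H}^3_{\tilde X}=2\mathcal{H}^3_X$ (these properties pass to the limit from the analogous ones for $\pi_i,\Gamma_i$ in Remark \ref{Rem: Ramified double cover properties}). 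The point is now to verify that $(\tilde X,\tilde\pi,\tilde\Gamma)$ satisfies the defining properties of Theorem \ref{Thm: Ramified double cover existence} for $X$. Specifically, taking $A:=A_\varepsilon(X)$ and $\hat A:=\tilde\pi^{-1}(A)\subseteq\tilde X$ for $0<\varepsilon<\varepsilon(n)$ small, one can argue as in Lemma \ref{lem: regular set double cover} that $\tilde\pi|_{\hat A}$ is a local isometry and double cover onto $A$, and that $\hat A$ is an orientable topological manifold and dense (since its complement in $\tilde X$ has Hausdorff dimension at most $1$, by pushing forward the corresponding bound on $X$).

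Finally, invoking the uniqueness part of Theorem \ref{Thm: Ramified double cover existence}, we conclude that $(\tilde X,\tilde\dd,\mathcal{H}^3)$ together with the projection $\tilde\pi$ is isomorphic to the ramified double cover $(\hat X,\hat\dd,\mathcal{H}^3)$ of $X$, with $\tilde p$ corresponding to some $\hat p$ such that $\pi(\hat p)=p$. Because every pmGH subsequential limit is isomorphic to the same limit $\hat X$, the full sequence $(\hat X_i,\hat\dd_i,\mathcal{H}^3,\hat p_i)$ converges to $(\hat X,\hat\dd,\mathcal{H}^3,\hat p)$. The main obstacle is the third step: checking that the limit maps $\tilde\pi,\tilde\Gamma$ truly realize the structure of a ramified double cover, and in particular producing the dense orientable manifold part $\hat A$ on which $\tilde\pi$ is a genuine two-sheeted local isometry; this is precisely the content of the $\RCD$-case stability result \cite[Theorem 4.2]{BBP24}, which becomes applicable here thanks to Theorem \ref{Thm: ramified double cover is RCD}.
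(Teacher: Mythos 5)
Your proposal is correct and matches the paper's argument: the paper's proof is precisely the observation that, once Theorem \ref{Thm: ramified double cover is RCD} guarantees each $\hat X_i$ is an orientable non-collapsed $\RCD(-2,3)$ space, both conclusions follow from \cite[Theorem 4.2]{BBP24}. Your routing of the first assertion through Theorem \ref{Thm: stability of non-orientiable local cde case} and your expansion of the second into the limit-of-covers argument (Arzelà--Ascoli on $\pi_i,\Gamma_i$ plus uniqueness of the ramified double cover) are just the contents of that cited result spelled out, so this is essentially the same proof.
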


The first theorem follows from Lemma \ref{Lem: ramified double cover is RCD LF version}, and the second theorem follows from \cite[Theorem 4.2]{BBP24} and the first. 


As another consequence,
we have the following classification theorem for non-collapsed $\RCD(K,3)$ spaces without boundary when $K > 0$. 

\begin{Cor}\label{Cor: topological classification}
    Let $K > 0$ and let $(X, \dd, \HH^3)$ be a non-collapsed $\rcd(K,3)$ space without boundary. Then one of the following holds:
    \begin{enumerate}
     \item $X$ is a spherical $3$-manifold, i.e., $X$ is an orientable topological manifold homeomorphic to $\mathbb{S}^3/\Gamma$, where $\Gamma < SO(4)$ is a finite subgroup acting freely by rotations on $\mathbb{S}^3$;
     \item $X$ is the spherical suspension over $\mathbb{RP}^2$ and thus it is non-orientable.
 \end{enumerate}
\end{Cor}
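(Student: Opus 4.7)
The plan is to apply Theorem~\ref{Thm: orbifold structure thm} and split into two cases according to whether $X$ is orientable. First I record two common observations: since $K>0$, a standard Bonnet--Myers type argument (a consequence of the Bishop--Gromov inequality) shows that $X$ is compact with $\pi_1(X)$ finite, and Theorem~\ref{Thm: orbifold structure thm} ensures that $\mathcal{P}$ (the set of locally non-orientable points) is finite. Moreover, every $p\in\mathcal{P}$ has a neighborhood homeomorphic to the non-orientable space $C(\mathbb{RP}^2)$.

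\textbf{Case 1 ($X$ orientable).} Orientability forces $\mathcal{P}=\emptyset$, so $X$ is a closed orientable topological 3-manifold with finite fundamental group. By Perelman's Elliptization Theorem, $X$ is homeomorphic to $\mathbb{S}^3/\Gamma$ with $\Gamma<SO(4)$ acting freely by rotations, giving conclusion~(1).

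\textbf{Case 2 ($X$ non-orientable).} Then $\mathcal{P}\neq\emptyset$ (otherwise Case~1 would apply and yield an orientable $X$). I form the ramified double cover $\pi:\hat X\to X$, which by Theorem~\ref{Thm: ramified double cover is RCD} is a non-collapsed orientable $\rcd(K,3)$ space without boundary. Applying Case~1 to $\hat X$ gives $\hat X\cong\mathbb{S}^3/\tilde\Gamma$ for a finite free $\tilde\Gamma<SO(4)$; let $q:\mathbb{S}^3\to\hat X$ be the universal cover. The deck involution $\iota:\hat X\to\hat X$ with $X=\hat X/\iota$ is orientation-reversing (as $\hat X$ is orientable but $X$ is not) and has isolated fixed set equal to the unique lifts $\pi^{-1}(\mathcal{P})$ (by Proposition~\ref{Pro: LNO classification}), with the local model near each fixed point being the antipodal action on an $\mathbb{S}^2$ cross-section.

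Picking $\hat p\in\text{Fix}(\iota)$ and $\tilde p\in q^{-1}(\hat p)$, the lifting criterion for the simply connected cover produces a unique lift $\tilde\iota:\mathbb{S}^3\to\mathbb{S}^3$ of $\iota$ with $\tilde\iota(\tilde p)=\tilde p$; since $\tilde\iota^2$ covers the identity, lies in $\tilde\Gamma$, and fixes $\tilde p$, freeness of $\tilde\Gamma$ gives $\tilde\iota^2=e$. Thus $\tilde\iota$ is an orientation-reversing topological involution of $\mathbb{S}^3$ with locally isolated fixed points. Smith theory ($\mathrm{rk}_{\mathbb{Z}/2}H_*(\text{Fix}(\tilde\iota))\le\mathrm{rk}_{\mathbb{Z}/2}H_*(\mathbb{S}^3)=2$ together with $|\text{Fix}(\tilde\iota)|\equiv\chi(\mathbb{S}^3)=0\pmod 2$) then forces $\text{Fix}(\tilde\iota)$ to consist of exactly two points $\tilde p_1,\tilde p_2$.

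To conclude I show $\tilde\Gamma=\{e\}$. The group $\tilde\Gamma$ acts on $\{\tilde p_1,\tilde p_2\}$; any element fixing some $\tilde p_i$ is trivial by freeness, so $\tilde\Gamma$ embeds in $S_2$ and $|\tilde\Gamma|\in\{1,2\}$. Suppose $|\tilde\Gamma|=2$ with generator $\sigma$; then $\sigma$ swaps $\tilde p_1\leftrightarrow\tilde p_2$, commutes with $\tilde\iota$ (because $\tilde\iota\sigma\tilde\iota^{-1}\in\tilde\Gamma\setminus\{e\}$), and $\sigma\tilde\iota$ is another orientation-reversing involutive lift of $\iota$. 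By the topological classification of commuting involutions on $\mathbb{S}^3$ (free orientation-preserving involutions are topologically conjugate to the antipodal $-I_4$, while orientation-reversing ones with isolated fixed set are conjugate to the linear line reflection $\mathrm{diag}(1,-1,-1,-1)$), one can simultaneously normalize $\sigma=-I_4$ and $\tilde\iota=\mathrm{diag}(1,-1,-1,-1)$, whence $\sigma\tilde\iota=\mathrm{diag}(-1,1,1,1)$ is a hyperplane reflection with a full $2$-sphere of fixed points. Since $\sigma\tilde\iota$ is a lift of $\iota$, this $2$-sphere projects to a positive-dimensional subset of $\text{Fix}(\iota)\subset\hat X$, contradicting the isolation of the $\mathcal{P}$-points. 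Hence $\tilde\Gamma=\{e\}$, so $\hat X\cong\mathbb{S}^3$, and $\iota=\tilde\iota$ is an orientation-reversing involution of $\mathbb{S}^3$ with exactly two isolated fixed points; such an involution is topologically conjugate to the standard line reflection, with quotient $\Sigma\mathbb{RP}^2$. This gives conclusion~(2).

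\textbf{Main obstacle.} The delicate step is the case analysis forcing $\tilde\Gamma=\{e\}$: one must combine the topological classification of commuting involutions on $\mathbb{S}^3$ with Smith-theoretic rigidity, and leverage the isolation of $\text{Fix}(\iota)$ (inherited from the $C(\mathbb{RP}^2)$ local model of the $\mathcal{P}$-points) to rule out the existence of non-trivial covering transformations of $\hat X$.
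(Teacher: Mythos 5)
Your architecture is genuinely different from the paper's: the paper removes small Green balls around the points of $\mathcal{P}$, shows (via the lifted Green balls in $\hat X$, Van Kampen, and compactness of the $\RCD$ universal covers from \cite{MW19}, \cite{W24}) that the complement is a compact non-orientable $3$-manifold with finite fundamental group bounded by copies of $\mathbb{RP}^2$, and then applies \cite[Theorem 2]{L63} to identify this complement with $[0,1]\times\mathbb{RP}^2$, forcing $|\mathcal{P}|=2$; the case $\mathcal{P}=\emptyset$ is handled by elliptization exactly as in your Case 1. Your Case 1 is fine, modulo stating the standard fact that a closed $3$-manifold with finite fundamental group is orientable — you also need this fact to justify the opening of Case 2 ("$\mathcal{P}=\emptyset$ would make $X$ orientable"), since Case 1's hypothesis is orientability, not $\mathcal{P}=\emptyset$.

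The genuine gaps are in the step forcing $\tilde\Gamma=\{e\}$. First, the claim that $\tilde\Gamma$ acts on $\mathrm{Fix}(\tilde\iota)=\{\tilde p_1,\tilde p_2\}$ is unjustified: a deck transformation $\sigma$ carries $\mathrm{Fix}(\tilde\iota)$ to $\mathrm{Fix}(\sigma\tilde\iota\sigma^{-1})$, which is a \emph{different} lift of $\iota$ and hence in general a disjoint two-point set unless $\sigma$ centralizes $\tilde\iota$; what $\tilde\Gamma$ preserves is only the larger set $q^{-1}(\mathrm{Fix}(\iota))$. So the bound $|\tilde\Gamma|\le 2$ does not follow. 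Second, even granting $|\tilde\Gamma|=2$ and commutativity, the "simultaneous normalization" of $\sigma$ and $\tilde\iota$ to $-I_4$ and $\mathrm{diag}(1,-1,-1,-1)$ is a claim about conjugating the whole $(\mathbb{Z}/2)^2$-action to a linear one; it does not follow from the separate conjugacy classifications of the two involutions (an equivariant conjugation is much stronger than two individual ones), and you give no argument for it. Ruling out a nontrivial $\tilde\Gamma$ is precisely where the hard $3$-manifold input lives — it is what the paper extracts from Livesay's Theorem 2. A repair along your lines is possible but requires more work: one can show that every element of $\langle\tilde\Gamma,\tilde\iota\rangle\setminus\tilde\Gamma$ is an involution with exactly two fixed points (no such element can act freely, since its quotient would be a closed non-orientable $3$-manifold with finite $\pi_1$), deduce that conjugation by $\tilde\iota$ inverts $\tilde\Gamma$, and then, when $|\tilde\Gamma|$ is even, contradict Borel's formula for the resulting $(\mathbb{Z}/2)^2$-action on $\mathbb{S}^3$ ($3-(-1)\neq 0+1+1$); the odd-order case still needs additional topology (e.g., which lens spaces admit orientation-reversing involutions). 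As written, the argument does not close. The final appeal to \cite[Theorem 1]{L63} for an involution of $\mathbb{S}^3$ with two fixed points is correct.
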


\begin{proof}
    From the proof of Theorem \ref{Thm: orbifold structure thm}, $X$ admits a ramified double cover $\hat X$ which is an orientable $\RCD(K,3)$ topological manifold. Moreover, since $K>0$, both $X$ and $\hat X$ are compact, with finite fundamental group:
    indeed, both have $\RCD(K,3)$ universal covers by \cite{MW19} and \cite{W24},
    with the latter being compact for both spaces as $K>0$. Let $\mathcal{P}=\{p_1,\dots,p_N\}$ be the set of locally non-orientable points of $X$ and consider a small good Green ball $B_i$ around each $p_i$.

    As seen in the proof of Proposition \ref{Pro: orbifold struture thm up to local finiteness},
    we can assume that each $B_i$ lifts to a Green ball $\hat B_i\subset\hat X$.
    By Proposition \ref{green.balls} (8) and Van Kampen's theorem,
    $\pi_1(\hat X\setminus\bigcup_i \hat B_i)$
    is also finite. The same then holds for
    $\pi_1(X\setminus\bigcup_i B_i)$, since $\pi:\hat X\to X$ is a covering map on
    the preimage of $X\setminus\bigcup_i B_i$.
    
    Since $\partial B_i\cong\mathbb{RP}^2$ by Proposition \ref{green.balls} (5), if $N\ge1$ then
    $X\setminus\bigcup_i B_i$ cannot be orientable;
    we can then apply \cite[Theorem 2]{L63} to deduce that
    $$X\setminus\bigcup_i B_i\cong[0,1]\times\mathbb{RP}^2.$$
    We deduce that in this case $N=2$ and,
    since $\bar B_i$ is homeomorphic to the (compact) cone
    over $\mathbb{RP}^2$, the claim follows.
    
    If instead $N=0$, then $X$ is a topological manifold with finite fundamental group.
    In this case, the claim follows from the solution to Thurston's elliptization conjecture.
\end{proof}

A version of the above classification theorem was proved in \cite{DGGM18} for $\RCD(K,3)$ spaces which are also $3$-dimensional Alexandrov spaces without boundary. The proofs are essentially the same once the previous results of the paper are established, which in the case of \cite{DGGM18} is made considerably simpler due to the presence of Alexandrov structure. Indeed, the orbifold structure and local finiteness of singularities from Theorem \ref{Thm: orbifold structure thm intro} for such spaces are direct consequences of Perelman's conical neighborhood theorem \cite{P93}. Moreover, it can be checked that the ramified double cover is an Alexandrov space (see \cite[Proposition 2.4]{DGGM18}) following the methods of \cite{GW14}, which gives Lemma \ref{Lem: main lemma} immediately. 

\section{Topological stability}

Let $\mathcal{P}$ be the set of locally non-orientable points as before. Note that by the results of the previous section $\mathcal{P}$ is equal to set of points whose tangents cones have cross sections homeomorphic to $\mathbb{RP}^2$. We start with the following lemma, which says that $\mathcal{P}$ is uniformly separated depending only on the non-collapsing constant.

\begin{Lem}\label{p.lb}
    Given $v>0$, there exists $\rho(v)>0$ such that the following holds.
    If $(X,\dd,\HH^3,p)$ is a non-collapsed $\rcd(-2,3)$ space without boundary with
    $\HH^3(B_1(p))\ge v$ and $p\in\mathcal{P}$, then
    $$\mathcal{P}\cap B_{\rho}(p)=\{p\}.$$
\end{Lem}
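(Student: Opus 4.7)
The plan is to argue by contradiction, combining Lemma \ref{cone.ok} with the uniform conicality at LNO points provided by Proposition \ref{green.balls}.

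Suppose the conclusion fails for some $v>0$: there exist sequences $(X_i, \dd_i, \HH^3, p_i)$ of non-collapsed $\rcd(-2,3)$ spaces without boundary with $\HH^3(B_1(p_i)) \ge v$, $p_i \in \mathcal{P}(X_i)$, and $q_i \in \mathcal{P}(X_i) \setminus \{p_i\}$ such that $r_i := \dd_i(p_i, q_i) \to 0$. Theorem \ref{Thm: orbifold structure thm} ensures $\mathcal{P}(X_i)$ is closed and locally finite in $X_i$, so I can take $q_i$ to be a nearest point to $p_i$ in $\mathcal{P}(X_i) \setminus \{p_i\}$; in particular, $\mathcal{P}(X_i) \cap B_{r_i/2}(q_i)$ is finite, hence locally finite in $B_{r_i/2}(q_i)$. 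By Bishop--Gromov, $\HH^3(B_{r_i}(p_i)) \ge \theta_0 r_i^3/3$ for some $\theta_0 = \theta_0(v) > 0$ once $r_i \le 1$, so the hypotheses of Lemma \ref{cone.ok} are met for the triple $(p_i, q_i, r_i)$.

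Lemma \ref{cone.ok} then yields $\gamma = \gamma(v) > 0$ such that, for all $i$ large,
\begin{equation*}
d_{GH}(B_{2r_i}(p_i), B_{2r_i}(o_Y)) > 2\gamma r_i
\end{equation*}
for every non-collapsed $\rcd(0,3)$ cone $Y = C(\Sigma^2)$. To derive a contradiction, I would produce a matching upper bound using the good Green ball structure at $p_i$. Since $p_i \in \mathcal{P}(X_i)$, the Green spheres at $p_i$ are homeomorphic to $\mathbb{RP}^2$. Using property (1) of Proposition \ref{green.balls} I extract a radius $\tilde r_i \in \mathcal{G}_{p_i} \cap (2r_i, 2Cr_i)$ for $i$ large; property (3) then gives that $B_{2r_i}(p_i)$ is $\delta_0$-conical, i.e.,
\begin{equation*}
d_{GH}(B_{2r_i}(p_i), B_{2r_i}(o_{Y_i})) \le 2\delta_0 r_i
\end{equation*}
for some non-collapsed $\rcd(0,3)$ cone $Y_i = C(\Sigma_i^2)$ with $\Sigma_i^2 \cong \mathbb{RP}^2$. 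Combining the two estimates forces $\delta_0 > \gamma$.

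The main obstacle is to ensure $\delta_0(v) < \gamma(v)$, which would yield the contradiction directly. This hinges on the flexibility of the Green ball construction of \cite{BPS24} to choose $\delta_0$ arbitrarily small (at the cost of thinning $\mathcal{G}_p$), while preserving property (1) enough to reach a scale comparable to $2r_i$. Should a direct such tuning not be available, an alternative is to rescale by $r_i^{-1}$ and pass to a pmGH subsequential limit $(X_\infty, \dd_\infty, \HH^3, p_\infty)$---a non-collapsed $\rcd(0,3)$ space without boundary by \cite{BNS22}---with $q_i \to q_\infty$ and $\dd_\infty(p_\infty, q_\infty) = 1$, invoke Theorem \ref{Thm: P stability} (applied with basepoints $p_i$ and $q_i$, using local finiteness of $\mathcal{P}(X_i)$ from Theorem \ref{Thm: orbifold structure thm}) to obtain $p_\infty, q_\infty \in \mathcal{P}(X_\infty)$, apply Lemma \ref{cone.ok} to $X_\infty$ at scale $r = 1$, and contradict it via the tangent cone structure at $p_\infty$ (whose cross-sections are homeomorphic to $\mathbb{RP}^2$) combined with a diagonal rescaling argument showing $B_2(p_\infty)$ is arbitrarily close to a non-collapsed $\rcd(0,3)$ cone.
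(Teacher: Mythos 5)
Your strategy hinges on a conicality upper bound for $B_{2r_i}(p_i)$ at the scale $2r_i=2\dd_i(p_i,q_i)$, and this is precisely where the argument breaks — and not only because of the tuning of $\delta_0$ versus $\gamma$ that you flag. Lemma \ref{cone.ok} pins the scale to $r_i$, whereas Proposition \ref{green.balls} only guarantees $\delta_0$-conicality of $B_s(p_i)$ for $s$ in the window $[\delta_0\tilde r_i,\tilde r_i/\delta_0]$ around a good radius $\tilde r_i\in\mathcal{G}_{p_i}\cap(2r_i,2Cr_i)$; these windows need not cover all small scales (that would force every small ball at every point to be $\delta_0$-conical, which fails in general for non-collapsed spaces whose volume ratio is not almost constant at some intermediate scale), so there is no reason $2r_i$ lies in such a window. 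For a fixed space one can shrink $r$ until all scales below a threshold are almost conical, but that threshold depends on the point and the space, while in your contradiction sequence the spaces vary and $r_i$ is dictated by the geometry of $\mathcal{P}_i$. The fallback has the same defect: the blow-up limit $(X_\infty,p_\infty)$ at the pinned scales $r_i$ is merely some non-collapsed $\rcd(0,3)$ space containing two points of $\mathcal{P}$ at distance one; nothing forces $B_2(p_\infty)$ to be close to a metric cone (tangent cones at $p_\infty$ only govern infinitesimal scales, and a ``diagonal rescaling'' is unavailable because the scales are fixed by $\dd(p_i,q_i)$), so Lemma \ref{cone.ok} cannot be contradicted. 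Note that where the paper actually uses Lemma \ref{cone.ok} — in the proof of local finiteness of $\mathcal{P}$ — the needed conicality is extracted by a Baire category selection of points that are $\gamma$-conical at \emph{all} scales below a fixed threshold; your argument has no substitute for that device, and even that argument yields only local finiteness, not a uniform $\rho(v)$.

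The paper proves Lemma \ref{p.lb} by a completely different, topological mechanism that is insensitive to where the other singular points sit. One chooses (via Bishop--Gromov pigeonholing, uniformly in $v$) a good Green ball $\mathbb{B}_r(p)$ with $r>2\rho$ whose lift $\mathbb{B}_r(\hat p)$ to the ramified double cover is again a Green ball with $\mathbb{S}_r(\hat p)\cong\mathbb{S}^2$, removes small lifted Green balls around the $k$ points of $\pi^{-1}(\mathcal{P})\cap\mathbb{B}_r(p)$, and applies the Lefschetz fixed point theorem to the fixed-point-free involution $\Gamma$ on the resulting compact $3$-manifold with boundary $M$: contractibility of the Green balls and $\chi_\Gamma(\partial B_i)=0$ give $\chi_\Gamma(M)=1-k$, while freeness gives $\chi_\Gamma(M)=0$, forcing $k=1$. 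To complete your proof you would need either this argument or a genuinely new way to produce almost-conicality at the scale set by $\dd(p_i,q_i)$.
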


\begin{proof}
    Taking $\delta_0,C$ as in Proposition \ref{green.balls} and $\delta_1,\Lambda$ as in Lemma \ref{conicality.lift} below  (for the chosen $\delta_0$), let $\rho(v)>0$ such that,
    for some $\sigma\in(2C\rho,\delta_0^2/C)$, we have $s<\delta_1$ and
    the ball $B_{\Lambda s}(p)\subset X$ is $\delta_1$-conical for all $s\in[C^{-1}\delta_0\sigma,\sigma/\delta_0]$. In particular, both $B_s(p)$ and $B_s(\hat p)$ are $\delta_0$-conical for these $s$.
    By \cite[Theorem 4.7]{BPS24}, there exists a good Green-type distance
    $b:B_{\sigma}(p)\to[0,\infty)$ (see \cite[Remark 4.11]{BPS24}).
    As in the proof of Proposition \ref{Pro: orbifold struture thm up to local finiteness},
    $\hat b:=b\circ\pi$ is also a good Green-type distance on $B_\sigma(\hat p)$.

    By construction of $\mathcal{G}_{\hat p}$ (which is built using \cite[Theorem 5.4]{BPS24}),
    we can now take
    $$r\in(\sigma/C,\sigma)\cap\mathcal{G}_{p}\cap\mathcal{G}_{\hat p},\quad\mathbb{B}_r(p):=\{b<r\},\quad\mathbb{B}_r(\hat p):=\{\hat b<r\}=\pi^{-1}(\mathbb{B}_r(\hat p)),$$
    so that in particular $r>2\rho$.
    We can also assume without loss of generality that $\mathbb{S}_r(\hat p)\cap\pi^{-1}(\mathcal{P})=\emptyset$.

    Letting $\tilde{\mathcal{P}}:=\pi^{-1}(\mathcal{P})\cap\mathbb{B}_r(p)$,
    since $\pi(\mathbb{B}_r(\hat p))\supseteq\pi(B_\rho(\hat p))=B_\rho(p)$ (by Proposition \ref{green.balls} (4)), it suffices to show that $\tilde{\mathcal{P}}=\{\hat p\}$.
    Since $\mathcal{P}$ is locally finite, we have $N:=\#\tilde{\mathcal{P}}<\infty$.
    Let $(B_i)_{i=1}^k$ be a family of small Green balls around the points of $\tilde{\mathcal{P}}$,
    with disjoint closures included in $B_0:=\mathbb{B}_r(\hat p)$. As in the proof of
    Proposition \ref{Pro: orbifold struture thm up to local finiteness},
    we can assume that $B_i$ is the lift of a Green ball $\pi(B_i)\subset X$ with $\partial\pi(B_i)\cong\mathbb{RP}^2$, for each $i=1,\dots,k$.

    Since $\hat X$ is an oriented topological manifold,
    $B_i$ is contractible for each $i=0,\dots,k$  by \cite[Proposition 9.24]{BPS24}; also, $\bar B_i$ is a manifold with boundary
    (by Proposition \ref{green.balls} (8)) and $\partial B_i\cong S^2$.
    Hence, the set
    $$M:=\bar B_0\setminus\bigcup_{i=1}^k B_i$$
    is a compact manifold with boundary.
    Given any compact submanifold $S\subseteq\hat X$ (with or without boundary) with $\Gamma(S)=S$, let us denote by
    $$\chi_\Gamma(S):=\sum_{j=0}^\infty(-1)^j\operatorname{tr}(\Gamma_*:H_j(S;\Q)\to H_j(S;\Q))$$
    the Lefschetz number of $\Gamma|_S:S\to S$. We have
    $$\chi_\Gamma(B_0)=\chi_\Gamma(M)-\chi_\Gamma(M\cap N)
    +\chi_\Gamma(N),\quad N:=\bigcup_{i=1}^k\bar B_i.$$
    Since each ball is contractible, we have $\chi_\Gamma(B_i)=1$ for all $i=0,\dots,k$. Thus,
    $$\chi_\Gamma(M)=1+\chi_\Gamma(M\cap N)-k.$$
    Moreover, $\Gamma|_{\partial B_i}$ is orientation-reversing for each $i=1,\dots,k$,
    as $\pi(\partial B_i)\cong\mathbb{RP}^2$, giving $\chi_\Gamma(\partial B_i)=0$ for these $i$. Since $M\cap N=\bigcup_{i=1}^k\partial B_i$, we obtain $\chi_\Gamma(M\cap N)=0$, and hence
    $$\chi_\Gamma(M)=1-k.$$
    
    Since $\Gamma|_M$ has no fixed point and $M$ can be triangulated (as it is a 3-manifold with boundary),
    Lefschetz's theorem gives $\chi_\Gamma(M)=0$, so that $k=1$ and thus $\tilde{\mathcal{P}}=\{\hat p\}$.
\end{proof}

We used the following fact, whose proof is contained in the one of \cite[Lemma 9.17 (iii)]{BPS24}, and is thus omitted.

\begin{Lem}\label{conicality.lift}
    There exist $\delta_1(v, \delta_0)>0$ and $\Lambda(v)>0$ such that the following holds.
    Assume that $(X,\dd,\HH^3,p)$ is a non-orientable $\RCD(-2,3)$ space with $\HH^3(B_1(p))\ge v$
    and that $p\in\mathcal{P}$. If $B_{\Lambda r}(p)$ is $\delta_1$-conical for some $r\in(0,\delta_1)$,
    then $B_r(\hat p)\subset\hat X$ is $\delta_0$-conical.
\end{Lem}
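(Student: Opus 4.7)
The plan is to argue by contradiction via a blow-up procedure. Suppose the statement fails for some $v, \delta_0 > 0$; then there exist sequences $\Lambda_i \to \infty$, $\delta_{1,i} \to 0$, radii $r_i \in (0, \delta_{1,i})$, and non-orientable $\rcd(-2,3)$ spaces $(X_i, \dd_i, \HH^3, p_i)$ with $p_i \in \mathcal{P}(X_i)$, $\HH^3(B_1(p_i)) \ge v$, such that $B_{\Lambda_i r_i}(p_i)$ is $\delta_{1,i}$-conical while $B_{r_i}(\hat p_i)$ fails to be $\delta_0$-conical in the ramified double cover. After rescaling $(X_i, \dd_i)$ by $r_i^{-1}$, the spaces are uniformly non-collapsed and $\rcd(-2r_i^2, 3)$, and the ball $B_{\Lambda_i}(p_i)$ in the rescaled metric is $\delta_{1,i}$-conical. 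By Gromov compactness (Lemma \ref{Lem: pmGH precompactness}) and the definition of $\delta_{1,i}$-conicality, a subsequence of the rescaled spaces converges in the pmGH sense to a non-collapsed $\rcd(0,3)$ cone $(C(\Sigma^2), \dd_{C(\Sigma^2)}, \HH^3, o)$ with tip $o$.

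Next I would show that $\Sigma^2 \cong \mathbb{RP}^2$. Since each $p_i$ is locally non-orientable, every ball around $p_i$ is non-orientable; applying Theorem \ref{Thm: ramified double cover stability} (established in the preceding section) forces every ball $B_R(o)$, $R > 0$, in $C(\Sigma^2)$ to be non-orientable, so $o$ is locally non-orientable. Since $C(\Sigma^2) \setminus \{o\} \cong \R_{>0} \times \Sigma^2$ is a topological manifold in which every point admits an orientable neighborhood, $o$ is isolated in $\mathcal{P}(C(\Sigma^2))$. Lemma \ref{Lem: isolated lno}, combined with the fact that $C(\Sigma^2)$ is its own tangent cone at $o$, then yields $\Sigma^2 \cong \mathbb{RP}^2$.

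Applying Theorem \ref{Thm: ramified double cover stability} once more, the rescaled ramified double covers $(\hat X_i, r_i^{-1} \hat\dd_i, r_i^{-3} \HH^3, \hat p_i)$ converge in the pmGH sense to the ramified double cover of $(C(\Sigma^2), o)$. By the uniqueness clause of Theorem \ref{Thm: Ramified double cover existence}, this limit is $(C(\hat\Sigma^2), \dd_{C(\hat\Sigma^2)}, \HH^3, \hat o)$, where $\hat\Sigma^2$ is the $2$-dimensional Alexandrov space with $\curv \ge 1$, homeomorphic to $\mathbb{S}^2$, which is the orientable double cover of $\mathbb{RP}^2$ (Remark \ref{Fac: Alexandrov facts}(2)). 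This limit is a non-collapsed $\rcd(0,3)$ cone, so for $i$ large enough $d_{GH}(B_1(\hat p_i), B_1(\hat o)) \le \delta_0$ in the rescaled metric, contradicting the failure of $\delta_0$-conicality of $B_{r_i}(\hat p_i)$. The main delicate point is the identification of the ramified double cover of $C(\mathbb{RP}^2)$ with $C(\hat\Sigma^2)$: this is verified by checking directly that $C(\hat\Sigma^2)$, endowed with the natural involution induced by $\Gamma_{\hat\Sigma^2}$ on cross-sections (and fixing the tip), satisfies all defining properties of Theorem \ref{Thm: Ramified double cover existence}, and invoking the uniqueness clause of that theorem.
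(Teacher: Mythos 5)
The overall strategy---argue by contradiction, blow up at $p$, use the stability of non-orientability and of the ramified double cover from Section 4, and identify the ramified double cover of $C(\mathbb{RP}^2)$ with $C(\hat\Sigma^2)$---is reasonable and consistent with what the lemma is for (the paper itself omits the proof, deferring to \cite[Lemma 9.17 (iii)]{BPS24}, so there is no written argument to compare against). However, your very first step has a genuine gap. You take counterexample sequences with $\Lambda_i\to\infty$ and $\delta_{1,i}\to0$ chosen independently, and claim that after rescaling by $r_i^{-1}$ the spaces converge in the pmGH sense to a cone ``by the definition of $\delta_{1,i}$-conicality.'' But $\delta_{1,i}$-conicality of $B_{\Lambda_i r_i}(p_i)$ only gives $d_{GH}\bigl(B_{\Lambda_i r_i}(p_i),B_{\Lambda_i r_i}(o_i)\bigr)\le\delta_{1,i}\Lambda_i r_i$; restricted to the concentric ball of radius $r_i$ (the scale at which you need control, since the conclusion concerns $B_{r_i}(\hat p_i)$), the relative error is of order $\delta_{1,i}\Lambda_i$, which need not tend to zero. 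Conicality at a single large scale does not propagate to smaller concentric scales, so nothing forces the rescaled unit balls $B_1(p_i)$ to converge to a cone ball, and the rest of your argument, which relies entirely on the blow-up limit being $C(\Sigma^2)$, does not get off the ground. Note that sending $\Lambda_i\to\infty$ makes the hypothesis of the lemma weaker, not stronger, so it is the wrong direction to push in a contradiction argument.

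Two repairs are available. Either fix $\Lambda=\Lambda(v)$ once and for all, large enough that $B_1(\hat z)$ in the ramified double cover of the limit is determined by $B_\Lambda(z)$ (since $\Gamma\hat z=\hat z$, both $\hat\dd(\hat x,\hat y)$ and $\hat\dd(\hat x,\Gamma\hat y)$ for $\hat x,\hat y\in B_1(\hat z)$ are realized by paths in $\pi^{-1}(B_4(z))$, so $\Lambda=4$ or so suffices), and send only $\delta_{1,i}\to0$; in this case the limit is a cone only on $B_\Lambda$, and you must localize the identification of the ramified double cover, since the global uniqueness clause of Theorem \ref{Thm: Ramified double cover existence} that you invoke does not apply to a ball. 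Alternatively, keep $\Lambda_i\to\infty$ but couple the sequences so that $\delta_{1,i}\Lambda_i\to0$ (e.g.\ $\Lambda_i=i$, $\delta_{1,i}=i^{-2}$); this restores global convergence of the rescalings to a cone and lets the rest of your argument run as written, at the cost of producing a $\Lambda$ that also depends on $\delta_0$, which is harmless for the application in Lemma \ref{p.lb}. The remaining steps---forcing $o\in\mathcal{P}$ via Theorem \ref{Thm: ramified double cover stability}, deducing $\Sigma^2\cong\mathbb{RP}^2$ from Lemma \ref{Lem: isolated lno}, and passing to the limit of the rescaled double covers---are correct once the limit is known to be (locally) a cone.
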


\begin{Cor}\label{unif.cone.rp2}
    In the same situation of Lemma \ref{p.lb}, there exists $C(v)$ such that,
    given $r\in(0,\rho/C)$, there exists $r'\in\mathcal{G}_p\cap(r,Cr)$
    such that the closed Green ball $\bar{\mathbb{B}}_{r'}(p)$
    is homeomorphic to the (compact) cone over $\mathbb{RP}^2$.
\end{Cor}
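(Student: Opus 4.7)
The plan is to re-run the Green-ball argument from the proof of Proposition \ref{Pro: orbifold struture thm up to local finiteness}, now starting from a prescribed radius $r_1 = r'$ in the window $(r, Cr)$ rather than at the top of an arbitrary shrinking sequence. The key point is that nothing in that argument required the first radius to be smaller than the threshold $\rho/C$ already secured by Lemma \ref{p.lb}; once the first radius is fixed, the rest of the sequence can be chosen freely to converge to $0$.

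First, I would enlarge $C$ so that, for every $r \in (0, \rho/C)$, Proposition \ref{green.balls} (1), combined with the construction of $\mathcal{G}_{\hat p}$ and Lemma \ref{conicality.lift} (which transfers $\delta_1$-conicality at $p$ to $\delta_0$-conicality at $\hat p$), produces a radius
\[
r' \in \mathcal{G}_p \cap \mathcal{G}_{\hat p} \cap (r, Cr).
\]
Setting $r_1 := r'$ and iterating, I would then inductively choose $r_{k+1} \in \mathcal{G}_p \cap \mathcal{G}_{\hat p} \cap (r_k/(2C), r_k/2)$, producing a shrinking sequence $r_k \to 0$ with all $r_k < \rho/C$. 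By Lemma \ref{p.lb}, we have $\mathcal{P} \cap B_{r_k}(p) = \{p\}$, and the good Green-type distance $b$ at $p$ lifts to $\hat b = b \circ \pi$ exactly as in the proof of Proposition \ref{Pro: orbifold struture thm up to local finiteness}, so that $\mathbb{B}_{r_k}(\hat p) = \pi^{-1}(\mathbb{B}_{r_k}(p))$ for all $k$.

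With this sequence, the argument from that proof applies directly: since $\hat X$ is orientable (Theorem \ref{Thm: ramified double cover is RCD}) and each $\mathbb{S}_{r_k}(\hat p) \cong \mathbb{S}^2$ by Proposition \ref{green.balls} (5), while each $\bar{\mathbb{B}}_{r_k}(\hat p)$ is simply connected by \cite[Proposition 9.15]{BPS24}, Van Kampen's theorem together with Proposition \ref{green.balls} (8) shows that the annulus $\hat A_k := \bar{\mathbb{B}}_{r_k}(\hat p) \setminus \mathbb{B}_{r_{k+1}}(\hat p)$ is simply connected. Capping $\hat A_k$ off with two closed $3$-balls yields a simply connected closed $3$-manifold, hence $\mathbb{S}^3$ by the Poincaré conjecture, and Schoenflies then gives $\hat A_k \cong [0,1] \times \mathbb{S}^2$. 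Projecting by $\pi$ and invoking \cite[Theorem 1]{L63}, we obtain $A_k := \bar{\mathbb{B}}_{r_k}(p) \setminus \mathbb{B}_{r_{k+1}}(p) \cong [0,1] \times \mathbb{RP}^2$, with the two boundary components corresponding to $\mathbb{S}_{r_k}(p)$ and $\mathbb{S}_{r_{k+1}}(p)$.

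The main technical obstacle, as in the proof of Proposition \ref{Pro: orbifold struture thm up to local finiteness}, is the consistent pasting of these product homeomorphisms across the shared copies of $\mathbb{RP}^2$, together with the verification that the glued map extends continuously at the central cone point $p$. The first issue is handled by precomposing each successive $A_k \cong [0,1] \times \mathbb{RP}^2$ with a self-homeomorphism of $\mathbb{RP}^2$ so that the boundary identifications on $\mathbb{S}_{r_{k+1}}(p)$ match, and the second follows from $\diam(\bar{\mathbb{B}}_{r_k}(p)) \to 0$ as $k \to \infty$. Assembling the resulting telescoping homeomorphism gives $\bar{\mathbb{B}}_{r'}(p) \cong C(\mathbb{RP}^2)$, as required.
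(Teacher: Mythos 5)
Your proposal is correct and follows essentially the same route as the paper: the paper's proof of this corollary is precisely a pointer back to the radius-selection argument of Lemma \ref{p.lb} (producing $r'\in\mathcal{G}_p\cap\mathcal{G}_{\hat p}\cap(r,Cr)$ with lifted Green-type distances) followed by the telescoping annulus argument of Proposition \ref{Pro: orbifold struture thm up to local finiteness}, which you reproduce faithfully. The details you spell out (Poincaré plus the isotopy of embedded balls for $\hat A_k\cong[0,1]\times\mathbb{S}^2$, Livesay's theorem for the quotient annuli, and the pasting at the cone point) match the paper's own treatment.
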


\begin{proof}
    Indeed, arguing as in the previous proof, we can find a Green ball $B_0=\mathbb{B}_{r'}(p)\subset \hat X$ whose projection is a
    Green ball $\pi(B_0)\subset B_\rho(p)$. Taking a decreasing sequence of small
    Green balls $B_1\supset B_2\supset\dots$ (in $\hat X$) whose projections are Green balls in $X$, we can conclude by arguing exactly as in the proof of Proposition \ref{Pro: orbifold struture thm up to local finiteness}.
\end{proof}

\begin{Lem}\label{displ.lb}
    Given $v>0$, there exists $\delta(v)>0$ such that the following holds.
    If $(X,\dd,\HH^3,p)$ is a non-collapsed $\rcd(-2,3)$ space without boundary such that
    $\HH^3(B_1(p))\ge v$ and $\mathcal{P}\neq\emptyset$, then we have
    $$\hat\dd(\hat q,\Gamma\hat q)\ge\delta\min\{1,\hat\dd(\hat q,\pi^{-1}(\mathcal P))\}\quad\text{for all }\hat q\in B_1(\hat p)$$
    on the ramified double cover $\hat X$, where $\pi(\hat p)=p$.
\end{Lem}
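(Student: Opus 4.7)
The plan is to argue by contradiction and compactness, exploiting the cone-like structure around $\mathcal{P}$-points from Corollary \ref{unif.cone.rp2} together with a uniform lower bound on the girth of the $\mathbb{RP}^2$ cross-sections. Suppose the lemma fails: there exist $v>0$ and sequences $(X_i,\dd_i,\HH^3,p_i)$ satisfying the hypotheses and $\hat q_i\in B_1(\hat p_i)$ with $s_i:=\hat\dd_i(\hat q_i,\pi_i^{-1}(\mathcal{P}_i))>0$ and $\hat\dd_i(\hat q_i,\Gamma_i\hat q_i)<\min\{1,s_i\}/i$. I would rescale each $X_i$ by $\lambda_i:=1/\min\{1,s_i\}$ to obtain $\tilde X_i$ in which $\tilde\dd_i(q_i,\mathcal{P}_i)\ge 1$ (with $q_i:=\pi_i(\hat q_i)$), while $\tilde{\hat\dd}_i(\hat q_i,\Gamma_i\hat q_i)\to 0$. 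Bishop--Gromov together with the hypothesis $\HH^3(B_1(p_i))\ge v$ yields a uniform volume lower bound $c(v)>0$ on unit balls in $\tilde X_i$ around $q_i$. Pick $p_0^i\in\mathcal{P}_i$ realizing $\tilde s_i:=\tilde\dd_i(q_i,p_0^i)\ge 1$; up to a subsequence, assume $\tilde s_i$ is bounded (the case $\tilde s_i\to\infty$ is handled by an analogous compactness argument combined with Theorem \ref{Thm: stability of orientability} and the emergence of a new $\mathcal{P}$-point in the pmGH limit, which reduces to the bounded case).

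By Corollary \ref{unif.cone.rp2} applied at $p_0^i$, for large $i$ there is a Green ball $\mathbb{B}_{r_i'}(p_0^i)$ of radius $\tilde r_i'\in(2\tilde s_i,C(v)\tilde s_i)$ containing $q_i$ with $\bar{\mathbb{B}}_{r_i'}(p_0^i)\cong C(\mathbb{RP}^2)$. By Proposition \ref{green.balls} (3) this ball is $\delta_0$-conical and thus GH-close, at scale $\tilde r_i'$, to the ball of the same radius in a metric cone $C(\Sigma_i)$ over a $2$-dimensional Alexandrov space $\Sigma_i\cong\mathbb{RP}^2$ with $\curv\ge 1$ and $\HH^2(\Sigma_i)\ge c(v)$. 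Lemma \ref{conicality.lift} transfers this conicality to the lifted ball $B_{\tilde r_i'/\Lambda}(\hat p_0^i)\subset\hat X_i$, so it is itself GH-close to the corresponding ball in $C(\hat\Sigma_i)$, where $\hat\Sigma_i\cong\mathbb{S}^2$ is the orientable double cover of $\Sigma_i$ carrying the associated free isometric involution whose quotient yields $\Sigma_i$.

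To close the argument, I claim a uniform positive lower bound
$$\gamma_0(v):=\inf\bigl\{d_{\hat\Sigma}(\theta,\Gamma_{\hat\Sigma}\theta)\,:\,\theta\in\hat\Sigma\bigr\}>0$$
on the minimum displacement of the involution within the class of such $(\hat\Sigma,\Gamma_{\hat\Sigma})$. Indeed, a violating sequence $(\hat\Sigma_n,\Gamma_n)$ would, by Gromov compactness for $2$-Alexandrov spaces (with $\curv\ge 1$, $\diam\le\pi$, and $\HH^2\ge 2c(v)$) and Arzelà--Ascoli, produce a GH-limit $\hat\Sigma_\infty\cong\mathbb{S}^2$ (by Perelman's topological stability) carrying an isometric involution $\Gamma_\infty$ with a fixed point; but then $\hat\Sigma_\infty/\Gamma_\infty\not\cong\mathbb{RP}^2$, since $\mathbb{RP}^2$ arises as a quotient $\mathbb{S}^2/\mathbb{Z}_2$ only via a free involution, contradicting Perelman stability applied to $\Sigma_n\cong\mathbb{RP}^2$. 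In the cone $C(\hat\Sigma_i)$ the displacement of the involution at any point at distance $\ge 1$ from the tip is bounded below by $2\sin(\gamma_0/2)$; transferring this to $\hat X_i$ through the GH-closeness (by choosing the $\delta_0$ of Proposition \ref{green.balls} sufficiently small) yields $\tilde{\hat\dd}_i(\hat q_i,\Gamma_i\hat q_i)\ge\sin(\gamma_0/2)>0$, contradicting $\tilde{\hat\dd}_i(\hat q_i,\Gamma_i\hat q_i)\to 0$. The main obstacle is the quantitative transfer of the cone-model displacement bound to $\hat X_i$ through Lemma \ref{conicality.lift} and the stability of the displacement under GH approximation, together with the handling of the auxiliary subcase $\tilde s_i\to\infty$.
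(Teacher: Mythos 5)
Your proposal takes a genuinely different route from the paper, and the central step has a gap. The key problem is your use of an explicit conical model at the scale $\tilde s_i=\tilde\dd_i(q_i,p_0^i)$. Corollary \ref{unif.cone.rp2} only produces Green balls homeomorphic to $C(\mathbb{RP}^2)$ of radius $r'\in(r,Cr)$ with $r<\rho(v)/C$, i.e.\ at scales below the (small) separation radius $\rho(v)<\delta_0^2$; after your normalization $\tilde\dd_i(q_i,\mathcal{P}_i)\ge1$, no such Green ball can reach $q_i$, so the ball of radius $\tilde r_i'\in(2\tilde s_i,C\tilde s_i)$ you invoke is not covered by that corollary. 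More fundamentally, $\delta_0$-conicality of $B_s(p_0^i)$ is guaranteed only for $s$ in the good set of scales attached to $p_0^i$ (Proposition \ref{green.balls} (1)--(3)); it is quantitatively dense but need not contain the prescribed radius $\approx\tilde s_i$ dictated by the position of $q_i$, so the claimed GH-closeness to $C(\Sigma_i)$ at that scale is unjustified. Two further issues: Lemma \ref{conicality.lift} only asserts that the lifted ball is $\delta_0$-conical, i.e.\ close to \emph{some} cone --- it gives no equivariant statement matching $\Gamma_i$ with the model involution of $C(\hat\Sigma_i)$, which is exactly what your displacement transfer needs; and the case $\tilde s_i\to\infty$ does not ``reduce to the bounded case'': there the contradiction requires showing that a fixed point of the limit involution forces nearby points of $\mathcal{P}_i$, i.e.\ the nontrivial direction of the Hausdorff convergence $\mathcal{P}_i\to\mathcal{P}$, which is only established later (in the proof of Theorem \ref{main.homeo}, via Green spheres). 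Note also that your normalization is mismatched with Lemma \ref{p.lb}: two limit points of $\mathcal{P}$ at mutual distance $\ge1$ do not violate a separation radius $\rho(v)<1$.

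The paper's argument is much softer and uses no conical model. It splits according to whether $\mathcal{P}\cap B_{\rho/2}(q)$ is empty or not. In the first case a plain compactness argument suffices (a vanishing displacement would produce a fixed point of the limit involution, hence a point of $\mathcal{P}$ in the limit within $B_{\rho/4}(q_\infty)$, contradicting stability of the condition $\mathcal{P}\cap B_{\rho/2}(q)=\emptyset$). In the second case one takes $p$ to be the nearest point of $\mathcal{P}$ and rescales so that $\dd(p,q)=\rho/2$ \emph{exactly}; a contradicting sequence then pmGH-converges (via Theorems \ref{Thm: ramified double cover is RCD} and \ref{Thm: ramified double cover stability}) to a limit in which both $p_\infty$ and $q_\infty=\lim\hat q_i=\lim\Gamma_i\hat q_i$ are locally non-orientable and $\dd(p_\infty,q_\infty)=\rho/2<\rho$, contradicting Lemma \ref{p.lb} applied to the limit. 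If you want to salvage your approach, you should replace the explicit cone comparison by this limit-plus-separation argument.
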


\begin{proof}
    Let $v'(v)>0$ be such that $\HH(B_1(p'))\ge v'$ for all $p'\in B_2(p)$,
    and let $\rho(v')>0$ be given by Lemma \ref{p.lb}.
    If $\mathcal{P}\cap B_{\rho(v')/2}(q)=\emptyset$, the claim
    follows immediately from a simple compactness argument,
    together with the fact that the latter condition is stable under pmGH limits (see the first part of
    the proof of Theorem \ref{main.homeo} below).

    If instead $\mathcal{P}\cap B_{\rho(v')/2}(q)\neq\emptyset$,
    then (up to replacing $v>0$ with $v'>0$) we can assume that $p\in\mathcal{P}$ is a closest point to $q$ in $\mathcal{P}$.
    It suffices to show that, for some $\delta>0$, we have
    $$\hat\dd(\hat q,\Gamma\hat q)\ge\delta\hat\dd(\hat p,\hat q)\quad\text{for all }\hat q\in \bar B_{\rho/2}(\hat p).$$
    Up to rescaling, it is enough to do it when $\hat\dd(\hat p,\hat q)=\rho/2$.

    Assume by contradiction that $(X_i,\dd_i,\HH^3,p_i)$,
    together with $\hat q_i\in\hat X_i$, provide a counterexample for $\delta=2^{-i}\to0$,
    and let $(X,\dd,\HH^3,p)$
    be a limit in the pmGH sense, up to a subsequence. We also let
    $$\hat q:=\lim_{i\to\infty}\hat q_i=\lim_{i\to\infty}\Gamma_i\hat q_i$$
    be the limit point in $\hat X$, so that $q:=\pi(\hat q)\in\mathcal{P}$ and $p\in\mathcal{P}$,
    as well as
    $$\dd(p,q)=\hat\dd(\hat p,\hat q)=\rho/2.$$
    Thus, $\mathcal{P}\cap B_\rho(p)$ contains at least two points,
    contradicting the previous lemma.
\end{proof}

\begin{Cor}\label{cor.contr}
    In the situation of Lemma \ref{displ.lb},
    given $q\in B_{1}(p)$
    and $r\in(0,\delta\min\{1,\dd(q,\mathcal{P})\})$, there exists $r'\in(2r,2Cr)\cap\mathcal{G}_p$ and, for any such $r'$, the Green ball $\bar{\mathbb{B}}_{r'}(q)$ is contractible
    (for a possibly smaller $\delta(v)>0$ and larger $C(v)>0$).
    In particular, $B_r(q)$ is contractible in $B_{4Cr}(q)$.
\end{Cor}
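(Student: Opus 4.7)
The plan is to lift the Green ball $\mathbb{B}_{r'}(q)$ from $X$ to the ramified double cover $\hat X$, where it is contractible thanks to the topological manifold structure of $\hat X$, and then project the contraction back via $\pi$. If $\mathcal{P}=\emptyset$ then $X$ itself is an oriented topological manifold by Theorem \ref{loc.reg} and \cite[Proposition 9.24]{BPS24} applies directly, so I will assume $\mathcal{P}\neq\emptyset$ and invoke Lemma \ref{displ.lb}.

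First I would replace $\delta$ and $C$ by smaller/larger constants $\tilde\delta(v)\le\delta/(32 C)$ and $\tilde C(v)\ge 2C$. Then, for any $r\in(0,\tilde\delta\min\{1,\dd(q,\mathcal{P})\})$ and any $r'\in(2r,2\tilde C r)$,
\[
8r'<\delta\min\{1,\dd(q,\mathcal{P})\}\le\hat\dd(\hat q,\Gamma\hat q),
\]
using that $\dd(q,\mathcal{P})\le\hat\dd(\hat q,\pi^{-1}(\mathcal{P}))$ since $\pi$ is $1$-Lipschitz. A direct computation combined with Remark \ref{Rem: Ramified double cover properties} (2) then shows that $\pi$ is an isometry from $B_{2r'}(\hat q)$ onto $B_{2r'}(q)$, with $\pi^{-1}(B_{2r'}(q))=B_{2r'}(\hat q)\sqcup B_{2r'}(\Gamma\hat q)$. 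Indeed, for $\hat x,\hat y\in B_{2r'}(\hat q)$,
\[
\hat\dd(\Gamma\hat x,\hat y)\ge\hat\dd(\Gamma\hat q,\hat q)-\hat\dd(\hat q,\hat x)-\hat\dd(\hat q,\hat y)>8r'-4r'=4r'\ge\hat\dd(\hat x,\hat y).
\]

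Proposition \ref{green.balls} (1) provides $r'\in(2r,2\tilde C r)\cap\mathcal{G}_q$ (possibly after further enlarging $\tilde C$). The associated good Green-type distance $b_q$ on $B_{2r'}(q)$ lifts to $\hat b:=b_q\circ\pi$ on $B_{2r'}(\hat q)$; by the same argument used in the proofs of Proposition \ref{Pro: orbifold struture thm up to local finiteness} and Lemma \ref{p.lb}, $\hat b$ is itself a good Green-type distance, and $\pi$ maps $\overline{\mathbb{B}}_{r'}(\hat q):=\{\hat b\le r'\}$ homeomorphically onto $\overline{\mathbb{B}}_{r'}(q)$. By Theorem \ref{Thm: ramified double cover is RCD}, $\hat X$ is an orientable $\rcd(-2,3)$ space without boundary, so all its tangent cones have spherical cross-sections and Theorem \ref{loc.reg} makes it a topological manifold. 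Hence \cite[Proposition 9.24]{BPS24} yields that $\overline{\mathbb{B}}_{r'}(\hat q)$ is contractible, and so is its homeomorphic image $\overline{\mathbb{B}}_{r'}(q)$.

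For the last assertion, Proposition \ref{green.balls} (4) gives the chain
\[
B_r(q)\subseteq B_{r'/2}(q)\subseteq\mathbb{B}_{r'}(q)\subseteq B_{2r'}(q)\subseteq B_{4\tilde C r}(q),
\]
so the contractibility of $\mathbb{B}_{r'}(q)$ forces the inclusion $B_r(q)\hookrightarrow B_{4\tilde C r}(q)$ to be null-homotopic. The main technical point is verifying that $\hat b=b_q\circ\pi$ remains a good Green-type distance in the sense of \cite[Theorem 4.7]{BPS24}; this relies on the purely local, analytic nature of the defining conditions, which are preserved under the metric measure isomorphism $\pi:B_{2r'}(\hat q)\to B_{2r'}(q)$ established in the previous step, exactly as in the lifting arguments already used earlier in the paper.
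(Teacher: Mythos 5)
Your proof is correct and follows essentially the same route as the paper's: both use Lemma \ref{displ.lb} to see that $\pi$ restricts to an isometry on a ball around $\hat q$ of radius comparable to $\delta\min\{1,\dd(q,\mathcal{P})\}$, identify the Green ball with one sitting in the topological manifold $\hat X$, invoke \cite[Proposition 9.24]{BPS24} for contractibility, and conclude the last assertion from Proposition \ref{green.balls} (4). The only difference is cosmetic: you lift the Green-type distance explicitly and work upstairs, whereas the paper applies Proposition 9.24 directly downstairs after noting the closed Green ball lies in the isometric (hence topologically regular) region.
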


\begin{proof}
    Indeed, for $\delta(v)>0$ given by the previous lemma,
    letting $\kappa:=\delta\min\{1,\dd(q,\mathcal{P})\}$ we have $\hat\dd(\hat q,\Gamma \hat q)\ge\kappa$.
    Hence, $\pi$ restricts to an isometry from $B_{\kappa/4}(\hat q)$
    to $B_{\kappa/4}(q)$.
    Now \cite[Proposition 9.24]{BPS24} shows that any Green ball $\mathbb{B}_{r'}(p)$
    with $\bar{\mathbb{B}}_{r'}(q)\subset\mathbb{B}_{\kappa/4}(q)$ is contractible.
    Replacing $\delta$ with a smaller $\delta'$ such that $2C\delta'<\delta/4$
    and recalling Proposition \ref{green.balls} (4), we obtain the claim.
\end{proof}

Combining Corollary \ref{unif.cone.rp2} and Corollary \ref{cor.contr},
we obtain the following.

\begin{Cor}\label{cor.contr.bis}
    If $(X,\dd,\HH^3,p)$ is a non-collapsed $\rcd(-2,3)$ with $\HH^3(B_1(p))\ge v$,
    for any $r\in(0,\delta)$ 
    the ball $B_r(q)$ is contractible in $B_{4Cr}(q)$ (for a possibly smaller $\delta(v)>0$ and larger $C(v)>0$).
\end{Cor}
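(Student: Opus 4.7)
The plan is to establish the corollary by combining the contractibility results from Corollary \ref{cor.contr} (which handles the case of $q$ far from $\mathcal{P}$) and Corollary \ref{unif.cone.rp2} (which handles the cone-type topological singularities in $\mathcal{P}$) through a simple case split based on the distance from $q$ to $\mathcal{P}$. Throughout, I take $q$ to satisfy $B_r(q) \subseteq B_1(p)$, as in the intro version of the statement. Fix the constants $\delta_0(v) > 0$ and $C_0(v) > 1$ coming from the two preceding corollaries, taking the smaller $\delta$ and the larger $C$, and assume $r < \delta_0^2$.

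In the \emph{regular regime} $\dd(q, \mathcal{P}) \ge r/\delta_0$ (including the case $\mathcal{P} = \emptyset$), one has $r \le \delta_0 \min\{1, \dd(q, \mathcal{P})\}$, so Corollary \ref{cor.contr} applies directly and gives contractibility of $B_r(q)$ in $B_{4 C_0 r}(q)$.

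In the \emph{singular regime} $\dd(q, \mathcal{P}) < r/\delta_0$, pick $p' \in \mathcal{P}$ with $\dd(q, p') < r/\delta_0$. Since $p' \in B_2(p)$, the Bishop--Gromov inequality yields a uniform non-collapsing bound $\HH^3(B_1(p')) \ge v'(v) > 0$, so Corollary \ref{unif.cone.rp2} is applicable at $p'$. Starting from the target radius $\tilde r := 2(1 + 1/\delta_0)\, r$, I obtain some $r' \in (\tilde r, C_0 \tilde r) \cap \mathcal{G}_{p'}$ for which $\bar{\mathbb{B}}_{r'}(p')$ is homeomorphic to the compact cone over $\mathbb{RP}^2$, and therefore contractible. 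By Proposition \ref{green.balls} (4), $B_{r'/2}(p') \subseteq \mathbb{B}_{r'}(p') \subseteq B_{2r'}(p')$. The inclusions
\[
B_r(q) \subseteq B_{r + r/\delta_0}(p') = B_{\tilde r/2}(p') \subseteq \mathbb{B}_{r'}(p'),
\qquad
B_{2r'}(p') \subseteq B_{2r' + r/\delta_0}(q) \subseteq B_{C_1(v)\, r}(q),
\]
for the explicit constant $C_1(v) := 2 C_0 \tilde r / r + 1/\delta_0 = 4 C_0 (1 + 1/\delta_0) + 1/\delta_0$, then show that $B_r(q)$ is contractible inside the ball $B_{C_1(v)\, r}(q)$.

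Setting $C(v) := \max\{4 C_0(v),\, C_1(v)\}$ and shrinking $\delta(v)$ as needed to absorb the smallness requirements from both regimes completes the proof. The argument is a routine gluing of the two preceding corollaries; the only point requiring a small observation is the transfer of the non-collapsing bound from $p$ to the nearby $p' \in \mathcal{P}$ via Bishop--Gromov, which ensures that Corollary \ref{unif.cone.rp2} can be invoked with constants depending only on $v$.
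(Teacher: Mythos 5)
Your argument is correct and follows essentially the same route as the paper: split according to whether $\dd(q,\mathcal P)$ is at least a fixed multiple of $r$, invoke Corollary \ref{cor.contr} in the far case, and in the near case sandwich $B_r(q)$ between a contractible closed Green ball $\bar{\mathbb{B}}_{r'}(p')$ supplied by Corollary \ref{unif.cone.rp2} (for a point $p'\in\mathcal P$ within distance $r/\delta_0$ of $q$) and the ball $B_{C_1 r}(q)$. Your explicit choice of the input scale $\tilde r=2(1+1/\delta_0)r$, which keeps the contractible Green ball at scale comparable to $r$, is exactly what the scale-invariant conclusion requires, and the bookkeeping of constants (including the transfer of the non-collapsing bound to $p'$ via Bishop--Gromov) is sound.
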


\begin{proof}
    Indeed, taking $\delta$ as in Corollary \ref{cor.contr} and assuming $r<\delta'\le\delta$,
    if $r<\delta\dd(p,\mathcal{P})$ then the claim holds by the same result.
    Otherwise, we have $\dd(p,p')\le r/\delta$ for some $p'\in\mathcal{P}$,
    which then has $\HH^3(B_1(p'))\ge v'(v)>0$.
    Once we take $\delta'$ so small that $8(C+\delta^{-1})r<\rho(v')$, we can find radii
    $r'\in(2r,2Cr)\cap\mathcal{G}_p$ and $s>\rho(v')$ (from Corollary \ref{unif.cone.rp2})
    such that $\mathbb{B}_{s}(p')$ is contractible and moreover
    $$\mathbb{B}_{r'}(p)\subseteq B_{4Cr}(p)\subseteq B_{\rho(v')/2}(p')\subseteq\mathbb{B}_s(p'),$$
    giving the claim.
\end{proof}

The topological stability (Theorem \ref{Thm: stability thm intro}) is a direct consequence of the following theorem.

\begin{Thm}\label{main.homeo}
    Assume that $(X_i,\dd_i,\HH^3)$ is a sequence of
    non-collapsed $\rcd(-2,3)$ spaces without boundary converging
    to $(X,\dd,\HH^3)$ in the mGH sense. Also, assume that each has diameter $\le D<\infty$,
    so that for some $v>0$ we have $\HH^3(B_1(p))\ge v>0$ for all $i$ and $p\in X_i$.
    Then eventually there is a homeomorphism $h_i:X_i\to X$;
    moreover, given a sequence of $\veps_i$-GH isometries $f_i:X_i\to X$ with $\veps_i\to0$ and $\veps_i$-inverses $g_i:X\to X_i$, we can take $h_i$ such that
    $$\sup_{x\in X_i}\dd(f_i(x),h_i(x))\to0,\quad\sup_{x\in X}\dd_i(g_i(x),h_i^{-1}(x))\to0.$$
\end{Thm}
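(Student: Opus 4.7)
My plan is to run the Green-ball-based construction of \cite{BPS24} on the complement of the locally non-orientable (LNO) points while handling the singular cone neighborhoods separately via radial extension.

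\textbf{Matching LNO points.} By the orbifold structure theorem (Theorem \ref{Thm: orbifold structure thm}), combined with Lemma \ref{p.lb} and the assumptions $\diam(X_i)\le D$, $\HH^3(B_1(p))\ge v$, the sets $\mathcal{P}_i$ and $\mathcal{P}$ are uniformly bounded in cardinality: $|\mathcal{P}_i|,|\mathcal{P}|\le N(v,D)$. Passing to a subsequence and applying Theorem \ref{Thm: P stability} together with the uniform separation of LNO points from Lemma \ref{p.lb}, I would set up a bijection between $\mathcal{P}_i=\{p_{i,1},\dots,p_{i,k}\}$ and $\mathcal{P}=\{p_1,\dots,p_k\}$ with $f_i(p_{i,j})\to p_j$. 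The surjectivity of this correspondence uses that each $p_j\in\mathcal{P}$ forces orientation-reversing loops at every scale, which by local stability of orientability (Remark \ref{Rem: local stability of orientability}) must persist at small scales for $X_i$, producing LNO points $p_{i,j}$ near $p_j$; local finiteness and uniform separation give multiplicity one.

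\textbf{Fixing compatible Green cones.} For each $j$ I would select via Corollary \ref{unif.cone.rp2} a small $r_j\in\mathcal{G}_{p_j}$ with $\bar{\mathbb{B}}_{r_j}(p_j)\cong C(\mathbb{RP}^2)$, ensuring the closures are disjoint and all radii are below any prescribed threshold. Since the local Green-type distance $b_{p_j}$ used to build $\mathbb{B}_{r_j}(p_j)$ is constructed from the metric measure data and depends continuously on it under pmGH convergence, for large $i$ one obtains an analogous distance $b_{p_{i,j}}$, a radius $r_{i,j}\in\mathcal{G}_{p_{i,j}}$ with $r_{i,j}\to r_j$, and a Green ball $\bar{\mathbb{B}}_{r_{i,j}}(p_{i,j})\cong C(\mathbb{RP}^2)$ whose boundary Green sphere converges to $\mathbb{S}_{r_j}(p_j)$ in GH sense.

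\textbf{Manifold stability on the complement and coning.} Set $M:=X\setminus\bigcup_j\mathbb{B}_{r_j}(p_j)$ and $M_i:=X_i\setminus\bigcup_j\mathbb{B}_{r_{i,j}}(p_{i,j})$. These sit inside the topological manifolds $X\setminus\mathcal{P}$ and $X_i\setminus\mathcal{P}_i$ given by Theorem \ref{Thm: orbifold structure thm}; by Proposition \ref{green.balls} (8) the Green spheres $\partial M$, $\partial M_i$ are tamely collared, so $M$ and $M_i$ are compact topological manifolds with boundary. The Green-ball construction of \cite{BPS24} is entirely local and applies on neighborhoods of any point of $M$ or $M_i$ (all tangent cones there have $\mathbb{S}^2$ cross-section), producing compatible local homeomorphisms; I would globalize them via the nerve-gluing technique of \cite{BPS24} to get a homeomorphism $\phi_i:M_i\to M$ close to $f_i|_{M_i}$ and sending each boundary component of $M_i$ onto the corresponding one of $M$. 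Restricting $\phi_i$ to each Green sphere $\mathbb{S}_{r_{i,j}}(p_{i,j})\cong\mathbb{RP}^2$ gives a homeomorphism onto $\mathbb{S}_{r_j}(p_j)$, which I would then extend radially through the cone coordinates of Step~2 via $(t,\theta)\mapsto(t,\phi_i(\theta))$ to a homeomorphism of the closed cones. Gluing with $\phi_i$ on $M_i$ yields the desired global homeomorphism $h_i:X_i\to X$. The error $\sup\dd(f_i,h_i)$ on $M_i$ is small by construction, and on each cone it is controlled by $2r_j$, which can be made arbitrarily small by our initial choice.

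\textbf{Main obstacle.} The delicate step is producing the manifold homeomorphism $\phi_i:M_i\to M$ in a way that is compatible with the Green-sphere collars and so extends continuously to the cones. I expect this to require a small refinement of the \cite{BPS24} globalization: when building the compatible system of Green-ball homeomorphisms, the covering Green balls near $\mathbb{S}_{r_j}(p_j)$ must be chosen to respect the collar structure from Proposition \ref{green.balls} (8), so that the resulting $\phi_i$ is automatically compatible with the radial extension. An alternative route would be to lift everything to the orientable topological manifolds $\hat X_i\to\hat X$ (using Theorem \ref{Thm: ramified double cover stability} and Lemma \ref{Lem: ramified double cover is RCD LF version}) and apply \cite{BPS24} equivariantly with respect to $\Gamma_i,\Gamma$, but producing an \emph{equivariant} homeomorphism close to the GH map is itself a delicate topological task and seems harder to control than the direct cone approach above.
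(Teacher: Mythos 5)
Your overall decomposition — match the locally non-orientable points, excise small Green cone neighborhoods $\bar{\mathbb{B}}_{r_{i,j}}(p_{i,j})\cong C(\mathbb{RP}^2)$, find a homeomorphism between the complements $M_i$ and $M$, and cone off over the boundary $\mathbb{RP}^2$'s — is exactly the paper's. However, the step you yourself flag as the ``main obstacle'' is not a small refinement but the core of the proof, and your proposed route does not close it. There is no ``nerve-gluing'' mechanism in \cite{BPS24} that assembles local Green-ball homeomorphisms into a global homeomorphism of compact $3$-manifolds \emph{with boundary} close to a given GH isometry, and making such a gluing compatible with the collar of Proposition \ref{green.balls} (8) is precisely the unsolved difficulty. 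The paper resolves this differently: it forms the metric doublings $\tilde M_i$, $\tilde M$ of the complements along their boundaries, verifies by hand that these doublings are uniformly locally contractible (this requires Corollary \ref{cor.contr}, the Lipschitz retraction onto $\partial M$ from Proposition \ref{green.balls} (7), the $2$-connectedness of Green spheres from Proposition \ref{green.balls} (6), and a transversality/surgery argument for maps meeting both halves), then applies Petersen's theorem (Proposition \ref{pet.pro}) to upgrade $\tilde f_i$ to a controlled homotopy equivalence and Jakobsche's theorem (Theorem \ref{hom.thm}) to approximate it by a homeomorphism $\tilde h_i:\tilde M_i\to\tilde M$. Since $\tilde h_i$ need not respect the two halves, a further $3$-manifold argument (Alexander's embedded sphere theorem in the oriented double cover of the collar, plus Livesay's theorem \cite{L63}) is needed to isotope $\tilde h_i(\partial M_i)$ onto $\partial M$ before restricting to $M_i$. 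None of this machinery appears in your proposal, and without it the existence of $\phi_i:M_i\to M$ is unjustified.

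A secondary gap: your argument that every $p_j\in\mathcal{P}$ is approximated by points of $\mathcal{P}_i$ does not follow from local stability of orientability. The contrapositive of Remark \ref{Rem: local stability of orientability} gives, for each fixed $r$, non-orientability of $B_r(q_i)$ for infinitely many $i$ along approximating points $q_i\to p_j$; but a non-orientable ball need not contain a locally non-orientable point, and the quantifiers ($r$ fixed first, then $i$) do not produce an LNO point of $X_i$ for a fixed large $i$. The paper instead argues that if no such points existed then $B_{r'}(p_i)$ would eventually be a topological manifold, while the Green spheres $\partial\mathbb{B}_{r_i}(p)\cong\mathbb{RP}^2$ persist along the sequence (as in \cite[Theorem 8.1]{BPS24}) and $\mathbb{RP}^2$ cannot bound a compact $3$-manifold. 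You would need this or an equivalent argument to make the matching of singular sets rigorous.
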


Note that the very last part of the statement easily follows from $\sup_{x\in X_i}\dd(f_i(x),h_i(x))\to0$.
We will use two important tools from geometric topology.
For the first one, we refer to \cite{P90};
although not explicitly stated in this form,
it follows from the proof of the main result of \cite{P90}.

\begin{Pro}\label{pet.pro}
    Given $n\in\N$, a function $\gamma:(0,\tau)\to(0,\infty)$ with
    $$\gamma(r)\ge r\text{ for all }r,\quad\lim_{r\to0}\gamma(r)=0,$$
    and $\veps>0$, there exist $\sigma\in(0,\tau)$ and $\eta>0$ such that the following holds. If $f:X\to Y$ is an $\eta$-GH isometry between two metric spaces of (Lebesgue) covering dimension $\le n$ and
    any ball $B_r(p)$ in either space is $n$-connected in $B_{\gamma(r)}(p)$
    for all $r\in(\sigma,\tau)$, then $f$ is $\veps$-close to a homotopy equivalence $\tilde f$.
\end{Pro}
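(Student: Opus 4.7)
The plan is to approximate $f$ by a continuous map $\tilde f$ factoring through the nerve of a fine cover of $X$, following the standard scheme in \cite{P90}, then to build an analogous $\tilde g$ for a quasi-inverse of $f$, and finally to verify homotopy equivalence by constructing homotopies via skeletal extensions that invoke the $n$-connectedness hypothesis at every level.

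First I would fix a maximal $\sigma$-separated net $\{p_\alpha\}$ in $X$ and consider the cover $\mathcal{U}:=\{B_{2\sigma}(p_\alpha)\}$. Because $\dim X\le n$, a standard shrinking argument produces a refinement of $\mathcal{U}$ whose nerve $K$ has dimension $\le n$, together with a partition-of-unity map $\phi_X:X\to|K|$ sending $p_\alpha$ to the vertex $v_\alpha$, with $\phi_X(x)$ supported on those $v_\alpha$ satisfying $\dd(x,p_\alpha)\le 2\sigma$. Define $F$ on the vertex set of $K$ by $F(v_\alpha):=f(p_\alpha)$. Extend $F$ inductively over each $k$-skeleton ($k=1,\dots,n$): if $\Delta$ is a $k$-simplex whose boundary $F(\partial\Delta)$ already lies in a ball $B_{r_{k-1}}(y)\subseteq Y$, the assumed $(k-1)$-connectedness (part of the $n$-connectedness hypothesis) lets one fill in the map over $\Delta$ inside $B_{\gamma(r_{k-1})}(y)$, bumping the control radius to $r_k:=\gamma(r_{k-1})$. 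An elementary estimate shows that one may take $r_0:=4\sigma+2\eta$, using that $f$ is an $\eta$-GH isometry. After at most $n$ iterations the procedure terminates in a continuous $F:|K|\to Y$ with $\sup_x\dd(f(x),F(\phi_X(x)))\le r_n$. Setting $\tilde f:=F\circ\phi_X$ and applying the mirror procedure to an $\eta$-inverse $g:Y\to X$ yields $\tilde g:Y\to X$ with an analogous closeness estimate.

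To verify that $\tilde g\circ\tilde f$ is homotopic to $\mathrm{id}_X$, I would first observe that these two self-maps of $X$ are pointwise within some $\delta(\sigma,\eta)$ of each other, with $\delta\to 0$ as $\sigma,\eta\to 0$. Then I would construct a homotopy via the same skeletal extension on a fine product CW decomposition of $X\times[0,1]$, obtained from a common refinement of the covers underlying $\tilde f$, $\tilde g$ and the identity; since $\dim(X\times[0,1])\le n+1$, the top-dimensional cells require filling $n$-spheres, which is exactly what the full $n$-connectedness assumption provides. The symmetric argument gives $\tilde f\circ\tilde g\simeq\mathrm{id}_Y$, and the closeness of $\tilde f$ to $f$ built in by the first step automatically yields the desired estimate $\sup_{x\in X}\dd(f(x),\tilde f(x))<\veps$.

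The main obstacle is quantitative bookkeeping. One must choose $\sigma$ and $\eta$ so that iterating $\gamma$ up to $n+1$ times starting from $4\sigma+2\eta$ stays inside $(0,\tau)$ and produces a final error below $\veps$, so that at every stage the extension balls lie in the range where $n$-connectedness is available and all the accumulated displacements sum to less than $\veps$. Since $\gamma$ is only assumed to satisfy $\gamma(r)\to 0$ without a prescribed modulus, one selects $\sigma$ so that $\gamma^{\circ(n+1)}(4\sigma)<\min(\veps,\tau)/2$ (recalling $\gamma(r)\ge r$, this is a constraint on $\sigma$ alone), and then $\eta\ll\sigma$. The existence of nerves and partitions of unity of the required dimension for abstract metric spaces of covering dimension $\le n$ is standard nerve theory; the genuine delicate points are this estimate-taming and the careful coordination of covers so that the homotopy skeletal extensions are compatible with those used to build $\tilde f$ and $\tilde g$.
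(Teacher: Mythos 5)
The paper does not prove this proposition itself: it is stated as a known consequence of the proof of the main theorem of \cite{P90}, and your sketch is a faithful reconstruction of exactly that nerve-and-skeletal-extension argument, including the correct quantitative order of choices (first $\sigma$ so that the $(n+1)$-fold iterate of $\gamma$ starting from $\approx 4\sigma$ stays below $\min\{\veps,\tau\}$, then $\eta\ll\sigma$) and the correct use of the hypothesis only at scales in $(\sigma,\tau)$. The one imprecise step is the homotopy $\tilde g\circ\tilde f\simeq\mathrm{id}_X$: you cannot take a ``product CW decomposition of $X\times[0,1]$,'' since $X$ is merely a finite-dimensional metric space; the standard fix (as in \cite{P90} and classical $LC^n$/ANR theory) is to first show that $\mathrm{id}_X$ is homotopic to the composite $X\to|K|\to X$ of the partition-of-unity map with the skeletal extension of $v_\alpha\mapsto p_\alpha$, and then to homotope the two resulting maps \emph{defined on the $n$-dimensional complex $|K|$} via skeletal extension over $|K|\times[0,1]$, which is where the dimension bound $n+1$ and the $n$-connectedness actually enter. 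With that substitution your outline is the intended proof.
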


We recall that $B_r(p)$ is \emph{$n$-connected} in $B_{\gamma(r)}(p)$ if
any continuous map $\varphi:\mathbb{S}^\ell\to B_r(p)$ can be extended to
a continuous map $\bar B^{\ell+1}\to B_{\gamma(r)}(p)$ defined on the closed Euclidean ball, for any $\ell=0,\dots,n$.

\begin{Rem}\label{pet.equiv}
    It follows that $\tilde f$ is a $C\veps$-GH isometry.
    In fact, the proof gives a homotopy inverse $\tilde g$ of $\tilde f$ which is also a $C\veps$-GH isometry, and $\tilde f$ is a $C\veps$-equivalence: this means that there exist homotopies $H$ and $H'$, connecting $\tilde g\circ\tilde f$ to $\operatorname{id}_X$ and $\tilde f\circ\tilde g$ to $\operatorname{id}_Y$ respectively,
    such that $\{\tilde f\circ H(t,x) \,:\,  t\in[0,1]\}$
    and $\{H'(t,y) \,:\,  t\in[0,1]\}$ have diameter at most $C\veps$
    (for every $x\in X$ and $y\in Y$).
\end{Rem}

The second tool is the following result, first obtained by Chapman--Ferry in dimension $n\ge5$ \cite{CF79}; its proof in dimension $n=3$ is due to Jakobsche \cite{J88} (note that the technical assumption in the main statements of \cite{J88}, namely the absence of \emph{fake $3$-cells}, is guaranteed by the positive resolution of the Poincaré conjecture).

\begin{Thm}\label{hom.thm}
    Given two metric spaces $X,Y$ which are closed topological $3$-manifolds
    and given $\veps>0$, there exists $\eta(Y,\veps)>0$
    such that any $\eta$-equivalence $f:X\to Y$ is $\veps$-close to a homeomorphism $h:X\to Y$.
\end{Thm}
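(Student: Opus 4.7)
The strategy is to verify the hypotheses of Proposition \ref{pet.pro} uniformly along the sequence, apply it to perturb $f_i$ to small controlled homotopy equivalences, and then invoke Theorem \ref{hom.thm} to upgrade these to homeomorphisms. The main obstruction is that Theorem \ref{hom.thm} requires topological $3$-manifolds, while $X$ and the $X_i$'s can have $C(\mathbb{RP}^2)$-singularities by Theorem \ref{Thm: orbifold structure thm}. We circumvent this, when $\mathcal{P}(X)\neq\emptyset$, by passing to the ramified double covers, which are manifolds, and then descending equivariantly.

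\emph{Step 1: Verification of Petersen's hypotheses.} By Corollary \ref{cor.contr.bis}, there are $\delta(v),C(v)>0$ such that, on any of the spaces $X_i$ or $X$, every ball $B_r(q)$ with $r\in(0,\delta)$ is contractible in $B_{4Cr}(q)$, hence $3$-connected there. Together with the bound $\dim X_i,\dim X\le 3$ on the covering dimension (since they are non-collapsed $\RCD(-2,3)$), this verifies the hypotheses of Proposition \ref{pet.pro} with $\gamma(r):=\max\{r,4Cr\}$ and $n=3$, uniformly in $i$. Applying it to $f_i$, together with Remark \ref{pet.equiv}, for every $\veps>0$ and $i$ large enough, $f_i$ is $O(\veps)$-close to an $O(\veps)$-equivalence $\tilde f_i:X_i\to X$ with an $O(\veps)$-GH isometry as homotopy inverse.

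\emph{Step 2: Manifold case and reduction to double covers.} If $\mathcal{P}(X)=\emptyset$, then $X$ is a topological $3$-manifold by Theorem \ref{Thm: orbifold structure thm}; by the stability of non-orientability (Theorem \ref{Thm: ramified double cover stability}), the same holds for $X_i$ for large $i$, so Theorem \ref{hom.thm} applied to $\tilde f_i$ gives the desired $h_i$ (a diagonal extraction in $\veps$ produces the $o(1)$ estimate, and the inverse bound follows from $g_i\circ f_i\approx\mathrm{id}_{X_i}$). If instead $\mathcal{P}(X)\neq\emptyset$, we pass to the ramified double covers: by Theorem \ref{Thm: ramified double cover is RCD} and Theorem \ref{Thm: orbifold structure thm}, $\hat X$ and $\hat X_i$ are orientable non-collapsed $\RCD(-2,3)$ spaces without boundary, hence topological $3$-manifolds (no LNO points). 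By Theorem \ref{Thm: ramified double cover stability}, $\hat X_i\to\hat X$ in the pmGH sense, and using the explicit distance formula in Remark \ref{Rem: Ramified double cover properties} one constructs $o(1)$-GH isometries $\hat f_i:\hat X_i\to\hat X$ which are \emph{approximately equivariant} with respect to the involutions $\Gamma_i,\Gamma$. Applying Step 1 to $\hat f_i$ produces a homeomorphism $\hat h_i:\hat X_i\to\hat X$ which is $o(1)$-close to $\hat f_i$.

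\emph{Step 3: Equivariant descent (main obstacle).} The remaining task is to modify $\hat h_i$ to an \emph{exactly} equivariant homeomorphism, so it descends to the required $h_i:X_i\to X$ via the quotient maps $\pi_i,\pi$. Since $\hat f_i$ is approximately equivariant, both $\hat h_i$ and $\Gamma\circ\hat h_i\circ\Gamma_i$ are $o(1)$-close to $\hat f_i$, hence to each other. The plan is to interpolate between them using the structure of the fixed set $\mathrm{Fix}(\Gamma)=\pi^{-1}(\mathcal{P}(X))$: this set is finite and uniformly separated by Lemma \ref{p.lb}, with analogous control on $\mathrm{Fix}(\Gamma_i)$, and by Corollary \ref{unif.cone.rp2} each fixed point admits a canonical conical neighborhood $\cong C(\mathbb{S}^2)$ upstairs projecting to a $C(\mathbb{RP}^2)$-neighborhood downstairs. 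Near each fixed point we replace $\hat h_i$ by the canonical (and automatically equivariant) cone homeomorphism matching the two model neighborhoods. On the complement, which is a compact $3$-manifold with sphere boundary components on which $\Gamma,\Gamma_i$ act freely, we invoke an equivariant version of Theorem \ref{hom.thm}, or equivalently descend to the quotient manifold-with-boundary (where $\pi,\pi_i$ are honest local isometries) and apply Theorem \ref{hom.thm} with standard relative-to-boundary modifications. Gluing via a collar argument in an $o(1)$-neighborhood of each boundary component yields the equivariant homeomorphism, whose quotient is the desired $h_i$. The main obstacle is making this interpolation quantitative, so that the final $h_i$ satisfies the stated $o(1)$ proximity bounds to $f_i$ and $g_i$; this will require carefully tracking the scales of the conical neighborhoods relative to the $o(1)$-closeness of $\hat h_i$ and $\hat f_i$.
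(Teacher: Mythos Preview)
Your proposal does not address the stated theorem at all. Theorem \ref{hom.thm} is a \emph{cited external result} in the paper --- it is attributed to Chapman--Ferry \cite{CF79} in dimension $n\ge5$ and to Jakobsche \cite{J88} in dimension $3$ --- and the paper offers no proof of it. What you have written is instead an attempted proof of Theorem \ref{main.homeo} (the topological stability theorem), which \emph{uses} Theorem \ref{hom.thm} as a black box. This is a basic misidentification of the target statement.

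Setting that aside and reading your proposal as a strategy for Theorem \ref{main.homeo}, it is genuinely different from the paper's argument. The paper removes small Green balls around the singular points $\mathcal{P}_i,\mathcal{P}$ to obtain compact $3$-manifolds with $\mathbb{RP}^2$ boundary components, \emph{doubles} these along the boundary to obtain closed $3$-manifolds $\tilde M_i,\tilde M$, verifies Petersen's hypotheses on the doublings, and then applies Theorem \ref{hom.thm} there; an isotopy argument using Alexander's theorem and \cite{L63} recovers the homeomorphism on the original pieces. Your route instead passes to the ramified double covers $\hat X_i,\hat X$ and attempts to descend equivariantly. The advantage of the paper's doubling trick is that it sidesteps any equivariance issue entirely: one works with honest closed manifolds and standard homeomorphism theory. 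Your approach, by contrast, hinges on Step 3, where you invoke ``an equivariant version of Theorem \ref{hom.thm}'' and ``standard relative-to-boundary modifications'' without supplying either; making a nearly-equivariant homeomorphism exactly equivariant for a $\Z/2$-action with isolated fixed points is a nontrivial problem in $3$-manifold topology (related to the Smith conjecture and linearization of involutions), and you have not indicated how to carry it out. This is a genuine gap, not a routine detail.
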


\begin{proof}[Proof of Theorem \ref{main.homeo}]
    First of all, we have $\mathcal{P}_i\to\mathcal{P}$
    in the Hausdorff sense: from Theorem \ref{Thm: P stability}, it is clear that
    a (subsequential) limit of points in $\mathcal{P}_i$ belongs to $\mathcal{P}$.
    Moreover, if $p\in\mathcal{P}$ then there must exist $p_i\in\mathcal{P}_i$ converging to $p$: if not, then there exists
    $r'>0$ such that $B_{r'}(p_i)\subset X_i$ is a topological manifold (up to a subsequence),
    where $p_i\in X_i$ are arbitrary points converging to $p\in X$.
    Since $p\in\mathcal{P}$, there exists a good radius $r\in\mathcal{G}_p\cap(0,r')$
    such that $\partial\mathbb{B}_r(p)\cong\mathbb{RP}^2$.
    Then, exactly as in the proof of \cite[Theorem 8.1]{BPS24}, eventually we have $\partial\mathbb{B}_{r_i}(p)\cong\mathbb{RP}^2$
    for a suitable sequence $r_i\to r$ of good radii $r_i\in\mathcal{G}_{p_i}$, which is impossible since $\partial\mathbb{B}_{r_i}(p)$
    bounds the topological manifold $\bar{\mathbb{B}}_{r_i}(p)$.

    From Lemma \ref{p.lb}, it follows that $\mathcal{P}_i$ and $\mathcal{P}$ have also the same cardinality eventually.
    We can then write
    $$\mathcal{P}_i=\{p_{i,k} \,:\,  k=1,\dots,N\},\quad
    \mathcal{P}=\{p_{k} \,:\,  k=1,\dots,N\},$$
    with $p_{i,k}\to p_k$. We now choose good radii $r_k\in(\rho/(4C),\rho/4)$
    given by Corollary \ref{unif.cone.rp2} and similarly we choose $r_{i,k}\to r_k$
    along the sequence. We then have the homeomorphism
    $$\bar{\mathbb{B}}_{r_{i,k}}(p_{i,k})\cong\bar{\mathbb{B}}_{r_k}(p_{k}).$$

    To conclude, we claim that $M_i\cong M$, where
    $$M:=X\setminus\bigcup_{k=1}^N\mathbb{B}_{r_k}(p_{k}),\quad M_i:=X_i\setminus\bigcup_{k=1}^N\mathbb{B}_{r_{i,k}}(p_{i,k});$$
    note that the balls in the union have disjoint closures.
    To do this,
    let $\tilde M$ be the doubling of $M$, i.e., the 3-manifold given by the union of two copies
    of $M$ glued along the boundary, and similarly let $\tilde M_i$ be the doubling of $M_i$.
    The doubling $\tilde M$ is a metric space: viewing $M\subset \tilde M$, we extend the metric $\dd$ by letting $\dd(w',z'):=\dd(w,z)$ whenever $w',z'\in\tilde M\setminus M$ correspond to $w,z\in M$ in the other half, and
    $$\dd(w,z'):=\min_{v\in\partial M}[\dd(w,v)+\dd(v,z)];$$
    it is straightforward to check that the extension is still a geodesic metric (and similarly for $\tilde M_i$).
    We claim that $\tilde M$ and $\tilde M_i$ satisfy the assumptions of Proposition \ref{pet.pro}.

    Once this is done, calling $\tilde f_i:\tilde M_i\to \tilde M$ the almost GH-isometry induced by $f_i$ and applying Proposition \ref{pet.pro}
    in conjunction with Remark \ref{pet.equiv} and Theorem \ref{hom.thm}, we deduce that $\tilde f_i$ (together with its inverse) becomes arbitrarily close to a homeomorphism $\tilde h_i:\tilde M_i\to \tilde M$ as $i\to\infty$.

    With $\tilde h_i$ in hand, calling $S_k:=\mathbb{S}_{r_k}(p_{k})$ and
    viewing $M_i\subset \tilde M_i$ and $M\subset \tilde M$, we can apply Proposition \ref{green.balls} (8) and find a neighborhood
    $S_k\subset V_k\subset \tilde M$
    with $\bar V_k\cong [-1,1]\times\mathbb{RP}^2$.
    In fact, we claim that
    \begin{equation}\label{isotop}\bar V_k\cap\tilde h_i(M_i)\cong[0,1]\times\mathbb{RP}^2.\end{equation}
    Indeed, for $i$ large enough, $\tilde h_i(\partial M_i)\cap V_k$ is a (tamely embedded) projective plane separating the two boundary components of $\bar V_k$
    and hence, in the oriented cover $W_k$ of $\bar V_k$ (namely $[-1,1]\times S^2$, which we view as $S^3$ with two spherical caps removed), it lifts to a sphere $\Sigma_k$ separating the boundary components (i.e., the two caps in $S^3$).
    
    By applying Alexander's embedded sphere theorem (see, e.g., \cite[Theorem 1.1]{H00}), we deduce
    that $\Sigma_k\subset S^3$ bounds a topological ball. Endowing $S^3$ with a smooth structure such that $\Sigma_k$ is smoothly embedded, so that it bounds a diffeomorphic copy of $\bar B^3$, we deduce that $\Sigma_k$ is isotopic to the equator of $S^3$ (as this ball can be shrunk to an almost round ball by isotopies). Since $\Sigma_k$ separates the two spherical caps in $S^3$, we deduce that $\Sigma_k$ bounds two copies of $[0,1]\times S^2$ in $W_k$ (for the same reason); by applying \cite[Theorem 1]{L63}, we obtain \eqref{isotop}. Thus,
    $\tilde h_i(\partial M_i)\cap V_k$ is isotopic to one of the boundary components of $\bar V_k$, and hence to $S_k$, within a slightly larger neighborhood $V_k'\supseteq V_k$,
    completing the proof that $M_i\cong M$ (through a slight perturbation
    of $\tilde h_i$).

    We are left to show that $\tilde M_i$ and $\tilde M$ satisfy the assumptions of Proposition \ref{pet.pro}, for the same function $\gamma$.
    We check this just for $\tilde M$, since the argument gives a $\gamma$ depending only on $v$.
    Let $\delta(v),C(v)$ be as in Corollary \ref{cor.contr} and fix $\tau(v)>0$ small such that
    $\tau<\delta\dd(M,\mathcal{P})$, as well as $4C\tau<\delta$.
    Taking an arbitrary $B_s(q)\subset \tilde M$ with $q\in M$ and
    $s\in(0,\tau)$, if $4Cs\le\dd(q,\partial M)$
    then Corollary \ref{cor.contr} implies that $B_s(q)$ is contractible
     in $B_{4Cs}(q)\subset M$. Assume then that $4Cs>\dd(q,\partial M)$.

     Assume that we have a continuous map $\phi:S^\ell\to B_s(q)\subset\tilde M$ with $\ell\le3$. We wish to extend $\phi$ to the closed Euclidean ball $\bar B^{\ell+1}$,
     taking values in a controlled enlargement $B_{C's}(q)\subset\tilde M$. The claim is obvious if $\ell=0$,
     since $\tilde X$ is a geodesic space. Also, once we prove it for $\ell\le2$,
     it follows for $\ell=3$ as well, since $B_s(q)$ is an open $3$-manifold and thus its third homology group vanishes, allowing us to conclude the proof of the claim via a local version of
     Hurewicz theorem (see \cite[Theorem 0.8.3]{DV09}).

    Let us assume then $\ell\in\{1,2\}$.
    If $\phi$ takes values in $M$, then (viewing $M\subset X$)
    we can apply Corollary \ref{cor.contr} and extend it to a map
    $\bar\phi:\bar B^{\ell+1}\to B_{4Cs}(q)$.
    By Proposition \ref{green.balls} (7) and the lower bound $\rho(v)$ on pairwise distances of points in $\mathcal{P}$ given by Lemma \ref{p.lb}, there exists a neighborhood $\partial M\subset U\subset X$
    of size $\lambda(v)r$ (i.e., $U=B_{\lambda r}(\partial M)$), where $r:=\min\{r_1,\dots,r_N\}$, and a retraction $R:U\to\partial M$ such that
    $$\dd(x,R(x))\le C\dd(x,\partial M)\quad\text{for all }x\in U.$$
    As a consequence, as long as $8C\tau<\lambda r$, we have $B_{4Cs}(q)\subset U$
    and thus $R\circ\bar\phi$ is the desired extension with values in $M\subset N$.

    The proof is similar if $\phi$ takes values in the other half $M'$ of the doubling. To conclude, given $\phi$ taking values in both halves,
    we will homotope it to a constant by reducing to the previous situation.
    We only show how this can be done when $\ell=2$, since the case $\ell=1$ is similar (and easier).
     
     By endowing $\tilde M$ with a smooth structure with respect to which
     $\partial M$ is smoothly embedded, up to a slight perturbation
     we can assume that $\phi$ is transverse to $\partial M$.
     Thus, $\phi^{-1}(\partial M)$ is a union of embedded circles in $S^2$.
     Among these, we can always find one, which we call $\Gamma$, bounding a (compact) topological disk $\Delta$. Now we exploit Proposition \ref{green.balls} (6):
     namely, since $\partial M=\bigcup_k\partial\mathbb{B}_{r_k}(p_k)$, we can fill $\phi|_\Gamma$ with a continuous map $\psi:\Delta\to B_{C_0s}(q)\cap\partial M$.
     
     Since $\phi|_\Delta$ and $\psi$ are taking values in the same half of the doubling,
     the previous case (for $\ell=2$) shows that $\phi|_\Delta$ is homotopic to $\psi$
     in $B_{CC_0s}(q)$ (up to decreasing $\tau$ accordingly).
     Thus, up to replacing $\phi|_\Delta$ with $\psi$
     and perturbing the resulting map slightly (detaching the image of $\Delta$ from $\partial M$), we can decrease the number of connected components in $\phi^{-1}(\partial M)$. By iteration, we eventually reduce to the situation where $\phi$ takes values in just one half;
     note carefully that at each step the modified map $\tilde\phi$
     takes values in $B_{C_0s}(q)$ for the \emph{same} constant $C_0$, since (up to a slight perturbation) each successive modification differs from the initial map $\phi$
     just by a filling of one of the initial loops $\phi|_\Gamma$.

     The previous argument showed the existence of $\tau(v)>0$ and $C_1(v)$
     such that each ball $B_s(q)\subset\tilde M$ with $s\in(0,\tau)$
     is contractible in $B_{C_1s}(q)\subset\tilde M$, as desired.
\end{proof}

\frenchspacing

\end{document}